\DeclareMathAlphabet{\mathbf}{OT1}{ptm}{bx}{n}
\DeclareMathAlphabet{\mathrm}{OT1}{ptm}{m}{n}
\DeclareMathAlphabet{\mathbb}{U}{jkpsyb}{m}{n}
\tikzstyle{terminator} = [rectangle, draw, text centered, rounded corners, minimum height=2em]
\tikzstyle{process} = [rectangle, draw, text centered, minimum height=2em]
\tikzstyle{decision} = [diamond, draw, text centered, minimum height=2em]
\tikzstyle{data}=[trapezium, draw, text centered, trapezium left angle=60, trapezium right angle=120, minimum height=2em]
\tikzstyle{connector}  = [draw, very thick]
\tikzstyle{connectorG} = [draw, green!70!gray, very thick]
\tikzstyle{connectorR} = [draw, red!70!gray, very thick]
\tikzstyle{arrow} = [very thick,->,>=stealth]
\newtheorem{assumption}[lemma]{Assumption}
\DeclareSymbolFont{timesoperators}{T1}{ptm}{m}{n}
\renewcommand{\operator@font}{\mathgroup\symtimesoperators}
\DeclareMathOperator{\Tr}{Tr}
\newcommand{\eqdef}{\stackrel{\mbox{\tiny\rm def}}{=}}
\def\dash{\leavevmode\unskip\kern0.18em--\penalty\exhyphenpenalty\kern0.18em}
\def\slash{\leavevmode\unskip\kern0.15em/\penalty\exhyphenpenalty\kern0.15em}
\colorlet{darkblue}{blue!90!black}
\colorlet{darkgreen}{green!50!black}
\newcommand\bpi{\boldsymbol{\pi}}
\newcommand\myl{\raisebox{0.13em}{\mbox{\tiny $\boldsymbol{<}$}}}
\newcommand\myg{\raisebox{0.13em}{\mbox{\tiny $\boldsymbol{>}$}}}
\newcommand\myc{\raisebox{0.1em}{\mbox{\small $\circ$}}}
\newcommand\myleq{\raisebox{0.13em}{\mbox{\tiny $\leqslant$}}}
\newcommand\mygeq{\raisebox{0.13em}{\mbox{\tiny $\geqslant$}}}
\def\one{\mathord{\mathbf{1}}}
\def\a{{\mathord{\mathbf a}}}
\newcommand{\RR}{\mathbb{R}}
\newcommand{\NN}{\mathbb{N}}
\newcommand{\ZZ}{\mathbb{Z}}
\newcommand{\TT}{\mathbb{T}}
\newcommand{\CC}{\mathbb{C}}
\newcommand{\PP}{\mathbb{P}}
\newcommand{\EE}{\mathbb{E}}
\newcommand{\mf}[1]{\mathfrak{#1}}
\newcommand*{\ud}{\mathrm{\,d}}
\newcommand{\mX}{\mathcal{X}}
\newcommand{\mF}{\mathcal{F}}
\newcommand{\mA}{\mathcal{A}}
\newcommand{\mL}{\mathcal{L}}
\newcommand{\mP}{\mathcal{P}}
\newcommand{\mM}{\mathcal{M}}
\newcommand{\mN}{\mathcal{N}}
\newcommand{\mS}{\mathcal{S}}
\newcommand{\mB}{\mathcal{B}}
\newcommand{\mD}{\mathcal{D}}
\newcommand{\mC}{\mathcal{C}}
\newcommand{\mZ}{\mathcal{Z}}
\newcommand{\mO}{\mathcal{O}}
\newcommand{\mQ}{\mathcal{Q}}
\newcommand{\mH}{\mathcal{H}}
\newcommand{\tC}{\texttt{C}}
\newcommand{\tD}{\texttt{D}}
\renewcommand{\l}{\ell}
\def\K{\mathbf{K}}
\def\CE{\mathcal{E}}
\newcommand{\ve}{\varepsilon}
\newcommand{\vr}{\varrho}
\newcommand{\vt}{\vartheta}
\DeclareMathOperator{\supp}{supp}
\let\f\frac
\let\eps\varepsilon
\bottomrule \end{longtable}}%
  \renewcommand*{\glsgroupheading}[1]{}%
\newglossaryentry{con}
{
        name= ~Concentrated~,
        symbol={$ \| (\Pi_{M_{t}-1}^{\myc} + \Pi_{M_{t}-2}^{\myc}) u_{t} \| \geqslant \frac{1}{4}
\| \Pi^{\myl}_{M_{t}-2} u_{t} \| $},
	description={The system is concentrated if mass
accumulates close to the energy median.}
}
\newglossaryentry{dil}
{
        name= ~Diluted~,
        symbol={$ \| (\Pi_{M_{t}-1}^{\myc} + \Pi_{M_{t}-2}^{\myc}) u_{t} \| < \frac{1}{4} \|
\Pi^{\myl}_{M_{t}-2} u_{t} \| $},
	description={If the system is not concentrated, then it is diluted.}
}
\newglossaryentry{low}
{
        name= \phantom{A}~\ensuremath{\Pi_{L}^{\myl}},
        symbol={ $ \Pi^{\myl}_{L} \varphi^{\alpha} = \sum_{| k | \leqslant
L_{\alpha}} \langle \varphi^{\alpha}, e_{k} \rangle e_{k}$},
	description = {Projection on frequencies lower than $ L $.}
}
\newglossaryentry{high}
{
        name= \phantom{A}~\ensuremath{\Pi_{L}^{\myg}},
	description = {Projection on frequencies strictly higher than $ L +1 $.},
        symbol={ $ \Pi^{\myg}_{L} \varphi^{\alpha} = \sum_{| k |
>(L+1)_{\alpha}}
 \langle \varphi^{\alpha}, e_{k} \rangle e_{k} $}
}
\newglossaryentry{central}
{
        name= \phantom{A}~\ensuremath{\Pi_{L}^{\myc}},
        symbol={ $ \Pi_{L}^{\myc}  \varphi^{\alpha} = \sum_{L_{\alpha} < | k |
\leqslant (L+1)_{\alpha}}  \langle \varphi^{\alpha}, e_{k} \rangle e_{k}$},
	description = {Projection on frequencies of order $ L$.},
}
\newglossaryentry{energy median}
{
        name= \phantom{A}~\ensuremath{M(u)},
        symbol={ $\inf_{M \in \NN^{+}} \{  \|
\Pi^{\mygeq}_{M} u \| \leqslant \| \Pi^{\myl}_{M} u \| \} $ },
	description = {Energy median.},
}
\newglossaryentry{eigenvalue}
{
        name= \phantom{A}~\ensuremath{\zeta_{k}},
        symbol={ $ \zeta_{k} = | k |^{2 a}$ },
	description = {Eigenvalue of the operator $ (- \Delta)^{a} $ associated
to the Fourier mode $ k \in \ZZ^{d} $},
}
\newglossaryentry{gap}
{
        name= \phantom{A}~\ensuremath{\Delta_L},
        symbol={ $\zeta_{L+1} - \zeta_{L}$ },
	description = {Gaps between the eigenvalues of $ (- \Delta)^{a} $ in
dimension $ d=1 $, used in any dimension.},
}
\newglossaryentry{w}
{
        name= \phantom{A}~\ensuremath{w (L; t, x) },
        symbol={ $\Pi^{\myg}_{L} u_{t}(x) / \| \Pi^{\myl}_{L}
u_{t} \| $ },
	description = {No central frequencies.},
}
\newglossaryentry{wbold}
{
        name= \phantom{A}~\ensuremath{w^{\mygeq}(L; t, x)},
        symbol={ $\Pi^{\mygeq}_{L} u_{t}(x) / \|
\Pi^{\myl}_{L} u_{t} \| $ },
	description = {Centered around $ N_{t} = M_{t} -1 $,
unless stated otherwise.},
}
\newglossaryentry{sbold}
{
        name= \phantom{A}~\ensuremath{\sigma^{\mygeq}_{\beta}(L, t_0)},
        symbol={ $ \inf\{ t \geqslant t_{0}  \; \colon \; \|
\Pi^{\mygeq}_{L} u_{t}\| \geqslant \beta \| \Pi^{\myl}_{L}
u_{t} \|\}\wedge (t_{0} +1) $ },
	description = {$ \sigma^{\mygeq} = \sigma^{\mygeq}_{3/2}$.},
}
\newglossaryentry{taul}
{
        name= \phantom{A}~\ensuremath{\tau^{\myl} (L,t_{0}) },
        symbol={ $\inf \{ t \geqslant t_{0}  \ \colon \ \|
\Pi_{ L}^{\myl}  u_{t} \| \geqslant 2 \| \Pi_{
L}^{\myg}  u_{t} \|  \} \wedge (t_{0} + 1)$ },
	description = {Lower exit time. $ \tau^{\myl} (t_{0}) = \tau^{\myl} (
N_{t_{0}}, t_{0})$.},
}
\newglossaryentry{numin}
{
        name= \phantom{A}~\ensuremath{\nu_{\rm{min}}},
        symbol={ $ \min_{\alpha = 1, \dots, m} \nu^{\alpha}$ },
	description = {aa},
}
\newglossaryentry{numax}
{
        name= \phantom{A}~\ensuremath{\nu_{\rm{max}}},
        symbol={ $ \max_{\alpha = 1, \dots, m} \nu^{\alpha}$ },
	description = {aa},
}
\newglossaryentry{la}
{
        name= \phantom{A}~\ensuremath{L_{\alpha}},
        symbol={ $( \nu^{\alpha})^{- \frac{1}{2 \a}} L $ },
	description = {aa},
}
\begin{document}

\title{Spectral gap for projective processes of linear SPDEs}

\author{Martin Hairer$^{1,2}$ \orcidlink{0000-0002-2141-6561} and Tommaso
Rosati$^3$ \orcidlink{0000-0001-5255-6519}}
\institute{EPFL, Switzerland, \email{martin.hairer@epfl.ch}
\and Imperial College London, UK, \email{m.hairer@imperial.ac.uk}  
\and University of Warwick, UK, \email{t.rosati@warwick.ac.uk} }

\maketitle

\begin{abstract}
  This work studies the angular component $ \pi_{t} = u_{t} / \|
  u_{t} \| $ associated to the solution $ u $
  of a vector-valued linear hyperviscous SPDE on a $ d $--dimensional torus
  \begin{equ}
    \ud  u^{\alpha}  =- \nu^{\alpha} (- \Delta)^{\a } u^{\alpha} \ud t  + (u
\cdot \ud W)^{\alpha} \;,
\quad \alpha \in \{ 1, \dots, m \} 
  \end{equ}
  for $ u \colon \TT^{d} \to \RR^{m} $,  $ \a  \geqslant  1 $ and a sufficiently
smooth and non-degenerate noise $
  W $. We provide conditions for existence, as well as uniqueness
  and spectral gaps (if $ \a  > d/2$) of
  invariant measures for $ \pi $ in the projective space.
  Our proof relies on the  introduction of a novel Lyapunov functional for $
  \pi_{t}$, based on the study of dynamics of the ``energy median'': the energy
level $M$ at which projections of $u$ onto frequencies with energies less or more than $M$ have about
equal $L^2$ norm. This technique is applied
  to obtain \dash in an infinite-dimensional setting without order preservation
\dash lower bounds on top Lyapunov exponents of the equation, and their
  uniqueness via Furstenberg--Khasminskii formulas.  \\[.4em]
  \noindent {\scriptsize \textit{Keywords:} Linear SPDEs, Lyapunov exponents,
  projective processes, Furstenberg--Khasminskii.}\\
  \noindent {\scriptsize\textit{MSC classification:} 60H15}
\end{abstract}

\setcounter{tocdepth}{2}

\tableofcontents

\section{Introduction}

We consider, for fixed $ d, m \in \NN $, a parameter $ \a  \geqslant  1 $ and $
\nu^{\alpha} >0 $ for all $
\alpha  \in \{ 1, \dots, m \} $, the solution $ u = (u^{\alpha})_{\alpha =1}^{m} \colon [0,
\infty) \times \TT^{d} \to \RR^{m} $ ($ \TT^{d} $ being the $ d $-dimensional
torus) to the vector-valued linear stochastic PDE 
\begin{equ}\label{eqn:main}
\ud u^{\alpha} = - \nu^{\alpha} (- \Delta)^{\a } u^{\alpha} \ud t + (u
\cdot \ud W)^{\alpha }\;, \ \
u^{\alpha }(0, x)= u_{0}^{\alpha } (x)\;, \ \ \forall \alpha  \in \{ 1, \dots, m \} \;,
\end{equ}
for $ (t, x) \in [0, \infty) \times \TT^{d}$ and $ u_{0} \in L^{2}_{\star} =
L^{2} \setminus \{ 0 \} $. The noise $ W $ is chosen white in time, translation
invariant and
sufficiently smooth in space for classical solution
theories to apply. 
In this setting, the multiplicative ergodic theorem guarantees that for
every $ u_{0} \in L^{2} $ the Lyapunov exponent
\begin{equ}
  \lambda (u_{0}) = \lim_{t \to \infty} \frac{1}{t} \log{ \| u_{t} \|} \in \RR
  \cup \{ \pm \infty \}
\end{equ}
exists, is deterministic, and under mild assumptions on the noise satisfies $
\lambda (u_{0}) < \infty $. The converse bound, namely $ \lambda (u_{0}) > - \infty $,
is however unknown in general. The aim of the present work is to prove that under
relatively weak non-degeneracy conditions there exists a uniform lower
bound on $ \lambda (u_{0}) $ over all initial conditions $
u_{0}$:
\begin{equ}[eqn:lower-bd-lyap]
  \inf_{u_{0} \in L^{2}_{\star}} \lambda(u_{0}) > - \infty \;.
\end{equ}
In addition, under stronger assumptions we show that the Lyapunov
exponent does not depend at all on the initial condition: $ \lambda (u_{0}) =
\lambda (v_{0}), $ for all \( u_{0}, v_{0} \in L^{2}_{\star} \). In this case we 
prove Furstenberg--Khasminskii type formulas for the exponent.

Both results build on the study of the angular component $ \pi_{t} = u_{t} / \| u_{t} \| $ of the
solution to \eqref{eqn:main}, which is at the heart of the present work. While
in finite dimensions the link between properties of the Lyapunov exponent and
ergodic properties of the process $ \pi_{t} $ has been extensively used
(see for example the monograph \cite{MR600653}), in
infinite dimensions such an approach has remained mostly inaccessible. The
main difficulty is that the process
$ \pi_{t} $ no longer takes values in a compact state space, so the proof
of existence of invariant measures requires special care. To the best of
our knowledge, the only infinite-dimensional case that has been treated to some
extent is the order-preserving one \cite{Order,Monotone}, namely when the dynamic of the linear equation
preserves both cones of positive and negative functions. The most prominent
example in this setting is \eqref{eqn:main} with $ \a  =  1$ and $m=1 $, which is linked via the Cole--Hopf transform to
both the KPZ and Burgers' equation. Here, under the assumption $ \pi_{0} \geqslant 0 $, geometric ergodicity of $ \pi_{t} $ \dash
in fact, even a pathwise contraction
and a one force one solution principle \dash can be established, building on
suitable generalizations of the Krein--Rutman theorem,
see for example the seminal work by Sinai \cite{Sinai1991Buergers} and many
later works and the references therein
\cite{Boritchev16Burgulence,DunlapGrahamRyzhik21Burgers,RosatiSynchro}. Yet,
even in the case $ \a  =1$ and $m =1 $, no proof of \eqref{eqn:lower-bd-lyap}, which
allows for arbitrary initial data, seems to
be available, although a simpler proof than ours appears very plausible. Indeed for
sufficiently non-degenerate noise one would expect the dynamic of $ u $ to
eventually be trapped in either the cone of positive or that of negative
functions, even if the initial condition has no definite sign, which then reduces to the known case.

Therefore, although in the regime $ \a  > 1 $ the local solution theory of \eqref{eqn:main}
is simplified by the additional smoothing, this is an interesting regime
from our perspective since no order preservation property is satisfied and 
our study of $ \pi_{t} $ cannot rely on previous methods. Even more
interesting is the case $ \a  = 1 $ but $ m \geqslant 2 $ which is relevant in the study of fluid dynamics. The difficulty we 
are faced with is not just technical: it is easy to see
that our results simply do not hold in the deterministic case $ W = 0$, since
the Laplacian does not have a bounded spectrum, contradicting
\eqref{eqn:lower-bd-lyap}. Indeed, a 
sufficiently non-degenerate noise is required, in opposition to the mentioned
order preserving case, with positive or negative initial data, where the
noise is not necessary and if $ W = 0 $ the result reduces to the
Krein--Rutman theorem.

The linchpin on which our argument hinges is a novel Lyapunov functional
for the process $ \pi_{t} $, which can be obtained under rather mild regularity and
non-degeneracy assumptions on the noise $ W $.
Its definition is based on the analysis of the dynamics of energy levels of the
angular process \( \pi_{t} \). 
The main issue that has to be overcome is that in the deterministic dynamic 
($ W = 0 $) every eigenfunction of the Laplacian is a fixed point for the deterministic
angular dynamic, so that $ \pi_{t} $ can get stuck in high-frequency
states and, for large times, converges to the eigenfunction associated to
its smallest non-zero Fourier mode.
On the other hand, every eigenfunction but the one associated to the top
eigenvalue is a saddle point, the unstable directions being given by all
eigenfunctions with strictly smaller wave number. One therefore expects
that, provided the noise is sufficiently non-degenerate, the process is unlikely to get trapped by these
critical points  and bounds such as \eqref{eqn:lower-bd-lyap} become more plausible.

Our approach to making this heuristic rigorous is to measure the high-frequency
state in which the process $ \pi_{t} $ finds itself through the ``energy median''
$ M(\pi_{t}) $, a level at which roughly half of its $ L^{2} $ energy lies in
frequencies both above and below $ M(\pi_{t}) $. We use the energy median to
distinguish between large scales, where we have to deal with dynamical phenomena such
as the one just explained, and small scales, where we expect to be able to exploit
the strong dissipativity of the  equation.

Eventually, the construction of the Lyapunov functional builds on small scale
(or high-frequency) hyperviscous regularity estimates, together with a drift
towards low frequencies for the energy median. The proof of
the latter result requires to distinguish two cases. On the one hand a
\emph{diluted} case, in which the energy is spread out, also in frequencies
distant from the level $ M(\pi_{t}) $: here the negative drift is simply a
consequence of dissipation.
On the other hand a \emph{concentrated}
case, in which most of the energy is to be found around frequency $
M(\pi_{t}) $: this is where the deterministic analysis alone
cannot be sufficient and we solve a control problem to prove that a non-degenerate
noise rapidly pushes the system out of high-frequency concentrated states.

Having constructed the Lyapunov functional, the proof of the
uniform lower bound on the Lyapunov exponent follows by a bootstrap argument
which delivers sufficiently good regularity estimates for $ \pi $.
Furthermore, uniqueness of the
invariant measures follows from Harris' theorem, when viewing \( \pi \) as a \emph{projective}
process, that is identifying $ \pi $ and $ - \pi $. Here
the proof requires some regularity for the law of $ \pi_{t} $ near the constant
eigenfunction $ \pi\equiv 1/ \sqrt{m}  $ (the constant is chosen to have unit
$ L^{2} $ norm), which imposes among others the much
stronger requirement $ \a > d/2 $ to make use of the Bismut--Elworthy--Li formula:
see Remark~\ref{rem:on-assumption} for a further discussion of this point.

To conclude this introduction, let us discuss the relevance of our results. The
study of ergodicity for projective processes is fundamental to obtain a
precise control on top Lyapunov exponents of SPDEs, from properties such as
finiteness, uniqueness, Furstenberg--Khasminskii formulas and continuous dependence on the parameters of the
equation \cite{rosati2021lyapunov}, to central limit theorems for the sample Lyapunov exponent
\cite{DuGuKo21Fluct},
to precise estimates on the stability or instability of nonlinear equations.
Especially the study of nonlinear SPDEs close
to an invariant manifold still presents many open challenges. In the order
preserving case, a recent work
\cite{MuellerKhoshnevisan20Phase} 
considers the stability or instability of a nonlinear equation similar to
\eqref{eqn:main} with $ \a  = 1 $ close to the fixed point $ u \equiv 0 $. The
authors show that if the noise is either very weak or very strong, then the equation is locally unstable (respectively stable). Beyond the
order preserving case, very little is known, see for example
\cite{BianchBlomker21SwiftHohenberg} for quartic ($ \a  =2 $) equations in a
small noise regime, where the argument of the authors relies on a
transformation that reduces the problem to the study of an order preserving
system. Similarly relevant is the recent work \cite{gess2022lyapunov} (related
to a classical result \cite{crauel1998additive} for finite-dimensional systems): here
the particular structure of the Allen--Cahn equation can be used to prove the
negativity of the Lyapunov exponent, although so far there is not proof of a
lower bound to the exponent.

In the finite dimensional case, much more precise tools are available. Building on spectral
gaps for projective processes one can implicitly construct Lyapunov functionals
for nonlinear problems close to unstable invariant manifolds
\cite{Hairer09Heat}: this allows to establish and quantify the stability or
instability of an invariant manifolds in terms of the sign of the top Lyapunov
exponent, as should be expected. If extended to infinite dimensions, these tools can in
principle be used to address open problems, such as the \textit{non}-uniqueness of invariant measures for the
Navier--Stokes equations under degenerate forcing, as opposed to uniqueness
when the noise satisfies minimal non-degeneracy conditions
\cite{HairerMattingly06}, see for example \cite{HairerCotiZelati21Lorenz} for
 a result with the same underlying motivation, but in the case of a 3D Lorenz system.

A series of nice results which are related in both motivation and flavour to this article was recently
obtained by Bedrossian, Blumenthal, and Punshon-Smith.
They obtained two types of results for the stochastically forced Navier--Stokes equations.
On one hand, they study in \cite{BedrossianBlumenthalSmith22LowerBoundLyap,
  BedrossianBlumenthalSmith22Lagrangian,
BedrossianBlumenthalSmith22BatchlorSpectrum} the behaviour of passive tracers advected by the corresponding
random velocity field. They show that as long as the forcing is sufficiently non-degenerate for the
strong Feller property to hold, these exhibit ``Lagrangian chaos'', namely tracers started nearby
separate exponentially fast.
On the other hand, they show in \cite{BedrossianSmith21chaos} that its finite-dimensional Galerkin truncations exhibit ``Eulerian chaos'',
namely that the top Lyapunov exponent for the linearised (with respect to initial data) equation 
is strictly positive at high enough Reynolds number.
Extending the latter result to
infinite dimensions presents many fundamental challenges.
Even establishing the continuity of the Lyapunov exponent
with respect to the size of the finite-dimensional approximation, or a 
lower bound in the spirit of \eqref{eqn:lower-bd-lyap} remain open problems.
Once more, this is not out of purely technical reasons
since in the deterministic Navier--Stokes system, results concerning enhanced dissipation
show that arbitrarily large exponential, or even super-exponential decay is
possible \cite{BedrossianMasmoudiVicol16InvDampEnhancedDiss, BedrossianCoti17Enhanced}.

In conclusion, this article introduces a new approach to
study Lyapunov exponents and projective processes for
SPDEs beyond the order preserving case. The long-term goal of these methods is to
tackle some of the problems described above.

\subsection*{Acknowledgments}
This article was written in vast majority while TR was employed at
Imperial College, London.
Support from the Royal Society through MH's research
professorship, grant RP\textbackslash R1\textbackslash 191065, is
gratefully acknowledged.
TR is very grateful to Alex Blumenthal and Sam Punshon-Smith for many inspiring
discussions.

\subsection*{Notations} 

We write $ \NN = \{ 0, 1, 2,3 \ldots \}, \ \NN^{+} = \NN \setminus \{ 0
\} $, and $ \TT^{d} =
\RR^{d}/\ZZ^{d} $ for the $ d $-dimensional Torus. We denote by $ \| \cdot \| $
the
\( L^{2}(\TT^{d}; \CC^{n}) \) norm $ \| \varphi
\|^{2} =  \int_{\TT^{d}} |\varphi|^{2}(x)  \ud x
$, when $ n $ is clear from context and we write $ L^{2}_{\star}
(\TT^{d}; \CC^{n}) = L^{2}(\TT^{d}; \CC^{n})\setminus \{ 0 \} $. For $ x  \in \CC^{n} $ we write $ | x | $ for its
Euclidean norm. We also write for $ \varphi, \psi \in L^{2}(\TT^{d};
\CC^{n}) $:
\begin{equation*}
\begin{aligned}
\langle \varphi, \psi \rangle = \sum_{\alpha=1}^{n} \int_{\TT^{d}} \varphi^{\alpha} (x)
\psi^{\alpha}(x) \ud x \;.
\end{aligned}
\end{equation*}
We will control high frequency regularity, for
$ L \in \NN $ and $ \gamma \geqslant 0 $, via the
(semi-)norms
\begin{equs}
  \| \varphi \|_{H^{\gamma}(\TT^{d}; \RR^{n})}^2 & =  \sum_{\alpha=1}^{n}\sum_{k \in \ZZ^{d}} (1 + | k |)^{2} |
  \hat{\varphi}^{\alpha} (k) |^{2} \;, \\
  \| \varphi \|_{H^{\gamma}_{L}}^2 & = \sum_{\alpha =1}^{n} \sum_{| k | >
L_{\alpha} } (1+| k |- L_{\alpha})^{2 \gamma} |
  \hat{\varphi}^{\alpha}(k) |^{2}  \;, \qquad L_{\alpha} =
(\nu^{\alpha})^{- \frac{1}{2 \a }} L\;,
\end{equs}
where $ (\nu^{\alpha})_{\alpha}  $ are viscosity coefficients.
Further, to simplify the notation we write
\begin{equ}
  \| \varphi \|_{H^{\gamma}_{L}} = \| \varphi \|_{\gamma, L} \;, \qquad \|
  \varphi \|_{H^{\frac{1}{2}}_{L}} = \| \varphi \|_{L} \;.
\end{equ}
For any set $ \mX $ and functions $ f, g \colon
\mX \to \RR $ we write $ f \lesssim g $ if there exists a constant $ c >0 $
such that $ f(x) \leqslant c g(x)$ for all $ x \in \mX $.
The (lack of) dependence of $c$ on additional parameters will hopefully either be clear
from context or explicitly specified. For a complex number $ z \in \CC $, we
write
$\mf{Re} (z)$, $\mf{Im}(z)$,
for its real and imaginary parts.
We write $ \zeta_{k} $ for the  eigenvalue of $  (- \Delta)^{\a } $ associated to mode
$ k \in \ZZ^{d}$, and $ \Delta_{M} $ for the one-dimensional gaps:
\begin{equation*}
\begin{aligned}
\gls{eigenvalue} = | k |^{2 \a } \;, \qquad \gls{gap} = \zeta_{L+1}- \zeta_{L} \;,
\qquad \forall k \in \ZZ^{d} \;, \quad L \in \NN \;.
\end{aligned}
\end{equation*}
Throughout the article we will consider the solution $
(u_{t})_{t \geqslant 0} $ to a stochastic PDE on a probability space $
(\Omega, \mF, \PP) $ and we write $
(\mF_{t})_{t \geqslant 0} $ for the right-continuous filtration generated by the Wiener process
driving $u$. We use the following notation for conditional expectations, given any stopping time $ t_{0} $
\begin{equ}
\EE_{t_{0}}[ \varphi] = \EE[\varphi \vert \mF_{t_{0}}]\;, \quad
\PP_{t_{0}}(\mA) = \PP( \mA \vert \mF_{t_{0}})\;.
\end{equ}

\section{Preliminaries and main results}
We start with some general considerations, which hold for a wide class of
matrix-valued noises $ \ud W $. First, for any $ (t,x) \in
[0, \infty) \times \TT^{d} $ we write in coordinates $ W_{t} (x) =
(W^{\alpha}_{\beta, t}(x))_{\alpha, \beta =1}^{m} \in (\RR^{m})^{\otimes 2}  $.
Hence in \eqref{eqn:main} we find the matrix multiplication
\begin{equ}[e:dotproduct]
(u \cdot \ud W_{t})^{\alpha} = \sum_{\beta} u^{\beta} \ud W^{\alpha}_{\beta, t} 
\;.
\end{equ}
Now, for the sake of this introductory section, let us consider a generic
\emph{spatially homogeneous} noise $ W $. By this, we mean that
\begin{equation*}
\ud \langle W^{\alpha}_{\beta}(x), W^{\alpha^{\prime}}_{\beta^{\prime}}
(y) \rangle_{t} = \Lambda^{\alpha, \alpha^{\prime}}_{\beta, \beta^{\prime}}
(x-y)\ud t \;, \qquad \forall x, y \in \TT^{d} \;,
\end{equation*}
for a tensor-valued function $ \Lambda \colon \RR \to (\RR^{m})^{\otimes 4} $.
In Fourier coordinates, such a noise can be written as
\begin{equ}
\ud W^{\alpha}_{\beta, t } =  \sum_{k \in
\ZZ^{d}}  e_{k} \ud B^{\alpha, \beta}_{k, t} \;,
\end{equ}
with $ e_{k}(x) = \exp{ ( \iota k \cdot x )} $ for $ x \in
\TT^{d} $ and with  $$ \{
B^{\alpha, \beta}_{k, t}  \; \colon \; k \in \ZZ^{d} \;, \alpha, \beta \in
\{ 1, \dots, m \} \;, t \geqslant 0 \} $$ a collection of rescaled complex
Brownian motions with quadratic
covariation of the form
\begin{equ}[e:noisefc]
 \ud \langle 
B^{\alpha, \beta}_{k} ,
B^{\alpha^{\prime}, \beta^{\prime}}_{k^{\prime}}
\rangle_{t} = \Gamma^{\alpha, \alpha^{\prime}}_{\beta,
\beta^{\prime}, k} \delta_{k,- k^{\prime}}\ud t \;, \qquad B^{\alpha,
\beta}_{-k, t} = \overline{B}^{\alpha, \beta}_{k, t}\;, 
\end{equ}
such that
\begin{equ}[e:def-Lambda]
\Lambda^{\alpha, \alpha^{\prime}}_{\beta, \beta^{\prime}}
(x) = \sum_{k \in \ZZ} \Gamma^{\alpha, \alpha^{\prime}}_{\beta, \beta^{\prime},
k} e_{k}(x)\;.
\end{equ}
For the time being, we will refrain from stating more precise assumptions on
the noise coefficients $ \Gamma $ appearing above: the assumptions will be
provided in the upcoming section. Instead we proceed with some heuristic
arguments, assuming that the noise is sufficiently smooth for \eqref{eqn:main}
to be well posed. Throughout this work we will decompose the solution as $ u_{t} =
r_{t} \pi_{t} $, with 
\begin{equ}[e:defrpi]
r_{t} = \| u_{t} \|\;, \qquad \pi_{t} = \frac{u_{t}}{
\| u_{t} \|}\;.
\end{equ}
We will refer to the first as the \textit{radial} and to the second as the
\textit{angular} component of the solution $ u_{t} $. 

The natural state
space for the angular component is the infinite-dimensional sphere 
\begin{equ}
S = \{
\varphi \in L^{2}(\TT^{d}; \RR^{m})  \ \colon \ \| \varphi
\|_{L^{2}(\TT^{d}; \RR^{m})} = 1 \} \;,
\end{equ}
but on
this space the process may in principle have multiple invariant measures. For example, in the case
$ \a  =1 $, one has at least two invariant measures since the equation preserves the cones of positive and
negative functions. If $ \a  \geqslant 1 $ and the noise is sufficiently non-degenerate,
then it follows from our results that there are at most two invariant measures,
since any invariant measure must contain either $ e_{0} $ or $ - e_{0} $ in its
support and the angular component is strong Feller at these points, see the
proof of Theorem~\ref{thm:uniq}. In general, we cannot expect less than two
invariant measures, since in the case $
\a =1, m=1$, the angular component $ \pi_{t} $ preserves the cones of positive and
negative functions. On the other hand, if $ \a  > 1 $, since the
cones of positive and negative functions are no longer preserved, one might expect
exactly one invariant measure for the process $ \pi_{t} $, but this falls
beyond the scope of this work.

Motivated by the case $ \a  =1, m=1 $, the natural space to check for
uniqueness of the invariant measure is therefore
the infinite dimensional projective space $ \mathbf{P} $, which can be viewed
as the Hilbert manifold obtained by
quotienting the sphere with the antipodal map. We define
\begin{equ}
  \mathrm{Ap} \colon S \to S\;,
  \qquad \mathrm{Ap}(\varphi) = - \varphi\;, \qquad \mathbf{P} =
  S / \mathrm{Ap} \;,
\end{equ}
and we denote by $ [\varphi] \in \mathbf{P} $ the equivalence class of $ \varphi \in S $
in the projective space:
\begin{equ}
  \left[ \; \cdot \; \right] \colon S \to \mathbf{P} \;, \qquad [\varphi] = [-
  \varphi] \;.
\end{equ}
We observe that due to the linearity of \eqref{eqn:main}, the process $
( [\pi_{t}] )_{t \geqslant 0}
$ is a Markov process on $ \mathbf{P} $ which we call the
\emph{projective} process associated to $ u $. Before we move on to state our
main results, let us perform some formal calculations for $ r_{t} $ and $
\pi_{t} $, which establish the link between them and the Lyapunov exponent $
\lambda(u_{0}) $.

\subsection{Formula for the top Lyapunov exponent}\label{sec:fk-derivation}
Control over the projective process opens the road to a detailed analysis of
Lyapunov exponents. In this section we cover but one aspect in which control
over the projective dynamic is helpful, by formally establishing
Furstenberg--Khasminskii type formulas for the Lyapunov exponent.
We start by observing that we can formally write an SPDE for $ \pi_{t} $. For this
purpose it is
useful to rewrite the equation for $ u_{t} $ in Stratonovich form:
\begin{equ}[eqn:main-strat]
  \ud u  = \mL  u \ud t - \frac{1}{2}  u \cdot \Tr(\Lambda) + u  \cdot\circ \ud W_{t}\;,
\end{equ}
where we have defined
\begin{equ}
( \mL u)^{\alpha} = - \nu^{\alpha} (-
\Delta)^{\a } u^{\alpha}\;, \qquad \Tr (\Lambda) \in
(\RR^{m})^{\otimes 2}\;, \qquad \Tr (\Lambda)^{\alpha}_{\beta} =
\sum_{\gamma = 1}^{m} \Lambda^{\gamma, \alpha}_{\beta, \gamma}(0) \;,
\end{equ}
and $ u \cdot \Tr(\Lambda) = \sum_{\beta} u^{\beta} \Tr
(\Lambda)^{\alpha}_{\beta} $.
Regarding the radial process, setting $$ Q_{\mL}(u,u) = \Big\langle
u , \mL u  - \frac{1}{2} u \cdot \Tr(\Lambda) \Big\rangle \;,$$ a simple application of the chain rule yields
\begin{equ}[e:r]
\ud r =  r Q_{\mL}(\pi, \pi) \ud t +
r \langle \pi,  \pi \cdot \circ \ud W \rangle  \;.
\end{equ}
This in turn leads to an expression for the projective dynamic:
\begin{equ}[e:pi]
\ud \pi  =   \Big( \mL \pi - \frac{1}{2} \pi \cdot \Tr(\Lambda)-
Q_{\mL}(\pi, \pi) \pi\Big) \ud t +  \pi \cdot  \circ \ud W - \langle
\pi,  \pi \cdot \circ \ud W \rangle \pi  \;.
\end{equ}
We observe that if $ m =1 $, then the equation for $ \pi $ does not depend on
$ \Lambda $ (other than through the noise), as should be expected since $\Tr(\Lambda) $ is scalar in this case.

These calculations allow us to derive a heuristic
formula for the top Lyapunov exponent.  If we write the equation for $  \log( r_{t} ) $ in Itô form
\begin{equs}
\ud \log(  r ) & =  Q_{\mL}(\pi,
\pi) \ud t +
\langle \pi, \pi \cdot \circ \ud W \rangle\\
&  =  \langle \pi , \mL \pi \rangle + \frac{1}{2} C(\pi, \Lambda)  \ud t +
\langle \pi, \pi \cdot \ud W \rangle \;,\qquad \label{eqn:differential-fk}
\end{equs}
where the It\^o--Stratonovich corrector can be obtained via \eqref{e:pi} and
is given by
\begin{equ}
C(\pi, \Lambda)  = \langle \pi, \pi \cdot
\Tr^{\mathrm{u}} (\Lambda) \rangle -2 \langle \pi^{\otimes 2}, \Lambda
\pi^{\otimes 2} \rangle \;. 
\end{equ}
Here we have defined 
\begin{equs}
\Tr^{\mathrm{u}} (\Lambda)^{\alpha}_{\beta} & =
\sum_{\gamma=1}^m \Lambda^{\gamma, \gamma}_{\alpha, \beta}(0) \;,\\
 \langle \pi^{\otimes 2}, \Lambda \pi^{\otimes 2} \rangle & =  \sum_{\alpha, \beta,
\gamma, \eta}\int_{(\TT^{d})^{2}} \pi^{\alpha}(x)\pi^{\gamma}(x)
\Lambda^{\alpha, \beta}_{\gamma, \eta}(x -y)
\pi^{\beta}(y)\pi^{\eta} (y)  \ud x \ud y \;. \label{e:quartic}
\end{equs}
For the sake of clarity, let us observe that if $ m=1 $ and if the noise is
space-independent, that is $\Lambda (x) \equiv \Lambda(0) $, then the
corrector reduces to $ C(\pi, \Lambda) =
- \Lambda^{2} \in (- \infty, 0) $. In other words, the corrector we obtain is
an infinite-dimensional and vector-valued generalisation of the corrector
between It\^o and Stratonovich versions of geometric Brownian motion, the first one with
Lyapunov exponent $ - 1/2 $, the second one with Lyapunov exponent zero.

Indeed, we see that in the limit $t\to \infty$, if we consider $ \frac{1}{t} \log{r_{t}} $, the
martingale term disappears, being roughly of order $ \sqrt{t} $. Assuming
that $ [\pi_{t}] $ is uniquely ergodic, and since the drift above is quadratic
in $ \pi $ and therefore depends only
on the projective class of $ \pi $, this calculation suggests the identity
\begin{equ}[eqn:fk]
  \lambda =  \EE_{\infty} \bigg[ \langle \pi, \mL \pi \rangle + \frac{1}{2}
C(\pi, \Lambda )\bigg] \in \RR \cup \{ - \infty \} \;,
\end{equ}
where $ \EE_{\infty} $ stands for expectation under the stationary law of $
[\pi_{t}] $. One of the aims of this article is to rigorously
derive \eqref{eqn:fk}, by obtaining suitable conditions for the existence of a
unique invariant measure for the projective process. Another
aim is to prove that $ \lambda > - \infty $ by obtaining suitable regularity
estimates.

\subsection{The Lyapunov functional and the energy median}\label{sec:lyapunov}

The fundamental tool to achieve our objective is to introduce a Lyapunov
functional for the projective process. Its construction rests on a large- and
small- scale separation, which is achieved by separating frequencies according to
whether they lie above or below the energy median, which we will shortly
define. In the vector-valued setting, if the viscosity coefficients $
\nu^{\alpha}$ are not identical, we must adjust low or high
frequency projections so as to match the dissipation rate. Namely, for given $
L >0$ we project on modes that dissipate at, above, or below level $
\zeta_{L} = - L^{2\a } $. Since at frequency $ k $ and in the entry $
\alpha $ the dissipation rate is given by $ \nu^{\alpha} | k |^{\mathbf{2 a}} $, the
threshold for the projection for the $ \alpha $-th entry will be $$ | k
| = \frac{L}{ (\nu^{\alpha})^{\frac{1}{2\a }}}
\eqdef L_{\alpha}\;.$$ In other words, rather
than projecting on frequencies less than a given frequency level, we project on eigenspaces with
eigenvalues less than a fixed threshold.

\begin{definition}\label{def:proj}
For every $ L \in \NN $ and $ \varphi \in L^{2}(\TT^{d}; \RR^{m}) $ we define respectively the 
low- and high-frequency projections, written in components for $ \alpha \in \{ 1, \dots , m \} $
\begin{equ}
\gls{low}\varphi^{\alpha}  = \sum_{| k | \leqslant L_{\alpha}} \langle
\varphi^{\alpha},
e_{k} \rangle e_{k}\;, \qquad  \gls{high}\varphi^{\alpha} = \sum_{| k | >
(L+1)_{\alpha}} \langle
\varphi^{\alpha}, e_{k} \rangle e_{k}\;,
\end{equ}
as well as the central frequency projection
\begin{equ}
\gls{central} \varphi^{\alpha} = \sum_{ L_{\alpha} < | k | \leqslant
(L+1)_{\alpha}} \langle \varphi^{\alpha}, e_{k} \rangle e_{k}\;.
\end{equ}
In addition we consider
\begin{equ}
\Pi_{L}^{\myleq} = \Pi_{L}^{\myl} + \Pi_{L}^{\myc} = \Pi_{L+1}^{\myl}\;, \qquad
\Pi_{L}^{\mygeq} = \Pi_{L}^{\myc} + \Pi_{L}^{\myg} = \Pi_{L-1}^{\myg} \;.
\end{equ}
Finally, we define the energy median $M \colon L^{2}(\TT^{d}; \RR^{m}) \to \NN$ by
\begin{equ}
\gls{energy median} = \inf \{ M \in \NN \;, M \geqslant 1   \ :\ \|
\Pi_{M}^{\mygeq} u \| \leqslant \| \Pi_{M}^{\myl} u\|  \}\;. 
\end{equ}
\end{definition}

\begin{remark}\label{rem:dimension}
  In one dimension, there is no eigenvalue in the interval $
  (L, L+1) $. In this case, at least if all $ \nu^{\alpha} $ are identical, some of the upcoming technical steps do
simplify significantly, while in higher dimensions we must treat separately modes in between two
separated shells (in the picture below, for $ d =2 $ and $ L = 5 $, the modes that fall in
the gray area). When we consider separated shells we can make
use of gaps between eigenvalues of $\mL $, which is of great advantage. This
motivates the distinction (superfluous in one dimension) in
Section~\ref{sec:dissip} between $ w^{\myg}$ and  $ w^{\mygeq}$.
  \end{remark}
\begin{center}
\begin{tikzpicture}
\draw[line width = 1pt] (0,0) circle (5*0.40);
\draw[line width = 1pt] (0,0) circle (6*0.40);
\fill[gray!25, even odd rule] (0,0) circle (5*0.40) (0,0) circle (6*0.40);
\foreach \x in {-7,-6,...,7}
	\foreach \y in {-7,-6,...,7}
		{
		\pgfmathtruncatemacro{\z}{(0.4*\x)^2+(0.4*\y)^2}
		\ifthenelse{\z<9}{\fill[black!80] (0.40*\x, 0.40*\y) circle (0.75pt);}{} 
		}
\fill[black!60] (0,0) circle (2pt);
\end{tikzpicture}
\end{center}
The second ingredient in the construction of the Lyapunov functional is a
control on high frequency regularity. For this reason we introduce for $ L \in \NN $ the shifted Sobolev
spaces $ H^{\gamma}_{L} $ (these are Banach spaces only if considered as subsets of the functions
$ \varphi \in L^{2} $ such that $ \Pi_{L}^{\myg} \varphi = \varphi $) defined by the seminorm 
\begin{equ}[eqn:shifted-sobolev]
\| \varphi \|_{H^{\gamma}_{L}}^2 = \sum_{\alpha =1}^{m}\sum_{| k | >
L_{\alpha} +1} (1+| k |- L_{\alpha})^{2 \gamma} |
\hat{\varphi}^{\alpha} (k) |^{2}  \;,
\end{equ}
and to simplify the notation we write
\begin{equ}
 \| \varphi \|_{\gamma, L} = \| \varphi \|_{H^{\gamma}_{L}}  \;, \qquad 
 \| \varphi \|_{L} = \| \varphi \|_{\frac{1}{2}, L}  \;.
\end{equ}
\begin{remark}\label{rem:def-norm}
  The choice of the regularity parameter equal to $ 1/2 $ simplifies
  certain computations since it is ``linear'' in $ L $:
  \begin{equ}
    \| \varphi \|^{2}_{\frac{1}{2} , L} = \| \Pi_{L}^{\myg} \varphi
    \|^{2}_{H^{\frac{1}{2}}} - \sum_{\alpha=1}^{m} L_{\alpha} \|
\Pi_{L}^{\myg} \varphi^{\alpha} \|^{2}\;.
  \end{equ}
In particular, in this setting we observe that we can bound for any $ k_{0} \in \NN $ 
\begin{equ}
  \| \pi_{t} \|_{H^{\frac{1}{2}}}^{2} \leqslant 2
\nu_{\mathrm{min}}^{-\frac{1}{2 \a }}  ( M(\pi_{t}) + k_{0}) +1 + \|
  \pi_{t} \|_{M(\pi_{t})+ k_{0}}^{2} \;,
\end{equ}
with $ \nu_{\mathrm{min}}= \min_{\alpha} \nu^{\alpha} $.
\end{remark}
Now, a natural first candidate for a Lyapunov functional to $ t \mapsto
\pi_{t}  $ could be the map
\begin{equ}[e:naivelyap]
S \ni \pi \mapsto \exp \big( \| \pi \|_{H^{\gamma}}^{2} \big) \;,
\end{equ}
for some $ \gamma > 0 $. This is a reasonable choice for a Lyapunov functional,
but our analysis is not sufficient to prove that it does indeed satisfy
the Lyapunov property.
As we will see, a
crucial point of our argument is to control
the evolution of level sets of the energy of $ \pi_{t} $, such as the energy
median, which we will use as
thresholds to distinguish between large and small scales.
Therefore we will replace our first guess for the Lyapunov functional by 
\begin{equ}[eqn:def-G-lyap]
 \mf{G} \colon S \to [0, \infty] \;, \quad \mf{G} (\pi) = \exp \left( \kappa_{0} M (\pi) + \| \pi
  \|_{M(\pi) +k_{0}}^{2} \right)\;,
\end{equ}
for two parameters $ \kappa_{0}>0, k_{0} \in \NN $ to be fixed later on. 
For fixed $ \kappa_{0}, k_{0} $, there exists constants $
c_{1} (\kappa_{0}, k_{0}) < c_{2} (\kappa_{0}, k_{0}) $ such that for every $
\pi \in S $:
\begin{equ}
c_{1} \| \pi \|_{H^{1/2}}^{2} \leqslant  \kappa_{0} M (\pi) + \| \pi
  \|_{M(\pi) +k_{0}}^{2} \leqslant c_{2} \| \pi \|^{ 2}_{H^{1/2}} \;.
\end{equ}
In particular, if we could prove a \emph{super-Lyapunov} property for the
functional $ \mf{G} $, namely that
\begin{equs}
\EE[ \mf{G}( \pi_{t}) ] \leqslant c \exp \left( c e^{- \lambda t} \kappa_{0} M
(\pi_{0}) + c e^{- \lambda t} \| \pi_{0}
  \|_{M(\pi_{0}) +k_{0}}^{2} \right) \;,
\end{equs}
for some $ c , \lambda > 0 $, 
then we would be able to deduce that also the map in \eqref{e:naivelyap}
satisfies the Lyapunov property. Unfortunately, proving such super-Lyapunov
property lies beyond our capacities and we restrict to proving the Lyapunov
property only. The main issue lies in the analysis of the median
process $ M( \pi_{t}) $: for the high-frequency regularity $ \| \pi_{t} \|_{M (\pi_{t}) +
k_{0}} $, which behaves very much in analogy to the Sobolev norm of a typical
parabolic SPDE, we can prove a bound very crudely of the type $ \EE[ \exp \left(
 \| \pi_{t} \|_{M (\pi_{t}) +
k_{0}}^{2} \right) ] \leqslant  c \exp \left( c e^{- \lambda k_{0} t} \|
\pi_{0} \|_{M (\pi_{0}) + k_{0}}^{2}\right)  $. For the median however we can
only prove a bound along the lines of $ \EE [ \exp( \kappa_{0} M (\pi_{t}))]
\leqslant c( e^{- \lambda \kappa_{0} t }\exp( \kappa_{0} M (\pi_{0})) +1) $.
Namely, we show that $ M $ is upper bounded, at least for large values,
by a diffusion with a constant drift towards low frequencies. It is this upper
bound \dash which we believe to be sub-optimal but whose improvement would require a much
deeper analysis of energy level dynamics \dash which leads to the restriction
mentioned above. This discussion also
explains the presence of the parameters
$ \kappa_{0}, k_{0} $, which are used to increase to our convenience the
contraction constants $ e^{- \lambda \kappa_{0} t} $ and $ e^{- \lambda k_{0}
t} $.

Before we move on, let us remark that sometimes we will 
call $ \mf{G} $ the \emph{full} Lyapunov functional, in opposition to a
\emph{skeleton} functional $ \mf{F} $ which we construct in
Section~\ref{sec:skeleton} and which is at the
heart of our technical analysis. Now we are ready to state the main
achievements of this paper.

\subsection{Main results}

Our main results neatly divide in two distinct theorems, with respectively
weaker and stronger non-degeneracy assumptions on the noise: one concerning the
existence of invariant measures and lower bounds for the Lyapunov
exponent(s), and one regarding the uniqueness of invariant measures and
Furstenberg--Khasminskii formulas for the Lyapunov exponent. We start with the
first result.

\subsubsection{Existence of invariant measures}

Our proof of existence of invariant measures amounts to proving that $
\mf{G} $ in \eqref{eqn:def-G-lyap} is a Lyapunov functional for $
\pi_{t} $. This is not true for any noise, but requires a certain
non-degeneracy. The sufficient property we identify for the desired Lyapunov property to hold
guarantees that the noise
counteracts concentration in high-frequency states: if a substantial amount of
the energy of $ \pi_{t} $ finds itself in a frequency shell at level $ M
$, so that say $ \| (\Pi^{\myc}_{M-2} + \Pi^{\myc}_{M-1}) \pi_{t} \| \geqslant \beta_{0} $ for some
$ \beta_{0} > 0 $ with no control on how much energy lies in lower frequencies,
then the noise must nevertheless shift some energy to lower frequencies arbitrarily quickly,
provided that $ M $ is sufficiently large. The fact that we consider two shells
($ M -2 $ and $ M-1 $) is out of purely technical reasons, for later
convenience. In this case, we say that $ W
$ induces \emph{high-frequency stochastic instability}.

\begin{definition}\label{def:high-freq-insta}
The noise $ W $ appearing in \eqref{eqn:main} is said to induce
high-frequency stochastic instability if 
for every $ \mf{t} >0 $ and $ \ve \in (0, 1) $  there exists a $ \K (\mf{t}, \ve) \in \NN $ such that for all
$ M \geqslant \K$ and all $ u_{0} \in L^{2}_{\star} $, if
\begin{equ}[e:idata]
\| \Pi^{\mygeq}_{M} u_{0} \| \leqslant 2 \| \Pi^{\myl}_{M} u_{0} \|   \;,
\qquad \| (\Pi^{\myc}_{M-1} + \Pi^{\myc}_{M-2}) u_{0} \| \geqslant \frac{1}{4} \|
\Pi^{\myl}_{M-2} u_{0} \| \;,
\end{equ}
then
\begin{equs}
\PP \left( \| (\Pi^{\myc}_{M-1} + \Pi^{\myc}_{M-2}) u_{t} \| < \frac{1}{4} \| \Pi^{\myl}_{M-2} u_{t}
\| \text{ for some } t \in [0, \mf{t}]\right) \geqslant 1- \ve \;.
\end{equs}
\end{definition}
The exact value of the constants $ 2 $ and $ 1/4 $ is irrelevant but chosen in
harmony with later thresholds. The assumption \eqref{e:idata} on the initial data is stating
that $ M $ is related to the energy median of $ u_{0}$ (it bounds another energy level
set) and that the energy of $ \pi_{0} = u_{0} / \| u_{0} \| $ is concentrating
in a shell of level $ M $. The claim is then that the noise shifts energy to lower
frequencies \dash no matter how small $ \| \Pi^{\myl}_{M-2} u_{0} \| $ might be.
It is easy to see that if there is no noise ($  W = 0$) the condition is not satisfied,
which is why this is a non-degeneracy assumption on the noise.
Our main result can then be formulated in terms of this stochastic instability.
Here we make use of the Fourier coefficients appearing in \eqref{e:noisefc}.

\begin{theorem}\label{thm:lyap-func}
  Assume that $ \a  \geqslant 1 $, $ \nu^{\alpha} > 0 $ for all $ \alpha
\in \{ 1, \dots, m \} $, that the noise $ W $ appearing in \eqref{eqn:main} induces a
high-frequency stochastic instability, and that the noise coefficients $ \Gamma
$ in \eqref{e:noisefc} satisfy for every $ \alpha, \alpha^{\prime} , \beta, \beta^{\prime}  \in \{ 1, \dots, m \} $
\begin{equ}[e:reg-assu]
\exists \gamma_{0} > \frac{d}{2} +1 \;, C > 0  \quad \text{ such that } \quad |
\Gamma^{\alpha, \alpha^{\prime}}_{\beta, \beta^{\prime}, k}  | \leqslant C( 1+ | k
|)^{- 2 \gamma_{0}} \;, \quad \forall k \in \ZZ^{d} \;. 
\end{equ}
Then the following hold
\begin{enumerate}
\item For every $ u_{0} \in L^{2}_{\star} (\TT^{d}; \RR^{m}) $, almost surely
the solution $ u_{t} $ to \eqref{eqn:main} satisfies $ u_{t} \neq 0 $ for all
$ t \geqslant 0 $. The angular component $ \pi_{t} = u_{t}/ \|
u_{t}\| $ is hence defined for any $ \pi_{0} \in S$, almost surely, for all $t
\geqslant 0 $, and it is a Markov process.
\item For any $ \mf{c} \in (0, 1) $ there exist
  $ \kappa_{0}, k_{0}, J, t_{\star} >0 $
  such that for $ \mf{G} $ as in \eqref{eqn:def-G-lyap} and all $
\pi_{0} \in S $
\begin{equ}
  \EE_{t} \left[ \mf{G}(\pi_{t + t_{\star}}) \right] \leqslant
  \mf{c} \cdot \mf{G}(\pi_{t})  + J \;.
\end{equ}
\end{enumerate}
\end{theorem}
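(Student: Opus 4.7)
The plan is to treat the two items of Theorem~\ref{thm:lyap-func} in sequence, the first being essentially preliminary and the second being the main content.

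For item (1), the Stratonovich equation \eqref{e:r} is multiplicatively linear in $r$, equivalently \eqref{eqn:differential-fk} describes $\log r_{t}$ as the sum of a drift $\langle \pi_{t}, \mL \pi_{t}\rangle + \tfrac{1}{2}C(\pi_{t}, \Lambda)$ and an Itô martingale whose quadratic variation depends only on $\pi_{t}$ paired with $\Lambda$. The hyperviscous smoothing of order $\a \geqslant 1$, together with the regularity assumption \eqref{e:reg-assu}, ensures that $u_{t}$ is smooth for every $t>0$ almost surely, so the drift is integrable on any interval $[\delta,T]$ with $\delta > 0$. A standard stopping-time truncation of $\log r_{t}$ then rules out $r_{t}\to 0$ in finite time, yielding $u_{t}\neq 0$ for all $t \geqslant 0$. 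Once this is known, $\pi_{t}=u_{t}/\|u_{t}\|$ is a well-defined continuous $S$-valued process, and its Markov property follows from that of $u_{t}$ together with the scaling invariance of \eqref{eqn:main}.

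For item (2), the guiding idea is to decompose $\mf{G}(\pi)= e^{\kappa_{0} M(\pi)} \cdot e^{\|\pi\|^{2}_{M(\pi)+k_{0}}}$ into its two exponential factors and control each separately. First I would analyse the ``skeleton'' functional $\mf{F}$ foreshadowed at the end of Section~\ref{sec:lyapunov}, which tracks the shifted Sobolev norm $\|\pi_{t}\|_{L}^{2}$ at a fixed scale $L$. Because the hyperviscous operator has spectral gap of order $L^{2\a}$ above level $L$, a direct energy estimate on $\Pi_{L}^{\myg}\pi_{t}$, combined with summability of the quadratic noise contribution provided by \eqref{e:reg-assu} (convergent thanks to $\gamma_{0} > d/2 + 1$), produces exponential contraction of $\|\pi_{t}\|_{L}^{2}$ at a rate $\gtrsim L^{2\a}$. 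Setting $L = M(\pi_{t})+k_{0}$ and taking $k_{0}$ large yields as strong a contraction of the high-frequency factor as desired, uniformly in the initial datum.

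The more delicate factor is $e^{\kappa_{0} M(\pi_{t})}$. To control it, I would set up an iterative analysis of the median process using the hitting times $\tau^{\myl}$ and $\sigma^{\mygeq}$ from the glossary and split according to the Concentrated/Diluted dichotomy. In the diluted regime the ratio $\|\Pi^{\mygeq}_{M}u_{t}\|/\|\Pi^{\myl}_{M}u_{t}\|$ decreases deterministically, because the spectral gap between high and low modes makes the numerator contract at a strictly faster rate than the denominator, pushing $M(\pi_{t})$ down by at least one within time $\sim M^{-2\a}$. In the concentrated regime the deterministic argument fails; instead, Definition~\ref{def:high-freq-insta} is invoked to ensure that, for $M \geqslant \K(\mf{t},\ve)$, the system exits the concentrated regime within time $\mf{t}$ with probability at least $1-\ve$. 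Iterating and tracking how much time is spent in each regime, one bounds the dynamics of $M$ from above by a simpler chain with strictly negative drift, which in turn yields a bound of the form $\EE_{t}[e^{\kappa_{0} M(\pi_{t+t_{\star}})}] \leqslant \mf{c}\,e^{\kappa_{0} M(\pi_{t})} + J$ for any prescribed $\mf{c}\in(0,1)$, provided $\kappa_{0}$ is chosen small enough.

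The final assembly combines the two estimates through Cauchy--Schwarz (or Hölder with exponents tuned to $\kappa_{0}$ and $k_{0}$): the median contraction can be made close to one only by taking $\kappa_{0}$ small, while the high-frequency contraction is made arbitrarily strong by taking $k_{0}$ large, and the two parameters are balanced so that their product is below $\mf{c}$. The principal obstacle is the concentrated regime: since $\|\Pi^{\myl}_{M-2}u_{t}\|$ can be arbitrarily small compared to the central-shell mass, no deterministic or second-moment estimate can close the argument, and one must exploit Definition~\ref{def:high-freq-insta} as a genuinely probabilistic input and then propagate the resulting high-probability exit bound into an expectation estimate uniform in the low-frequency energy scale. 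A secondary difficulty, already alluded to after \eqref{eqn:def-G-lyap}, is that a super-Lyapunov-type exponential-moment bound on $M(\pi_{t})$ seems out of reach with the present techniques, which forces one to work at the level of the Lyapunov inequality only and explains the role of the tunable parameters $\kappa_{0}, k_{0}$ in absorbing the losses of the two separate estimates.
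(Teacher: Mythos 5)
Your overall philosophy for item (2) — concentrated/diluted dichotomy, dissipation in the diluted regime, Definition~\ref{def:high-freq-insta} as the probabilistic input in the concentrated regime, a drift-to-low-frequencies comparison for the median, and tunable $\kappa_{0},k_{0}$ — is exactly the paper's. But there are two genuine gaps.

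First, item (1). Your claim that smoothness of $u_{t}$ for $t>0$ makes the drift of $\log r_{t}$ integrable is circular: the drift is $\langle \pi_{s},\mL\pi_{s}\rangle=-\sum_\alpha\nu^{\alpha}\|(-\Delta)^{\a/2}u_{s}^{\alpha}\|^{2}/\|u_{s}\|^{2}$, and smoothness of $u_{s}$ bounds only the numerator. If $\|u_{s}\|\to 0$ as $s\uparrow t^{*}$ the ratio can diverge, and ruling this out is precisely the statement $u_{t}\neq 0$ you are trying to prove. The paper's Lemma~\ref{lem:nonzero} has to deploy the full machinery (the skeleton median stopped at the hitting time of zero, plus the higher-order regularity estimates of Proposition~\ref{prop:higher-regularity}) to bound $\EE[\sup_{s\leqslant t}\|\pi_{s}\|_{H^{\a}}^{2}]$ before the representation of $\log\|u_{t}\|$ can be exploited; a ``standard stopping-time truncation'' does not close on its own.

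Second, and more seriously, for item (2) the plan of contracting the factors $e^{\kappa_{0}M(\pi)}$ and $e^{\|\pi\|^{2}_{M(\pi)+k_{0}}}$ separately and recombining by Cauchy--Schwarz does not account for their coupling, which is the central difficulty the paper is built around. The high-frequency seminorm in $\mf{G}(\pi_{t+t_{\star}})$ is recentered at the \emph{new} median: by Lemma~\ref{lem:est-reg-jump-n}, when $M(\pi_{t+t_{\star}})<M(\pi_{t})$ the quantity $\|\pi_{t+t_{\star}}\|^{2}_{M(\pi_{t+t_{\star}})+k_{0}}$ picks up a contribution proportional to the size of the drop, so the very event that makes the median factor contract inflates the regularity factor; conversely, upward excursions of the median are controlled only through the high-frequency energy (Lemma~\ref{lem:est-mid-jump-n}). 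A factorized Cauchy--Schwarz bound sees neither effect. The paper's resolution is structural: a skeleton median whose downward jumps are capped at $1$ per step, the joint functional $\mf{F}(\kappa,\bpi)$ with the quasi-super-Lyapunov recursion in $\kappa$ of Theorem~\ref{thm:lyap-func-discrete}, and $\kappa_{0}$ chosen large (not small, as you suggest) so that $e^{-\kappa_{0}}$ absorbs the constant from the regularity jump. Relatedly, your assertion that in the diluted regime the ratio ``decreases deterministically'' is false: Proposition~\ref{prop:drft-OU} only gives a negative drift plus a martingale, so there is a positive-probability bad event on which the median moves up, and on that event one needs uniform-over-initial-data exponential moment bounds at the exit time (estimate \eqref{e:rupnew}) to control the otherwise unbounded jump of the functional. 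Without these ingredients the ``comparison with a chain with negative drift'' cannot be turned into an expectation estimate for $e^{\kappa_{0}M}$.
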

Before we move on, let us comment on the assumptions of this theorem.
\begin{remark}\label{rem:assuthm1}
The assumption $ \a \geqslant 1 $ is slightly unnatural: the natural threshold
for our analysis would be $ \a = 1/2 $, since it is at this point that gaps $
\Delta_{L} $ of the (1D) Laplacian are no longer increasing in $ L $. In our
analysis, the restriction $ \a \geqslant 1 $ appears in the proof of
Proposition~\ref{prop:higher-regularity} when treating systems with different
viscosity coefficients $ (\nu^{\alpha})_{\alpha=1}^{m} $ (if all $
\nu^{\alpha} $ were identical, we would be able to treat all $ \a > 1/2 $).

The requirement $ \gamma_{0} > d/2 +1$ appearing in \eqref{e:reg-assu} is
instead a mild and mostly
technical regularity assumption. It is needed for example to obtain the energy estimates in
Proposition~\ref{prop:reg-high-freq}.
\end{remark}
The proof of the Lyapunov property is the content of
Section~\ref{sec:prf-main-result}, the existence and Markov property of the angular component is
the content of Section~\ref{sec:basic}, see Lemma~\ref{lem:nonzero}.
We observe that (see Remark~\ref{rem:def-norm}) for some
constants $ c_{1}, c_{2} > 0 $ the functional $ \mf{G} $ satisfies
\begin{equ}
  \mf{G}(\pi) \geqslant c_{1} \exp \big( c_{2} \| \pi \|^{2}_{H^{1/2}} \big) \;,
\end{equ}
which suffices to see that the next result implies tightness of the process $
\pi_{t} $ in $ S $. 

Of course, the notion of high-frequency stochastic instability is not very
practical, so it is desirable to have some easy-to-check conditions implying it. As it
turns out, there is a bonanza of fairly mild non-degeneracy conditions that can be imposed on the noise
coefficients $ \Gamma $ to enforce this stochastic instability.
One possible condition which is very weak yet easy to state is the following.

\begin{assumption}\label{assu:noise}
Assume that the noise coefficients $
\Gamma^{\alpha, \alpha^{\prime}}_{\beta, \beta^{\prime}, k} $ satisfy
\eqref{e:reg-assu} and are diagonal:
\begin{equ}
\Gamma^{\alpha, \alpha^{\prime}}_{\beta, \beta^{\prime}, k} =
\Gamma^{\alpha}_{\beta, k} \delta_{\alpha, \alpha^{\prime}} \delta_{\beta,
\beta^{\prime}} \;,
\end{equ}
for some $ \{ \Gamma^{\alpha}_{\beta, k} \}_{\alpha, \beta, k} \subseteq [0,
\infty) $ (with a slight abuse of the letter $ \Gamma $).
Further, assume that
for every $ \beta \in \{ 1, \dots, m \} $ there exists an $ \alpha (\beta) \in \{ 1, \dots,
m \} $ such that the viscosity coefficients satisfy $ \nu^{\beta} \geqslant
\nu^{\alpha} $ and the noise coefficients satisfy that there exists a finite set
$ \mA $ and a $ \K_{0} \in \NN $ such that
$$ \mA \subseteq \supp (\Gamma^{\alpha}_{\beta}) = \{ k  \; \colon \;
\Gamma^{\alpha}_{\beta, k} > 0 \} \;,$$
and for which
\begin{equation}\label{e:lsnd}
\begin{aligned}
\one_{\mA} * \one_{B(M)} \geqslant \one_{B( M+b )} \;, \qquad \forall M \geqslant
\K_{0} \;,
\end{aligned}
\end{equation}
where  $ B(M) = \{ k \in \ZZ^{d}  \; \colon \; | k
| \leqslant M \} $ and $b=3 \nu_{\mathrm{min}}^{- 1/ 2
\a  }$.
\end{assumption}
Here, in the large scale non-degeneracy assumption we have written $
\one_{A} $ for the characteristic function of a subset $ A \subseteq
\ZZ^{d} $ and we used the discrete convolution $ (f * g) (k) =
\sum_{l \in \ZZ^{d}} f (k-l) g(l) $. Before we proceed, let us conclude
with some remarks on the setting above.

\begin{remark}\label{rem:a-grt-1}
We observe that there are many possible choices for the coefficients $
\Gamma^{\alpha}_{\beta,k} $ for which \eqref{e:lsnd} is satisfied. 
\begin{enumerate}
\item For example, by Lemma~\ref{lem:geom}, if $ \nu^{\alpha} $ and $ \Gamma^{\alpha}_{\beta, k} $
are such that for any $ \beta \in \{ 1, \dots, m \} $ there exists an $
\alpha(\beta) $ with $ \nu^{\beta} \geqslant \nu^{\alpha} $ and  
\begin{equation*}
\begin{aligned}
 \Gamma^{\alpha}_{\beta, k}  > 0
\;, \qquad \forall k \in \ZZ^{d}  \; \colon \;  | k | \leqslant  
\eta(d , \nu) \;,
\end{aligned}
\end{equation*}
where $ \eta(d, \nu) $ is an arbitrary constant such that $ \eta(d , \nu) >  3
\nu_{\mathrm{min}}^{- 1/2 \a} \sqrt{d}  $ is satisfied, then
\eqref{e:lsnd} is satisfied.
\item The condition $ \one_{\mA} * \one_{B(M)} \geqslant \one_{B (M+b)}  $ could be
relaxed to $ \one_{\mA} * \one_{B(M)} \geqslant \one_{B(M+ \ve)} $, for arbitrary $
\ve > 0 $ by considering shells of smaller width throughout the work: we work
with shells of width one only to lighten the burden of notation. For the very same
reason, we also expect that the factor $ 3
\nu_{\mathrm{min}}^{- 1/2 \a} \sqrt{d} $ appearing in the previous point can be
replaced by $ 1 $ in any dimension.
\end{enumerate}
\end{remark}
As already anticipated, this is sufficient to deduce high-frequency stochastic
instability for the noise $ W $. 
\begin{proposition}\label{prop:hf-insta}
Under Assumption~\ref{assu:noise} the noise $ W $ induces high-frequency
stochastic instability, in the sense of Definition~\ref{def:high-freq-insta}.
\end{proposition}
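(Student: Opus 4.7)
The plan is to reduce the probabilistic claim to a deterministic controllability problem via a Stroock--Varadhan support theorem, construct the required control using \eqref{e:lsnd} in a single hop, and then boost the positive probability from a single control-application to $1-\varepsilon$ by a Markov iteration in time.

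Since $\gamma_0 > d/2+1$, the Stratonovich SPDE \eqref{eqn:main-strat} is well posed in $L^2$ and its law on $C([0,\mathfrak{t}_0]; L^2)$ is supported on the $L^\infty$-closure of trajectories of the controlled equation
\begin{equ}
\partial_t u = \mL u - \tfrac{1}{2} u \cdot \Tr(\Lambda) + u \cdot \phi,
\end{equ}
for controls $\phi$ with $\hat\phi^\alpha_\beta(j,\cdot)=0$ unless $j\in\supp(\Gamma^\alpha_\beta)$. Because the dilution event of Definition~\ref{def:high-freq-insta} is open in the uniform topology, it suffices to exhibit, for each $u_0$ satisfying \eqref{e:idata} and some fixed $\mathfrak{t}_0 \leqslant \mathfrak{t}$, a control $\phi$ and a time $s\in(0,\mathfrak{t}_0)$ such that $\|(\Pi^\myc_{M-1}+\Pi^\myc_{M-2}) u^\phi_s\| < \tfrac{1}{8}\|\Pi^\myl_{M-2} u^\phi_s\|$, with the margin $1/8 < 1/4$ absorbing the small $L^2$-perturbation between $u$ and $u^\phi$.

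To construct this control, normalise $\|u_0\|=1$: combining both parts of \eqref{e:idata} yields $\|\Pi^\myl_M u_0\|^2 \geqslant 1/5$ and $\|\Pi^\myl_{M-2} u_0\|^2 \leqslant 16\|(\Pi^\myc_{M-1}+\Pi^\myc_{M-2}) u_0\|^2$, hence the uniform lower bound $\|(\Pi^\myc_{M-1}+\Pi^\myc_{M-2}) u_0\|^2 \geqslant 1/85$. For each $\beta$ pick $\alpha(\beta)$ as in Assumption~\ref{assu:noise}. The arithmetic inequality $b = 3\nu_{\min}^{-1/(2\a)} > 2\nu_{\alpha(\beta)}^{-1/(2\a)} = M_{\alpha(\beta)}-(M-2)_{\alpha(\beta)}$ combined with \eqref{e:lsnd} applied at level $(M-2)_{\alpha(\beta)}$ (valid once $M$ is large enough) yields $B(M_{\alpha(\beta)}) \subseteq B((M-2)_{\alpha(\beta)}) + \mA$, so every shell mode $l$ of $\hat u^\beta$ decomposes as $l = k'+j$ with $|k'| \leqslant (M-2)_{\alpha(\beta)}$ and $j \in \mA$. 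Such a pair defines a two-mode oscillator driven by the control coefficient $\hat\phi^{\alpha(\beta)}_\beta(j)$; choosing $|\hat\phi^{\alpha(\beta)}_\beta(j)| \gg M^{2\a}$ drives it through nearly a quarter period in time $\delta \ll M^{-2\a}$, transferring essentially all of $\hat u^\beta(l)$ into $\hat u^{\alpha(\beta)}(k')$ before dissipation plays a role. Scheduling these transfers sequentially over the finitely many shell modes, and relying on $\nu^{\alpha(\beta)} \leqslant \nu^\beta$ so that the created low-frequency mass does not dissipate faster than its source, produces at time $s = O(\delta)$ the separation $\|(\Pi^\myc_{M-1}+\Pi^\myc_{M-2}) u^\phi_s\|^2 \leqslant \eta^2 \|(\Pi^\myc_{M-1}+\Pi^\myc_{M-2}) u_0\|^2$ and $\|\Pi^\myl_{M-2} u^\phi_s\|^2 \geqslant (1-\eta^2)\|(\Pi^\myc_{M-1}+\Pi^\myc_{M-2}) u_0\|^2$ for any prescribed $\eta>0$, whence the $1/8$-dilution for $\eta$ small enough.

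By Girsanov applied to the Cameron--Martin shift associated with $\phi$, the event $\{\sup_{t\in[0,\mathfrak{t}_0]}\|u_t - u^\phi_t\| < \rho\}$ has strictly positive probability $p = p(\mathfrak{t}_0,\eta,\rho)>0$, uniform in admissible $u_0$ and $M\geqslant\K$; picking $\rho$ smaller than the $1/8$-margin yields a uniform positive probability of dilution in $[0,\mathfrak{t}_0]$. Iterating via the Markov property along disjoint subintervals of length $\mathfrak{t}_0$ inside $[0,\mathfrak{t}]$ then gives $\PP(\text{dilution in }[0,\mathfrak{t}])\geqslant 1-(1-p)^{\lfloor\mathfrak{t}/\mathfrak{t}_0\rfloor}\geqslant 1-\varepsilon$ after enlarging $\K$ if necessary. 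The main obstacle is the bilinearity of the controlled equation: orchestrating the sequential pairwise transfers without destructive cross-interference is delicate, and the Markov iteration requires showing that, conditional on failure up to time $k\mathfrak{t}_0$, the state at $k\mathfrak{t}_0$ still satisfies a variant of \eqref{e:idata} suitable for re-applying the control argument. The crucial simplification making the scheme tractable is that a \emph{single} hop per shell mode suffices, thanks to the inequality $b > 2\nu_{\alpha(\beta)}^{-1/(2\a)}$ in Assumption~\ref{assu:noise}; a multi-hop iteration would force the control $L^2$-norm to scale with $M$ and would destroy the uniform Girsanov lower bound on $p$.
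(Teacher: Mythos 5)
Your reduction to a control problem has a gap that I believe is fatal, and it sits exactly where you assert uniformity of the Girsanov bound. The definition of high-frequency instability must hold for initial data with $\|\Pi^{\myl}_{M-2}u_0\|$ arbitrarily small (possibly zero), so dilution genuinely requires transferring an $O(1)$ fraction of the shell energy down to level $M-3$ against dissipation acting at rate $\zeta_M\sim M^{2\a}$. With a control of size $\phi$, the relative amplitude transferred before the dissipation mismatch saturates is of order $\phi/\Delta_{M-3}$, so any control achieving an $O(1)$ relative transfer must have $|\hat\phi|\gtrsim \Delta_{M-3}\sim M^{2\a-1}$ (your own choice $|\hat\phi|\gg M^{2\a}$ over a window $\delta\ll M^{-2\a}$ is even larger). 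The Cameron--Martin norm of such a control is $\gtrsim \phi^2\delta \gtrsim M^{2\a-2}\to\infty$, so the Girsanov lower bound on $p$ degenerates (at least) exponentially in a positive power of $M$; it is \emph{not} uniform in $M\geqslant\K$. The Markov iteration cannot rescue this: within the fixed window $[0,\mf t]$ you have at most $\mf t/\mf t_0$ attempts, and $(1-p)^{\mf t/\mf t_0}\to 1$ as $M\to\infty$ when $p$ decays exponentially in $M$ while $\mf t_0^{-1}$ grows only polynomially. This is not a technical nuisance but the central difficulty of the proposition, which is why the paper does not argue via controllability at all.

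The paper's mechanism is genuinely stochastic rather than control-theoretic. Writing the low-mode energy as $e^{-2\zeta_{M-2}t}(X_t+Y_t)$, the It\^o (quadratic-variation) drift transfers only $O(1/\Delta_{M-3})$ relative energy over the natural time scale $t_\star^{M,(1)}=1/(2\Delta_{M-3})$, but the dissipation mismatch over the longer window $t_\star^{M,(2)}=\log(\lambda\Delta_{M-3})/(2\Delta_{M-3})$ amplifies this to $X\geqslant c\lambda$. The martingale part $Y$ has fluctuations of the much larger order $\sqrt{\Delta_{M-3}}$, so one cannot simply discard it; instead, a mean-zero plus moment-bound (Paley--Zygmund type) argument yields a per-attempt success probability of at least $\Delta_{M-3}^{-3/4}$. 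This vanishing probability is then boosted to $1-\ve$ by iterating over $\sim\Delta_{M-3}^{r}$ blocks, $r\in(3/4,1)$, each of duration $\sim\log(\Delta_{M-3})/\Delta_{M-3}$, so that the total time still tends to zero and fits inside $[0,\mf t]$ for $M\geqslant\K(\mf t,\ve)$. A polynomially small per-attempt probability compensated by a polynomially growing number of polynomially short attempts closes; an exponentially small one does not. Your proposal would need to be restructured around this kind of fluctuation estimate rather than a deterministic control plus Girsanov.
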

The proof of this proposition can be found in Section~\ref{sec:insta}.
One can observe that at least heuristically, in view of \eqref{eqn:fk} the
functional \( \mf{G} \) alone is not sufficient to obtain the bound $ \lambda (u_{0}) > - \infty
$, as this result requires an a priori estimate on $ \EE[ \| \pi_{t}
\|_{H^{\a }}^{2}] $ (as opposed to the $ H^{1/2} $ bound provided by $
  \mf{G} $), which we obtain separately by a bootstrap argument.

\begin{corollary}\label{cor:finite-lyap}
  Under the same assumptions as in Theorem~\ref{thm:lyap-func}, there exist $
s_{\star}$ and $C > 0 $ such that uniformly over all $ \pi_{0} \in S $ 
\begin{equ}
  \sup_{t \geqslant s_{\star}}\EE \left[ \| \pi_{t}
\|^{2}_{H^{\a }}  \right] <
  C \mf{G}(\pi_{0})\;,
\end{equ}
Hence,  in particular $\inf_{u_{0} \in L^{2}_{\star}} \lambda (u_{0}) > - \infty$.
\end{corollary}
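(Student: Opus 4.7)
The plan is to combine the Lyapunov bound of Theorem~\ref{thm:lyap-func} with a bootstrap from $H^{1/2}$ to $H^{\a}$ regularity, and then feed the result into the It\^o form of $\log\|u_{t}\|$ derived in Section~\ref{sec:fk-derivation}. The first step is to iterate Theorem~\ref{thm:lyap-func} over discrete time steps of length $t_{\star}$: by the Markov property and $\mf{c}<1$, one obtains $\EE[\mf{G}(\pi_{n t_{\star}})]\leqslant \mf{c}^{n}\mf{G}(\pi_{0})+J/(1-\mf{c})$, so in particular $\sup_{t\geqslant t_{\star}}\EE[\mf{G}(\pi_{t})]\lesssim \mf{G}(\pi_{0})+1$. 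Since $\mf{G}\geqslant c_{1}\exp(c_{2}\|\cdot\|^{2}_{H^{1/2}})$ by Remark~\ref{rem:def-norm}, this yields a uniform-in-$t$ bound on $\EE[\|\pi_{t}\|^{2}_{H^{1/2}}]$ by a constant multiple of $\mf{G}(\pi_{0})+1$.

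The second step is a parabolic smoothing argument upgrading $H^{1/2}$ regularity to $H^{\a}$. Applying It\^o's formula to $\|\pi_{t}\|_{H^{\a}}^{2}$ via the projective equation~\eqref{e:pi}, the dissipative operator $\mL$ contributes a negative drift of order $-\sum_{\alpha}\nu^{\alpha}\|\pi_{t}^{\alpha}\|_{H^{2\a}}^{2}$, while the It\^o correction and the cubic term $Q_{\mL}(\pi,\pi)\pi$ are controlled using the noise regularity~\eqref{e:reg-assu} together with the identity $\|\pi\|=1$. Interpolating $\|\pi\|_{H^{\a}}^{2}\lesssim \|\pi\|_{H^{1/2}}^{2\theta}\|\pi\|_{H^{2\a}}^{2(1-\theta)}$ for an appropriate $\theta\in(0,1)$ and invoking Young's inequality produces a closed differential inequality which, combined with Step~1, yields $\sup_{t\geqslant s_{\star}}\EE[\|\pi_{t}\|_{H^{\a}}^{2}]\leqslant C(\mf{G}(\pi_{0})+1)$.

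For the lower bound on $\lambda(u_{0})$, \eqref{eqn:differential-fk} gives
\begin{equation*}
\tfrac{1}{T}\EE[\log\|u_{T}\|]=\tfrac{\log\|u_{0}\|}{T}+\tfrac{1}{T}\int_{0}^{T}\EE\bigl[\langle\pi_{s},\mL\pi_{s}\rangle+\tfrac{1}{2}C(\pi_{s},\Lambda)\bigr]\,\ud s.
\end{equation*}
The integrand is bounded below by $-\nu_{\mathrm{max}}\|\pi_{s}\|_{H^{\a}}^{2}-C$ (with $C$ depending only on $\Lambda$), and by the multiplicative ergodic theorem the left-hand side converges to $\lambda(u_{0})$ as $T\to\infty$, so the $H^{\a}$ bound from Step~2 yields $\lambda(u_{0})\geqslant -C(\mf{G}(\pi_{0})+1)$. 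To upgrade this to a bound uniform over $u_{0}\in L^{2}_{\star}$, I would exploit the shift-invariance $\lambda(u_{0})=\lambda(u_{t_{1}}(\omega))$ a.s.: parabolic smoothing of the linear SPDE~\eqref{eqn:main} ensures $\pi_{t_{1}}\in H^{1/2}$ almost surely for any $t_{1}>0$, after which running the Lyapunov iteration starting from $t_{1}$ makes $\EE[\mf{G}(\pi_{t})]$ eventually bounded by $J/(1-\mf{c})+1$ independently of $u_{0}$, giving the required uniformity after a second application of Step~2.

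The main obstacle I anticipate is Step~2: the multiplicative noise $u\cdot\ud W$ does not obviously commute with high-order derivatives, and the cubic correction $-Q_{\mL}(\pi,\pi)\pi$ appearing in~\eqref{e:pi} couples low and high frequencies of $\pi$ in a nontrivial way. Closing the interpolation therefore requires careful bookkeeping under assumption~\eqref{e:reg-assu}; once this is done, the remaining ingredients are essentially consequences of the It\^o formula and the multiplicative ergodic theorem.
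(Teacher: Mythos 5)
Your overall architecture (Lyapunov iteration for $\mf{G}$, a regularity upgrade to $H^{\a}$, then the It\^o form of $\log r_t$ with Fatou/LIL to kill the martingale and get a bound that becomes uniform in $u_0$ after iterating the contraction) matches the paper's, and Steps 1 and 3 are essentially sound. The problem is Step 2, and it is not a bookkeeping issue but a genuine gap: the direct energy estimate for $\|\pi_t\|_{H^{\a}}^2$ does not close under assumption \eqref{e:reg-assu}. The It\^o correction to $\ud\|\pi_t\|_{H^{\a}}^2$ contains (schematically, in the diagonal case) the term
\begin{equ}
\sum_{k}(1+|k|)^{2\a}\sum_{l}\Gamma_{l}\,|\hat{\pi}^{\,k-l}_{t}|^{2}
=\sum_{j}|\hat{\pi}^{\,j}_{t}|^{2}\sum_{l}\Gamma_{l}\,(1+|j+l|)^{2\a}\;,
\end{equ}
and the inner sum converges only if $\sum_{l}\Gamma_{l}(1+|l|)^{2\a}<\infty$, i.e.\ only if $\gamma_{0}>\a+d/2$. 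Assumption \eqref{e:reg-assu} guarantees merely $\gamma_{0}>d/2+1$, so for $\a\geqslant\gamma_{0}-d/2$ (which is permitted for any $\a>1$) this term is a divergent series multiplying $|\hat\pi^{\,j}|^2$ for \emph{every} $j$; no amount of dissipation $-\|\pi\|_{H^{2\a}}^{2}$ or interpolation can absorb it, because it is not a finite quantity to be absorbed. The underlying reason is that $\pi\cdot\dot W$ only has spatial regularity $H^{\gamma_{0}-d/2-\ve}$, so $\|\pi_t\|_{H^{\a}}^{2}$ is not a semimartingale with a naive It\^o decomposition once $\a$ exceeds $\gamma_{0}-d/2$: the extra $\a$ derivatives of regularity of the solution are produced by convolution with $e^{(t-s)\mL}$, not by an $H^{\a}$-level energy balance.

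This is precisely why the paper routes the regularity upgrade through Proposition~\ref{prop:higher-regularity} and Lemma~\ref{lem:high-reg-disc}: one writes $w$ (the high-frequency part relative to the skeleton median) in mild form, estimates the quadratic variation of the stochastic convolution only at a low regularity level $\overline{\gamma}\in(0,1/2)$ — where the analogue of the sum above, $\sum_l\Gamma_l(1+|l|)^{2\overline{\gamma}}$, does converge since $\overline{\gamma}<\gamma_0-d/2$ — and then gains the missing $\beta<2\alpha\a$ derivatives from the semigroup smoothing via the factorisation method (see \eqref{eqn:assu-parameters-hr} and the treatment of $F_t$ and $G_s$ there). If you want to salvage your plan, replace the It\^o/interpolation step by this mild-formulation bootstrap; alternatively note that your argument as written is valid only in the restricted regime $\a<\gamma_{0}-d/2$. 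A secondary, fixable point: passing from the a.s.\ limit $\lambda(u_0)=\lim_t t^{-1}\log\|u_t\|$ to a statement about $\lim_t t^{-1}\EE[\log\|u_t\|]$ requires a justification (the paper avoids it by using that the limit is deterministic and applying Fatou to the negative integrand $\langle\pi_s,\mL\pi_s\rangle$).
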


\begin{proof}
  The uniform estimate on the $ H^{\a } $ norm follows from
  Proposition~\ref{prop:higher-regularity}. As for the last claim, let us prove
  a uniform lower bound over $ u_{0} \in H^{1/2} \setminus \{ 0 \}$, which is sufficient since for every
  $ u_{0} \in L^{2}_{\star} $ we have that \( u_{t} \in H^{1/2} \) for any $ t > 0
  $. By \eqref{eqn:differential-fk} we find that $\PP$-almost surely
\begin{equs}
  \lambda (u_{0}) = \lim_{t \to \infty} \frac{1}{t} \bigg(
      & \int_{s_{\star}}^{t}  \langle \pi_{s}, \mL \pi_{s} \rangle  +
\frac{1}{2} C(\pi_{s}, \Lambda)  \ud s 
+ \int_{s_{\star}}^{t} \langle \pi_{s}, \pi_{s} \cdot \ud W_{s} \rangle  \bigg) \;.
\end{equs}
We observe that the quadratic variation of the last term is given by
\begin{equation*}
\begin{aligned}
\int_{s_{\star}}^{t} \langle \pi^{\otimes 2}_{s}, \Lambda
\pi^{\otimes 2}_{s} \rangle  \ud s \;,
\end{aligned}
\end{equation*}
as defined in \eqref{e:quartic}. Further we can estimate, using the assumption
of Theorem~\ref{thm:lyap-func}
\begin{equation*}
\begin{aligned}
|\Lambda^{\alpha, \alpha^{\prime}}_{\beta, \beta^{\prime}} (x) | & =
\left\vert \sum_{k} \Gamma^{\alpha, \alpha^{\prime}}_{\beta, \beta^{\prime} ,
k} e_{k}(x-y) \right\vert 
 \leqslant  \sum_{k} | \Gamma^{\alpha, \alpha^{\prime}}_{\beta, \beta^{\prime} ,
k} | \lesssim \sum_{k} (1 + | k |)^{- d -2} < \infty \;,
\end{aligned}
\end{equation*}
meaning that $ \| \Lambda \|_{\infty} \lesssim 1 $. Therefore, the
quadratic variation is bounded by
\begin{equation*}
\begin{aligned}
\int_{s_{\star}}^{t} \langle \pi^{\otimes 2}_{s}, \Lambda
\pi^{\otimes 2}_{s} \rangle  \ud s \leqslant \| \Lambda
\|_{\infty} t \;,
\end{aligned}
\end{equation*}
since $ \pi_{s} \in S $, so that
from the law of the iterated logarithm we find that $\PP$-almost surely
  \begin{equ}
    \lim_{t \to \infty} \frac{1}{t} \int_{s_{\star}}^{t} \langle
\pi_{s}, \pi_{s} \cdot \ud W_{s} \rangle  =0 \;.
  \end{equ}
  Hence the Lyapunov exponent is bounded by
  \begin{equs}
    \lambda (u_{0}) & = \lim_{t \to \infty} \frac{1}{t} 
       \int_{s_{\star}}^{t} \langle \pi_{s}, \mL \pi_{s} \rangle   + \frac{1}{2} C(\pi_{s}, \Gamma)  \ud s  \\
    & \geqslant \EE \bigg( \limsup_{t \to \infty} \frac{1}{t}
\int_{s_{\star}}^{t} \langle \pi_{s}, \mL \pi_{s} \rangle \ud s \bigg) - \|
{\Tr^{\mathrm{u}}(\Lambda)} \| - \| \Lambda \|_{\infty} \;,
    \label{eqn:lwr-bd-lyap}
  \end{equs}
  since the left-hand side is deterministic \dash take the expected value of
the first line and replace the limit by a limsup.

  Since $ \langle \pi_{s}, \mL \pi_{s} \rangle $ is a negative random variable, Fatou's lemma yields
  \begin{equs}
  \EE \left[ \limsup_{t\to \infty} \frac{1}{t} \int_{s_{\star}}^{t} \langle \pi_{s}, \mL \pi_{s} \rangle\ud s \right]  & = - \EE \left[ \liminf_{t\to \infty} \frac{1}{t} \int_{s_{\star}}^{t}  -\langle \pi_{s}, \mL \pi_{s} \rangle\ud s \right]  \\
    & \geqslant -  \liminf_{t\to \infty} \frac{1}{t} \int_{s_{\star}}^{t}  - \EE [Q_{\mL}(\pi_{s},
    \pi_{s}) ] \ud s  \\
    & \geqslant -  \left( \max_{\alpha} \nu^{\alpha} \right) \liminf_{t\to \infty} \frac{1}{t}
\int_{s_{\star}}^{t}  \EE \left[ \| \pi_{s} \|^{2}_{H^{\a }} \right] \ud s\;.
  \end{equs}
  The first statement of our result furthermore guarantees that for every $ n \in
\NN^{+} $ we have the upper bound
  \begin{equs}
     \liminf_{t \to \infty} \frac{1}{t} \int_{n s_{\star}}^{t} \EE \|
    \pi_{s} \|^{2}_{H^{\a }} \ud s & \leqslant C \EE\mf{G}(\pi_{(n-1) s_{\star}})
     \leqslant C \left( \mf{c}^{n-1} \mf{G}(\pi_{0}) + J (\sum_{i =
    0}^{n-2} \mf{c}^{i})  \right) \;,
  \end{equs}
  where we used the uniform bound on $ \EE \| \pi_{s}
  \|_{H^{\a }}^{2} $ and the contraction property for
  $ \mf{G} $ from Theorem~\ref{thm:lyap-func}. Passing to the limit $ n \to \infty $ we deduce that
  \begin{equ}
    \liminf_{t \to \infty} \frac{1}{t} \int_{n s_{\star}}^{t} \EE \|
    \pi_{s} \|^{2}_{H^{\a }} \ud s  \leqslant C J \frac{1}{1 - \mf{c}} \;,
  \end{equ}
  yielding the desired uniform bound.
\end{proof}

\subsubsection{Uniqueness of invariant measures}
We can next exploit our control on $ \pi_{t} $ to derive 
unique ergodicity for the projective process $ [\pi_{t}] $: here we require some strong
non-degeneracy assumptions on the noise, and a condition on the hyperviscosity
parameter and the dimension.

\begin{assumption}\label{assu:non-deg-uniq}
  The parameter $ \a $ in \eqref{eqn:main} satisfies
  $\a  \in [1,\infty)\cap (d/2,\infty)$ and the noise coefficients $
\Gamma^{\alpha, \alpha^{\prime}}_{\beta, \beta^{\prime}, k} $ satisfy
\eqref{e:reg-assu} and are diagonal:
\begin{equ}
\Gamma^{\alpha, \alpha^{\prime}}_{\beta, \beta^{\prime}, k} =
\Gamma^{\alpha}_{\beta, k} \one_{\{ \alpha = \alpha^{\prime} \}} \one_{\{ \beta =
\beta^{\prime}\}} \;,
\end{equ}
for some $ \{ \Gamma^{\alpha}_{\beta, k} \}_{\alpha, \beta, k} \subseteq [0,
\infty) $. Furthermore, there exist
 constants $ 0 < c < C < \infty $ and $ \gamma_{0} > d/2+1 $ such that for all $
\alpha, \beta  \in \{ 1, \dots, m \} $: 
\begin{equ}[e:boundalphak]
    c (1 + | k |)^{- 2 \gamma_{0}} \leqslant  \Gamma^{\alpha}_{\beta, k}  \leqslant C (1 + | k
    |)^{ - 2 \gamma_{0}}\;, \qquad \forall k \in \ZZ^{d} \;.
  \end{equ}
\end{assumption}

\begin{remark}\label{rem:on-assumption}
The condition above, and in particular the requirement $ \a  >d/2 $, 
 is very restrictive and we believe it is far
from optimal.
It is
      used to guarantee that the Jacobian of the solution $ u_{t} $ takes
values in the image of the noise operator, which
allows to establish the strong Feller property via
      the Bismut--Elworthy--Li formula and a localisation argument. This could
      be avoided by using different approaches, such as asymptotic strong
      Feller \cite{HairerMattingly06} or asymptotic couplings
\cite{AsCoupling}, but extending our approach to make use of such techniques
lies beyond the scope of this paper. 
Similarly, the matching lower and upper
bounds in \eqref{e:boundalphak} are imposed to us by our strategy of proof.
\end{remark}

Under Assumption~\ref{assu:non-deg-uniq}, which implies
Assumption~\ref{assu:noise}, we are able to derive a spectral gap
for $ [\pi_{t}] $ as stated in Theorem~\ref{thm:uniq} below. This implies uniqueness of the Lyapunov
exponent over all initial conditions, along with a Furstenberg--Khasminskii
type formula \cite{Furstenberg,MR600653} for it and continuous
dependence on the parameters of the model: in our setting, we chose for simplicity
only the parameter $ \a  $ appearing as a power in the Laplacian,
although one could as well choose the noise strength coefficients $ \Gamma $.

In the statement of the following theorem we denote by $ \mP_{t} ([\pi_{0}], \cdot) $
the law of $ [\pi_{t}] $ started in $ [\pi_{0}] $ (the functional $
\mf{G}(\cdot) $ does not depend on the choice of representative for $ [\pi_{0}] $).
Moreover, for a measurable space $ (\mX, \mF) $, we
denote with $ \| \cdot \|_{\mathrm{TV}, \mX} $ the total variation norm of a
signed measure \(\mu\) over $ \mX $ (scaled by a factor $ 1/2 $ for later
convenience) by
\begin{equ}[e:tv]
 \| \mu \|_{\mathrm{TV}, \mX} =  \frac{1}{2} | \mu|(\mX)  \;.
\end{equ}

\begin{theorem}\label{thm:uniq}
  Under Assumption~\ref{assu:non-deg-uniq}
  there exists a unique invariant measure $ \mu_{\infty} $ for $ ([\pi_{t}])_{t \geqslant 0} $ on $
  \mathbf{P} $. In addition, there
  exist $ C , \gamma > 0 $ such that for $ \mf{G} $ as in
  Theorem~\ref{thm:lyap-func} and any $ \pi_{0} \in S \cap H^{1/2} $:
  \begin{equ}
    \| \mP_{t} ( [\pi_{0}] , \cdot) - \mu_{\infty} \|_{\mathrm{TV},
    \mathbf{P}} \leqslant C
    e^{- \gamma t} \mf{G}([\pi_{0}]) \;.
  \end{equ}
  In addition
  \begin{enumerate}
    \item For any initial condition $u_{0} \in L^{2}_{\star}(\TT^{d};
\RR^{m} )$,
      $ \PP$-almost surely
      \begin{equ}
        \lim_{t \to \infty} \frac{1}{t} \log{\|u_{t}\|} = \lambda\;,
      \end{equ}
      with $ \lambda \in \RR $ given by \eqref{eqn:fk}. 
    \item If we consider $ \a  \mapsto
  \lambda(\a ) $ as a function of the parameter $ \a  \geqslant 1 $ appearing in
  \eqref{eqn:main}, then the map $\lambda $ is
  continuous on the interval $ (d/2, \infty) \cap [1, \infty)$.
  \end{enumerate}
\end{theorem}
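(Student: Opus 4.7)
The plan is to combine the Lyapunov estimate for $\mf{G}$ already obtained in Theorem~\ref{thm:lyap-func} with a minorisation condition for the Markov kernel of $[\pi_{t}]$ on sublevel sets of $\mf{G}$, and then invoke Harris' theorem in its weighted total variation form. Since $\mf{G}(\pi) \gtrsim \exp(c\| \pi \|_{H^{1/2}}^{2})$, the sublevel sets $\{ \mf{G} \leqslant R \}$ are pre-compact in $S$ by Rellich's theorem, and since $\mf{G}$ is invariant under the antipodal map it descends to $\mathbf{P}$. It is therefore enough to exhibit a single point $[\pi_{\star}] \in \mathbf{P}$ and some $T > 0$ such that (i) $\mP_{T}$ is strong Feller at $[\pi_{\star}]$ and (ii) $[\pi_{\star}]$ lies in the topological support of $\mP_{T}([\pi_{0}], \cdot)$ uniformly for $[\pi_{0}] \in \{ \mf{G} \leqslant R \}$. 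I would take $[\pi_{\star}]$ to be the equivalence class of the normalised constant eigenfunction of $(-\Delta)^{\a}$, around which the linear equation \eqref{eqn:main} is essentially additive.

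The strong Feller step is the main obstacle and the reason for the restriction $\a > d/2$. One would linearise \eqref{eqn:main} in the initial condition: the Jacobian $J_{0,t}\xi$ solves a linear SPDE whose coefficients depend multiplicatively on $u_{t}$, and moments of its operator norm are controlled on a stopping time interval on which $u$ stays near $e_{0}$ via Corollary~\ref{cor:finite-lyap}. Under Assumption~\ref{assu:non-deg-uniq} the lower bound in \eqref{e:boundalphak} ensures that the noise covariance has full range with quantitative symbol $(1+|k|)^{-2\gamma_{0}}$, and the Sobolev embedding $H^{\a} \hookrightarrow L^{\infty}$ (which is where $\a > d/2$ enters) guarantees that the multiplicative structure $u \cdot \ud W$ does not destroy this range near $e_{0}$. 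A localised Bismut--Elworthy--Li formula on a neighbourhood of $e_{0}$ then yields the gradient bound $\| \nabla \mP_{T} \varphi \|_{\infty} \lesssim \| \varphi \|_{\infty}$ at $[\pi_{\star}]$; this transports to the quotient since the projection $S \to \mathbf{P}$ is a local diffeomorphism away from the fixed locus of the antipodal map, to which $[e_{0}]$ does not belong.

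For the support condition, I would use a two-step control argument. The Lyapunov estimate of Theorem~\ref{thm:lyap-func} implies that on a time window of length depending on $R$, with positive probability $\pi_{t}$ falls into a region where $M(\pi_{t})$ equals its minimum and $\| \pi_{t} \|_{M(\pi_{t}) + k_{0}}$ is of order one; there $\pi_{t}$ is essentially concentrated on a bounded finite-dimensional set. Because every Fourier mode carries strictly positive noise by \eqref{e:boundalphak}, a standard Stroock--Varadhan support argument applied to the low-frequency reduction (with high modes frozen) then pushes $[\pi_{t}]$ into any prescribed neighbourhood of $[e_{0}]$ with positive probability. Combining the two steps produces the required minorisation on $\{ \mf{G} \leqslant R \}$, and Harris' theorem delivers the spectral gap claimed in the theorem.

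The two downstream assertions follow in a standard way. For the Furstenberg--Khasminskii formula, one uses \eqref{eqn:differential-fk} to write $\log \| u_{t} \|$ as a time integral of the drift $\langle \pi_{s}, \mL \pi_{s} \rangle + \tfrac{1}{2} C(\pi_{s}, \Lambda)$ plus a martingale whose quadratic variation is bounded by $\| \Lambda \|_{\infty} t$ as in the proof of Corollary~\ref{cor:finite-lyap}; Birkhoff's ergodic theorem for $[\pi_{t}]$ under $\mu_{\infty}$, combined with integrability of the drift (ensured by $\EE_{\mu_{\infty}} \| \pi \|_{H^{\a}}^{2} < \infty$ via the spectral gap applied to the bound of Corollary~\ref{cor:finite-lyap}), identifies the limit with \eqref{eqn:fk}. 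For continuity of $\a \mapsto \lambda(\a)$, the constants in the Lyapunov estimate of Theorem~\ref{thm:lyap-func} can be chosen uniformly on compact subsets of $(d/2, \infty) \cap [1, \infty)$, so the family $\{ \mu_{\infty}^{(\a)} \}$ is tight in $H^{1/2}$; any weak subsequential limit as $\a_{n} \to \a_{\infty}$ is invariant for the limiting dynamics and therefore equals $\mu_{\infty}^{(\a_{\infty})}$ by uniqueness, and the uniform $H^{\a}$ moment bound justifies passing to the limit under the integrand in \eqref{eqn:fk}.
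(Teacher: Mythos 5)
Your proposal follows essentially the same route as the paper: Harris' theorem combined with the Lyapunov functional $\mf{G}$, smallness of sublevel sets obtained from a localised Bismut--Elworthy--Li strong Feller estimate near the constant function $[1]$ together with a support/controllability argument, and the two downstream claims derived exactly as in the paper (martingale term negligible for the Furstenberg--Khasminskii formula; tightness plus uniqueness of the limiting invariant measure for continuity in $\a$). The only cosmetic discrepancy is the mechanism by which $\a>d/2$ enters: in the paper it is the Hilbert--Schmidt condition on $e^{-s(-\Delta)^{\a}}$ required by the Bismut--Elworthy--Li formula (the Sobolev embedding used to invert the multiplication operator comes from $\gamma_{0}>d/2+1$), but this does not affect the validity of your argument.
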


\begin{proof}
  The proof of the spectral gap is the content of Section~\ref{sec:uniqueness},
  and together with Theorem~\ref{thm:lyap-func} and
  Corollary~\ref{cor:finite-lyap} it immediately implies \eqref{eqn:fk}. As for the last statement, it
  follows from the observation that all the estimates of this work hold locally
  uniformly over $ \a  \in [1, \infty) $. In particular, for any compact interval
$ [\eta, \vt] =I \subseteq (d/2, \infty) \cap [1, \infty) $, provided that $ | \eta - \vt
| < 1/2 $, and for any $ n \in \NN $, we find by
Proposition~\ref{prop:higher-regularity} the uniform bound
\begin{equation}\label{e:unif-a}
\begin{aligned}
\sup_{\mathbf{ a} \in I} \EE_{\mu_{\a }} \left[ \| \pi \|^{n}_{H^{\vt}}  \right] < \infty
\;,
\end{aligned}
\end{equation}
where $ \mu_{\a } $ denotes the invariant measure for the process $ t \mapsto
\pi_{t} $ for a given parameter $ \a  > d/2  $. 
A first consequence of the uniform bound \eqref{e:unif-a} is that the
measures $ \mu_{\a } $ are continuous in $ \a  $ with respect to weak convergence
as probability measures on $ S $. Indeed \eqref{e:unif-a} implies that any
sequence of measures $ \{ \mu_{\a _{k}} \}_{k \in \NN } $ for  $ \{
\a _{k} \}_{k} \subseteq
I  $ with $ \a _{k} \to \a  \in I$ is tight and, writing $\mP_t^{\a}$ for the Markov semigroup
with given parameter $\a$, it is not hard to verify via Lemma~\ref{lem:cont-ic} that 
the map $(\a, \mu) \mapsto \mP_t^{\a}\mu$ is jointly continuous for any fixed
$t>0$. 
This implies that any weak limit point must be
invariant under the evolution of $ \pi_{t} $ associated to the limiting value of $
\a  $, and therefore equal $\mu_\a$. Finally, weak continuity of $ \mu_{\a } $ together with \eqref{e:unif-a} and
the Furstenberg--Khasminskii formula \eqref{eqn:fk} imply
that $ \a  \mapsto \lambda (\a ) $ is continuous.
\end{proof}

\section{The skeleton Lyapunov functional and dissipation estimates}
\label{sec:skeleton}

One of the issues that appear when dealing directly with the energy median $ (M
(\pi_{t}))_{t \geqslant 0} $ is that we have no control on the frequency of its
jump times. In fact, we expect $ M( \pi_{t}) $ to jump very rapidly to low
frequencies, provided it starts from
a sufficiently high frequency level. Such jumps are in principle
``good'' for us (since they imply that energy is not shifting to high frequencies), but every time the
energy median jumps to lower frequencies, the high-frequency regularity also jumps,
but upwards, so that it is unclear whether the Lyapunov functional is
decreasing. In addition, the energy median can be very irregular in
time since, due to the noise, it can easily accumulate
infinitely many jumps in a finite time interval while rapidly oscillating
between two neighbouring frequency levels.

To avoid all these issues, our strategy is to introduce a piecewise constant process $
(M_{t})_{t \geqslant 0} $, which we call the \emph{skeleton median process},
which behaves similarly to $ M( \pi_{t}) $, but is such that the time intervals between successive 
jumps are of order one.
For this, we will construct a suitable sequence of stopping times
$0 = T_{0} < T_1 < \dots$
with intervals of length of order one, meaning that $ T_{i +1} - T_{i} $ has
uniform bounds on both positive and negative moments. The skeleton energy median
is then a process that remains constant on every interval $ [T_{i}, T_{i +1}) $ and
updates its value only at the stopping times $ T_{i} $. For the sake of the
present discussion, one can for instance think of the stopping times as being deterministic
times
\begin{equation*}
\begin{aligned}
T_{i} \sim \eta \cdot i \;,
\end{aligned}
\end{equation*}
for some $ \eta > 0 $ and of the skeleton energy median to be defined as the
true median at times $ T_{i} $:
\begin{equation*}
\begin{aligned}
M_{t} \sim M (\pi_{T_{i}})\;, \qquad \forall t \in [T_{i}, T_{i+1}) \;.
\end{aligned}
\end{equation*}
Unfortunately, there are several reasons why this simple construction does not 
quite work for our analysis. First, our study of the median relies on bounds on
the dynamic of the relative energy processes which are addressed in
Proposition~\ref{prop:drft-OU}. These bounds do not hold over deterministic time
intervals, but only up to certain stopping times.  Second, the energy median may exhibit
large negative jumps over fixed time intervals, which is not desirable for the reason mentioned above, 
so we enforce the deterministic bound $ M_{T_{i +1}} - M_{T_{i}}
\geqslant -1 $, as well as the bound $ M_{T_{i}} \geqslant M (\pi_{T_{i}}) $ for all $ i \in \NN
$. 
Overall, although the stopping times $ T_{i} $ will not be deterministic,
we will have a deterministic upper bound on the increment $ T_{i+1} -
T_{i} $ as well as a bound on all inverse moments of such increments.

We refer to Lemma~\ref{lem:consistency} below for a
list of desirable properties satisfied by the skeleton median process.
Similarly, since the exact definitions of both the stopping times $ T_{i} $ and of the skeleton
process are quite intricate, they are deferred to Section~\ref{sec:st-def},
after we have introduced all the required tools.

Now we introduce another feature of our analysis, namely that we distinguish
between ``concentrated'' and ``diluted'' states of the process $ u_{t} $. Given a 
frequency level $ L \in \NN $
and a function $ u \in L^{2} $, we introduce the marker $
\mf{m}(L, u) \in \{ \tC, \tD \} $ given by
\begin{equ}[eqn:def-dil-n]
\mf{m}(L, u) = \begin{cases} 
\texttt{C} \quad (\text{\gls{con}}) \quad & \text{ if } \quad \|
(\Pi_{L}^{\myc} + \Pi_{L-1}^{\myc} ) u \| \geqslant \frac{1}{4}  \| \Pi_{L-1}^{\myl} u \|\;, \\
\texttt{D} \quad (\text{\gls{dil}}) \quad & \text{ if } \quad \|
(\Pi_{L}^{\myc} + \Pi_{L-1}^{\myc} )u \| < \frac{1}{4} \| \Pi_{L-1}^{\myl} u \|\;. \end{cases} 
\end{equ}
The value $ \frac{1}{4} $ is as usual arbitrary and can be replaced with any value in
$ (0, 1) $. 
In our setting, assuming that the skeleton median process is given, 
we are interested in whether the system is concentrated or diluted about level $
M_{t}-1 $. Therefore, we set
\begin{equation} \label{e:mft-def}
\begin{aligned}
\mf{m}_{t} = \mf{m} (M_{t}-1, u_{t}) \in \{ \tC, \tD \} \;.
\end{aligned}
\end{equation}
We introduce this marker because we exploit different mechanisms depending on whether
the system is in a diluted ($ \mf{m}_{t} = \tD $) or a concentrated ($
\mf{m}_{t} = \tC $) state. In both cases, for any $ i \in \NN $, provided that $
M_{T_{i}} $ is sufficiently large, the quantity $\|
\Pi_{M_{T_{i}}}^{\myg} u_{t} \|/\|
\Pi_{M_{T_{i}}}^{\myl} u_{t} \| $ is
very likely to decrease (if it is finite) for $ t \in [ T_{i}, T_{i+1}) $ as a consequence of the
dissipative nature of our equation. This
is the content of Proposition~\ref{prop:drft-OU}, which shows that the
evolution of this ratio is bounded from above by an Ornstein--Uhlenbeck process
on certain time intervals. 
At time $ T_{i+1} $, once the small scale energy captured by $ \|
\Pi_{M_{T_{i}}}^{\myg} u_{t}\|$ is likely to have dissipated, we update the
value of $ M_{T_{i}} $ to the new median, which we now expect to be strictly smaller
than the original one. But if the energy is
\textit{concentrated} in a shell of width two about level $ M_{T_{i}} -1$, namely if  $
(\Pi_{M_{T_{i}}-1}^{\myc} + \Pi_{M_{T_{i}}-2}^{\myc}) \pi_{t} $ contains a significant 
fraction of energy \dash in
the most extreme case without any energy in lower modes \dash then the
deterministic dynamic predicts that the energy will just further
concentrate at level $ M_{T_{i}}$ and dissipation alone is not sufficient
to guarantee that $ M_{T_{i+1}} < M_{T_{i}} $.
Instead it is the effect of the non-degenerate
noise \dash which pushes energy to modes surrounding $ M_{T_{i}} $ in a time of
order one \dash combined with dissipation which guarantees that the
median actually drifts to lower levels.

Eventually, we will compare the dynamics of $ M_{t} $ for large values of
$ M_{t} $ to a random walk with drift towards lower frequencies. Hence we have
to control how far the process jumps to high frequencies in the unlikely event
that this occurs. Here, once more, we make use of the dissipation, by obtaining suitable
regularity estimates for 
\begin{equ}[eqn:def-h]
  w_{t} = \frac{\Pi^{\myg}_{M_{t}}u_{t}}{\| \Pi^{\myl}_{M_{t}} u_{t}
  \|}=\frac{\Pi^{\myg}_{M_{t}} \pi_{t}}{\| \Pi^{\myl}_{M_{t}} \pi_{t} \|} \;.
\end{equ}
We use this approach to study, for $ \kappa_{0},
\kappa > 0, k_{0} \in \NN $, the functional
\begin{equ}[eqn:def-lyap-fun]
  \mf{F} (\kappa, \bpi_{t}) = \exp \left( \kappa_{0}  M_{t} +
    \kappa \| w_{t} \|_{M_{t}+ k_{0}}^{2} \right) \;,
\end{equ}
which is now a functional of the enhanced process
\begin{equ}
  \bpi_{t} = (M_{t}, \pi_{t}) \in \NN \times S\;,
\end{equ}
and should be interpreted as a skeleton version of the functional $
\mf{G} $ defined in \eqref{eqn:def-G-lyap}. While the parameters $ \kappa_{0} $ and
$ k_{0} $ will be fixed later on, $ \kappa $ will be allowed to vary.
Note that $ \mf{F} $ cannot be expressed as a
functional of $ \pi_{t}$ alone since the process $ M_{t} $ depends on the past:
one of the key technical results of this work is then
Theorem~\ref{thm:lyap-func-discrete}, which shows that $ \mf{F} $ satisfies
a  Lyapunov-type property.

In the next section we introduce one the main
building blocks of our analysis: a dissipation
estimate relating the relative energy $ \|
\Pi^{\myg}_{M_{T_{i}}} u_{t} \| / \| \Pi^{\myl}_{M_{T_{i}}} u_{t} \| $ to
an Ornstein--Uhlenbeck process on suitable time intervals.

\subsection{Dissipation estimates}\label{sec:dissip}

Let us start by considering any stopping time $ t_{0} \geqslant
0 $ and an $ \mF_{t_{0}} $--adapted random variable $ L \in \NN $. 
To state our dissipation estimates we need two additional
ingredients. First the dissipation is captured by the gap between different
eigenvalues, which satisfies
\begin{equ}
\Delta_{L} = \zeta_{L + 1} - \zeta_{L} \in (0, \infty), \qquad \lim_{L \to
\infty} \Delta_{L} = \infty \;, \qquad \Delta_{L} \sim L^{2 \a -1}\;.
\end{equ}
Then, for any $ u_{0} \in L^{2}_{\star}(\TT^{d}; \RR^{m}) $ and $ u_{t} $ the solution to
\eqref{eqn:main} we define the following functions for all \( t \geqslant t_{0}
\), assuming $ \| \Pi^{\myl}_{L} u_{t_{0}} \| > 0 $ almost surely:
\begin{equ}[eqn:def-w]
\gls{w}  = \frac{\Pi_{L}^{\myg}  u_{t}(x)}{\| \Pi_{
L}^{\myl}  u_{t} \|} \;, \qquad
\gls{wbold} = \frac{\Pi_{L}^{\mygeq}  u_{t}(x)}{\| \Pi_{
L}^{\myl}  u_{t} \|}\;.
\end{equ}
Note that nothing prevents the denominator
from vanishing in finite time. For this reason, we will only ever track these functions up to 
suitable stopping times which avoids this. In particular, we define for any $
\beta > 0 $:
\begin{equs}[eqn:tau-generic-n]
\gls{taul} & = \inf \{ t \geqslant t_{0}  \ \colon \ \|
w (L; t, \cdot) \| \leqslant 1/2 \} \wedge (t_{0} + 1) \;, \\
 \sigma^{\mygeq}_{\beta} (L,t_{0}) & = \inf \{
  t \geqslant t_{0}  \; \colon \; \| w^{\mygeq} (L; t , \cdot) \| \geqslant
\beta \} \wedge (t_{0} +1) \;.
\end{equs}
Stopping after a time interval of length at most one will later allow us to enforce
a deterministic upper bound on $T_{i+1} - T_i$. Further, the value $ 1/2
$ is of course
again somewhat arbitrary, and we use the letter $ \sigma $ rather than $ \tau
$ for the second stopping time since $ \tau^{\mygeq} $ will be reserved for the
particular threshold $ \beta=2 $ later on. As for the variable $ L $, we will
use these definitions mostly with $ L \in \{ M_{t_{0}}, M_{t_{0}} -2 \} $,
where $ M_{t} $ is the skeleton median process constructed in
Section~\ref{sec:st-def}.
Finally, it will be convenient to rewrite \eqref{eqn:main} in Fourier
coordinates.

\begin{remark}\label{rem:fourier}
  Let $ u_{t} $ be the solution to \eqref{eqn:main} with $ u_{0} \in
L^{2}_{\star} (\TT^{d}; \RR^{m}) $. Then for any $ k \in
  \ZZ^{d}$ the process $\hat{u}_{t}^{k}= \langle u_{t}, e_{k} \rangle $ satisfies
\begin{equ}
\ud \hat{u}_{t}^{k} = - \nu \zeta_{k} \hat{u}_{t}^{k} \ud t + \sum_{l \in
\ZZ^{d}}  \hat{u}_{t}^{k- l} \cdot \ud B^{l}_{t}\;.
\end{equ}
Recall that $ u $ is vector-valued, so here we have defined $ \nu =
(\nu^{\alpha})_{\alpha =1}^{m} $ and for $ \varphi \in \RR^{m} $ we define the
component-wise product
\begin{equ}
\nu \varphi = (\nu^{\alpha} \varphi^{\alpha})_{\alpha =1}^{m} \in
\RR^{m} \;.
\end{equ}
The quadratic covariation matrix between $ \hat{u}_{t}^{k}$ and $
\hat{u}_{t}^{l} $ is given by
\begin{equ}
\ud \langle \hat{u}^{k} , \hat{u}^{l} \rangle_{t} = \sum_{m \in \ZZ^{d}}
\hat{u}_{t}^{k - m} \otimes \hat{u}_{t}^{l+m} \cdot \Gamma_{m} \ud t
\eqdef C_{k, l}(u_{t})
\ud t\;. 
\end{equ}
Here the resulting covariance is a matrix $ (C_{k , l}^{\alpha,
\beta})_{\alpha, \beta} \in (\RR^{m})^{\otimes 2} $, defined by
\begin{equation*}
\begin{aligned}
C_{k , l}^{\alpha, \beta} (u) = \ud \langle \hat{u}^{\alpha, k},
\hat{u}^{\beta, l} \rangle  = \sum_{m} \sum_{\gamma, \eta} \hat{u}^{\gamma, k - m}
\hat{u}^{\eta , k + m} \Gamma^{\alpha, \beta}_{\gamma, \eta, m } \;,
\end{aligned}
\end{equation*}
where we can interpret the inner sum as a dot product between the tensor $
\hat{u}^{k-m} \otimes \hat{u}^{l+m} $ and the 4-tensor $ (\Gamma^{\alpha,
\beta}_{\gamma, \eta, m} )_{\alpha, \beta, \gamma, \eta} $, with $ m $ fixed.
Now, if we define
\begin{equation*}
\begin{aligned}
\overline{\Gamma}_{m} = \max_{\alpha, \beta, \gamma, \eta} | \Gamma^{\alpha,
\beta}_{\gamma, \eta, m}  | \;,
\end{aligned}
\end{equation*}
then the covariation $ C_{k,l}(u) $ can be estimated by the
Cauchy--Schwarz inequality, for every $ \alpha, \beta  $:
\begin{equ}[eqn:bd-cov]
| C_{k, l}^{\alpha, \beta}(u) | \leqslant \sum_{m}
\overline{\Gamma}_{m} | \hat{u}^{k + m} | | \hat{u}^{k - m} | \leqslant \| u
\|_{\l^{2}_{k}} \| u \|_{\l^{2}_{l}} \;, \end{equ}
with 
\begin{equ}
\| u \|_{\l^{2}_{k}}^{2} = \sum_{m \in \ZZ^{d}} \overline{\Gamma}_{k+ m} |
\hat{u}^{m} |^{2}\;.
\end{equ}
\end{remark}
Now we are ready to state our dissipation estimates.

\begin{proposition}\label{prop:drft-OU}
Under the assumptions of Theorem~\ref{thm:lyap-func}, consider any stopping time $ t_{0}$ and any $
\mF_{t_{0}}$-adapted random variable $ L $ with values in $ \NN $. Then fix $
u_{0} \in L^{2}_{\star} (\TT^{d}; \RR^{m}) $, let $ u_{t} $ be the solution
to \eqref{eqn:main} and assume that almost surely $ \| \Pi_{L}^{\myl}
u_{t_{0}} \| > 0 $.
Further, let $ w$ and $ w^{\mygeq} 
$ be as in \eqref{eqn:def-w} and  $ 0 < E < \beta $ two non-negative constants.
Then there exists a (deterministic) increasing function $ \beta \mapsto R(\beta) \in (0, \infty) $
such that the following holds if $ \| w^{\mygeq} (L; t_{0}, \cdot) \| \leqslant
E $, uniformly over all $ u_{0} $:
\begin{enumerate}
\item For all $ t \in [t_{0}, \sigma_{\beta}^{\mygeq} (L,t_{0}) ] $ we can bound
\begin{equ}
  \ud \| w(L; t, \cdot) \|^{2} \leqslant -2 \nu_{\mathrm{min}} \Delta_{L} \|
w(L; t, \cdot) \|^{2} \ud t + R(\beta) \ud t + \ud \mM_{t}\;,\label{eqn:mid-point}
\end{equ}
where $ \nu_{\mathrm{min}} = \min_{\alpha} \nu^{\alpha} > 0 $ and  $ \mM_{t} $ is a continuous, square integrable martingale on $ [t_{0},
\sigma_{\beta}^{\mygeq}(L,t_{0})] $ with $ \mM_{t_{0}} = 0 $ and
quadratic variation bounded by $ \ud \langle \mM \rangle_{t} \leqslant
R(\beta ) \ud t$.
\item Similarly, for all $ t \in [t_{0}, \sigma_{\beta}^{\mygeq} (L,
t_{0})] $ we can bound:
\begin{equ}
  \ud \| w^{\mygeq} (L; t, \cdot) \|^{2} \leqslant R(\beta) \ud t  + \ud
  \mathcal{N}_{t}\;,
\end{equ}
where $ \mathcal{N}_{t} $ is a continuous, square integrable martingale on $ [t_{0},
\sigma^{\mygeq}_{\beta}(L,t_{0}) ] $ with $ \mathcal{N}_{t_{0}} = 0 $ and
quadratic variation bounded by $ \ud \langle \mathcal{N} \rangle_{t}  \leqslant
R(\beta) \ud t $.
\end{enumerate}
\end{proposition}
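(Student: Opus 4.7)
The plan is to apply Itô's formula to the quotients
\begin{equ}
  \|w(L; t, \cdot)\|^2 = \frac{A_t}{B_t}\;, \qquad
  \|w^{\mygeq}(L; t, \cdot)\|^2 = \frac{A'_t}{B_t}\;,
\end{equ}
where $A_t = \|\Pi_L^{\myg} u_t\|^2$, $A'_t = \|\Pi_L^{\mygeq} u_t\|^2$, and $B_t = \|\Pi_L^{\myl} u_t\|^2$. The very definition of $\sigma^{\mygeq}_{\beta}$ guarantees $A'_t \leqslant \beta^2 B_t$ on $[t_0, \sigma^{\mygeq}_{\beta}]$, which in particular forces $B_t > 0$ on this interval (otherwise $\|w^{\mygeq}\|$ would diverge before $\sigma^{\mygeq}_{\beta}$), so the quotient Itô formula is well-defined there. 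Since $\Pi_L^{\myl} + \Pi_L^{\mygeq} = \mathrm{Id}$, we also obtain the useful identity $\|u_t\|^2 = B_t + A'_t \leqslant (1+\beta^2)B_t$ on the stopped interval.

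First I would compute $dA_t$, $dA'_t$, $dB_t$ individually in Fourier coordinates via Remark~\ref{rem:fourier}. Each decomposes into a dissipative drift from $\mL$, an Itô corrector stemming from the quadratic variation of $u \cdot \ud W$, and a continuous martingale. The spectral thresholds built into the projections yield dissipative drifts bounded by
\begin{equs}
(dA_t)_{\text{diss}} &\leqslant -2 \nu_{\min}(L+1)^{2 \a} A_t\,\ud t\;,\\
(dA'_t)_{\text{diss}} &\leqslant -2 \nu_{\min} L^{2 \a} A'_t\,\ud t\;,\\
(dB_t)_{\text{diss}} &\geqslant -2 L^{2 \a} B_t\,\ud t\;,
\end{equs}
where the $\nu_{\min}$ factor absorbs the integer-lattice mismatch between the $\nu^{\alpha}$-dependent thresholds $(L+1)_\alpha, L_\alpha$ and the actual admissible Fourier indices. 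Inserting these bounds into the quotient Itô formula
\begin{equ}
d(A/B) = \tfrac{1}{B}\,dA - \tfrac{A}{B^2}\,dB + \tfrac{A}{B^3}\,\ud\langle B\rangle_t - \tfrac{1}{B^2}\,\ud\langle A, B\rangle_t\;,
\end{equ}
the dissipative contributions from $\tfrac{1}{B}dA$ and $-\tfrac{A}{B^2}dB$ combine via $(L+1)^{2\a}-L^{2\a} = \Delta_L$ into the asserted contraction rate $-2\nu_{\min}\Delta_L \|w\|^2$. For $\|w^{\mygeq}\|^2 = A'/B$ the analogous computation produces matching $L^{2\a}$ thresholds for both $A'$ and $B$, so the net deterministic drift vanishes, consistent with statement 2.

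Next I would estimate all remaining stochastic contributions uniformly in $\beta$. The covariation bound \eqref{eqn:bd-cov} (which uses the noise regularity \eqref{e:reg-assu} to make $\sum_m \overline{\Gamma}_m$ finite) gives $|C_{k,l}(u_t)| \leqslant \|u_t\|_{\ell^2_k}\|u_t\|_{\ell^2_l}$, so each of the quadratic covariations $\ud\langle A\rangle_t$, $\ud\langle B\rangle_t$, $\ud\langle A,B\rangle_t$, $\ud\langle A',B\rangle_t$ is bounded, after summing over Fourier modes, by $C\|u_t\|^4\,\ud t$. Dividing by the appropriate power of $B_t$ and invoking $\|u_t\|^2/B_t \leqslant 1+\beta^2$ turns each Itô-corrector into a bound of the form $R(\beta)\,\ud t$ for an explicit increasing function $R$, and the same estimate controls the quadratic variations $\ud\langle \mM\rangle_t$ and $\ud\langle \mathcal{N}\rangle_t$ of the martingale remainders. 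Continuity and square-integrability of $\mM$ and $\mathcal{N}$ on $[t_0, \sigma^{\mygeq}_\beta]$ follow from the deterministic bound on their quadratic variations together with the deterministic upper bound $\sigma^{\mygeq}_\beta \leqslant t_0 + 1$.

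The main technical obstacle is the bookkeeping: every stochastic term arising from the quotient Itô formula must be expressible as a bound depending only on $\beta$, with no residual dependence on the magnitude of $u_0$ or of $B_t$ itself. This is exactly what $\sigma^{\mygeq}_{\beta}$ achieves, since on the stopped interval it collapses all ratios of the form $\|u_t\|^2/B_t$ into the benign quantity $1+\beta^2$, and this is the mechanism that produces a single function $R(\beta)$ independent of the initial datum.
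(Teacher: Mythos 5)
Your proposal follows essentially the same route as the paper: the quotient It\^o formula for $A_t/B_t$ and $A'_t/B_t$, the cancellation of low- against high-frequency dissipation rates producing the gap $\Delta_L$ (respectively zero for $w^{\mygeq}$), and the reduction of every It\^o corrector and quadratic variation to a function of $\beta$ alone via \eqref{eqn:bd-cov} and the bound $\|u_t\|^2 \leqslant (1+\beta^2)B_t$ on the stopped interval. The one slip is in your displayed dissipative bounds: as written, $-2\nu_{\mathrm{min}}(L+1)^{2\a}A_t$ and $-2L^{2\a}B_t$ combine to $-2\bigl(\nu_{\mathrm{min}}(L+1)^{2\a}-L^{2\a}\bigr)A_t/B_t$, which is not $-2\nu_{\mathrm{min}}\Delta_L A_t/B_t$ (and can even have the wrong sign for $\nu_{\mathrm{min}}<1$); the clean observation, which is how the paper argues, is that the $\nu$-adapted thresholds $L_{\alpha}=(\nu^{\alpha})^{-1/2\a}L$ make the componentwise dissipation rate at least $\zeta_{L+1}$ on the range of $\Pi^{\myg}_L$ and at most $\zeta_L$ on the range of $\Pi^{\myl}_L$ with no $\nu$ left over, so the drift is bounded by $-2\Delta_L\|w\|^2$ directly.
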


\begin{proof}
By the strong Markov property of the solution $ u $ we may assume that $ t_{0} = 0 $. 
To show the first point, we use It\^o's formula to find
\begin{equs}
\ud \frac{\| \Pi_{ L}^{\myg}  u_{t} \|^{2}}{\| \Pi_{L}^{\myl} 
u_{t} \|^{2}} & = 2 \left[ \frac{1}{\| \Pi_{L}^{\myl}  u_{t} \|^{2}}\langle
\Pi_{L}^{\myg}  u_{t}, \mL  u_{t}\rangle - \frac{\| \Pi_{L}^{\myg}  u_{t} \|^{2}}{\| \Pi_{
L}^{\myl}  u_{t} \|^{4}} \langle \Pi_{L}^{\myl}  u_{t}, \mL u_{t}\rangle
\right] \ud t \\ & \quad + \ud \mM_{t} + \frac{1}{2} \sum_{k, l \in
\ZZ^{d}} \partial_{ \hat{u}^{k} \hat{u}^{l}} \Big( \frac{A_{t}}{B_{t}} \Big) : \ud
\langle \hat{u}^{k}, \hat{u}^{l} \rangle_{t}\;. \label{e:ItoFirst}
\end{equs}
Here the last line contains the quadratic covariation term and we have defined
$$ A_{t} = \| \Pi_{L}^{\myg}  u_{t} \|^{2}\;, \qquad B_{t} = \|
\Pi_{L}^{\myl}  u_{t} \|^{2}\;,$$ so that $  \partial_{ \hat{u}^{k} \hat{u}^{l}}
\Big( \frac{A_{t}}{B_{t}} \Big) $ is the matrix, over the indices $ \alpha,
\beta \in \{ 1, \dots , m \} $
\begin{equation*}
\begin{aligned}
  \left( \partial_{ \hat{u}^{k} \hat{u}^{l}} \Big( \frac{A_{t}}{B_{t}} \Big)
\right)^{\alpha, \beta} = \partial_{ \hat{u}^{\alpha, k} \hat{u}^{\beta,l}} \Big( \frac{A_{t}}{B_{t}} \Big)
\end{aligned}
\end{equation*}
  and for two matrices \( G, F \in
(\RR^{m} )^{\otimes 2} \) we have $ F : G = \sum_{\alpha, \beta}
F^{\alpha, \beta} G^{\alpha, \beta} $.
Then, via Remark~\ref{rem:fourier}, we can estimate the quadratic variation
term by

\begin{equs}
\sum_{k, l \in \ZZ^{d}} & \partial_{ \hat{u}^{\alpha, k} \hat{u}^{\beta, l}} \Big(
\frac{A_{t}}{B_{t}} \Big) \ud \langle \hat{u}^{k} , \hat{u}^{l}
\rangle_{t}^{\alpha, \beta} \\
 \lesssim &\sum_{|k| > (L+1)_{\alpha}} \frac{| C^{\alpha,
\beta}_{k , -k}(u_{t}) |}{B_{t}}\one_{\{ \alpha = \beta \}} +
\sum_{|l| \leqslant L_{\alpha}}   \frac{A_{t}}{B_{t}^{2}} |C_{l, -l}^{\alpha, \beta}(u_{t})| \one_{\{\alpha = \beta\}} \\
&  + \sum_{| k | \leqslant L_{\alpha}, | l | \leqslant L_{\beta}}
\frac{A_{t}}{B_{t}^{3}} | \hat{u}_{t}^{l} | | \hat{u}^{k}_{t} | |C^{\alpha, \beta}_{k,
l}(u_{t})|  + \sum_{|k| > (L+1)_\alpha, |l| \leqslant L_\beta} \frac{| \hat{u}_{t}^{k}
| | \hat{u}_{t}^{l} |}{B^{2}_{t}} | C_{k, l}^{\alpha , \beta}(u_{t}) |\;.
\end{equs}
Next we use the bound in \eqref{eqn:bd-cov}. We observe that by
\eqref{e:reg-assu}, since $ \sum_{m} \overline{\Gamma}_{m} < \infty $,
we find that $ \sum_{k \in \ZZ^{d}} \| u
\|_{\l^{2}_{k}}^{2} \lesssim \| u \|^{2} $ and therefore
\begin{equs}
\sum_{l} | C^{\alpha, \beta}_{l , -l} (u) | & \lesssim \| u \|^{2} \;, \\
\sum_{l, k} | \hat{u}^{k}  | | \hat{u}^{l}  | | C^{\alpha, \beta}_{k, l}
(u)|& \lesssim \Big( \sum_{l} | \hat{u}^{l} | \| u
\|_{\l^{2}_{l}} \Big)^{2} \lesssim \| u \|^{4} \;.
\end{equs}
Then we can estimate the terms above as
follows for $ t \in [t_{0}, \sigma_{\beta}^{\mygeq} (t_{0})] $:
\begin{equs}
\sum_{k, l \in \ZZ^{d}} \partial_{ \hat{u}^{k} \hat{u}^{l}} \Big(
\frac{A_{t}}{B_{t}} \Big) & : \ud \langle \hat{u}^{k}, \hat{u}^{l} \rangle_{t}
\lesssim \frac{\| u_{t} \|^{2}}{B_{t}} + \frac{\| u_{t} \|^{4}}{B_{t}^{2}}  + \frac{\| u_{t} \|^{6}}{B_{t}^{3}}  \lesssim_{\beta} 1 \;.
\end{equs}
Regarding the first term in \eqref{e:ItoFirst}, we observe that 
\begin{equ}
\frac{1}{\| \Pi_{L}^{\myl} u_{t} \|^{2}}\langle
\Pi_{L}^{\myg}  u_{t}, \mL  u_{t}\rangle - \frac{\| \Pi_{L}^{\myg}  u_{t} \|^{2}}{\|
\Pi_{L}^{\myl}  u_{t} \|^{4}} \langle \Pi_{L}^{\myl}  u_{t}, \mL u_{t}\rangle \leqslant
-(\zeta_{L+1} - \zeta_{L}) \frac{A_{t}}{B_{t}}\;.
\end{equ}
Here we have used the particular definition of the projection in
Definition~\ref{def:proj} to deal with distinct viscosity coefficients. Indeed,
note that for every $ \alpha \in \{ 1, \dots, m \} $ we have that
\begin{equation*}
\begin{aligned}
\langle \Pi^{\myg}_{L} u^{\alpha}, \mL^{\alpha} u^{\alpha} \rangle & = -
\nu^{\alpha}\langle \Pi^{\myg}_{L} u^{\alpha}, (- \Delta)^{\a }
u^{\alpha} \rangle \\
&  \leqslant - \nu^{\alpha} | (L+1)_{\alpha} |^{2 \a } \| \Pi^{\myg} u^{\alpha} \|^{2} 
= - (L+1)^{2 \a } \| \Pi^{\myg} u_{t} \|^{2} \;,
\end{aligned}
\end{equation*}
as desired, and similarly for the low frequency projection.
This concludes the proof of \eqref{eqn:mid-point}. As for the quadratic variation, the
martingale term is given by
\begin{equ}
\ud \mM_{t} =  2 \left[ \frac{1}{\| \Pi_{L}^{\myl} u_{t} \|^{2}}\langle
\Pi_{L}^{\myg}  u_{t},  u_{t} \cdot \ud W_{t}\rangle - \frac{\| \Pi_{L}^{\myg}  u_{t}
\|^{2}}{\| \Pi_{L}^{\myl} 
u_{t} \|^{4}} \langle \Pi_{L}^{\myl}  u_{t}, u_{t} \cdot \ud W_{t}\rangle
\right]\;.
\end{equ}
The first bracket (and verbatim for the second one) can be rewritten as
follows
\begin{equ}
\langle \Pi_{L}^{\myg}  u_{t},  u_{t} \ud W_{t}\rangle = \sum_{\alpha, \beta} \sum_{| k | >
(L+1)_{\alpha} } \hat{u}_{t}^{\alpha, k} \sum_{l} \hat{u}_{t}^{\beta, k-l} \ud
B^{\alpha, \beta}_{ l, t}\;,
\end{equ}
which has quadratic variation
\begin{equation*}
\begin{aligned}
\sum_{\alpha, \beta, \gamma, \eta} \sum_{l} \Big(  \sum_{| k | >
(L+1)_{\alpha} } \hat{u}_{t}^{\alpha, k}  \hat{u}_{t}^{\beta, k-l}
\Big) \cdot \Big(  \sum_{| k | >
(L+1)_{\gamma} } \hat{u}_{t}^{\gamma, k} \hat{u}_{t}^{\eta, k-l}
\Big) \Gamma^{\alpha, \gamma}_{\beta, \eta, l} \;,
\end{aligned}
\end{equation*}
which can be bounded by
\begin{equ}
\sum_{\alpha, \beta} \sum_{l} \overline{\Gamma}_{l} \Big| \sum_{| k | > (L+1)_{\alpha}} \hat{u}_{t}^{ \alpha, k}
\hat{u}_{t}^{\beta, k-l} \Big|^{2} \lesssim \| u_{t} \|^{4}\;.
\end{equ}
Hence with the same notation as above we obtain
\begin{equ}
  \frac{\ud}{\ud t}  \langle \mM \rangle_{t} \lesssim \frac{\| u_{t}
\|^{2}}{B_{t}^{2}} + \frac{\| u_{t} \|^{8}}{B_{t}^{4}}  \lesssim_{\beta} 1\;,
\end{equ}
which proves the required bound.
To obtain the estimate for $w^{\mygeq} $, we can follow verbatim the
previous calculations. The only difference lies in the treatment of the term
\begin{equ}
\frac{1}{\| \Pi_{L}^{\myl} u_{t} \|^{2}}\langle
\Pi_{L}^{\mygeq}   u_{t}, \mL  u_{t}\rangle - \frac{\| \Pi_{L}^{\mygeq}  u_{t} \|^{2}}{\|
\Pi_{L}^{\myl}  u_{t} \|^{4}} \langle \Pi_{L}^{\myl}  u_{t}, \mL u_{t}\rangle \leqslant
-(\zeta_{L} - \zeta_{L}) \frac{A_{t}}{B_{t}} = 0\;,
\end{equ}
which delivers the desired result.
\end{proof}

\subsection{Construction of the skeleton median}\label{sec:st-def}

We are now ready to define the stopping times $ \{ T_{i} \}_{i \in \NN} $ and
the skeleton median process $ (M_{t})_{t \geqslant 0} $. The first step in the
definition of the stopping times is to make sure that they do not kick in too
quickly. We therefore start by introducing a \emph{padding} time of length about $
\delta $, where $ \delta \in (0, 1) $ is a fixed parameter (the padding is the
 red region in the figure below).
\begin{center}
\begin{tikzpicture}
\draw[line width=3mm,red!15] (0,0) -- (2,0);
\draw[line width=3mm,blue!15] (2,0) -- (4,0);
\draw[line width=3mm,green!15] (4,0) -- (6,0);
	\draw (-0.1,0.0) -- (6.1,0.0) ;
	\draw (0,0.3) -- (0, -0.3) ;
  \node at (0, -0.55) (a) {$ T_{i} $};
\node at (1.0,0.35) (a1) {\strut Padding};
\node at (3.0,0.35) (a2) {\strut Dilution};
\node at (5.0,0.35) (a3) {\strut Dissipation};
  \draw (2.0,0.3) -- (2.0, -0.3) ;
  \node at (2.2, -0.55) (b) {$ V_{i+1} $};
  \draw (4.0,0.3) -- (4.0, -0.3) ;
  \node at (4.2, -0.55) (c) {$ S_{i+1}$};
  \draw (6.0,0.3) -- (6.0, -0.3) ;
  \node at (6.2, -0.55) (c) {$ T_{i+1}$};
\end{tikzpicture} 
\end{center}
After the padding time, we wait for the noise to shift the
system into a diluted state, at time $ S_{i+1}$ \dash note
that the system may well be diluted to start with. Once the system is diluted, we wait for
dissipation to kick in and reduce the value of the median. 
Of course we may be in bad luck
and see an unexpected event somewhere along the line: if this is the case we
stop prematurely: for this reason $ V_{i+1} $ is for example a stopping time,
and not the deterministic $ T_{i} + \delta $. This discussion
motivates the definition of the following stopping times as well as of the
skeleton median process. Recall here the stopping
times defined in \eqref{eqn:tau-generic-n}, the marker defined in
\eqref{e:mft-def}, and let us also introduce the first dilution time
\begin{equ}[e:defsigmaDil]
\sigma^{\tD} (t_{0}) = \inf \{ t \geqslant t_{0}  \; \colon \;
\mf{m}_{t} =\tD \} \;.
\end{equ}

  \begin{definition}\label{def:Tn} Let $ \delta \in (0, 1) $ be fixed and
set, for any $ \pi_{0} \in S $ \[ T_{0} = 0 \;, \qquad  M_{0} = M(\pi_{0}) \;. \]  Then for any $ i \in \NN $,
assuming that $
T_{i} $ and $ (M_{t})_{t \leqslant T_{i}} $ are given, and writing $M_i =
M_{T_i}$ for short, define the following.
\begin{enumerate}
\item The next padding time:
\begin{equ}
  V_{i+1} = (T_i + \delta) \wedge \sigma^{\mygeq}_{5/4} (M_{i}, T_{i})\;.
\end{equ}
\item The next dilution time:
\begin{equation*}
S_{i+1} = \sigma^{\tD} (V_{i+1}) \wedge
\sigma^{\mygeq}_{3/2} (M_{i}, V_{i+1}) \;.
\end{equation*}
\item The next dissipation time:
 \begin{equs}
    T_{i+1}  = \tau^{\myl} (M_{i} -2, S_{i+1}) \wedge \sigma_{2}^{\mygeq}
(M_{i}-2,  S_{i+1}) \;.
  \end{equs}
\item The skeleton median process up to time $ T_{i+1} $:
\begin{equs}
M_{t} & = M_{i} \;, \qquad \qquad \qquad \quad \ \ \forall t \in [ T_{i} , T_{i+1}) \;, \\
  M_{T_{i+1}} & = M_{i+1} = \begin{cases} M_{i} - 1 \qquad  & \text{ if } \ \
    M(\pi_{T_{i+1}}) < M_{i} \;, \\ M (\pi_{T_{i+1}}) & \text{
  else. }\end{cases} 
\end{equs}
\end{enumerate}

Finally, for any stopping time $ t_{0}$ we write $
T(t_{0}) $ for the first time after $ t_{0} $ in which the skeleton median
jumps:
\begin{equation}\label{e:Tt0}
\begin{aligned}
T (t_{0}) = \min \{ T_{i}  \; \colon \; T_{i} > t_{0} \} \;.
\end{aligned}
\end{equation}
\end{definition}

\begin{remark}
This definition may appear slightly circular since $\sigma^{\tD}$ used in point~2 requires
knowledge of $M_t$ for $t > T_i$ which is only defined in point~4. 
To alleviate this, one could simply replace $ \mf{m}_{t} $ with $ \mf{m}(M_{t_{0}}-1,u_t)$ in 
\eqref{e:defsigmaDil} which leads to the same construction.
\end{remark}
As we have described before, the stopping times $ \sigma^{\mygeq}_{\beta} $
should kick in rarely (at least in the regime of interest to us, namely when the 
Lyapunov function is large) and are present to guarantee that the system stays under
control. We can therefore identify an event $ \mA_{i} $, in which all
the stopping times kick in, which are more likely to do so (at least if $ M
$ is sufficiently large). Its complement $ \mB_{i} = \mA^{c}_{i} $ covers instead the
case in which one of the \(\sigma_{\beta}^{\mygeq} \) kicks in, and happens
with small probability:
\begin{equation*}
\begin{aligned}
\mA_{i} & = \{ V_{i +1} = T_{i } + \delta  \} \cap \{ \mf{m}_{S_{i+1}} = \tD \} \cap
\{ T_{i +1} = \tau^{\myl}  (M_{i} -2, S_{i +1}) \} \;, \\ 
\mB_{i}& = \mA_{i}^{c}\;.
\end{aligned}
\end{equation*}
Now we would like to establish some basic properties of the skeleton median.
For example, the values $ 5/4, 3/2 $ and $ 2 $ are chosen increasingly so that
the stopping times kick in one strictly after the other. This is not always the
case, since $ \sigma_{2} (M_{i}-2, S_{i+1}) \leqslant \sigma_{3/2} (M_{i}, V_{i
+1})  $ is guaranteed only if $ \mf{m}_{S_{i+1}} = \tD $ (recall that $ \mf{m}_{t} = \mf{m}
(M_{t}-1, u_{t}) $ is defined in \eqref{e:mft-def}), and even this bound
requires a short computation, see below. Another fundamental
property is that on the event $ \mA_{i} $ we have $ M_{T_{i +1}} =
M_{T_{i}} -1$.

Further, the skeleton median process is defined to be constant on each interval
$ [T_{i}, T_{i +1}) $, as already anticipated. Yet, in opposition to the
heuristic definition we have previously given, at time $ T_{i +1} $ we \emph{do
not} update its value to match $ M(\pi_{T_{i+1}}) $, unless $
M(\pi_{T_{i+1}}) \geqslant  M_{T_{i}} $.
This choice is taken because of a rather technical issue. If $ M (u_{T_{i}}) \gg 1 $, then we expect the
true median to jump very quickly to lower
frequencies, meaning that $ M (\pi_{T_{i+1}}) \ll M (\pi_{T_{i}}) $. But if the
median drops quickly, it is cumbersome to control high-frequency regularity: as $ M $ decreases,
the value of  $ \| \pi_{t} \|_{M} $ increases in such a way that the first
effect could be canceled by the latter and may not see any decrease in our
Lyapunov functional. With our definition we artificially rule out such large drops
in $ M_{t} $. Our skeleton process still satisfies $ M_{T_{i}} \geqslant M
(\pi_{T_{i}}) $ for all \( i \in \NN \), but the inequality may be strict and
there is no upper bound on the gap $ M_{i} - M
(\pi_{T_{i}}) $. We collect these and other considerations
in the following lemma.

\begin{lemma}\label{lem:consistency} The skeleton median process $
(M_{t})_{t \geqslant 0} $ and the stopping times defined in
Definition~\ref{def:Tn} satisfy the following properties:
  \begin{enumerate}
\item For all $ t \geqslant 0 $, it holds that
\begin{equ}[e:bd-pi]
\| \Pi^{\mygeq}_{M_{t}} \pi_{t} \| \leqslant 2 \| \Pi^{\myl}_{M_{t}}
\pi_{t} \| \;, \qquad  1 \leqslant \sqrt{5} \| \Pi^{\myl}_{M_{t}} \pi_{t} \|
\;.
\end{equ}
     \item
There exists a (deterministic) constant $ E \in (0, 2) $ such that
\begin{equ}
\| w^{\mygeq} (M_{i}-2; S_{i+1}, \cdot) \| \leqslant E\;, \qquad \text{ if }
\quad \mf{m}_{S_{i+1}} = \tD \;.
\end{equ}
\item For all $ i \in \NN $ we can lower bound 
\( M_{i} \geqslant M (\pi_{T_{i}})\).
\item The jumps of the skeleton median are such that for all $ i \in
\NN$ one has $M_{i +1} \geqslant M_{i} - 1$, and furthermore
$M_{i+1} = M_{i} -1$ on $ \mA_{i}$.
\item For all $ i \in \NN $ we have $    T_{i+1} - T_{i} \leqslant 3$.
  \end{enumerate}
\end{lemma}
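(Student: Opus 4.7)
My plan is to dispatch the five claims roughly in reverse order, since (3), (4), and (5) follow almost immediately from the definitions, while (1) and (2) require a bit more careful bookkeeping.

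First, I would argue (5) by construction. The definition of $V_{i+1}$ gives $V_{i+1}-T_i \leqslant \delta \leqslant 1$, and since the stopping times $\sigma^{\mygeq}_\beta(\cdot,t_0)$ and $\tau^{\myl}(\cdot,t_0)$ are by definition $\wedge (t_0+1)$, we have $S_{i+1}-V_{i+1}\leqslant 1$ and $T_{i+1}-S_{i+1}\leqslant 1$, giving $T_{i+1}-T_i\leqslant 3$. Next (3) is a trivial induction on $i$: the update rule $M_{i+1}\in\{M_i-1,M(\pi_{T_{i+1}})\}$ always returns a value at least $M(\pi_{T_{i+1}})$ (the value $M_i-1$ is chosen only when $M(\pi_{T_{i+1}})<M_i$, i.e.\ $M(\pi_{T_{i+1}})\leqslant M_i-1$). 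The first half of (4) is again immediate from the update rule.

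For the second half of (4), on the event $\mA_i$ we have $T_{i+1}=\tau^{\myl}(M_i-2,S_{i+1})<S_{i+1}+1$, so by continuity $\|\Pi^{\myl}_{M_i-2}u_{T_{i+1}}\|\geqslant 2\|\Pi^{\myg}_{M_i-2}u_{T_{i+1}}\|$. Using the identity $\Pi^{\mygeq}_{M_i-1}=\Pi^{\myg}_{M_i-2}$ (and $\Pi^{\myl}_{M_i-1}\supseteq\Pi^{\myl}_{M_i-2}$) which follows from Definition~\ref{def:proj}, this yields $\|\Pi^{\myl}_{M_i-1}u_{T_{i+1}}\|\geqslant\|\Pi^{\mygeq}_{M_i-1}u_{T_{i+1}}\|$, so by definition of the median $M(\pi_{T_{i+1}})\leqslant M_i-1<M_i$, and the update rule forces $M_{i+1}=M_i-1$.

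For (1), I would split $[T_i,T_{i+1})$ into $[T_i,V_{i+1})$, $[V_{i+1},S_{i+1})$, $[S_{i+1},T_{i+1})$ and invoke the corresponding stopping-time definition on each. On the first two sub-intervals we have directly $\|\Pi^{\mygeq}_{M_i}u_t\|\leqslant\tfrac{3}{2}\|\Pi^{\myl}_{M_i}u_t\|<2\|\Pi^{\myl}_{M_i}u_t\|$. On the third, the bound $\sigma^{\mygeq}_2(M_i-2,S_{i+1})$ gives $\|\Pi^{\mygeq}_{M_i-2}u_t\|\leqslant 2\|\Pi^{\myl}_{M_i-2}u_t\|$, and the inequality transfers to level $M_i=M_t$ via the monotonicities $\|\Pi^{\mygeq}_{M_i}\cdot\|\leqslant\|\Pi^{\mygeq}_{M_i-2}\cdot\|$ and $\|\Pi^{\myl}_{M_i-2}\cdot\|\leqslant\|\Pi^{\myl}_{M_i}\cdot\|$. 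The second assertion of (1) is then algebra: $1=\|\pi_t\|^2=\|\Pi^{\myl}_{M_t}\pi_t\|^2+\|\Pi^{\mygeq}_{M_t}\pi_t\|^2\leqslant 5\|\Pi^{\myl}_{M_t}\pi_t\|^2$.

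The one claim requiring an actual computation is (2), and I expect this to be the main (minor) obstacle, since one needs to verify that the numerical constants $\tfrac{1}{4}$, $\tfrac{3}{2}$ were chosen tightly enough to yield $E<2$. Decomposing orthogonally $\Pi^{\mygeq}_{M_i-2}=(\Pi^{\myc}_{M_i-2}+\Pi^{\myc}_{M_i-1})+\Pi^{\mygeq}_{M_i}$ and using at $t=S_{i+1}$ the diluted condition together with $\|\Pi^{\mygeq}_{M_i}u\|\leqslant\tfrac{3}{2}\|\Pi^{\myl}_{M_i}u\|$ (inherited by continuity from $S_{i+1}\leqslant\sigma^{\mygeq}_{3/2}(M_i,V_{i+1})$), I would get
\begin{equ}
\|\Pi^{\mygeq}_{M_i-2}u_{S_{i+1}}\|^{2}\leqslant \tfrac{1}{16}\|\Pi^{\myl}_{M_i-2}u\|^{2}+\tfrac{9}{4}\|\Pi^{\myl}_{M_i}u\|^{2}\;.
\end{equ}
Writing $\|\Pi^{\myl}_{M_i}u\|^2=\|\Pi^{\myl}_{M_i-2}u\|^2+\|(\Pi^{\myc}_{M_i-2}+\Pi^{\myc}_{M_i-1})u\|^2\leqslant\tfrac{17}{16}\|\Pi^{\myl}_{M_i-2}u\|^2$ in the diluted case, I obtain $\|w^{\mygeq}(M_i-2;S_{i+1},\cdot)\|^2\leqslant \tfrac{1}{16}+\tfrac{9}{4}\cdot\tfrac{17}{16}=\tfrac{157}{64}<4$, so $E:=\sqrt{157}/8\in(0,2)$ works.
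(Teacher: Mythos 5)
Your proposal is correct and follows essentially the same route as the paper: points (3)--(5) read off the definitions, point (4) uses the same observation that $\tau^{\myl}(M_i-2,S_{i+1})$ forces $\|\Pi^{\mygeq}_{M_i-1}u_{T_{i+1}}\|\leqslant\tfrac12\|\Pi^{\myl}_{M_i-1}u_{T_{i+1}}\|$, point (1) is the same stopping-time/orthogonality argument (just spelled out interval by interval), and your computation for point (2) is the identical orthogonal decomposition with the identical constant $157/64$, i.e.\ $E=\sqrt{157}/8$.
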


\begin{proof}
The first bound in \eqref{e:bd-pi} follows from  the definitions of $ \sigma^{\mygeq}_{\beta} $
and of $ \tau^{\myg} $ and the second bound follows from the
first since $\| \Pi^{\mygeq}_{M_{t}} \pi_{t} \|^2 + \| \Pi^{\myl}_{M_{t}}
\pi_{t} \|^2=1$. The second point holds since
\begin{equs}
    \frac{\| \Pi^{\mygeq}_{M_{i}-2} u_{S_{i+1}}  \|}{\|
    \Pi^{\myl}_{M_{i}-2} u_{S_{i+1}} \|} & \leqslant \sqrt{ \frac{\|
( \Pi^{\myc}_{M_{i}-1}+ \Pi^{\myc}_{M_{i}-2}) u_{S_{i+1}} \|^{2}}{ \| \Pi^{\myl}_{M_{i}-2}
u_{S_{i+1}} \|^{2}} + \frac{\| \Pi^{\mygeq}_{M_{i}} u_{S_{i +1}} \|^{2}}{\|
\Pi^{\myl}_{M_{i}-2} u_{S_{i+1}} \|^{2}} }  \\
& \leqslant \sqrt{ \left( \frac{1}{4} \right)^{2} + \frac{\| \Pi^{\myl}_{M_{i}} u_{S_{i +1}} \|^{2}}{\|
\Pi^{\myl}_{M_{i}-2} u_{S_{i+1}} \|^{2}} \cdot  \frac{\| \Pi^{\mygeq}_{M_{i}} u_{S_{i +1}} \|^{2}}{\|
\Pi^{\myl}_{M_{i}} u_{S_{i+1}} \|^{2}}} \\
& \leqslant \sqrt{ \left( \frac{1}{4} \right)^{2} + \left( 1 + \left(
\frac{1}{4} \right)^{2} \right) \left( \frac{3}{2} \right)^{2}} < 2 \;,
\end{equs}
where we used that since $ \mf{m}_{S_{i+1} } = \tD $ we have the bound $ \| (
\Pi^{\myc}_{M_{i} -2} + \Pi^{\myc}_{M_{i}-1}) u_{S_{i+1}}\| \leqslant
\frac{1}{4} \| \Pi^{\myl}_{M_{i}-2} u_{S_{i+1}} \| $, together with the fact
that $ S_{i+1} \leqslant \sigma_{3/2}^{\mygeq} (M_{i}, T_{i}) $. 

The third point follows immediately from Definition~\ref{def:Tn}. As for the fourth
point, we see that on $ \mA_{i} $ necessarily
either $ T_{i+1} = \tau^{\myl} (M_{i} -2, S_{i+1}) $. Therefore we find from
the definition of $ \tau^{\myl} $ that
\begin{equ}
\| \Pi^{\mygeq}_{M_{i}-1} u_{T_{i+1}} \| \leqslant \frac{1}{2} \|
\Pi^{\myl}_{M_{i}-2} u_{T_{i+1}} \| \leqslant \frac{1}{2} \|
\Pi^{\myl}_{M_{i}-1} u_{T_{i+1}}  \| \;,
\end{equ}
so that indeed $ M (u_{T_{i +1}}) \leqslant M_{i}-1 $.
The last point follows by observing that the stopping times $ \tau^{\mygeq} $ and $
\sigma^{\mygeq}_{\beta} $ kick in after a time interval of length at most one.
\end{proof}

Finally, to improve the readability of later calculations, we introduce the
following shorthand notations for the stopping times that have appeared so far
and an additional arbitrary stopping time $ t_{0} $:
\begin{equs}[e:short]
\tau^{\mygeq} (t_{0}) &= \sigma_{2}^{\mygeq}(M_{t_{0}}-2, t_{0}) \;,\quad \ \ 
& \tau^{\myl} (t_{0}) &= \tau^{\myl} (M_{t_{0}}-2, t_{0}) \;, \\
\sigma_{\beta}^{\mygeq} (t_{0}) & = \sigma^{\mygeq}_{\beta} (M_{t_{0}},
t_{0}) \;, \ \ & \sigma^{\mygeq}(t_{0}) & = \sigma^{\mygeq}_{3/2} (M_{t_{0}},
t_{0}) \;.
\end{equs}
\section{Proof of the main result}\label{sec:prf-main-result}

The proof of Theorem~\ref{thm:lyap-func}, follows from an analogue of the
same theorem for the stopped process.
To state this result, recall that the definition of $ M_{t} $ depends on the padding parameter
$ \delta \in (0, 1) $. Furthermore the Lyapunov functional $ \mf{F} $ defined
in \eqref{eqn:def-lyap-fun} depends on
the parameters $ \kappa_{0}> 0, k_{0} \in \NN $ and on the free variable $
\kappa > 0 $. For convenience we recall here its definition, and that of the
Lyapunov functional $ \mf{G} $:
\begin{equs}
  \mf{F} (\kappa, \bpi) & = \exp \left( \kappa_{0}  M +
    \kappa \| w(\bpi)\|_{M+ k_{0}}^{2} \right) \;, \qquad
  \bpi = (M, \pi) \in \NN \times S\;, \\
  \mf{G} (\pi) & = \exp \left( \kappa_{0} M (\pi) + \| \pi
  \|_{M(\pi) +k_{0}}^{2} \right)\;.
\end{equs}
Here we use the definition $w(\bpi) = \Pi^{\myg}_{M} \pi / \| \Pi^{\myl}_{M} \pi \|$ as in
\eqref{eqn:def-h}.
Finally, the Lyapunov property of $ \mf{F} $ will hold outside of a compact set
of the state space of $ \bpi_{t} $. We use the following convention for
any $ \mathbf{K}\in \RR_{+}$
\begin{equ}
  \| \bpi \| < \mathbf{K} \ \ \Leftrightarrow \ \
  M < \mathbf{K} \ \text{ and } \ \| w \|_{M + k_{0}}< 
  \mathbf{K} \;.
\end{equ}

\begin{theorem}\label{thm:lyap-func-discrete}
  Under the assumptions of Theorem~\ref{thm:lyap-func}, for any $ u_{0} \in
L^{2}_{\star} $ (or alternatively $ \pi_{0} \in S $) let $ M_{t} $ be the
skeleton median (with parameter $ \delta \in (0, 1)  $) from Definition~\ref{def:Tn} with associated stopping times
$ \{ T_{i} \}_{i \in \NN} $ and $ \bpi_{t} = (M_{t},
w_{t}) $, with $ w_{t} $ as in \eqref{eqn:def-h}.
  Then, for any $ b \geqslant 1  $ and $ \mf{c} \in (0,1) $ there exist $
  \delta \in (0, 1), k_{0} \in \NN^{+},  \mathbf{K} \in
  \RR_{+},  J > 0,  \kappa_{0} > 0 $ and increasing  maps $$ J, \mf{h} \colon
  [1/2, \infty) \to [1, \infty)\;, \qquad \text{ with } \qquad \mf{h}(1/2)=1 \;,$$ such that:
\begin{enumerate}
\item For all $
  \kappa \geqslant 1/2$ 
  \begin{equs}[eqn:contraction]
  \EE_{T_{i}} \left[ \mf{F}^{b}( \kappa , \bpi_{T_{i+1}}) \right] \leqslant
  \mf{c} \cdot \mf{h}(\kappa) \cdot \mf{F}^{b}( \kappa /2  , \bpi_{T_{i}}) & \one_{ \{
  \| \bpi_{T_{i}} \| \geqslant \K \} } \\
 +  J(\kappa)  &\one_{\{
  \| \bpi_{T_{i}}\|  < \K \}} \;.
\end{equs}
\item There exists a $ \mf{C} > 1 $ and $ \kappa_{1} > 0 $ such that uniformly over and $i \in
\NN $, if $ \kappa \geqslant \kappa_{1} $
\begin{equ}[e:unif]
  \EE_{T_{i}} \bigg[ \sup_{T_{i} \leqslant s< T_{i+1}} \mf{G}^{b}(\pi_{s}) \bigg]
  \leqslant \mf{C} \cdot \mf{F}^{b}(\kappa, \bpi_{T_{i}}) \;.
\end{equ}
\item If $ \kappa \leqslant 1/2 $, then 
$  \mf{G}(\pi_{0}) \geqslant \mf{F}(\kappa, \bpi_{0}) $ for all $
\pi_{0} \in \mS $ and $ \bpi_{0} = (M (\pi_{0}), \pi_{0}) $.
\end{enumerate}

\end{theorem}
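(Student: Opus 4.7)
The proof naturally splits along the dichotomy \(\mA_i\) versus \(\mB_i = \mA_i^c\): on the good event \(\mA_i\) the median drops deterministically by exactly one, while on \(\mB_i\) one of the \(\sigma_\beta^{\mygeq}\) stopping times has fired prematurely, which by Proposition~\ref{prop:drft-OU} applied to \(w^{\mygeq}\) should be a rare event. I would therefore write
\begin{equ}
  \EE_{T_i} [\mf{F}^b(\kappa, \bpi_{T_{i+1}})]
  = \EE_{T_i} [\mf{F}^b(\kappa, \bpi_{T_{i+1}}) \one_{\mA_i}]
  + \EE_{T_i} [\mf{F}^b(\kappa, \bpi_{T_{i+1}}) \one_{\mB_i}] \;,
\end{equ}
handle the two contributions separately, and choose \(\kappa_0\) and \(\K\) so that the sum is bounded by \(\mf{c}\,\mf{h}(\kappa)\,\mf{F}^b(\kappa/2, \bpi_{T_i})\) on \(\{\|\bpi_{T_i}\| \geq \K\}\).

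On \(\mA_i\), Lemma~\ref{lem:consistency} gives \(M_{i+1} = M_i - 1\), so the median contribution drops by a factor \(e^{-\kappa_0 b}\). For the high-frequency piece, I would apply the dissipation inequality of Proposition~\ref{prop:drft-OU} (adapted to the shifted Sobolev seminorm \(\|\cdot\|_{M+k_0}\), which is what the relevant higher-regularity estimate invoked in Section~\ref{sec:dissip} provides) on the interval \([T_i, T_{i+1}]\). Since \(\tau^{\myl}(t_0) \leq t_0 + 1\) and \(T_{i+1}-T_i\) carries a deterministic upper bound, a Grönwall argument over this interval produces a decay factor of order \(\exp(-c \Delta_{M_i + k_0})\) in \(\|w\|^2\), which after applying \(\exp(b \kappa \cdot)\) and taking expectation yields, via Itô's isometry on the martingale from Proposition~\ref{prop:drft-OU} and the fact that \(\Delta_{M_i+k_0} \gtrsim (M_i+k_0)^{2\a-1} \to \infty\), a bound of the form \(\mf{h}(\kappa)\exp(b\kappa \|w_{T_i}\|_{M_i+k_0}^2 / 2)\) once \(k_0\) is chosen large enough. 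The halving of \(\kappa\) on the right is precisely what is needed to absorb sub-Gaussian stochastic fluctuations of \(\|w\|^2\) over one skeleton interval, and it is encoded in the appearance of \(\kappa/2\) in \eqref{eqn:contraction}.

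On \(\mB_i\), the definition of \(\mA_i\) shows that \(\mB_i\) corresponds to one of the stopping times \(\sigma^{\mygeq}_{5/4}(M_i,T_i)\) or \(\sigma^{\mygeq}_{3/2}(M_i, V_{i+1})\) or \(\sigma^{\mygeq}_2(M_i-2, S_{i+1})\) firing before its deterministic cap. By the second assertion of Proposition~\ref{prop:drft-OU} together with Doob's inequality and the \(L^2\)-bound on the martingale \(\mathcal{N}\), each of these probabilities can be made arbitrarily small uniformly in the initial data. The growth of \(\mf{F}^b\) on this event is then controlled by Cauchy--Schwarz applied together with moment estimates for \(M_{i+1}\) (which from Lemma~\ref{lem:consistency} and \(T_{i+1}-T_i \leq 3\) satisfies an a priori bound in terms of \(M_i + M(\pi_{T_{i+1}})\)) and the higher-regularity estimates guaranteeing exponential moments of \(\|w\|_{M+k_0}^2\). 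Matching the smallness of \(\PP(\mB_i)\) against this polynomial/exponential growth is the main technical obstacle: it requires choosing \(\kappa_0\) first small enough that the bad-event contribution is tolerable, then choosing \(k_0\) large enough that the good-event dissipation still yields a strict contraction, and finally choosing \(\K\) large enough to reach the target constant \(\mf{c}\).

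For part~2, on \([T_i, T_{i+1})\) the median \(M_t\) is frozen at \(M_i\), so \(\mf{G}(\pi_s)\) differs from \(\mf{F}(\kappa, \bpi_{T_i})\) only through the fluctuation of \(\|\pi_s\|_{M_i+k_0}^2\) and through the possible jump of \(M(\pi_s)\). Both are controlled by the same dissipation estimate with a Doob maximal inequality, provided \(\kappa \geq \kappa_1\) is taken large enough to dominate the quadratic variation, yielding \eqref{e:unif}. Part~3 is a direct computation: \(M_0 = M(\pi_0)\) so the exponents \(\kappa_0 M\) agree, and the energy-median inequality \(\|\Pi^{\mygeq}_{M_0}\pi_0\| \leq \|\Pi^{\myl}_{M_0}\pi_0\|\) combined with \(\|\pi_0\|=1\) gives \(\|\Pi^{\myl}_{M_0}\pi_0\|^2 \geq 1/2\), so that \(\kappa \|w_0\|_{M_0+k_0}^2 = \kappa \|\pi_0\|_{M_0+k_0}^2 / \|\Pi^{\myl}_{M_0}\pi_0\|^2 \leq 2\kappa \|\pi_0\|_{M_0+k_0}^2 \leq \|\pi_0\|_{M_0+k_0}^2\) whenever \(\kappa \leq 1/2\).
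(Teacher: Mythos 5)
Your overall architecture for part~1 (split along $\mA_i$ versus $\mB_i$, contract via $e^{-\kappa_0}$ on the good event where the median drops by one, kill the bad event by smallness of its probability, then tune $\kappa_0$, $k_0$, $\K$ in that order) matches the paper's proof, and your arguments for parts~2 and~3 are essentially the paper's. But there is a genuine gap in your treatment of $\mB_i$. You describe $\mB_i$ as the event that one of the stopping times $\sigma^{\mygeq}_{\beta}$ ``fires prematurely'' and claim its probability is small by the second assertion of Proposition~\ref{prop:drft-OU} plus Doob's inequality. This misses the component $\{\mf{m}_{S_{i+1}} = \tC\}$ of $\mB_i$: the event that the system is still \emph{concentrated} at time $S_{i+1}$, which occurs in particular when nothing fires at all, $S_{i+1} = V_{i+1}+1$, and the energy is still sitting in the shells at levels $M_i-1$, $M_i-2$. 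No dissipation estimate can make this event unlikely \dash for $W=0$ and $u_0$ a single high Fourier mode it has probability one, and the theorem is false in that case. Making $\PP_{V_{i+1}}(\mf{m}_{S_{i+1}}=\tC)$ small on $\{M_i \geqslant \K\}$ is precisely where the high-frequency stochastic instability of Definition~\ref{def:high-freq-insta} enters, via Lemma~\ref{lem:switch-diluted} and Proposition~\ref{prop:hf-insta}; your argument never invokes it, so as written it would ``prove'' the statement for the deterministic equation.

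Two further points. First, on $\mA_i$ the exponential moment $\EE_{T_i}[e^{b\kappa\|w_{T_{i+1}-}\|^2_{M_i+k_0}}]$ cannot be extracted from a Gr\"onwall bound ``via It\^o's isometry'': isometry controls second moments of the martingale, not exponential moments of the norm. The paper obtains these through the mild-solution decomposition of Proposition~\ref{prop:reg-high-freq} and the exponential-martingale argument of Lemma~\ref{lem:bd-z}; moreover the gain producing $\kappa/2$ is the factor $e^{-2\delta k_0}$ from the semigroup decay at rate $k_0$ over the padding window $[T_i, T_i+\delta]$ (estimate \eqref{eqn:reg-dwn-n}), not a spectral-gap factor $\exp(-c\Delta_{M_i+k_0})$ over the whole interval. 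Second, on $\{\|\bpi_{T_i}\|\geqslant\K\}$ one may have $M_i<\K$ but $\|w_{T_i}\|_{M_i+k_0}\geqslant\K$, and there $\PP(\mB_i)$ is \emph{not} small (the instability only helps when $M_i$ is large); the paper's Step~3 handles this case separately by trading the largeness of $\|w_{T_i}\|_{M_i+k_0}$ against $\mf{F}(\kappa/2,\bpi_{T_i})$ to produce a factor $e^{-\K^2/4}$, a mechanism absent from your sketch.
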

In this result, the last two claims establish the relationship between the
skeleton Lyapunov functional and the full Lyapunov functional. We highlight in
particular that the lower bound in the last point can hold only at time $ t = 0
$, since at later times the skeleton median process $ t \mapsto M_{t} $ may be
much larger than the actual energy median $ M (\pi_{t}) $.
The crucial statement in the theorem is the first one, which establishes a
version of the Lyapunov property. 

The statement captures two effects: on one hand
taking expectations will reduce the value of the parameter $ \kappa $ in $
\mf{F} $: this is almost a super-Lyapunov property, although there is no gain
in the prefactor of the skeleton median. On the other hand,
when $ \kappa $ is equal to $ 1/2
$, then we recover the classical Lyapunov
property since $ \mf{h} (1/2) =1 $.
We will need these properties, because we will first of all apply the uniform
bound \eqref{e:unif}, which holds for some $ \kappa_{1} $ potentially larger
than $1/2$. Then we apply \eqref{eqn:contraction} several times, to decrease
the value of $ \kappa $ from $ \kappa_{1} $ to $1/2$. At this point,
\eqref{eqn:contraction} will guarantee the Lyapunov property.
Before we turn to the proof of this discretised result, we show that it is sufficient to
obtain Theorem~\ref{thm:lyap-func}.

\begin{proof}[of Theorem~\ref{thm:lyap-func}]
  Since the Markov process $ (\pi_{t})_{t \geqslant 0}$ is time homogeneous it
  suffices to show the desired property for $ t = 0 $. For any $ \mf{c} \in (0,
  1) $ consider $ \delta, k_{0},  \K, J$ and  $\kappa_{0} $ fixed as in
  Theorem~\ref{thm:lyap-func-discrete}. Then if we define $ A_{j} =
  [T_{j}, T_{j+1}) $ we have by Cauchy--Schwarz
  \begin{equ}
    \EE [\mf{G}(\pi_{t_{\star}})]  = \EE \Big[ \sum_{j \in \NN}
    \mf{G}(\pi_{t_{\star}}) \one_{A_{j}} (t_{\star})  \Big] 
     \leqslant \sum_{j \in \NN} \EE \left[ \mf{G}^{2}
      (\pi_{t_{\star}})\one_{A_{j}} (t_{\star}) 
  \right]^{\frac{1}{2}} \sqrt{\mathbf{p}_{ t_{\star}}(j) } \;,
  \label{eqn:cs-lyap}
  \end{equ}
  where we have defined
  \begin{equ}[eqn:def-p-star]
    \mathbf{p}_{t_{\star}}(j)  = \PP ( t_{\star} \in A_{j})\;,
    \qquad \forall j \in \NN  \;.
  \end{equ}
  Let us now fix $ \kappa_{1} = 8 \kappa_{0} + 1 $ and choose $ j_{0} \in \NN $
such that $ \kappa_{1} / 2^{j_{0}}
  \leqslant 1/2 $. Then we estimate uniformly over $ j \geqslant
  j_{0}$, using the uniform bound of Theorem~\ref{thm:lyap-func-discrete}
  with $ \kappa = \kappa_{1} $
  \begin{equs}
    \EE\left[ \mf{G}^{2} (\pi_{ t_{\star}}) \one_{A_{j}} 
    (t_{\star})  \right] & \leqslant \mf{C} \EE \left[
    \mf{F}^{2}(\kappa_{1}, \bpi_{T_{j}}) \right] \\
& \leqslant \mf{C} \EE\left[ \mf{c}^{j_{0}} \mf{h}^{j_{0}} (\kappa_{1}) \mf{F}^{2}(
      1/2,  \bpi_{T_{j-j_{0}}}) +J (\kappa_{1})
    \sum_{\l = 0}^{j_{0}-1} \mf{h}^{\l}(\kappa_{1})\mf{c}^{\l}\right] \;,
\end{equs}
by applying \eqref{eqn:contraction}  for a total of $ j_{0} $ times. Now we can
use the fact that $ \mf{h}(1/2) = 1 $ to further bound via
\eqref{eqn:contraction}:
\begin{equ}
     \EE\left[ \mf{G}^{2} (\pi_{ t_{\star}}) \one_{A_{j}} 
    (t_{\star})  \right]  \leqslant \mf{C} \mf{h}^{j_{0}} (\kappa_{1}) \mf{c}^{j}
\mf{F}^{2}(1/2, \bpi_{0}) + \frac{J(\kappa_{1}) \mf{C}\mf{h}^{j_{0}} (\kappa_{1}) }{1 -
\mf{c}}\;. \label{eqn:cs-j-bound}
  \end{equ}
Note that so far we did not use any property of $ t_{\star} $. Now we will
choose $ t_{\star} $ sufficiently large, so that the factor $
\mf{c}^{j} $ can compensate the constant $ \mf{C} \mf{h}^{j_{0}} ( \kappa_{1}) $.
  In particular, by Lemma~\ref{lem:stopping-density-bounds} for any $ j_{1} >
  j_{0} $ we can choose $
  t_{\star} > 0 $ such that $ \mathbf{p}_{t_{\star}} (j) = 0 $ for all $ j \leqslant
  j_{1} $.
  We therefore obtain the bound
  \begin{equs}
    \EE [ \mf{G}(\pi_{t_{\star}})] & \leqslant \sum_{j > j_{1}}
    \sqrt{p_{t_{\star}} (j)} \left( (\mf{C} \mf{c}^{j} \mf{h}^{j_{0}}(\kappa_{1}))^{\frac{1}{2}} 
    \mf{F}( 1/2, \bpi_{0}) + \sqrt{\frac{J(\kappa_{1}) \mf{C} \mf{h}^{j_{0}}(\kappa_{1})}{1 - \mf{c}}} \right) \\
    & \leqslant   \mf{c}^{\frac{j_{1}}{2}}  \frac{\sqrt{\mf{C} \mf{h}^{j_{0}}(\kappa_{1})}}{1 -
    \mf{\sqrt{c}}} \mf{F}(1/2, \bpi_{0})
    +C(t_{\star}) \sqrt{J(\kappa_{1}) \mf{C} \mf{h}^{j_{0}}(\kappa_{1}) (1 - \mf{c})^{-1}} \;,
  \end{equs}
  where an application of Lemma~\ref{lem:stopping-density-bounds} guarantees
  that for some $ C(t_{\star}) > 0 $
  \begin{equ}
    \sum_{j > j_{1} } \sqrt{\mathbf{p}_{t_{\star}}(j)} \leqslant
    C(t_{\star}) \;.
  \end{equ}
  Now for any $
  \overline{\mf{c}} \in (0, 1) $ we can choose $ j_{1} > j_{0} $ such
  that $$ \mf{c}^{\frac{j_{1}}{2}}  \frac{\sqrt{\mf{C} \mf{h}^{j_{0}}(\kappa_{1})}}{1 - \mf{\sqrt{c}}}
  \leqslant \overline{\mf{c}} \;. $$ Therefore, for any $ \overline{\mf{c}} \in
  (0, 1) $ we have found a $ t_{\star} > 0 $ and a $ J^{\prime}(t_{\star}) > 0 $ such that
\begin{equ}
  \EE [ \mf{G}(\pi_{ t_{\star}})]  \leqslant \overline{\mf{c}}  \mf{F}(1/2,
  \bpi_{0})  + J^{\prime}  \leqslant \overline{\mf{c}}  \mf{G}( \pi_{0})  + J^{\prime}\;,
  \end{equ}
  where we used that $ \mf{F}(1/2, \bpi_{0}) \leqslant \mf{G}(\pi_{0}) $ by the
  last property of Theorem~\ref{thm:lyap-func-discrete}. This completes the proof. 
\end{proof}

\begin{lemma}\label{lem:stopping-density-bounds}
  Under the assumptions of Theorem~\ref{thm:lyap-func}, for $ t_{\star} > 0 $ and $j \in \NN $, let $ \mathbf{p}_{
  t_{\star}}(j) $ be as in \eqref{eqn:def-p-star}.
  Then one has
\begin{enumerate}
\item For every compact set $ K \subseteq [0, \infty ) $ there exist constants
  $ c(K), C(K) > 0  $ such that
  \begin{equ}
  \mathbf{p}_{t_{\star}}(j) \leqslant C (K) e^{- c(K) j} \;, \qquad
   \forall j \in \NN \;, t_{\star} \in K \;.
  \end{equ}
\item We have $\mathbf{p}_{t_{\star}}(j) = 0$ for all $t_{\star} \geqslant 3(j+1)$.
\end{enumerate}
\end{lemma}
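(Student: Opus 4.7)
The second assertion follows immediately from Lemma~\ref{lem:consistency}(5): iterating $T_{i+1} - T_i \leq 3$ gives $T_{j+1} \leq 3(j+1)$, so $t_\star \geq 3(j+1)$ implies $t_\star \geq T_{j+1}$ and hence $t_\star \notin [T_j, T_{j+1}) = A_j$.

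For the first assertion, since $\{t_\star \in A_j\} \subseteq \{T_j \leq t_\star\}$, I would reduce the task to proving $\PP(T_j \leq t_\star) \leq C(K)e^{-c(K)j}$ uniformly for $t_\star \in K$. The starting observation is that on the favourable event $\mA_i$ from Definition~\ref{def:Tn} one has $V_{i+1} = T_i + \delta$, and since $T_{i+1} \geq S_{i+1} \geq V_{i+1}$, summing yields
\begin{equation*}
T_j \geq \delta \cdot \#\{0 \leq i < j : \omega \in \mA_i\} \;.
\end{equation*}
Thus $\{T_j \leq t_\star\}$ forces at least $j - \lfloor t_\star/\delta\rfloor$ of the bad events $\mB_i = \mA_i^c$ to occur among $i = 0,\dots,j-1$.

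The core technical step would then be to establish a uniform bound $\PP(\mB_i \mid \mF_{T_i}) \leq p$ for some $p \in (0,1)$, valid for $\delta$ small enough and independently of $i$ and of $M_i$. Writing $\mB_i$ as the union of its three constituent failure modes, I would bound each in turn: (i) $\{V_{i+1} < T_i + \delta\}$ using Proposition~\ref{prop:drft-OU}(2) with $L = M_i$, noting that by Lemma~\ref{lem:consistency}(3) the quantity $\|w^{\mygeq}(M_i; T_i, \cdot)\|^2$ is at most $1$, so that a Doob-type maximal inequality gives a probability of order $O(\delta)$ of reaching the threshold $5/4$ within a window of length $\delta$; (ii) $\{\mf{m}_{S_{i+1}} = \tC\}$, which requires a positive probability that the noise pushes the system into a diluted state before the ratio $\|\Pi^{\mygeq}_{M_i}u_t\|/\|\Pi^{\myl}_{M_i}u_t\|$ exceeds $3/2$; (iii) $\{T_{i+1} = \sigma_2^{\mygeq}(M_i - 2, S_{i+1})\}$, again by Proposition~\ref{prop:drft-OU}(2), now using Lemma~\ref{lem:consistency}(2) to guarantee the input bound $\|w^{\mygeq}(M_i - 2; S_{i+1}, \cdot)\| \leq E < 2$.

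The main obstacle I expect is step~(ii), because it must be handled uniformly in $M_i$. For $M_i$ large, I would apply the high-frequency stochastic instability of Definition~\ref{def:high-freq-insta}: the hypothesis \eqref{e:idata} is satisfied at $V_{i+1}$ whenever $\mf{m}_{V_{i+1}} = \tC$ (otherwise $\sigma^{\tD}(V_{i+1}) = V_{i+1}$ and (ii) is trivial) since $V_{i+1} \leq \sigma^{\mygeq}_{5/4}(M_i, T_i)$, so the instability delivers dilution within any prescribed time window with probability arbitrarily close to $1$. For the finitely many small values of $M_i$, I would separately invoke the non-degeneracy of the noise on a finite-dimensional shell to obtain a strictly positive transition probability for dilution, and take the minimum over this finite range. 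Once the uniform bound $p < 1$ is in hand, a conditional Chernoff estimate for the sum $\sum_{i<j}\one_{\mB_i}$, which is stochastically dominated by a $\mathrm{Bin}(j,p)$ variable, yields the required exponential decay whenever $j > j_0(K) = \lceil 2\sup K / (\delta(1-p))\rceil$; smaller $j$ are absorbed into the constant $C(K)$.
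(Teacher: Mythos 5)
Your second point is exactly the paper's argument and is fine. For the first point, your overall strategy (lower-bound $T_j$ by counting increments that are at least some fixed length, then run a conditional Chernoff bound) is the same in spirit as the paper's, which batches the indices and iterates the conditional bound $\PP_{T_i}(T_{i+1}-T_i>\ve)\geqslant 1/2$. However, your core technical claim has a genuine gap: you assert a uniform bound $\PP(\mB_i\mid\mF_{T_i})\leqslant p<1$ \emph{independently of $M_i$}, and your step (ii) for the dilution failure mode $\{\mf{m}_{S_{i+1}}=\tC\}$ relies on the high-frequency stochastic instability of Definition~\ref{def:high-freq-insta}. That assumption only applies for $M\geqslant\K(\mf{t},\ve)$; for the remaining small values of $M_i$ nothing in the hypotheses of Theorem~\ref{thm:lyap-func} guarantees that the system dilutes with positive probability (the ``non-degeneracy on a finite-dimensional shell'' you want to invoke is not among the assumptions — the theorem assumes only the instability for large $M$ together with the regularity bound \eqref{e:reg-assu}). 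Indeed, the paper's own estimate \eqref{eqn:prf-main-eps-final} for $\PP_{T_i}(\mB)$ is only asserted on the event $\{M_i\geqslant\K\}$, precisely for this reason.

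The gap is easily repaired, and the repair shows why your decomposition of $\mB_i$ into three failure modes is unnecessary here: to lower-bound $T_j$ you only need $T_{i+1}-T_i\geqslant V_{i+1}-T_i=\delta\wedge(\sigma^{\mygeq}_{5/4}(M_i,T_i)-T_i)$, so it suffices to count the indices for which the \emph{padding} alone succeeds (or, as in the paper, for which $\sigma^{\mygeq}_{5/4}(M_i,T_i)-T_i>\ve$ for some fixed $\ve$). Since $M_i\geqslant M(\pi_{T_i})$ gives $\|w^{\mygeq}(M_i;T_i,\cdot)\|\leqslant 1<5/4$ at every $T_i$, Lemma~\ref{lem:neg-mmts} bounds this probability uniformly over $i$ \emph{and} over $M_i$, with no case distinction between large and small medians and no use of the instability assumption. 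With that substitution your counting and Chernoff argument goes through.
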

\begin{proof}
  The second claim follows immediately from the fact that $ T_{i+1} -
  T_{i} \leqslant 3 $, as observed in Lemma~\ref{lem:consistency},
so we focus on the first claim. By definition we have that
\begin{equation*}
\begin{aligned}
T_{i +1} - T_{i} \geqslant V_{i +1} - T_{i} = \delta \wedge
(\sigma^{\mygeq}_{5/4}(T_{i +1}) - T_{i} ) \;,
\end{aligned}
\end{equation*}
where we recall the notation from \eqref{e:short}. Moreover, by
construction, since $ M_{T_{i}} \geqslant M( \pi_{T_{i}}) $ by
Lemma~\ref{lem:consistency}, we have at time $ T_{i} $ that $ \| w^{\mygeq}
(M_{T_{i}}; T_{i},  \cdot) \| \leqslant 1 $, so that the event \(
\mC_{E} (T_{i}) \) takes place with $ E =1 $.
In particular, it follows that by the second bound of Lemma~\ref{lem:neg-mmts},
there exists an $ \ve > 0 $ such that
\begin{equ} \label{e:sup-prob}
\PP_{T_{i}} (T_{i+1} - T_{i} > \ve ) \geqslant 1/2 \;, \qquad \forall i \in
\NN \;.
\end{equ}
Neither the value of $ \ve $ nor the value $ 1/2 $ will play any particular
role in the following, but we do need that the bound is uniform over $ i
$. We write
$\mathbf{p}_{t_{\star}}(j) \leqslant \PP ( T_{j} \leqslant t_{\star})$
and obtain an upper bound on the latter. To do so, we divide
the first $ j $ stopping times into batches of size $ N_{\star} =
\lfloor t_{\star}/ \ve \rfloor +1 $. Suppose that $ j \geqslant
N_{\star} (\l +1) $ for some $ \l \in \NN $. Then it holds that
\begin{equation*}
\begin{aligned}
\mH_{\l} \eqdef \left\{ T_{i+1} - T_{i} > \ve \;, \;
\forall i \in [N_{\star} \l , N_{\star} (\l +1)] \right\} \subseteq
\{ T_{j} > t_{\star} \} \;,
\end{aligned}
\end{equation*}
since we would have 
\begin{equation*}
\begin{aligned}
T_{j} \geqslant \ve N_{\star}> t_{\star} \;.
\end{aligned}
\end{equation*}
Now, the event $ \mH_{\l} $ happens with positive probability that is uniform
over $ j $ but depends on the value of $ t_{\star} $:
\begin{equ}
\PP_{T_{N_{\star} \l}} (\mH_{\l}) \geqslant 2^{- N_{\star}} \;.
\end{equ}
In particular, if we define
\begin{equation*}
\begin{aligned}
\l_{j} = \max \{ \l \in \NN  \; \colon \; N_{\star} (\l +1) \leqslant j \} \;, 
\end{aligned}
\end{equation*}
then we obtain the following bound:
\begin{equation*}
\begin{aligned}
\PP(T_{j} \leqslant  t_{\star}) \leqslant \PP ( \cap_{\l \leqslant \l_{j}}
\mH^{c}_{\l}) \leqslant (1 - 2^{- N_{\star}})^{\l_{j}} \;.
\end{aligned}
\end{equation*}
Since there exists a constant $ c (t_{\star}) $ such that $ \l_{j} \geqslant c
(t_{\star}) j $ for all $ j \in \NN $ the claim is proven: it is clear that all
estimates hold locally uniformly over $ t_{\star} $.
\end{proof}

\subsection{Proof of the discretised theorem}

The proof of Theorem~\ref{thm:lyap-func-discrete} will build on distinguishing
between the ``good'' event $ \mA_{i} $ (because by Lemma~\ref{lem:consistency} we
have the certainty that $ M_{i+1} = M_{i} -1 $) and the ``bad'' event $ \mB_{i}
$ (because it might be that $ M_{i+1} \geqslant M_{i} $). Since $ i $ will be
fixed throughout the proof, we will from now on refrain from writing it as a
index for these events.

To further clarify the structure of the proof, let us highlight three results,
appearing in later sections, which play a
fundamental role in the present proof. These are Lemma~\ref{lem:bd-uparrow-n},
Lemma~\ref{lem:switch-diluted} and Proposition~\ref{prop:reg-high-freq}.
Lemma~\ref{lem:bd-uparrow-n} is a technical result which estimates the jump
of the norm $ \| w_{t} \|_{M_{t} + k_{0}}^{2} $ at time $ t = T_{i+1} $: note
that at this time both the skeleton median $ M_{t} $ and the process $
w_{t}$ jump.

Lemma~\ref{lem:switch-diluted} is used when proving that the 
``bad'' event happens with small probability. It is fundamental
in our analysis, since it allows to treat the case in which the projective
dynamic reaches a concentrated state, so that the event $ \mf{m}_{S_{i+1}} =
\tC $ has small probability. The lemma guarantees
that even on such occasion the skeleton median has
an arbitrarily high probability of diluting \dash a consequence of the
high-frequency instability assumed in Theorem~\ref{thm:lyap-func}.
 
 Finally, Proposition~\ref{prop:reg-high-freq} yields high-frequency
regularity estimates: these estimates split into three groups, depending on how
quickly the next stopping time kicks in. The one in
\eqref{eqn:reg-dwn-n} relates to stopping times with a deterministic lower
bound (conditional on some event): this will be used to treat the event $ V_{i
+1} = T_{i}+ \delta \leqslant
\sigma_{5/4}^{\mygeq} (M_{i}, T_{i})  $. The uniform estimate in 
\eqref{eqn:uniform-bd} relates instead to stopping times that can kick in very
quickly, so that dissipation may not have time to improve the regularity of the initial datum:
therefore the estimates involves in the upper bound the regularity of the process $
w_{t}$ at the starting time. The estimate
\eqref{e:rupnew} relates instead to stopping times that kick in when
we observe upwards jumps of the median. These stopping times take a macroscopic
time to kick in, so that dissipation has in the meantime smoothed out the
initial datum: therefore, the eventual estimates are uniform over the initial
datum.

Finally, let us introduce (and in part recall) the following shorthand
notation:
  \begin{equ}[e:shorthand]
  M_{i} = M_{T_{i}}\;, \qquad  \Delta M_{i}  =  M_{i+1} - M_{i}\;, \qquad
\mf{m}_{i} = \mf{m}_{S_{i+1}} \;.
  \end{equ}
 Note that even though $t \mapsto M_t$ remains 
constant over all of $[T_i, T_{i+1})$, this is not the case for $\mf{m}_t$
since $u_{t}$ still evolves.

\begin{proof}[of Theorem~\ref{thm:lyap-func-discrete}]
  We prove the result for $ b =1 $, since other values of $ b $ can be treated
  identically.
  Our analysis is divided into 5 steps: one for $ \mA $, two for the event $
\mB \cap \{ \| \bpi_{T_{i}} \|
\geqslant  \K \} $, one for the event $\{ \| \bpi_{T_{i}} \|
<  \K \} $, and a final step where we address the uniform in time bounds on
$ \mf{G} $ appearing in points~2 and~3 of
Theorem~\ref{thm:lyap-func-discrete}.

Recall that we have four parameters at our disposal: $ \kappa_{0}, k_{0}$
from the definition of $ \mf{F} $, $\K$ which defines the size of
the ``center set'' outside of which we hope to see contraction of the Lyapunov
functional, and $ \delta $ from the definition of the padding stopping time $
V_{i+1} $. 

Let now $c>1$ be the constant appearing in Lemma~\ref{lem:bd-uparrow-n} and 
$H$ and $\bar H$ the functions appearing in \eqref{e:defH} and \eqref{e:defbarH} below.
We then first choose $\kappa_0$ large enough and then $\eps$ small enough so that
  \begin{equ}[eqn:for-kappa-0]
     H(1/2) e^{- \kappa_{0}+c/2} \leqslant \mf{c} \;,
     \quad
     \eps \bar H(2c\kappa_0+c) \leqslant \mf{c}^2\;.
  \end{equ}
We then use the fact that
it is possible to choose $ \delta \in (0, 1) $ small enough so that, almost surely,
  \begin{equ}[e:d]
     \PP_{T_{i}} (\mB^{V} ) \leqslant \ve \;,\qquad
     \mB^{V} = \{V_{i+1} < T_{i} + \delta\}\;.
  \end{equ}
This follows because $\mB^V$ coincides
with the event $ \{ \sigma_{5/4}^{\mygeq} (T_{i}) < T_{i} + \delta \}
$, which has small probability via Lemma~\ref{lem:neg-mmts} since at time $
T_{i} $ we have $ \| w^{\mygeq} (M_{i}; T_{i}, \cdot)\| \leqslant 1 < 5/4 $ by the
third point of Lemma~\ref{lem:consistency}. Note also that 
$ \mB = \mB^{V} \cup \mB^{S} $ with
\begin{equ}
\mB^{S} = \{ \sigma^{\tD} (V_{i+1}) >\sigma^{\mygeq} (V_{i+1})\}
\cup \{\tau^{\myl} (S_{i+1})> \tau^{\mygeq} (S_{i+1})\} \;.
\end{equ}
  Similarly to above, a combination of Lemma~\ref{lem:ext-time} and
  Lemma~\ref{lem:switch-diluted} guarantees that for $\K$ large enough
  \begin{equ}
    \PP_{V_{i+1}} ( \mB^{S}) \one_{\{ M_{i} \geqslant \K \} } \leqslant
\ve\one_{\{ M_{i} \geqslant \K \} }\;.\label{eqn:prf-main-eps}
  \end{equ}
Indeed, the probability of the event $\{\tau^{\myl} (S_{i+1})> \tau^{\mygeq}
(S_{i+1})\} $  is arbitrarily small via
Lemma~\ref{lem:ext-time}
 and the bound in the second point of
Lemma~\ref{lem:consistency}, conditional on the event $ \{
\sigma^{\tD} (V_{i+1}) \leqslant \sigma^{\mygeq} (V_{i+1})\} $.
On the other hand, the probability of the event $ \{
\sigma^{\tD} (V_{i+1}) >\sigma^{\mygeq} (V_{i+1})\} $ is small
by Lemma~\ref{lem:switch-diluted}, both provided that $ \K $ is
sufficiently large.

Combining both \eqref{eqn:prf-main-eps} and \eqref{e:d} we conclude that, for
$\ve > 0$ satisfying \eqref{eqn:for-kappa-0}, 
there are an upper bound on  $ \delta $
and a lower bound on $\K$ (depending on $ \mf{c}, \kappa_{0} $ and $ \ve
$ which at this stage are already fixed) which guarantee that 
\begin{equ}
    \PP_{T_i} ( \mB) \one_{\{ M_{i} \geqslant \K \} } \leqslant
\ve\one_{\{ M_{i} \geqslant \K \} }\;.\label{eqn:prf-main-eps-final}
\end{equ}

  \textit{Step 1: $ \mA$.} In this step we obtain an estimate on
$ \EE_{T_{i}} [ \mf{F} (\kappa, \bpi_{T_{i+1}}) \one_{\mA}] $. By
  construction, the skeleton median decreases on the event $ \mA $: $ \Delta M_{i}
  = -1 $ by Lemma~\ref{lem:consistency}. Therefore by the second bound in
Lemma~\ref{lem:bd-uparrow-n}
  \begin{equ}
    \EE_{T_{i}} \left[ \mf{F} (\kappa, \bpi_{T_{i+1}}) \one_{\mA}
    \right] \leqslant e^{- \kappa_{0} + c \kappa }\EE_{T_{i}} \left[  e^{ c \kappa \|
      w_{T_{i+1}-} \|^{2}_{M_{i}+ k_{0}}}\one_{\mA}
    \right]  e^{\kappa_{0} M_{i}}\;.
  \end{equ}
Observe that the quantity under the expectation on the right-hand side depends only on the left limit
of $ w $ at time $ T_{i+1} $. Now, the aim of this step will be to show that there exists an increasing function 
$\kappa \mapsto H(\kappa) > 1 $ such that
\begin{equ}[e:aim-first]
    \EE_{T_{i}} \Big[  e^{ c \kappa \|
      w_{T_{i+1}-} \|^{2}_{M_{i}+ k_{0}}}\one_{\mA}
    \Big]\leqslant H (\kappa) e^{ 4 c e^{- 2 \delta
    k_{0}} \kappa \| w_{T_{i}} \|_{M_{i}+ k_{0}}^{2}} \;.
\end{equ}
  To obtain this bound we first condition on time $ V_{i+1} $ and use the
estimate \eqref{eqn:uniform-bd} from Proposition~\ref{prop:reg-high-freq}:
\begin{equs}
  \EE_{V_{i+1}} \Big[   e^{ c \kappa \| w_{T_{i+1}-} \|^{2}_{M_{i}+
      k_{0}}}\one_{\mA} \Big] &\leqslant \EE_{V_{i+1}} \Big[   e^{ c \kappa \| w_{T_{i+1}-} \|^{2}_{M_{i}+
      k_{0}}} \Big]\one_{\mA^V} \\ &\leqslant \hat C(c\kappa)e^{ 2 c \kappa \| w_{V_{i+1}-} \|^{2}_{M_{i}+
      k_{0}}} \one_{\mA^{V}} \;,
\end{equs}
where we have set $\mA^{V} = \{ V_{i+1} = T_{i} + \delta  \}$.
We then take the expectation at time $
T_{i} $ and use \eqref{eqn:reg-dwn-n} to deduce the desired bound \eqref{e:aim-first} with
\begin{equ}[e:defH]
H(\kappa) = \hat C(c\kappa) C(2c\kappa)\;.
\end{equ}
We then choose $ k_{0}$ sufficiently large as a function of $\delta$ such that 
  \begin{equ}[eqn:for-k0]
    4 c e^{- 2 \delta k_{0}} \leqslant 1/2 \;.
  \end{equ}
Let us observe that \eqref{eqn:for-k0} is the only point where we require the parameter $
k_{0} $.
With this choice of parameters and recalling that $\kappa_0$ was chosen in such a way that 
\eqref{eqn:for-kappa-0} holds, we conclude from \eqref{e:aim-first} that provided that $\mf{h}$ is
chosen in such a way that
\begin{equ}[e:h-1]
\mf{h}(\kappa) \geqslant e^{c\kappa-c/2}H(\kappa)/ H(1/2)\;,
\end{equ}
one has the bound
    \begin{equs}[eqn:final-2]
    \EE_{T_{i}} \left[ \mf{F} (\kappa, \bpi_{T_{i+1}}) \one_{\mA}
    \right] & \leqslant \mf{c} \cdot \mf{h} (\kappa) \cdot e^{\kappa_{0} M_{i} + \frac\kappa2 \| w_{T_{i}}
    \|_{M_{i}+ k_{0}}^{2}} \\
& \leqslant  \mf{c} \cdot \mf{h}(\kappa) \cdot \mf{F}
    (\kappa /2 , \bpi_{T_{i}}) \;,
  \end{equs}
  which is an estimate of the required order. In addition, the map $
\mf{h} (\kappa) $ defined above satisfies the requirements of the theorem.

\textit{Step 2: $ \mB \cap \{ M_{i} \geqslant \K \} \eqdef
\mB_{\K}$.} Once
again, let us start by estimating the
conditional expectation at time $V_{i+1}$. We observe that although $
\mB $ is a ``bad'' event, we have no guarantee that
$ \Delta M_{i} \geqslant 0$.
Hence we combine both estimates of Lemma~\ref{lem:bd-uparrow-n} (the first one with $ L
=1 $, in order to keep the estimate uniform over $ k_{0} $), to find that for
some constant $ c > 1 $
  \begin{equs}
    \EE_{V_{i+1}}  \Big[ \mf{F} &(\kappa, \bpi_{T_{i+1}})
\one_{\mB_{\K}}
    \Big] e^{- \kappa_{0} M_{i}}  \\
    & \leqslant \EE_{V_{i+1}} \Big[ \exp \Big( c ( \kappa_{0} + \kappa) \left(   \|
      w_{T_{i+1}-}\|^{2}_{M_{i}+ 1} + 1  \right) \Big) \one_{
\mB_{\K} }
  \Big] \;,
  \end{equs}
where we have in addition used that $ \| w \|_{M_{i} + k_{0}} \leqslant \|
w \|_{M_{i} + 1} $.
Now, we know from \eqref{e:d} and 
\eqref{eqn:prf-main-eps} that $ \PP_{T_{i}} ( \mB_{\K})
\leqslant \ve $.
  Therefore, via Cauchy--Schwarz and the mentioned bounds we obtain that
  \begin{equ}[eqn:niceBoundB]
    \EE_{T_{i}} \left[ \mf{F} (\kappa, \bpi_{T_{i+1}})
\one_{\mB_{\K }} \right]  
      \leqslant \sqrt{\ve} \EE_{T_{i}}\left[ e^{ 2 c (\kappa_{0 } + \kappa )  \|
       w_{T_{i+1}-} \|^{2}_{M_{i}+ 1} }  \one_{\mB}
   \right]^{\frac{1}{2}} e^{c(\kappa_{0} + \kappa )} e^{\kappa_{0} M_{i}}\;.
  \end{equ}

We now note that $\mB \subset \CE \cup \bar \CE$ where 
\begin{equs}
\CE = \{\sigma^{\mygeq}_{5/4}(T_{i}) < S_{i+1}\}
\;,\quad
\bar \CE = \{\mf{m}_{S_{i+1} } =\tD\} \cap \{
\tau^{\myl} (S_{i+1}) > \tau^{\mygeq} (S_{i+1}) \}\;.
\end{equs}
Given two random variables $a$ and $b$, write $(a)_\CE(b)$ for the random variable equal to $a$ on $\CE$ and $b$ otherwise.
It then follows immediately from Definition~\ref{def:Tn} that, if we set $\beta = (5/4)_\CE(2)$, $L = (M_i)_\CE(M_i - 2)$,
as well as $t_0 = (T_i)_\CE(S_{i+1})$, then the stopping time\footnote{Although $t_0$ isn't a stopping time, one verifies that $t_1$ is.} 
$t_1 = \sigma^{\mygeq}_{\beta}(L,t_0)$ satisfies $t_1 \leqslant T_{i+1}$ on $\mB$ so that, for any $\bar \kappa > 0$,
 \eqref{eqn:uniform-bd} yields
\begin{equ}[e:boundt1]
 \EE_{t_1} \left[ e^{\bar \kappa \|
       w_{T_{i+1}-} \|^{2}_{M_{i}+ 1}}  \one_{\mB} \right]  
       \leqslant \hat C (\bar \kappa)   e^{2\bar \kappa  \|
       w_{t_1} \|^{2}_{M_{i}+ 1}}
\one_{\{t_1 \leqslant T_{i+1}\}}  \;.
\end{equ}

We can then apply $\EE_{t_0}$ to both sides: observe that $
t_{0} $ is not a stopping time, but for any two stopping times $ \tau, \sigma $
we define the expectation $\EE_{\tau_{\CE} \sigma } \left[ X \right] = \EE_{\tau} [ X \one_{\CE}] +
\EE_{\sigma} [X \one_{\CE^{c}}] $, which still satisfies the 
``tower property'' $\EE_{T_i} = \EE_{T_i}\EE_{t_0}$. In particular, one has the identity
\begin{equ}
\EE_{T_i} X = \EE_{T_i} \EE_{t_1} X = \EE_{T_i} \EE_{t_0} \EE_{t_1} X\;,
\end{equ}
so that, by \eqref{e:boundt1}, one has
\begin{equ}
\EE_{T_i} \left[ e^{\bar \kappa \|
       w_{T_{i+1}-} \|^{2}_{M_{i}+ 1}}  \one_{\mB} \right]  
       \leqslant \hat C (\bar \kappa) \EE_{T_i} \EE_{t_0}\Big[ e^{2\bar \kappa  \|
       w_{t_1} \|^{2}_{M_{i}+ 1}}
\one_{\{t_1 \leqslant T_{i+1}\}}\Big]\;.
\end{equ}
 Hence, after applying $ \EE_{t_{0}} $ we can use 
the bound \eqref{e:rupnew} since, 
by the second step of Lemma~\ref{lem:consistency} (and $E$ as appearing there) and the fact that 
$M_i \ge M(u_{T_i})$, we have $ \| w^{\mygeq} (L; t_0, \cdot) \|
\leqslant (1)_\CE(E) < (5/4)_\CE(2)$. We conclude that there exists an increasing function $\bar H$ such
that
\begin{equ}[e:defbarH]
\EE_{T_i} \left[ e^{\bar \kappa\|
       w_{T_{i+1}-} \|^{2}_{M_{i}+ 1}}  \one_{\mB} \right] \leqslant e^{-\bar \kappa}\bar H(\bar \kappa) \;.
\end{equ}
Inserting this back into \eqref{eqn:niceBoundB}, we see that
\begin{equ}
\EE_{T_{i}} \left[ \mf{F} (\kappa, \bpi_{T_{i+1}})
\one_{\mB_{\K }} \right] 
\leqslant \sqrt{\eps \bar H(2c(\kappa_0+\kappa))} \mf{F}(0, \bpi_{T_{i}}) \;.
\end{equ}
Since we assumed that $\delta$ is chosen sufficiently small so that \eqref{eqn:for-kappa-0} holds, 
we conclude that, provided that we choose $\mf{h}$ large enough so that
\begin{equ}[e:h-2]
\mf{h}(\kappa)^2 \geqslant \bar H(2c(\kappa_0+\kappa))/\bar H(2c\kappa_0+c)\;, 
\end{equ}
we have the bound
\begin{equ}[e:f3]
\EE_{T_{i}} \left[ \mf{F} (\kappa, \bpi_{T_{i+1}})
\one_{\mB_{\K}}  \right] \leqslant \mf{c} \cdot \mf{h} (\kappa)
\cdot  \mF (0, \bpi_{T_{i}}) \leqslant  \mf{c} \cdot \mf{h} (\kappa) \cdot \mF
( \kappa /2 , \bpi_{T_{i}}) \;.
\end{equ}
  
  \textit{Step 3: $ \mB \cap \{ \|
  w_{T_{i}} \|_{M_{i}+k_{0}} \geqslant \K \} \eqdef
\widetilde{\mB}_{\K} $.} Here we follow
  a slightly different estimate than in the previous step, since we are not
allowed to use \eqref{eqn:prf-main-eps}, as $ M_{i}  $ is not necessarily large.
Therefore we are not able to obtain a small factor $ \ve $ because of an event that
happens with small probability. Instead, we will gain a small factor, provided
that $ \K $ is sufficiently large. 
We start once more by applying
  Lemma~\ref{lem:bd-uparrow-n} (using both bounds, the first one with $ L =1
$) and the fact that $ \| w \|_{M_{i} + k_{0}} \leqslant \| w \|_{M_{i} +1}$,
so that on the event $ \mB $ we have:
  \begin{equ}
    \mf{F} (\kappa, \bpi_{T_{i+1}}) e^{- \kappa_{0} M_{i}}\leqslant \exp
\left( c (\kappa_{0} + \kappa)
      ( \| w_{T_{i+1}-}\|^{2}_{M_{i}+1} + 1) \right)\;.
  \end{equ}
  Therefore, by \eqref{e:defbarH}, we obtain the bound
  \begin{equs}
    \EE_{T_{i}} \Big[ \mf{F} (\kappa, \bpi_{T_{i+1}}) \one_{
\widetilde{\mB}_{\K}} \Big] e^{- \kappa_{0} M_{i}} 
       & \leqslant  e^{c ( \kappa_{0} + \kappa)} \EE_{T_{i}} \left[
      e^{ c(\kappa_{0} + \kappa)  \| w_{T_{i+1}-} \|_{M_{i+1}}^{2}
    }\one_{ \widetilde{\mB}_{\K} }\right] \\
       & \leqslant \bar{H}(c(\kappa_{0} + \kappa)) \one_{\{ \|
  w_{T_{i}} \|_{M_{i}+k_{0}} \geqslant \K \}}\;.
  \end{equs}
Since $ \kappa \geqslant 1/2 $, we can conclude that
\begin{equs}
\EE_{T_{i}}  \left[ \mf{F} (\kappa, \bpi_{T_{i+1}}) \one_{
\widetilde{\mB}_{\K}} \right] 
 &\leqslant \bar{H}( c(\kappa_{0}+ \kappa))  e^{- \frac{1}{4} \K^{2}} \mf{F}
(\kappa/2, \bpi_{T_{i}}) \one_{\{ \| w_{T_{i}} \|_{M_{i}+k_{0}} \geqslant \K \}} \\
& \leqslant \mf{c} \cdot \mf{h} (\kappa) \cdot \mf{F} (\kappa,
\bpi_{T_{i}})  \;,\label{e:f4}
\end{equs}
  provided that
\begin{equ}[e:h-3]
\mf{h}(\kappa) \geqslant \bar{H}(c(\kappa_{0} + \kappa))/ \bar{H}(c (\kappa_{0}
+ 1/2)) \;,
\end{equ}
and by choosing a $ \K(\ve, \kappa_{0}, \mf{c})
> 0 $ such that in addition to \eqref{eqn:prf-main-eps} also the following
holds:
  \begin{equ}
    \bar{H}(c (\kappa_{0} +1/2))  e^{- \frac{1}{4} \K^{2}} \leqslant
    \mf{c} < 1 \;.
  \end{equ}
  As in the previous steps, we have therefore obtained a bound of the required order.

  \textit{Step 4: The case $ \| \bpi_{T_{i}} \| < \K $.} Also this
estimate follows along the lines of the previous steps: indeed the proof is even simpler since we
are not interested in obtaining the contraction constant $ \mf{c} $. We start
as usual by applying Lemma~\ref{lem:bd-uparrow-n}:
  \begin{equs}
    \EE_{T_{i}} & \left[ \mf{F}(\kappa, \bpi_{T_{i+1}})\right] e^{-
\kappa_{0} M_{i}} \one_{\{ \| \bpi_{T_{i}} \| <\K \}} \\
    & \leqslant e^{c (\kappa_{0} + \kappa)}\EE_{T_{i}} \left[ \exp \Big( c
(\kappa_{0} + \kappa)  \|
      w_{T_{i+1}-}\|^{2}_{M_{i}+ 1}  \Big) \right]\one_{\{ \| \bpi_{T_{i}} \|
    <\K \}} \;.
\end{equs}
Now, via the uniform estimate \eqref{eqn:uniform-bd} in
Proposition~\ref{prop:reg-high-freq} we obtain
\begin{equs}
\EE_{T_{i}} \left[ \exp \Big( c (\kappa_{0} + \kappa)  \|
      w_{T_{i+1}-}\|^{2}_{M_{i}+ 1}  \Big) \right] \leqslant \hat{C} ( c
(\kappa_{0} + \kappa)) e^{2 c (\kappa_{0} + \kappa) \| w_{T_{i}-}
\|^{2}_{M_{i}+1}} \;,
\end{equs}
which yields as desired that for some $ J (\kappa_{0}, \kappa, \K) \in
(0, \infty) $
\begin{equation} \label{e:f5}
\begin{aligned}
\EE_{T_{i}}  \left[ \mf{F}(\kappa, \bpi_{T_{i+1}})\right] \one_{\{ \|
\bpi_{T_{i}} \| <\K \}} \leqslant
J (\kappa) \one_{\{ \| \bpi_{T_{i}} \| <\K
\}} \;,
\end{aligned}
\end{equation}
as required. This concludes the proof of the contraction estimate
\eqref{eqn:contraction}, since we can combine \eqref{eqn:final-2},
\eqref{e:f3}, \eqref{e:f4} and \eqref{e:f5} to obtain
\begin{equation*}
\begin{aligned}
\EE_{T_{i}} \left[ \mF(\kappa, \bpi_{T_{i +1}}) \right] \leqslant
4 \mf{c} \cdot \mf{h} (\kappa)  \mF (\kappa /2, \bpi_{T_{i}}) \one_{ \{
  \| \bpi_{T_{i}} \| \geqslant \K \} } +  J(\kappa)  &\one_{\{
  \| \bpi_{T_{i}}\|  < \K \}}\;,
\end{aligned}
\end{equation*}
for any $ \mf{h} $ satisfying \eqref{e:h-1}, \eqref{e:h-2} and \eqref{e:h-3},
and such that $ \mf{h} (1/2) =1 $.
This is the desired bound, since $ \mf{c} \in (0, 1) $ can be chosen
arbitrarily small.

  \textit{Step 5: Uniform estimates.} It now remains to obtain
\eqref{e:unif} and the bound at time $ t=0 $. The uniform estimate
\eqref{e:unif} follows along similar
  lines as the estimates above, using the uniform bound \eqref{eqn:uniform-bd}
  from Proposition~\ref{prop:reg-high-freq}. 
  Our first objective is therefore to obtain some deterministic estimates which reduce
the problem to an exponential bound of the kind treated in
Proposition~\ref{prop:reg-high-freq}. For $ s \in [T_{i}, T_{i+1}) $ we have that
  \begin{equ}
    \mf{G} (\pi_{s}) = \mf{F} ( \kappa, \bpi_{T_{i}}) \mf{B}(\kappa, \bpi_{T_{i}},
    \pi_{s}) \;,
  \end{equ}
  where
  \begin{equ}
    \mf{B}(\kappa, \bpi_{T_{i}}, \pi_{s})  = \exp \left( \kappa_{0} (M (\pi_{s}) -
    M_{i}) + \| \pi_{s} \|^{2}_{M (\pi_{s}) + k_{0}} - \kappa \|
  w_{T_{i}} \|^{2}_{M_{i} + k_{0}}  \right) \;.
  \end{equ}
  Now since $ \pi_{s} \in S $ we can use Lemma~\ref{lem:est-reg-jump-n} to bound
  \begin{equs}
    \| \pi_{s} \|_{M (\pi_{s}) + k_{0}}^{2} & \leqslant \|
    \Pi^{\myg}_{M_{i}+ k_{0}} \pi_{s} \|_{M_{i} + k_{0}}^{2} + 4
(\nu_{\mathrm{min}}^{- \frac{1}{
2 \a }} +1)^{2} ( M(\pi_{s})
    - M_{i})_{-} \\
    & \leqslant \| \Pi^{\myl}_{M_{i}} \pi_{s} \|^{2} \| 
    w_{s} \|_{M_{i}+ k_{0}}^{2} + 4(\nu_{\mathrm{min}}^{- \frac{1}{
2 \a }} +1)^{2}( M(\pi_{s})
    - M_{i})_{-}  \\
    & \leqslant  \| 
    w_{s} \|_{M_{i} + k_{0}}^{2} + 4(\nu_{\mathrm{min}}^{- \frac{1}{
2 \a }} +1)^{2}( M(\pi_{s}) - M_{i})_{-} \;.
  \end{equs}
  In particular, we can use this bound to estimate, with $ c_{1} =4 (\nu_{\mathrm{min}}^{- \frac{1}{
2 \a }} +1)^{2} $:
  \begin{equs}
    \mf{B}(\kappa, \bpi_{T_{i}}, \pi_{s} ) \leqslant \exp \Big(\kappa_{0} (M (\pi_{s})  -
      M_{i})_{+} &- (\kappa_{0} - c_{1}) (M (\pi_{s}) - M_{i})_{-} \\
      & + \| w_{s} \|^{2}_{M_{i} + k_{0}} - \kappa \| w_{T_{i}}
      \|^{2}_{M_{i} + k_{0}}  \Big) \;.
  \end{equs}
  Further, via Lemma~\ref{lem:est-mid-jump-n} and once more
via Lemma~\ref{lem:est-reg-jump-n}:
  \begin{equs}
    (M(\pi_{s}) - M_{i})_{+} & \leqslant (M (\Pi_{M_{i}}^{\myg} \pi_{s}) -
    M_{i})_{+} \\
    & \leqslant  4 \nu_{\mathrm{max}}^{\frac{1}{2 \a }} \| \pi_{s} \|_{M_{i}}^{2}   
     \leqslant c_{2}  \left(  \| \pi_{s} \|_{M_{i} + k_{0} }^{2} + k_{0} \right)  \\
    & \leqslant c_{2}  \left( \| w_{s} \|_{M_{i} + k_{0} }^{2} +   k_{0} \right)  \;.
  \end{equs}
  Hence overall, since we can assume that $
\kappa_{0} \geqslant c_{1} $, we obtain that
  \begin{equs}
    \mf{B}(\kappa, \bpi_{T_{i}}, \pi_{s})\leqslant e^{ \kappa_{0}
 k_{0} c_{2}}  \exp \left( c_{2}\|
  w_{s} \|^{2}_{M_{i} + k_{0}} - \kappa \| w_{T_{i}}
\|^{2}_{M_{i} + k_{0}} \right)\;.
  \end{equs}
This estimate allows us to conclude, since we can apply \eqref{eqn:uniform-bd}
from Proposition~\ref{prop:reg-high-freq} three times to obtain that for 
  for $ \kappa \geqslant \kappa_{1} = 2^{3}c_{2}  $ and $
  A_{i} = [T_{i}, T_{i+1}) $ one has
\begin{equ}
  \EE_{T_{i}} \left[ \sup_{s \in A_{i}} \mf{C}(\kappa, \bpi_{T_{i}},
  \pi_{s}) \right] \leqslant C(\kappa_{0}, k_{0}, \delta) \;,
\end{equ}
which proves the desired bound.

  To conclude, we turn to the estimate at time $ t =0 $. Here we use that $
  M_{0} = M( \pi_{0}) $ to find that
  \begin{equ}
    \mf{F}(\kappa, \bpi_{0}) = \mf{G}(\pi_{0}) \mf{D}( \kappa, 
    \pi_{0}) \;,
  \end{equ}
  with 
  \begin{equ}
    \mf{D}(\kappa, \pi_{0}) = \exp \left( \kappa \| w_{0} \|^{2}_{M
      (\pi_{0}) + k_{0}} - \| \pi_{0} \|^{2}_{M(\pi_{0})+ k_{0}} \right) \;.
  \end{equ}
  By the definition of $ M (\pi_{0}) $ we know that $ 2^{-1} \leqslant \|
  \Pi^{\myl}_{M(\pi_{0})} \pi_{0} \|^{2} $, from which we deduce
  \begin{equ}
    \| \pi_{0} \|^{2}_{M(\pi_{0})+ k_{0}}   = \| \Pi^{\myl}_{M (\pi_{0}) } \pi_{0} \|^{2} \|
    w_{0} \|^{2}_{M ( \pi_{0}) + k_{0}}  \geqslant 2^{-1} \| w_{0} \|^{2}_{M
    (\pi_{0}) + k_{0}} \;,
  \end{equ}
  implying that $\mf{D}( \kappa, \pi_{0}) \leqslant 1$ if $\kappa \leqslant 1/2$ as
  required.
\end{proof}

\subsection{Regularity to median estimates}

In this subsection we state elementary lemmas that relate jumps of the
energy median to high frequency regularity and vice versa.

\begin{lemma}\label{lem:est-reg-jump-n}
  Consider two values $ L^{+}, L^{-} \in \NN $ and write $ \Delta L =
  L^{+} - L^{-} $. Then for any $ \varphi \in S $ and $ \gamma \geqslant 1/2 $
  \begin{equs}
    \| \varphi \|_{\gamma, L^{+}}^{2} \leqslant 
  \begin{cases}
    \| \varphi \|_{\gamma, L^{-}}^{2} & \text{ if } \Delta L \geqslant 0 \;, \\
    2^{2 \gamma }(\nu_{\mathrm{min}}^{- \frac{\gamma}{
\a }} +1 )(\Delta L)_{-} + 2^{2 \gamma -1} \| \varphi \|_{\gamma,
    L^{-}}^{2} & \text{ if } \Delta L < 0 \;,
  \end{cases}
  \end{equs}
  with $ (\Delta L)_{-} $ the negative part of $ \Delta L $.
 \end{lemma}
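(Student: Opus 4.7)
The plan is a direct Fourier-analytic argument, splitting into the two cases according to the sign of $\Delta L$. For $\Delta L \geqslant 0$, monotonicity suffices: since $L^+_\alpha \geqslant L^-_\alpha$ for every component $\alpha$, the index set $\{|k| > L^+_\alpha\}$ appearing in $\|\varphi\|_{\gamma, L^+}^2$ is contained in $\{|k| > L^-_\alpha\}$, and each weight satisfies $(1 + |k| - L^+_\alpha)^{2\gamma} \leqslant (1 + |k| - L^-_\alpha)^{2\gamma}$. Summing mode by mode and then over $\alpha$ yields the inequality without any appeal to $\varphi \in S$.

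For $\Delta L < 0$, I would set $D = (\Delta L)_- = L^- - L^+ \geqslant 1$ and $D_\alpha = L^-_\alpha - L^+_\alpha = (\nu^\alpha)^{-1/(2\a)} D$, so that $D_\alpha \leqslant \nu_{\mathrm{min}}^{-1/(2\a)} D$ uniformly in $\alpha$. I would then split the sum defining $\|\varphi\|_{\gamma, L^+}^2$ into a high-frequency piece (indices with $|k| > L^-_\alpha$) and an intermediate shell (indices with $L^+_\alpha < |k| \leqslant L^-_\alpha$). On the high-frequency piece, I would rewrite
\begin{equ}
(1 + |k| - L^+_\alpha)^{2\gamma} = \bigl((1 + |k| - L^-_\alpha) + D_\alpha\bigr)^{2\gamma}
\end{equ}
and apply the convexity inequality $(a+b)^{2\gamma} \leqslant 2^{2\gamma-1}(a^{2\gamma} + b^{2\gamma})$, valid since $2\gamma \geqslant 1$. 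After summation this contributes exactly $2^{2\gamma-1}\|\varphi\|_{\gamma, L^-}^2$ plus a remainder bounded by $2^{2\gamma-1} D_\alpha^{2\gamma}$ times the tail $L^2$ mass $\sum_{|k|>L^-_\alpha}|\hat\varphi^\alpha(k)|^2$.

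On the intermediate shell the weight is bounded by $(1 + D_\alpha)^{2\gamma}$, so its contribution is controlled by $(1 + \nu_{\mathrm{min}}^{-1/(2\a)} D)^{2\gamma}$ times the $L^2$ mass supported in the corresponding annulus. Since $\varphi \in S$ we have $\sum_\alpha \|\varphi^\alpha\|^2 = 1$, which allows me to bound both residual $L^2$ masses by one. Collecting these estimates and using $D \geqslant 1$ to absorb additive constants and sub-linear powers of $D$ into a constant multiple of $D$, I arrive at the desired bound $2^{2\gamma}(\nu_{\mathrm{min}}^{-\gamma/\a} + 1)(\Delta L)_- + 2^{2\gamma-1}\|\varphi\|_{\gamma, L^-}^2$.

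The only delicate point in the argument is the bookkeeping needed to unify the two residual contributions into the single constant $2^{2\gamma}(\nu_{\mathrm{min}}^{-\gamma/\a} + 1)$ on the right-hand side, carefully tracking the factors $(\nu^\alpha)^{-1/(2\a)}$ arising from the definition $L_\alpha = (\nu^\alpha)^{-1/(2\a)} L$; in the case $\gamma = 1/2$ (the one actually used in the proof of Theorem~\ref{thm:lyap-func-discrete}) everything collapses, since $2^{2\gamma-1} = 1$ and both residual contributions are already linear in $D$, so no convexity step is needed and the stated constant is obtained immediately.
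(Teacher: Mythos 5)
Your decomposition (monotonicity for $\Delta L \geqslant 0$; high-frequency piece plus intermediate shell, with the convexity inequality $(a+b)^{2\gamma}\leqslant 2^{2\gamma-1}(a^{2\gamma}+b^{2\gamma})$, for $\Delta L<0$) is exactly the paper's argument. The gap is in the final bookkeeping step that you describe as "absorbing sub-linear powers of $D$": the residual produced by the convexity inequality is $2^{2\gamma-1}D_{\alpha}^{2\gamma}$ times a tail mass at most one, and $D_{\alpha}^{2\gamma}$ is a \emph{super}-linear power of $D$ when $\gamma>1/2$, so it cannot be absorbed into a constant multiple of $D$. In fact the stated bound, linear in $(\Delta L)_{-}$, is false for $\gamma>1/2$ and large $(\Delta L)_{-}$: take $m=1$, $\nu=1$, $\gamma=1$, $L^{+}=0$, $L^{-}=D$ and $\varphi\in S$ concentrated on modes with $|k|=2D$; then the left-hand side is $(1+2D)^{2}\sim 4D^{2}$ while the right-hand side is $8D+2(1+D)^{2}\sim 2D^{2}$. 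The honest conclusion of your (and the paper's) computation is a residual of order $(\Delta L)_{-}^{2\gamma}$, not $(\Delta L)_{-}$.

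You are in good company: the paper's own proof performs the same silent replacement of $(\Delta L)_{-}^{2\gamma}$ by $(\Delta L)_{-}$ in its last two lines. This is harmless downstream because the lemma is only ever invoked either with $\gamma=1/2$ (where $2\gamma=1$ and, as you correctly note, everything is genuinely linear and no convexity step is even needed) or with $(\Delta L)_{-}\in\{0,1\}$ (the skeleton median jumps down by exactly one, as in Lemma~\ref{lem:h-jump-n}), and in both regimes $(\Delta L)_{-}^{2\gamma}$ and $(\Delta L)_{-}$ coincide. To make your write-up airtight, either restrict the statement to $\gamma=1/2$, or replace $(\Delta L)_{-}$ by $(\Delta L)_{-}^{2\gamma}$ in the conclusion, or add the hypothesis $(\Delta L)_{-}\leqslant 1$ when $\gamma>1/2$; any of these keeps every application in the paper intact.
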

The restriction $ \gamma \geqslant 1/2 $ is not necessary: for $ \gamma < 1/2 $
the parameters of the estimate change only slightly, but such an estimate is not
required.

\begin{proof}
  Let us start with the case $ \Delta L \geqslant 0 $. From the definition of the norms
  we have for any $ \varphi \in S $ 
  \begin{equs}
    \| \varphi &\|_{\gamma, L^{+}}^{2} -  \| \varphi
    \|_{\gamma, L^{-}}^{2} \\
    & = \sum_{\alpha=1}^{m}  \left( \sum_{| k | > L^{+}_{\alpha}} (| k | - L^{+}_{\alpha} + 1
    )^{2 \gamma}|
    \hat{\varphi}^{\alpha}_{k}|^{2} - \sum_{| k | > L^{-}_{\alpha}} ( | k |
-L^{-}_{\alpha} + 1)^{2 \gamma} |
    \hat{\varphi}_{k}^{\alpha}  |^{2} \right) \leqslant 0 \;. 
  \end{equs}
  On the other hand if $ \Delta L < 0 $, since $ \| \varphi \| =1 $
  \begin{equs}
    \| \varphi &\|_{L^{+}}^{2} -   2^{2 \gamma -1} \| \varphi \|_{L^{-}}^{2}
    \\
    & = \sum_{\alpha =1}^{m} \left(  \sum_{|
    k | > L^{+}_{\alpha}} (| k | - L^{+}_{\alpha} +1 )^{2 \gamma}|
\hat{\varphi}_{k}^{\alpha} |^{2} -
     2^{2 \gamma -1} \sum_{| k | >
    L^{-}_{\alpha}} ( | k | -L^{-}_{\alpha} +1 )^{2 \gamma} |
\hat{\varphi}_{k}^{\alpha} |^{2}\right)  \\  
    &  \leqslant 2^{2 \gamma -1}\nu_{\mathrm{min}}^{- \frac{\gamma}{
\a }}  ( \Delta L )_{-}^{2 \gamma} + 2^{2 \gamma -1} \sum_{\alpha
=1}^{m} \sum_{L^{+}_{\alpha}  < | k | \leqslant L^{-}_{\alpha}} (| k
    | - L^{+}_{\alpha}+1)^{2 \gamma}  | \hat{\varphi}_{k}^{\alpha} |^{2} \\
    & \leqslant 2^{2 \gamma }  [\nu_{\mathrm{min}}^{- \frac{\gamma}{
\a }}( \Delta L )_{-} +1] \\
    & \leqslant  2^{2 \gamma }(\nu_{\mathrm{min}}^{- \frac{\gamma}{
\a }} +1 ) ( \Delta L )_{-} \;,
  \end{equs}
  where we used that $ (\Delta L)_{-} \geqslant 1 $ together with 
  \begin{equ}
    (a - L^{+}_{\alpha})^{2 \gamma} = (a - L^{-}_{\alpha} + (\Delta
L_{\alpha})_{-})^{2 \gamma} \leqslant
    2^{2 \gamma -1 } \left[ (a - L^{-}_{\alpha})^{2 \gamma} + (\Delta
L_{\alpha})_{-}^{2 \gamma} \right] \;,
  \end{equ}
  for all $ a \geqslant L^{-} $.
  This concludes the proof.
\end{proof}
An analogous estimate provides a bound on the size of the energy median through high
frequency regularity.

\begin{lemma}\label{lem:est-mid-jump-n}
  Consider any $ L \in \NN $. Then for every $
\varphi \in S $ satisfying $ \| \Pi^{\myg}_{L} \varphi \| > 0 $ the following
estimate holds:
  \begin{equ}
    (M ( \Pi^{\myg}_{L} \varphi) - L)^{\frac{1}{2} } \leqslant 2
\nu_{\mathrm{max}}^{\frac{1}{4 \a }} \| \varphi
    \|_{L} \;.
  \end{equ}
\end{lemma}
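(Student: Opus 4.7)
The plan is to work directly in Fourier coordinates and exploit the defining property of the energy median at the critical level. Write $\psi = \Pi_L^{\myg}\varphi$ and $\tilde M = M(\psi)$. Since $\psi$ is supported on modes with $|k|>(L+1)_\alpha$, one has $\|\Pi^{\myl}_{L+1}\psi\|=0 < \|\Pi^{\mygeq}_{L+1}\psi\|=\|\psi\|$, so the infimum defining the median satisfies $\tilde M \geqslant L+2$; in particular $\tilde M - L \geqslant 2$ and the left-hand side of the target inequality is well defined.

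The main input is the minimality of $\tilde M$ at the level $\tilde M -1$. Since $\tilde M$ is the smallest integer $M \geqslant 1$ with $\|\Pi^{\mygeq}_M \psi\| \leqslant \|\Pi^{\myl}_M\psi\|$, evaluating at $M = \tilde M - 1$ gives the reverse strict inequality, and combining with $\|\psi\|^2 = \|\Pi^{\mygeq}_{\tilde M - 1}\psi\|^2 + \|\Pi^{\myl}_{\tilde M - 1}\psi\|^2$ yields
\begin{equ}
  \|\Pi^{\mygeq}_{\tilde M -1}\psi\|^{2} > \tfrac{1}{2}\|\psi\|^{2}\;.
\end{equ}

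Next I would bound $\|\varphi\|_L^2$ from below by restricting the Fourier sum in its definition to the frequencies $|k|>(\tilde M-1)_\alpha$, on which $\hat\varphi^\alpha_k = \hat\psi^\alpha_k$ (since $(\tilde M-1)_\alpha \geqslant (L+1)_\alpha$). On this range the weight admits the uniform lower bound
\begin{equ}
  1 + |k| - L_\alpha \geqslant (\tilde M - 1)_\alpha - L_\alpha = (\nu^\alpha)^{-\frac{1}{2\a}}(\tilde M - 1 - L) \geqslant \tfrac{1}{2}\nu_{\max}^{-\frac{1}{2\a}}(\tilde M - L)\;,
\end{equ}
using $(\nu^\alpha)^{-1/(2\a)} \geqslant \nu_{\max}^{-1/(2\a)}$ together with $\tilde M - 1 - L \geqslant (\tilde M-L)/2$, which holds because $\tilde M - L \geqslant 2$. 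Plugging this into the definition of the seminorm and combining with the median bound on $\|\Pi^{\mygeq}_{\tilde M-1}\psi\|^2$ produces a quantitative estimate of the form $\|\varphi\|_L^2 \gtrsim \nu_{\max}^{-1/(2\a)}(\tilde M - L)$, and taking square roots delivers the target inequality.

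The main bookkeeping point is to track all three inequalities (the median, the uniform weight bound, and the absorption of $\|\psi\|^2$ into the unit-sphere constraint $\|\varphi\|=1$) carefully enough to arrive exactly at the constant $2\nu_{\max}^{1/(4\a)}$; beyond this elementary chain I do not expect any further input to be needed.
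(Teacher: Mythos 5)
Your route is the same as the paper's: restrict the seminorm $\|\varphi\|_L^2$ to the frequencies above level $\tilde M-1$, lower-bound the weight $1+|k|-L_\alpha$ there by a multiple of $\nu_{\mathrm{max}}^{-1/(2\a)}(\tilde M-L)$, and invoke minimality of the median at level $\tilde M-1$. The problem is the very last step. Minimality gives $\|\Pi^{\mygeq}_{\tilde M-1}\psi\|^2>\tfrac12\|\psi\|^2$ with $\psi=\Pi^{\myg}_L\varphi$, so what your chain actually produces is $\|\varphi\|_L^2\gtrsim\nu_{\mathrm{max}}^{-1/(2\a)}(\tilde M-L)\,\|\Pi^{\myg}_L\varphi\|^2$. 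The ``absorption of $\|\psi\|^2$ into the unit-sphere constraint'' goes the wrong way: $\|\varphi\|=1$ only gives $\|\psi\|\leqslant 1$, whereas you need a \emph{lower} bound on $\|\psi\|$, and the hypothesis $\|\Pi^{\myg}_L\varphi\|>0$ provides none. This is not bookkeeping; the factor $\|\Pi^{\myg}_L\varphi\|^2$ cannot be removed, and the inequality as stated in fact fails. Take $m=d=1$, $\nu=1$, $L=1$ and $\varphi=\sqrt{1-2\ve^2}\,e_0+\ve e_N+\ve e_{-N}$ with $N\geqslant 3$: then $\varphi\in S$, $M(\Pi^{\myg}_1\varphi)=N$, but $\|\varphi\|_1=\sqrt{2N}\,\ve$, so for small $\ve$ the left-hand side $(N-1)^{1/2}$ exceeds the right-hand side.

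To be fair, the paper's own proof elides exactly the same point: its last displayed inequality silently uses $\sum_\alpha\|\Pi^{\myg}_{M-1}\varphi^\alpha\|^2\geqslant 1/4$, which likewise does not follow from $\varphi\in S$ alone. The estimate that is actually true is the one you have before the illegitimate absorption, namely $(\tilde M-L)^{1/2}\,\|\Pi^{\myg}_L\varphi\|\lesssim\nu_{\mathrm{max}}^{1/(4\a)}\|\varphi\|_L$, and this is what the downstream uses really require: in Step~5 of the proof of Theorem~\ref{thm:lyap-func-discrete} and in Lemma~\ref{lem:bd-uparrow-n} the bound is only needed when the median of the full function genuinely moves up past $L+1$, and in that regime the median property of $\varphi$ itself forces at least half of the $L^2$ mass above the relevant level, hence $\|\Pi^{\myg}_L\varphi\|^2>1/2$ (the remaining case contributes a jump of at most one and is absorbed into the constant). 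If you restate the lemma with the factor $\|\Pi^{\myg}_L\varphi\|$ on the left, or add the hypothesis $\|\Pi^{\myg}_L\varphi\|^2\geqslant 1/2$, your argument closes; as written, it does not.
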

\begin{proof}
Let us write $ M = M ( \Pi^{\myg}_{L} \varphi) $ for short and note that $
M > L $, so that $ M-1 \geqslant L$, since we are working with integers.
Therefore, from the definition of the
  energy median
  \begin{equs}
    \| \varphi \|_{L} & \geqslant \bigg( \sum_{\alpha =1}^{m}\sum_{| k
| > M_{\alpha} -1} ( | k | - L_{\alpha}+1) |
  \hat{\varphi}_{k}^{\alpha} |^{2} \bigg)^{\frac{1}{2}}\\
  & \geqslant \bigg(  \sum_{\alpha =1}^{m} (M_{\alpha} - L_{\alpha}) \|
\Pi^{\myg}_{M-1} \varphi^{\alpha} \|^{2} \bigg)^{\frac{1}{2}} \geqslant
  \frac{1}{2 \nu_{\mathrm{max}}^{\frac{1}{4 \a }}} ( M - L)^{\frac{1}{2}}\;.
  \end{equs}
  Hence, the result is proven.
\end{proof}
The next result considers the jump of the process \( w_{t} =
\Pi_{M_{t}}^{\myg}  u_{t} / \| \Pi^{\myl}_{M_{t}} u_{t}  \| \) at the jump times $
T_{i} $: it is a slight adaptation of Lemma~\ref{lem:est-reg-jump-n}.

\begin{lemma}\label{lem:h-jump-n}
Under the assumptions of Theorem~\ref{thm:lyap-func} and with the stopping
times and skeleton median from Definition~\ref{def:Tn} the following holds.
  For every $ i \in \NN $ and $ \Delta M_{i} = M_{i+1} - M_{i} =
  M_{T_{i+1}} - M_{T_{i}} $ we have the following estimates for any $
  k_{0} \in \NN $ and $ \gamma \geqslant 1/2 $
  \begin{equ}
    \| w_{T_{i+1}} \|_{\gamma, M_{i+1} + k_{0}}^{2} \leqslant \begin{cases} 
      \| w_{T_{i+1}-} \|_{\gamma, M_{i} + k_{0}}^{2} & \text{if } \Delta M_{i} \geqslant 0
      \,, \\
      5 \cdot 2^{2 \gamma} \left[ \nu_{\mathrm{min}}^{- \gamma / \a} +1 
 + 2^{ - 1}  \| w_{T_{i+1}-} \|_{\gamma, M_{i}+ k_{0}}^{2} \right] &
      \text{if } \Delta M_{i} < 0 \,.
    \end{cases} 
  \end{equ}

\end{lemma}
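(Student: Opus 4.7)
The plan is to reduce both cases to estimates on the normalised solution $\varphi = u_{T_{i+1}}/\|u_{T_{i+1}}\| \in S$. Writing $L^\pm = M_{i+1} + k_0$ and $L^- = M_i + k_0$, one has the identities
\begin{equ}
\|w_{T_{i+1}}\|_{\gamma, L^+}^2 = \frac{\|\varphi\|_{\gamma, L^+}^2}{\|\Pi_{M_{i+1}}^{\myl}\varphi\|^2}\;,\qquad \|w_{T_{i+1}-}\|_{\gamma, L^-}^2 = \frac{\|\varphi\|_{\gamma, L^-}^2}{\|\Pi_{M_i}^{\myl}\varphi\|^2}\;,
\end{equ}
since $u$ is continuous across $T_{i+1}$ while $M_t$ jumps from $M_i$ to $M_{i+1}$. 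The task then reduces to comparing shifted Sobolev norms of a single function in $S$, which is precisely the content of Lemma~\ref{lem:est-reg-jump-n}, combined with a uniform lower bound on $\|\Pi_{M_{i+1}}^{\myl}\varphi\|$.

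For Case 1, $\Delta M_i \geqslant 0$, so $L^+ \geqslant L^-$. On the one hand, by Lemma~\ref{lem:est-reg-jump-n} (the $\Delta L \geqslant 0$ branch) we have $\|\varphi\|_{\gamma, L^+}^2 \leqslant \|\varphi\|_{\gamma, L^-}^2$. On the other hand, $M_{i+1} \geqslant M_i$ implies that $\Pi_{M_{i+1}}^{\myl}$ retains at least the modes retained by $\Pi_{M_i}^{\myl}$, so $\|\Pi_{M_{i+1}}^{\myl}\varphi\| \geqslant \|\Pi_{M_i}^{\myl}\varphi\|$ and hence $1/\|\Pi_{M_{i+1}}^{\myl}\varphi\|^2 \leqslant 1/\|\Pi_{M_i}^{\myl}\varphi\|^2$. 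Multiplying the two estimates yields the desired bound.

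For Case 2, the fourth point of Lemma~\ref{lem:consistency} forces $\Delta M_i = -1$, so $(\Delta L)_- = 1$. Applying Lemma~\ref{lem:est-reg-jump-n} to $\varphi \in S$ gives
\begin{equ}
\|\varphi\|_{\gamma, L^+}^2 \leqslant 2^{2\gamma}\bigl(\nu_{\mathrm{min}}^{-\gamma/\a} + 1\bigr) + 2^{2\gamma - 1}\|\varphi\|_{\gamma, L^-}^2\;.
\end{equ}
It remains to handle the denominators. The upper bound $\|\Pi_{M_i}^{\myl}\varphi\|^2 \leqslant 1$ is trivial, so $\|\varphi\|_{\gamma, L^-}^2 \leqslant \|w_{T_{i+1}-}\|_{\gamma, L^-}^2$. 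For the lower bound on $\|\Pi_{M_{i+1}}^{\myl}\varphi\|^2$, I would invoke \eqref{e:bd-pi} at time $T_{i+1}$, which gives $1 \leqslant \sqrt{5}\,\|\Pi_{M_{T_{i+1}}}^{\myl}\pi_{T_{i+1}}\|$, i.e.\ $\|\Pi_{M_{i+1}}^{\myl}\varphi\|^2 \geqslant 1/5$. Combining these ingredients produces the factor $5\cdot 2^{2\gamma}$ and yields Case 2 after a trivial rearrangement.

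Nothing in the argument looks subtle beyond bookkeeping; the only pitfall is to remember that it is the \emph{skeleton} median, not the true energy median, whose jump downwards is controlled by $1$, and that \eqref{e:bd-pi} holds for the skeleton process at the post-jump time $T_{i+1}$, which is what furnishes the uniform lower bound on the new low-frequency mass used in Case 2.
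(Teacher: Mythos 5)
Your proof is correct and follows essentially the same route as the paper: both cases reduce to Lemma~\ref{lem:est-reg-jump-n} applied to $\pi_{T_{i+1}}\in S$ together with the bound \eqref{e:bd-pi} at time $T_{i+1}$, the only cosmetic difference being that you bound the numerator and denominator separately where the paper bounds the ratio $\|\Pi^{\myl}_{M_i}\pi_{T_{i+1}}\|^2/\|\Pi^{\myl}_{M_{i+1}}\pi_{T_{i+1}}\|^2\leqslant 5$ directly. (There is a harmless typo where you write $L^\pm$ for $L^+$.)
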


\begin{proof}
  Let us start with the first estimate, in the case $ \Delta M_{i} \geqslant 0
  $. In this case we find that $ \| \pi_{T_{i+1}} \|_{\gamma, M_{i+1}+ k_{0}} \leqslant \|
  \pi_{T_{i+1}} \|_{\gamma, M_{i}+ k_{0}} $, as well as $ \| \Pi^{\myl}_{M_{i}}
\pi_{T_{i+1}} \| \leqslant \| \Pi^{\myl}_{M_{i+1}}
  \pi_{T_{i+1}} \| $. Hence, in
particular, we conclude that
  \begin{equ}
    \| w_{T_{i+1}} \|_{\gamma, M_{i+1}+ k_{0}} = \frac{\| \pi_{T_{i+1}}
\|_{\gamma, M_{i+1} + k_{0}}}{\| \Pi^{\myl}_{M_{i+1}} \pi_{T_{i+1}} \|} \leqslant \frac{\| \pi_{T_{i+1}}
    \|_{\gamma, M_{i}+ k_{0}}}{\| \Pi^{\myl}_{M_{i}} \pi_{T_{i+1}} \|}= \|
    w_{T_{i+1} -} \|_{\gamma, M_{i}+ k_{0}} \;.
  \end{equ}
  On the other hand, in the case \( \Delta M_{i} < 0 \), which by
  Definition~\ref{def:Tn} of the skeleton median process $ (M_{t})_{t
\geqslant 0} $ implies that the jump is exactly of size one, i.e.\ $ \Delta
M_{i} = -1 $, we find by
  Lemma~\ref{lem:est-reg-jump-n} and \eqref{e:bd-pi}
  \begin{equs}
    \| w_{T_{i+1}} \|_{\gamma, M_{i+1}+ k_{0}}^{2}   &= \frac{\| \pi_{T_{i+1}}
    \|_{\gamma, M_{i+1}+ k_{0}}^{2} }{ \| \Pi^{\myl}_{M_{i+1}} \pi_{T_{i+1}}
\|^{2}} \\
    & \leqslant 2^{2 \gamma} (\nu_{\mathrm{min}}^{- \frac{\gamma}{
\a }} +1 )\frac{| \Delta M_{i} |}{ \| \Pi^{\myl}_{M_{i+1}} \pi_{T_{i+1}}
    \|^{2}} + 2^{2 \gamma -1} \frac{\| \pi_{T_{i+1}} \|_{\gamma, M_{i}+
k_{0}}^{2} }{ \| \Pi_{M_{i+1}}^{\myl}  \pi_{T_{i+1}} \|^{2}} \\
    & \leqslant 2^{2 \gamma} (\nu_{\mathrm{min}}^{- \frac{\gamma}{
\a }} +1 ) \cdot 5 + 2^{2 \gamma -1} \frac{\| \pi_{T_{i+1}} \|_{\gamma, M_{i}+
    k_{0}}^{2} }{ \| \Pi_{M_{i+1}}^{\myl}  \pi_{T_{i+1}} \|^{2}}\;.
  \end{equs}
  We can also estimate via \eqref{e:bd-pi}
  \begin{equ}
    \frac{\| \Pi_{M_{i}}^{\myl}  \pi_{T_{i+1}} \|^{2}}{ \| \Pi_{M_{i+1}}^{\myl} 
    \pi_{T_{i+1}} \|^{2}} \leqslant 5 \;,
  \end{equ}
  so that overall
  \begin{equ}
    \| w_{T_{i+1}} \|_{\gamma, M_{i+1}+ k_{0}}^{2} \leqslant 2^{2 \gamma} (\nu_{\mathrm{min}}^{- \frac{\gamma}{
\a }} +1 )
    \cdot 5 + 2^{2 \gamma -1} \cdot 5  \frac{\| \pi_{T_{i+1}} \|_{\gamma,
M_{i}+ k_{0}}^{2} }{ \| \Pi^{\myl}_{M_{i}}
    \pi_{T_{i+1}} \|^{2}} \;,
  \end{equ}
  which completes the proof of the lemma.
\end{proof}

Now, a combination of the previous estimates delivers the following lemma.

  \begin{lemma}\label{lem:bd-uparrow-n}
    Under the assumptions of Theorem~\ref{thm:lyap-func} and with the stopping
times and skeleton median from Definition~\ref{def:Tn}, there exists a constant $
    c >1$ such that, uniformly over all $ L, i \in
\NN $ and $\kappa, \kappa_0 > 0$:
      \begin{enumerate}
      \item If $ \Delta M_{i} \geqslant 0 $, then:
    \begin{equs}
      \mf{F}(\kappa, &\bpi_{T_{i+1}}) e^{- \kappa_{0} M_{i}} 
 \leqslant \exp\left( c  \kappa_{0}
        \left(  \| w_{T_{i+1} -} \|_{M_{i}+ L}^{2} + 
        L \right) +  \kappa \| w_{T_{i+1}-} \|_{M_{i}+
    k_{0}}^{2} \right)   \;.
    \end{equs}
      \item  If $ \Delta M_{i} < 0 $, then:
    \begin{equ}
     \mf{F}(\kappa, \bpi_{T_{i+1}}) e^{- \kappa_{0} M_{i}} \leqslant  \exp\left( -
      \kappa_{0}  + c \kappa ( \| w_{T_{i+1} -}
    \|_{M_{i}+ k_{0}}^{2}+1) \right)   \;.
    \end{equ}
    \end{enumerate}
  \end{lemma}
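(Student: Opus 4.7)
The plan is to handle the two cases $\Delta M_i \geq 0$ and $\Delta M_i < 0$ separately, by expanding
\begin{equ}
\mf{F}(\kappa, \bpi_{T_{i+1}}) e^{-\kappa_{0} M_{i}} = \exp\bigl(\kappa_{0} \Delta M_{i} + \kappa \| w_{T_{i+1}} \|_{M_{i+1} + k_{0}}^{2}\bigr)
\end{equ}
and then bounding each of the two terms in the exponent via the regularity-jump estimate of Lemma~\ref{lem:h-jump-n} and the regularity-to-median estimate of Lemma~\ref{lem:est-mid-jump-n}.

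For the case $\Delta M_{i} < 0$, Lemma~\ref{lem:consistency} forces $\Delta M_{i} = -1$, so $\kappa_{0} \Delta M_{i} = -\kappa_{0}$, while Lemma~\ref{lem:h-jump-n} applied with $\gamma = 1/2$ yields a bound of the form $\|w_{T_{i+1}}\|^{2}_{M_{i+1}+k_{0}} \leq c(\|w_{T_{i+1}-}\|^{2}_{M_{i}+k_{0}} + 1)$ for some $c > 1$ depending only on $\nu_{\mathrm{min}}$ and $\a$. Combining and exponentiating gives the second estimate.

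For the case $\Delta M_{i} \geq 0$, the non-increase branch of Lemma~\ref{lem:h-jump-n} immediately gives $\|w_{T_{i+1}}\|^{2}_{M_{i+1}+k_{0}} \leq \|w_{T_{i+1}-}\|^{2}_{M_{i}+k_{0}}$, so all the work is in bounding $\Delta M_{i}$. Definition~\ref{def:Tn} identifies $M_{i+1} = M(\pi_{T_{i+1}})$ in this regime. The key observation is the following elementary \emph{comparison principle for the energy median}: for any $\varphi \in L^{2}_{\star}(\TT^{d};\RR^{m})$ and any $L_{\star} < M(\varphi)$ such that $\Pi^{\myg}_{L_{\star}}\varphi \neq 0$, one has $M(\Pi^{\myg}_{L_{\star}}\varphi) \geq M(\varphi)$. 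Indeed, for any integer $M' \in [L_{\star}+1, M(\varphi) - 1]$ the high-pass operators commute so $\Pi^{\mygeq}_{M'}(\Pi^{\myg}_{L_{\star}}\varphi) = \Pi^{\mygeq}_{M'}\varphi$, while $\Pi^{\myl}_{M'}(\Pi^{\myg}_{L_{\star}}\varphi)$ drops all modes with $|k| \leq (L_{\star}+1)_{\alpha}$ and thus has norm at most $\|\Pi^{\myl}_{M'}\varphi\|$; this yields $\|\Pi^{\mygeq}_{M'}(\Pi^{\myg}_{L_{\star}}\varphi)\| > \|\Pi^{\myl}_{M'}(\Pi^{\myg}_{L_{\star}}\varphi)\|$, so $M(\Pi^{\myg}_{L_{\star}}\varphi) > M'$ for every such $M'$.

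Applying this principle with $L_{\star} = M_{i} + L$ (for $L \geq 1$; the case $L = 0$ follows from $L = 1$ upon enlarging $c$, since $\|w\|^{2}_{M_{i}+1} \leq \|w\|^{2}_{M_{i}}$), we have either $M(\pi_{T_{i+1}}) \leq M_{i} + L$, in which case $\Delta M_{i} \leq L$, or $M(\pi_{T_{i+1}}) > M_{i} + L$, in which case the comparison and Lemma~\ref{lem:est-mid-jump-n} give
\begin{equ}
\Delta M_{i} - L \leq M(\Pi^{\myg}_{M_{i}+L}\pi_{T_{i+1}}) - (M_{i} + L) \leq 4\nu_{\mathrm{max}}^{1/(2\a)} \|\pi_{T_{i+1}}\|^{2}_{M_{i}+L}\;.
\end{equ}
A short Fourier-side check shows $\|\pi_{T_{i+1}}\|^{2}_{M_{i}+L} \leq \|w_{T_{i+1}-}\|^{2}_{M_{i}+L}$ for $L \geq 1$: on the frequency range $|k| > (M_{i}+L)_{\alpha} \geq (M_{i}+1)_{\alpha}$ one has $\hat{\pi}^{\alpha}_{T_{i+1}}(k) = \|\Pi^{\myl}_{M_{i}}\pi_{T_{i+1}}\|\, \hat{w}^{\alpha}_{T_{i+1}-}(k)$, and $\|\Pi^{\myl}_{M_{i}}\pi_{T_{i+1}}\| \leq 1$ by Lemma~\ref{lem:consistency}. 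Combining these estimates yields $\Delta M_{i} \leq L + 4\nu_{\mathrm{max}}^{1/(2\a)}\|w_{T_{i+1}-}\|^{2}_{M_{i}+L} \leq c(\|w_{T_{i+1}-}\|^{2}_{M_{i}+L} + L)$, and the first estimate follows upon exponentiating. The only genuinely non-routine ingredient is the comparison principle for $M$; once this is in place, applying Lemma~\ref{lem:est-mid-jump-n} at the shifted threshold $M_{i}+L$ produces the crucial \emph{linear} (rather than quadratic) dependence on $L$ in the bound on $\Delta M_{i}$, which is what the application to Theorem~\ref{thm:lyap-func-discrete} requires.
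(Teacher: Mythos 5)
Your proof is correct and follows essentially the same route as the paper's: the same decomposition of $\mf{F}(\kappa,\bpi_{T_{i+1}})e^{-\kappa_{0}M_{i}}$, Lemma~\ref{lem:h-jump-n} for the norm jump in both cases, and the observation that $M_{i+1}=M(\pi_{T_{i+1}})\leqslant M(\Pi^{\myg}_{L_{\star}}\pi_{T_{i+1}})$ (your ``comparison principle'', which the paper states as ``because of the presence of the projection $\Pi^{\myg}_{M_{i}}$ we have $M_{i+1}\leqslant M(w_{T_{i+1}-})$'') combined with Lemma~\ref{lem:est-mid-jump-n} to control $\Delta M_{i}$. The only real difference is where the threshold shift happens: you apply Lemma~\ref{lem:est-mid-jump-n} at $M_{i}+L$, whereas the paper applies it at $M_{i}$ to get $\Delta M_{i}\leqslant c\|w_{T_{i+1}-}\|^{2}_{M_{i}}$ and then peels off the shift via $\|w\|^{2}_{M_{i}}\leqslant\|w\|^{2}_{M_{i}+L}+cL\|w\|^{2}\leqslant\|w\|^{2}_{M_{i}+L}+cL$, using $\|w_{T_{i+1}-}\|\leqslant 2$ from Lemma~\ref{lem:consistency}; both yield the same estimate. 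One small inaccuracy: your reduction of $L=0$ to $L=1$ only produces $\Delta M_{i}\leqslant c(\|w\|^{2}_{M_{i}}+1)$, not the additive-constant-free bound that the statement requires at $L=0$; running the argument directly at threshold $M_{i}$ (as the paper does) closes this, and in any case the first estimate is only ever invoked with $L=1$ in the proof of Theorem~\ref{thm:lyap-func-discrete}.
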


  \begin{proof}
We start by showing that the following estimate holds uniformly over $ L, i \in
\NN$:
  \begin{equ}[e:extra1]
      \Delta M_{i} \leqslant c \| w_{T_{i+1}-} \|_{M_{i}+  L}^{2} + c
      L \;.
    \end{equ}
    This bound is only non-trivial when $\Delta M_{i} > 0$ so we assume that this is the case.
    Recall that $ w_{T_{i+1}-} =
    \Pi^{\myg}_{M_{i}} \pi_{T_{i+1}} / \| \Pi^{\myl}_{M_{i}} \pi_{T_{i+1}} \|$ and that,
    because of the presence of the projection $ \Pi^{\myg}_{M_{i}} $, we have
by definition that $
M_{i+1} \leqslant M( w_{T_{i+1}-} ) $. Hence
    applying first Lemma~\ref{lem:est-mid-jump-n} (with $L = M_i$ and $\varphi = \pi_{T_{i+1}}$) and then
    Lemma~\ref{lem:est-reg-jump-n} we obtain
    \begin{equs}
      \Delta M_{i} &  = \left( \sqrt{\Delta M_{i} } \right)^{2} \\
    & \leqslant c(\nu_{\mathrm{max}}) \|\pi_{T_{i+1}}\|_{M_{i}}^{2} \leqslant c(\nu_{\mathrm{max}}) \| w_{T_{i+1}-} \|_{M_{i}}^{2}  \\
      & \leqslant c(\nu_{\mathrm{max}})  \left( \| w_{T_{i+1}-} \|_{M_{i}+ L}^{2} + (\|
      w_{T_{i+1}-} \|_{M_{i}}^{2} - \| w_{T_{i+1}-} \|_{M_{i}+
    L}^{2}) \right) \\
    & \leqslant c(\nu_{\mathrm{max}}, \nu_{\mathrm{min}}) \left(  \| w_{T_{i+1}-} \|_{M_{i}+ L}^{2} + L \|
    w_{T_{i+1}-} \|^{2} \right)  \\
    & \leqslant  c(\nu_{\mathrm{max}}, \nu_{\mathrm{min}}) \left( \| w_{T_{i+1}-} \|_{M_{i}+ L}^{2} + 
     L\right) \;, \label{e:lastBound}
    \end{equs}
    where in the last line we have made use of the bound
\eqref{e:bd-pi} in Lemma~\ref{lem:consistency} since
\begin{equ}
\| w_{T_{i+1}-} \|= \lim_{t \uparrow T_{i +1}} \frac{\| \Pi_{M_{t}}^{\myg} 
\pi_{t} \|}{ \| \Pi_{M_{t}}^{\myl} \pi_{t} \|} \leqslant 2  \;,
\end{equ} 
thus proving \eqref{e:extra1}. Note that the
values of the constants $c$ in \eqref{e:lastBound} change from line to line.

At this point, the two desired estimates follow from Lemma~\ref{lem:h-jump-n}.
  If $ \Delta M_{i} \geqslant 0 $, then  we have $\| w_{T_{i+1}} 
  \|_{M_{i+1}+ k_{0}}^{2} \le \| w_{T_{i+1}-} \|_{M_{i}+ k_{0}}^{2}$, so that \eqref{e:extra1}
implies as desired
  \begin{equ}
    \mf{F}(\kappa, \bpi_{T_{i+1}}) e^{- \kappa_{0} M_{i}}
    \leqslant \exp \left( \kappa_{0} c  \|
      w_{T_{i+1} -}\|_{M_{i}+ L}^{2}  + \kappa_{0} c L + \kappa \| w_{T_{i+1}-} \|_{M_{i} + k_{0}}^{2}
  \right) \;.
  \end{equ}
    If instead $ \Delta M_{i} < 0 $ then $\Delta M_{i} = -1$ by definition and, by Lemma~\ref{lem:h-jump-n}, 
  we have 
  \begin{equ}
    \| w_{T_{i+1}} \|_{M_{i+1}+k_{0}}^{2} \leqslant c(\nu_{\mathrm{min}})(1 +   \| w_{T_{i+1}-}
    \|_{M_{i}+ k_{0}}^{2} )\;,
  \end{equ}
  which yields the desired bound.
\end{proof}

\subsection{Exit time bounds}\label{sec:exit-time}

In this subsection we collect estimates on certain exit times.
Given any stopping time $ t_{0} $ and any $ \mF_{t_{0}} $-adapted random
variables $ E \in (0, \infty) $ and $ L \in \NN $, let us define the following event:
\begin{equs}[e:sets-1n]
	\mD_{E} (L;t_{0}) & = \{ \| w^{\mygeq}   (L; t_{0}, \cdot) \|
	\leqslant E \}  \;.
\end{equs}
 Our first result is a negative exponential moment bound on
$ \sigma_{\beta}^{\mygeq} (L,t_{0}) - t_{0} $: recall the definition in
\eqref{eqn:tau-generic-n}.

\begin{lemma}\label{lem:neg-mmts}
Under the assumptions of Theorem~\ref{thm:lyap-func}, 
  fix any $ \kappa, E, \beta  > 0 $ such that $E \in
(0, \beta) $ and let $ t_{0} $ be any stopping
time. Then there exist a deterministic 
function $ (\zeta, \kappa, \beta, \delta) \mapsto C(\zeta, \kappa, \beta,
\delta) \in (0, \infty)$ such that uniformly over all parameters
\begin{equs}
  \EE_{t_{0}} & \left[ \exp \left(\kappa (\sigma_{\beta}^{\mygeq}(L, t_{0}) - t_{0})^{- \zeta}
    \right) \one_{\mD_{E}(L;t_{0})}
  \right]  < C(\zeta, \kappa, \beta, E ) \one_{\mD_{E}(L;t_{0})}\;.
\end{equs}
\end{lemma}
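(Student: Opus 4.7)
The plan is to turn the negative exponential moment bound into a Gaussian-type lower tail estimate for $\sigma := \sigma_\beta^{\mygeq}(L, t_0)$. By the strong Markov property, I may assume $t_0 = 0$. Writing $X_t = \|w^{\mygeq}(L; t, \cdot)\|^2$, on the event $\mD_E$ one has $X_0 \leqslant E^2$, and the second item of Proposition~\ref{prop:drft-OU} provides the supersolution representation
\begin{equ}
X_t \leqslant E^2 + R(\beta) t + \mathcal{N}_t\;,\qquad t \in [0, \sigma]\;,
\end{equ}
where $\mathcal{N}$ is a continuous martingale with $\mathcal{N}_0 = 0$ and quadratic variation $\ud\langle\mathcal{N}\rangle_t \leqslant R(\beta)\,\ud t$. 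Since $X_\sigma \geqslant \beta^2$ whenever $\sigma < 1$, if I set $M := (\beta^2 - E^2)/2$ and $s_0 := (\beta^2 - E^2)/(2R(\beta))$, then for every $s \in (0, s_0]$ I obtain the inclusion
\begin{equ}
\{\sigma \leqslant s\} \cap \mD_E \;\subseteq\; \bigl\{\sup_{0 \leqslant u \leqslant s} \mathcal{N}_u \geqslant M\bigr\} \cap \mD_E\;.
\end{equ}

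Next, I apply Dubins--Schwarz to the stopped martingale $\mathcal{N}^{\sigma \wedge s}$, whose clock is dominated by $R(\beta) s$ (equivalently, use the standard exponential Bernstein inequality for continuous martingales), to conclude
\begin{equ}
\PP_0(\sigma \leqslant s)\,\one_{\mD_E} \;\leqslant\; 2 \exp\bigl(- M^2 / (2 R(\beta) s)\bigr)\,\one_{\mD_E}\;,\qquad s \in (0, s_0]\;.
\end{equ}
Writing $F(s) = \PP_0(\sigma \leqslant s; \mD_E)$ and using the deterministic bound $\sigma \leqslant 1$, integration by parts gives
\begin{equ}
\EE_0\bigl[e^{\kappa \sigma^{-\zeta}}\,\one_{\mD_E}\bigr] \;\leqslant\; e^{\kappa} + \int_0^{1} \kappa \zeta\, s^{-\zeta - 1} e^{\kappa s^{-\zeta}}\, F(s)\, \ud s\;.
\end{equ}
Splitting this integral at $s_0$, the tail piece on $[s_0, 1]$ is controlled by an explicit constant depending on $(\kappa, \zeta, \beta, E)$, while on $(0, s_0]$ I plug in the Gaussian tail above to obtain an integrand dominated by $C\, s^{-\zeta - 1} \exp\bigl(\kappa s^{-\zeta} - M^2/(2 R(\beta) s)\bigr)$, which is integrable in the regime where the Gaussian cost $M^2/(2R(\beta) s)$ dominates $\kappa s^{-\zeta}$ as $s \to 0$, producing the desired constant $C(\zeta, \kappa, \beta, E) < \infty$.

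The only delicate point is the martingale tail estimate: one must stop $\mathcal{N}$ at $\sigma \wedge s$ before invoking Bernstein or Dubins--Schwarz, so that the quadratic variation bound from Proposition~\ref{prop:drft-OU} is guaranteed over the whole interval $[0, s]$ (since beyond $\sigma$ no such bound is claimed). Once this is done, the remainder of the argument is a straightforward computation, with all constants made explicit in terms of $\beta, E$ and $R(\beta)$.
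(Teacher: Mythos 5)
Your proposal is correct and follows essentially the same route as the paper: the supersolution bound from Proposition~\ref{prop:drft-OU}(2) reduces $\{\sigma_\beta^{\mygeq} \leqslant s\}$ to a martingale supremum exceeding a fixed level, which Doob's exponential (Bernstein-type) inequality turns into a Gaussian tail $\exp(-c/s)$, from which the negative exponential moment follows by integration. You are in fact slightly more careful than the paper in two places: you work with the squared norm (so the thresholds are $E^2$ and $\beta^2$, consistent with what Proposition~\ref{prop:drft-OU} actually controls) and you make explicit the final integration-by-parts step, including the implicit restriction that the Gaussian cost must dominate $\kappa s^{-\zeta}$ near $s=0$.
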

Let us highlight that
the estimate is not uniform over $ E \uparrow \beta $ and as a matter of fact fails at $
E \geqslant \beta $.

\begin{proof}

This result follows from Proposition~\ref{prop:drft-OU}, as a consequence of
Doob's submartingale inequality.
Indeed, consider $ R$ and $ \mN $ as in Proposition~\ref{prop:drft-OU}, so that we can
define for all $ t \in [t_{0}, \sigma_{\beta}^{\mygeq} (L, t_{0})] $
\begin{equ}
 x_{ t} = E+ R(\beta) \cdot (t - t_{0}) + \mN_{t}\;.
\end{equ}
For simplicity, we will consider $ \mN $ defined for all $ t \geqslant
t_{0} $ by setting $ \mN_{t} = \mN_{t \wedge \sigma_{\beta}^{\mygeq} (L,
t_{0})} $. By comparison, we then have that 
$ \sigma_{\beta}^{\mygeq}(L, t_{0}) \geqslant
\underline{\sigma}^{\mygeq}_{\beta}(L, t_{0})$, where the latter is defined by
\begin{equ}
\underline{\sigma}_{\beta}^{\mygeq}(L, t_{0}) = \inf \{ t \geqslant t_{0}  \ \colon \
x_{t} \geqslant \beta \} \;.
\end{equ}
Now fix any $ t_{\star}(\beta, \beta - E) > 0 $ satisfying $ 
R \, t_{\star} \leqslant  \frac{1}{2} ( \beta - E)$. 
Then for $ s \in [0, t_{\star}]  $ we find
\begin{equ}
\underline{ \sigma}_{\beta}^{\mygeq}(L, t_{0}) \leqslant t_{0} + s \ \ \Rightarrow \ \
\sup_{r \in [0, s]} \mN_{t_{0} + r} \geqslant  \frac{1}{2} ( \beta - E) \eqdef
\mu \;,
\end{equ}
so that overall for $ s \in [0, t_{\star}] $
\begin{equ}
  \PP_{t_{0}} ( \sigma_{\beta}^{\mygeq}(L, t_{0}) \leqslant  s )
  \one_{\mD_{E}(L; t_{0} )} 
  \leqslant \PP_{t_{0}} \left( \sup_{r \in [0, s]} \mN_{t_{0} + r} \geqslant \mu
\right)\one_{\mD_{E}(L;t_{0})}\;.
\end{equ}
We can now apply
Doob's submartingale inequality, since by
Proposition~\ref{prop:drft-OU} the quadratic variation of $ \mN $ satisfies $
\ud \langle \mN \rangle_{t} \leqslant R \ud t $ for all $ t \in  [t_{0} ,
\sigma_{\beta}^{\mygeq} (L; t_{0})] $, to obtain for any $ \lambda > 0 $
\begin{equs}
  \PP_{t_{0}}  \left( \sup_{r \in [0, s]} \mN_{t_{0} + r} \geqslant \mu
  \right) \one_{\mD_{E}(L;t_{0})} & = \PP_{t_{0}} \left( \sup_{r \in [0, s]}
    \exp( \lambda \mN_{t_{0} + r} ) \geqslant e^{\lambda \mu}
  \right) \one_{\mD_{E}(L;t_{0})}\\
& \leqslant \exp \left( -
\lambda \mu + \frac{\lambda^{2} s R}{2}  \right)
\one_{\mD_{E}(L;t_{0})}\;,
\end{equs}
so that choosing $ \lambda = \frac{\mu}{R s}  $ delivers eventually,
for $ s \in [0, t_{\star}] $, the bound
\begin{equ}
  \PP_{t_{0}} \left( \sup_{r \in [0, s]} \mN_{t_{0} + r} \geqslant \mu
  \right) \one_{\mD_{E}(L;t_{0})} \leqslant \exp \left( - \frac{ \mu^{2}}{2R}
  s^{-1}\right) \one_{\mD_{E}(L;t_{0})}\;,
\end{equ}
from which the required moment bound follows.
\end{proof}
Also the next result follows along classical lines from
Proposition~\ref{prop:drft-OU}. It guarantees a drift of the median to low
frequencies, in that the probability of the relative energy process about the
median becoming small is much higher than
that of increasing, provided the energy median is sufficiently large.
For this purpose, for any $ \K> 0, E \in (0, 2) $,  define $\mD_{E}^{\K}(t_{0}) =
\mD_{E}( M_{t_{0}} -2;t_{0} ) \cap \{ M_{t_{0}} > \K \}$. Recall also the
stopping times defined in \eqref{e:short}.

\begin{lemma}\label{lem:ext-time}
  Under the assumptions of Theorem~\ref{thm:lyap-func}, fix any $ E \in (0, 2) $ and let $ t_{0} $ be any stopping
  time. Then for every $ \ve \in (0, 1) $ there exists a $ \K (\ve, E) $
  such that 
\begin{equ}
  \PP_{t_{0}}  \big( \tau^{\myl}(t_{0}) < \tau^{\mygeq}(t_{0}) \big)\geqslant  1 - \ve  \;,
\end{equ}
on the event $ \mD^{\K}_{E}(t_{0}) $.
\end{lemma}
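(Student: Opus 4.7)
My plan is to show that, provided $\mathbf{K}$ is sufficiently large, with probability at least $1-\ve$ on $\mD_E^{\mathbf{K}}(t_0)$ one has simultaneously that $\tau^{\mygeq}(t_0) > t_0+s^{\star}$ and $\|w(L;t_0+s^{\star},\cdot)\|\leqslant 1/2$ for some deterministic $s^{\star}\in(0,1)$, where $L = M_{t_0}-2$. These two events together force $\tau^{\myl}(t_0)\leqslant t_0+s^{\star} < \tau^{\mygeq}(t_0)$ by definition. The key mechanism is that $\Delta_L \geqslant \Delta_{\mathbf{K}-2}\to\infty$ as $\mathbf{K}\to\infty$, so the dissipative drift appearing in Proposition~\ref{prop:drft-OU}(1) becomes arbitrarily strong while the noise coefficient $R(2)$ stays fixed.

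For the first event I would apply Proposition~\ref{prop:drft-OU}(2) with $\beta=2$. Since $\|w^{\mygeq}(L;t_0,\cdot)\|\leqslant E < 2$ on $\mD_E^{\mathbf{K}}(t_0)$, the same Doob-martingale argument used in Lemma~\ref{lem:neg-mmts} produces some $s^{\star}=s^{\star}(E,\ve)\in(0,1)$ with
\begin{equ}
\PP_{t_0}\bigl(\tau^{\mygeq}(t_0)\leqslant t_0+s^{\star}\bigr)\one_{\mD_E^{\mathbf{K}}(t_0)} \leqslant \ve/2 \;.
\end{equ}
For the second event I would apply Proposition~\ref{prop:drft-OU}(1) to $Y_t=\|w(L;t,\cdot)\|^2$, noting that $Y_{t_0}\leqslant E^2$ by definition of the projections. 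Interpreting the inequality $\ud Y_t \leqslant -2\nu_{\mathrm{min}}\Delta_L Y_t\,\ud t + R(2)\,\ud t + \ud \mM_t$ as the existence of a non-decreasing process $A_t$ with $A_{t_0}=0$ that turns it into an equality, It\^o's formula applied to $e^{2\nu_{\mathrm{min}}\Delta_L(t-t_0)}(Y_t+A_t)$, followed by dropping the non-negative contribution from $A_t$, yields on $[t_0,\tau^{\mygeq}(t_0)]$
\begin{equ}
Y_t \leqslant E^2 e^{-2\nu_{\mathrm{min}}\Delta_L(t-t_0)} + \frac{R(2)}{2\nu_{\mathrm{min}}\Delta_L} + N_t \;,
\end{equ}
where $N_t$ is a martingale satisfying $\EE_{t_0}[N_t^2]\leqslant R(2)/(4\nu_{\mathrm{min}}\Delta_L)$.

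Taking $\mathbf{K}$ so large that both deterministic terms above, evaluated at $t=t_0+s^{\star}$, are at most $1/16$ and that Chebyshev's inequality gives $\PP_{t_0}(|N_{t_0+s^{\star}\wedge\tau^{\mygeq}(t_0)}|>1/8)\leqslant \ve/2$, a union bound with the previous step concludes the proof, since $Y_{t_0+s^{\star}}\leqslant 1/4$ on the intersection of the two high-probability events. I expect the main technical nuisance to be the clean derivation of the integrating-factor bound from a semimartingale \emph{inequality}, combined with the bookkeeping needed to stop $\mM$ at $\tau^{\mygeq}(t_0)$ (since Proposition~\ref{prop:drft-OU} only controls the dynamics on this random interval) so that Doob's and Chebyshev's inequalities apply unconditionally to the stopped martingale $N_{\cdot\wedge\tau^{\mygeq}(t_0)}$.
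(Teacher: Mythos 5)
Your proposal is correct and follows essentially the same route as the paper's proof: both steps appear there in the same form, namely a Doob/Lemma~\ref{lem:neg-mmts}-type bound showing $\tau^{\mygeq}(t_0)>t_0+\delta$ with high probability, followed by using the dissipative drift $-2\nu_{\mathrm{min}}\Delta_{M_{t_0}-2}$ from Proposition~\ref{prop:drft-OU}, which diverges as $\K\to\infty$, to force $\|w(M_{t_0}-2;\cdot)\|\leqslant 1/2$ by time $t_0+\delta$. The only cosmetic difference is that the paper integrates the drift crudely under the contradiction hypothesis $\tau^{\myl}(t_0)>t_0+\delta$ (so that $\|w\|^2>1/4$ throughout, making the drift contribution at least $\nu_{\mathrm{min}}\delta\Delta_{M_{t_0}-2}/2$ in absolute value) rather than via your integrating factor plus Chebyshev; both close the argument.
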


\begin{proof}
  Let $ \mM $ and $ R $ be as in Proposition~\ref{prop:drft-OU}, and define for
$ t \geqslant t_{0} $ (by setting $ \mM_{t} = \mM_{t \wedge \tau^{\mygeq}
(t_{0})} $ to define the martingale also for times larger than $
\tau^{\mygeq} $):
	\begin{equs}
		x_{t} &= E + R(t - t_{0}) + \mM_{t}\;.
	\end{equs}
  Then the stopping time $ \underline{\tau}^{\mygeq} (t_{0}) $ given by
\begin{equ}
\underline{\tau}^{\mygeq}(t_{0}) = \inf \left\{ t \geqslant t_{0}  \ \colon \
x_{t} \geqslant 2  \right\} \;,
\end{equ}
satisfies by Doob's submartingale inequality that for some constant $ C(\zeta,
E) $:
\begin{equ}
  \EE_{t_{0}} \left[ \exp \left( (\underline{\tau}^{\mygeq}(t_{0}) -
  t_{0})^{- \zeta} \right) \right] 1_{\mD^{\K}_{E}(t_{0})}  \leqslant C(\zeta, E)\;.
\end{equ}
Therefore, via the previous Lemma~\ref{lem:neg-mmts}, for any $ \ve \in
(0, 1) $ we can find a $ \delta
(\ve, E) \in (0, 1) $ such that on
the event $ \mD^{\K}_{E} (t_{0}) $ 
\begin{equ}
  \PP_{t_{0}} \left( \underline{\tau}^{\mygeq}(t_{0}) \leqslant t_{0} + \delta \right) \leqslant
  \ve \;.
\end{equ}
Now, on the event $ \{ \underline{\tau}^{\mygeq} (t_{0}) > t_{0} + \delta , \tau^{\mygeq}
(t_{0}) > t_{0} + \delta \} $ we find by
Proposition~\ref{prop:drft-OU} that if $ \tau^{\myl}(t_{0}) > t_{0} + \delta $, then
\begin{equ}
  \| w (M_{t_{0}}-2; t_{0} + \delta, \cdot)  \|^{2} \leqslant -
\nu_{\mathrm{\min} }\delta
  \Delta_{M_{t_{0}}-2} + x_{\delta} \leqslant - \nu_{\mathrm{min}} \delta \Delta_{M_{t_{0}}-2} + 2
  \leqslant \frac{1}{4} \;,
\end{equ}
where the last inequality holds provided $ \K $ is
chosen sufficiently large depending on $ \delta $, and thus on $ \ve $ and $
E $. This proves that $$ \PP_{t_{0}} (\underline{\tau}^{\mygeq} (t_{0}) > t_{0} + \delta , \tau^{\mygeq}
(t_{0}) > t_{0} + \delta, \tau^{\myl} (t_{0}) > t_{0} + \delta )= 0 $$ on $
\mD^{\K}_{E} (t_{0}) $, so that
\begin{equ}
  \PP_{t_{0}} ( \tau^{\myl}(t_{0}) < \tau^{\mygeq}(t_{0}))\one_{
  \mD^{\K}_{E} (t_{0}) } \geqslant (1 -
    \ve)\one_{ \mD^{\K}_{E} (t_{0}) } \;,
\end{equ}
as required.
\end{proof}

\section{High-frequency stochastic instability}\label{sec:insta}

The main aim of this section
is the proof of Proposition~\ref{prop:hf-insta}.
It furthermore collects some estimates relevant to the study of the energy median dynamics in the
concentrated setting. It is a fundamental
tool of our analysis, as we make use of the non-degeneracy of the
noise to obtain the instability of high-frequency states. 

\begin{proof}[of Proposition~\ref{prop:hf-insta}]

Let us start by considering the solution $ u $
to \eqref{eqn:main} with initial condition $ u_{0} $ such
that $ \| u_{0} \| =1 $, up to normalising it.
By \eqref{e:idata}, showing high-frequency stochastic instability requires us to 
consider initial data $ u_{0} $ satisfying
\begin{equs}
 \| (\Pi_{M-1}^{\myc} + \Pi_{M-2}^{\myc}) u_{0}  \| & \geqslant \frac{1}{4} \| \Pi^{\myl}_{M-2} u_{0} \| \;, \\
 \| \Pi^{\mygeq}_{M} u_{0} \| & \leqslant 2 \| \Pi^{\myl}_{M} u_{0} \| \;.
\end{equs}
It follows in particular that
\begin{equs}
\| (\Pi_{M-1}^{\myc} + \Pi_{M-2}^{\myc})  u_{0} \|^{2} & \geqslant \frac{1}{16} \| \Pi^{\myl}_{M-2} u_{0}
\|^{2} \;, \\
\| (\Pi_{M-1}^{\myc} + \Pi_{M-2}^{\myc}) u_{0} \|^{2} & \geqslant
\frac{1}{2^{4} \cdot (1 +
2^{4})} \| \Pi_{M}^{\mygeq}  u_{0} \|^{2} \;,
\end{equs}
which implies, since $ \| u_{0} \|=1 $, that there exists $
\beta_{0} \in \{ 1, \dots, m \} $ such that
\begin{equ}[e:beta0-1] 
\| (\Pi_{M-1}^{\myc} + \Pi_{M-2}^{\myc}) u_{0}^{\beta_{0}} \| \geqslant \eta_{0} \;,
\qquad \eta_{0} = \frac{1}{\sqrt{m} \sqrt{1 + 2^{5} + 2^{8}}} \;.
\end{equ}
It follows from Assumption~\ref{assu:noise} and in particular \eqref{e:lsnd} 
that there exists $ \alpha_{0} \in \{ 1, \dots, m \}$ with $
\nu^{\beta_{0}} \geqslant \nu^{\alpha_{0}} $ and a constant $ c > 0 $ such that
\begin{equation} \label{e:simply-n}
\begin{aligned}
\sum_{| k | \leqslant (M-3)_{\alpha_{0}}} \sum_{l \in \ZZ^{d}} \Gamma^{\alpha_{0}}_{\beta_{0}, l} 
|\hat{\varphi}^{\beta_{0 }, k+ l}|^{2} \geqslant
c \| (\Pi_{M-1}^{\myc} + \Pi_{M-2}^{\myc}) \varphi^{\beta_{0}} \|^{2} \;.  
\end{aligned}
\end{equation}
Indeed, since $ \nu^{\beta_{0}}
\geqslant \nu^{\alpha_{0}} $ we have that $ L_{\beta_{0}} \leqslant
L_{\alpha_{0}} $ for all $ L \in \NN $. Therefore, to deduce \eqref{e:simply-n}
it suffices to show that there exists a constant $ c > 0 $ such that
\begin{equation*}
\begin{aligned}
\sum_{| k | \leqslant (M-3)_{\beta_{0}}} \sum_{l \in \ZZ^{d}} \Gamma^{\alpha_{0}}_{\beta_{0}, l} 
|\hat{\varphi}^{\beta_{0 }, k+ l}|^{2} \geqslant
c \sum_{| k | \leqslant M_{\beta_{0}}} | \hat{\varphi}^{\beta_{0}, k}
|^{2} \;,
\end{aligned}
\end{equation*}
which is the case with $ c = \min_{l \in \mA}
\Gamma^{\alpha_{0}}_{\beta_{0}, l} > 0$, where $ \mA $ is the set in
Assumption~\ref{assu:noise}, since for every $ | k | \leqslant
M_{\beta_{0}} $ there exists an $ l \in \mA $ such that $ | k - l | \leqslant
(M-3)_{\beta_{0}}$: note that $ B ( M_{\beta_{0}} ) \subseteq  B (
(M-3)_{\beta_{0}} + 3 \nu_{\mathrm{min}}^{-1/2 \a }) $, so that indeed the claim follows from the assumption.

Given these preliminaries, we now write the solution $ u $ to \eqref{eqn:main}
in its mild formulation in
Fourier coordinates. Following the conventions of
Remark~\ref{rem:fourier}, this yields
\begin{equation*}
\begin{aligned}
\hat{u}_{t}^{k} = e^{- \nu \zeta_{k} t} \hat{u}_{0}^{k} + \int_{0}^{t}
e^{- \nu \zeta_{k} (t -s)} \sum_{l\in\ZZ^d}  \hat{u}_{s}^{k- l} \cdot \ud
B^{l}_{s} \;.
\end{aligned}
\end{equation*}
Now consider the time horizon $ t_{\star}^{M, (1)} $ and the threshold $ \eta
$:
\begin{equation*}
\begin{aligned}
t_{\star}^{M, (1)} = \frac{1}{2\Delta_{M-3}} \;, \qquad \eta <
\eta_{0} \;,
\end{aligned}
\end{equation*}
where $ \eta $ is arbitrary (as long as it satisfies the constraint with a
strict inequality). Then we introduce the following stopping times:
\begin{equ}
\tau_{\mathrm{low}}  = \inf \Big\{ t \in [0, t_{\star}^{M, (1)})  \; \colon \; e^{
\zeta_{M}t} \| (\Pi_{M-1}^{\myc} + \Pi_{M-2}^{\myc}) 
u_{t}^{\beta_{0}} \| \leqslant \eta \Big\} \;, 
\end{equ}
with the usual convention that $\inf \emptyset = +\infty$.
This definition may appear a bit odd, so let us explain it. Our aim will be to
prove that for short times, or at least until the system has diluted,
we have a lower bound on the energy in a shell of width two about level $ M-1 $ 
for the coordinate $ \beta_{0} $,
which is enough to shift the energy to level $ M-3 $. 

As we will see, we eventually show that the system has a (very small) chance
of diluting only after a time of order
\begin{equ}
t_{\star}^{M, (2)} = \frac{\log{(\lambda \Delta_{M-3})}}{2\Delta_{M-3}} \gg
t_{\star}^{M, (1)}\;.
\end{equ}
The parameter $ \lambda > 0 $ can be arbitrarily large and is required to
close the estimates below.
The eventual time scale $ t_{\star}^{M, (2)}  $ hides two different effects.
One is a shift of energy, and another is an increase due to the different rates
of dissipation. Indeed, we will provide a lower bound on the amount of energy
that is shifted of the following order (omitting noise terms, which are the
technically the most challenging):
\begin{equ}
e^{2 \zeta_{M-2} t} \| \Pi^{\myl}_{M-3} u_{t} \|^{2} \gtrsim e^{ 2\Delta_{M-2}
t} \int_{0}^{t}
e^{- 2\Delta_{M-2} s} \| (\Pi_{M-1}^{\myc} + \Pi_{M-2}^{\myc}) u_{s}^{\beta_{0}} \|^{2} \ud s \;.
\end{equ}
The main contribution of the integral comes from the time
interval $ [0, t_{\star}^{M, (1)}] $ and is of order $ 1/ \Delta_{M-2} $. This
then becomes of order $ \lambda$ by time $ t_{\star}^{M, (2)} $, when
multiplied with the factor $ e^{\Delta_{M} t} $, which accounts for the
different rates of dissipation. Hence $ t^{M,
(1)}_{\star} $ is the time-scale up to which we require a lower bound on the
energy in $ (\Pi_{M-1}^{\myc} + \Pi_{M-2}^{\myc})u^{\beta_{0}}_{s} $. At the same
time, we cannot expect a lower bound of this kind for
larger times scales, because modes in a shell of width two about level $ M-1
$ start dissipating at substantially different rates after that time, hence the
definition of $ \tau_{\mathrm{low}} $.
Next we consider the following stopping time to control central and high
frequencies
\begin{equ}
\tau_{\mathrm{up}}  = \inf \{ t \geqslant 0  \; \colon \; e^{ \zeta_{M-2}t } \|
\Pi^{\mygeq}_{M-2} u_{t} \| \geqslant 2 \} \;.
\end{equ}
Similarly, we introduce a stopping time for the low frequency component, which
kicks in roughly when the system has diluted:
\begin{equ}
\sigma = \inf \{ t \geqslant 0  \; \colon \; e^{\zeta_{M-2} t} \|
\Pi^{\myl}_{M-2} u_{t} \| \geqslant 8 \} \;.
\end{equ}
Finally, define
\begin{equ}
\tau = \sigma \wedge \tau_{\mathrm{up}} \wedge \tau_{\mathrm{low}} \;, \qquad v_{t} =
\begin{cases} u_{t} & \text{ if } t \leqslant
\tau \;, \\ e^{- \zeta_{M-2} (t - \tau)}
u_{\tau} & \text{
if } t > \tau \;. \end{cases} 
\end{equ}
With this definition we have that for all $ t \geqslant 0 $
\begin{equation} \label{e:ub-v}
\begin{aligned}
\| v_{t} \| \leqslant \sqrt{2^{6}+ 2^{2}} e^{- \zeta_{M-2}t} \;, \qquad \forall t \geqslant 0 \;,
\end{aligned}
\end{equation}
as well as
\begin{equation} \label{e:v-lbn}
\begin{aligned}
\| (\Pi_{M-1}^{\myc} + \Pi_{M-2}^{\myc}) v_{t}^{\beta_{0}} \| \geqslant
\eta^{\prime} e^{- \zeta_{M-2} t} \;, \qquad \forall t \in [0, t_{\star}^{M,
(1)}] \;,
\end{aligned}
\end{equation}
for some $ \eta^{\prime} < \eta $, were we used that $ \exp ( (\zeta_{M}-
\zeta_{M-2}) t_{\star}^{M, (1)} ) \leqslant c$, for some constant $ c > 0 $
independent of $ M $.
Consider then for any $ \alpha \in \{ 1, \dots , m \} $ and $ k \in
\ZZ^{d} $ the real-valued process $ \hat{z}^{\alpha, k}_{t} $ defined to be the
principal (positive) square root of $ | \hat{z}^{\alpha, k}_{t}|^{2} $, which
in turn solves the following equation:
\begin{equation}\label{e:z-2}
\begin{aligned}
\ud | \hat{z}^{\alpha, k}_{t}|^{2} = -2 \nu_{\alpha}  \zeta_{k} 
| \hat{z}^{\alpha, k}_{t}|^{2}  \ud t & +
\sum_{\beta =1}^{m}\sum_{l \in \ZZ^{d} } \Gamma^{\alpha}_{\beta, l}
|\hat{v}^{ \beta, k + l}_{t}|^{2}  \ud t \\
 + 2 \cdot \one_{[0, \tau]}(t) &\cdot \mf{Re} \bigg( \sum_{l \in \ZZ^d} 
\hat{v}^{\alpha, k}_{t}  \Big( \hat{v}^{- k - l}_{t} \cdot \ud
B^{l}_{t}\Big)^{\alpha}  \bigg) \;,
\end{aligned}
\end{equation}
for all $ k \in \ZZ^{d} $ and $ \alpha \in \{ 1, \dots, m \} $.
To see that $ | \hat{z}^{\alpha, k}_{t}|^{2} \geqslant 0 $ for all times
it suffices to observe that by It\^o's formula applied to the $ \alpha$-th component of $ u_{t} $ we
have that
\begin{equ}
| \hat{z}^{\alpha, k}_{t}|^{2} = | \hat{u}^{\alpha, k}_{t} |^{2} \;, \qquad \forall \alpha
\in \{ 1, \dots , m \}\;, \ k \in \ZZ^{d} \;, \ t
\leqslant \tau \;.
\end{equ}
Now, our aim is to prove by contradiction that $ \tau < t_{\star}^{M, (2)} $
with a probability bounded from below uniformly over the initial
data and with an explicit dependence on $ M $ (which we will then work to
improve by iterating the argument). To
do so, we start by obtaining a lower bound on the probability that $
\| \Pi_{M-2}^{\myl} z_{t_{\star}^{M, (2)}} \| $ is suitably large (in particular, it suffices to
show that $ \| \Pi_{M-3}^{\myl} 
z_{t_{\star}^{M, (2)}} \| $ is large).

Summing \eqref{e:z-2} over all $ k $ with $ | k | \leqslant
(M-3)_{\alpha}  $ and all $ \alpha \in \{ 1, \dots,m \} $ and using the bound
\eqref{e:simply-n} leads to
\begin{equation*}
\begin{aligned}
\ud \| \Pi^{\myl}_{M-3} z_{t} \|^{2} \geqslant  - & 2 \zeta_{M-3} \|
\Pi^{\myl}_{M-3} z_{t} \|^{2} \ud t + c \|(\Pi_{M-1}^{\myc} +
\Pi_{M-2}^{\myc})v_{t}^{\beta_{0}} \|^{2} \ud t \\
+ & \one_{[0, \tau]}(t)\sum_{\alpha =1}^{m} \sum_{| k | \leqslant (M - 3)_{\alpha}} 2 \mf{Re} 
\bigg( \sum_{l\in \ZZ^d} 
\hat{v}^{\alpha, k}_{t}  \left( \hat{v}^{- k - l}_{t} \cdot \ud
B^{l}_{t}\right)^{\alpha}  \bigg)\;.
\end{aligned}
\end{equation*}
Therefore, we obtain the following lower bound:
\begin{equation*}
\begin{aligned}
\| \Pi^{\myl}_{M-3} z_{t} \|^{2} & \geqslant c \int_{0}^{t}  e^{-2
\zeta_{M-3} (t-s)} \| (\Pi_{M-1}^{\myc} +
\Pi_{M-2}^{\myc})  v_{s}^{\beta_{0}} \|^{2} \ud s + e^{- 2 \zeta_{M-3} t}
\mM_{t}\\
&  = e^{-2 \zeta_{M-2}t}X_{t} + e^{- 2 \zeta_{M-2}t} Y_{t}\;,
\end{aligned}
\end{equation*}
where 
\begin{equation*}
\begin{aligned}
X_{t} & =   e^{2 \Delta_{M-3} t} c\int_{0}^{t}
e^{2 \zeta_{M-3} s}  \| (\Pi_{M-1}^{\myc} +
\Pi_{M-2}^{\myc})v_{s}^{\beta_{0}}  \|^{2} \ud s \;,\\
Y_{t} & = e^{2 \Delta_{M-3} t} \mM_{t} \;,
\end{aligned}
\end{equation*}
and where $ \mM_{t} $ is the martingale
\begin{equation*}
\begin{aligned}
\mM_{t} =   2 \mf{Re} \bigg( \int_{0}^{t \wedge \tau} e^{2 \zeta_{M-3}s}\sum_{\alpha =1}^{m} \sum_{| k | \leqslant (M - 3)_{\alpha}} \sum_{l\in\ZZ^d} 
\hat{v}^{\alpha, k}_{s}  \left( \hat{v}^{- k - l}_{s} \cdot \ud
B^{l}_{s}\right)^{\alpha}  \bigg)\;.
 \end{aligned}
\end{equation*}
At this point we would like to prove that by time $ t_{\star}^{M, (2)} $ the
drift $ X_{t} $ has become very large, while the martingale term is not relevant.
For this purpose, we require an upper bound on $ Y_{t} $ and a lower
bound on $ X_{t} $. For the martingale $ \mM_{t} $ we have the following
estimate on the quadratic variation:
\begin{equs}
\langle \mM \rangle_{t} & \leqslant  \sum_{\alpha, \beta=1}^{m} \sum_{l\in\ZZ^d}
\Gamma^{\alpha}_{\beta, l} \int_{0}^{t}  \Big\vert  e^{ 2
\zeta_{M-3} s}\sum_{| k | \leqslant (M-3)_{\alpha}} \hat{v}^{\alpha, k}_{s}
\hat{v}^{\beta, -k - l}_{s} \Big\vert^{2} \ud s \\
& \lesssim_{\Gamma}  \int_{0}^{t} e^{-4\Delta_{M-3} s} \ud s 
 \lesssim_{\Gamma} \frac{1 - e^{- 4\Delta_{M-3} t}}{\Delta_{M-3}} \;, \label{e:ub-mart}
\end{equs}
where we used both the decay assumption on $ \Gamma $ in \eqref{e:reg-assu} and
the upper bound on $ v $ from \eqref{e:ub-v}.
In particular, from the Burkholder--Davis--Gundy inequality, for any $ p
\geqslant 1 $ there exists a constant $ C(p, \Gamma) $ such that
\begin{equation*}
\begin{aligned}
\EE | Y_{t} |^{2p} \leqslant C(p, \Gamma) \left(\frac{e^{4
\Delta_{M-3} t}}{\Delta_{M-3}} \right)^{p} \;.
\end{aligned}
\end{equation*}

On the other hand for the drift term we have that for $ t \geqslant
t_{\star}^{M, (1)} $, and by \eqref{e:v-lbn}, the following lower bound holds:
\begin{equation} \label{e:lb-drift}
\begin{aligned}
X_{t} & \gtrsim_{\eta^{\prime} , \Gamma} e^{2\Delta_{M-3}t}
\int_{0}^{t_{\star}^{M, (1)}} e^{- \Delta_{M-3} s} \ud s
\gtrsim_{\eta^{\prime} , \Gamma} \frac{e^{2 \Delta_{M-3} t}}{\Delta_{M-3}} \;.
\end{aligned}
\end{equation}
Now define
\begin{equation*}
\begin{aligned}
 X = X_{t_{\star}^{M, (2)}} \;, \qquad Y = Y_{t_{\star}^{M, (2)}} \;,
\end{aligned}
\end{equation*}
so that by our lower bound on the drift \eqref{e:lb-drift} and our upper bound
on the martingale term \eqref{e:ub-mart}, there exist constants $
c > 0 $ and $ C_{p, \lambda} > 0 $ (for any $ p \geqslant 1 $, and where $
\lambda $ is the parameter in the definition of $ t_{\star}^{M, (2)} $) such that
\begin{equation} \label{e:bds-rv}
\begin{aligned}
 c \lambda \leqslant X \;, \qquad \EE [|Y|^{2 p}] \leqslant
C_{p, \lambda} (\Delta_{M-3})^{p} \;.
\end{aligned}
\end{equation}
At this point, our aim is to obtain a quantitative lower bound on the probability that $
X + Y $ is strictly positive. Note that at first it is not clear at all that
such a lower bound should hold, since albeit $ Y $ has mean zero, its fluctuations are much larger than the
lower bound on $ X $. Despite this fact, we obtain that \dash at least with some small
probability \dash the sum $ X+ Y$ stays positive.
Indeed, by \eqref{e:lb-drift} we have that
\begin{equation*}
\begin{aligned}
\PP (X + Y \leqslant 8) \leqslant  \PP ( Y \leqslant 8 - c \lambda ) \;.
\end{aligned}
\end{equation*}
The number $ 8 $ is chosen in connection to the definition of $ \sigma $, and
our aim is to prove that the probability above is bounded away from one.
Now, let us fix $ \lambda > 0 $ such that $ 8 - c \lambda = - 1 $, so that
our aim becomes to find an upper bound on the probability
$
\varphi = \PP (Y \leqslant - 1)
$.
Since $ Y $ has mean zero we have
\begin{equation*}
\begin{aligned}
0 = \EE[Y] \leqslant  - \varphi + \EE [Y \one_{\{Y > -1\}}] \;,
\end{aligned}
\end{equation*}
from which we deduce that
$\varphi \leqslant \EE [Y \one_{\{Y > - 1\}}]$.
At this point we use H\"older's inequality to bound 
\begin{equation*}
\begin{aligned}
\EE [Y \one_{\{Y > - 1\}}] \leqslant \EE [Y^{2p}]^{\frac{1}{2p}} (1-
\varphi)^{\frac{1}{q}} \;, \qquad \frac{1}{q} + \frac{1}{2 p} = 1 \;, \qquad q
\in (1, 2] \;.
\end{aligned}
\end{equation*}
Here the requirement $ q \in (1, 2] $ is needed to guarantee $ p \geqslant 1
$, so that \eqref{e:bds-rv} applies.
Indeed, from here, by using the second bound in \eqref{e:bds-rv}, we deduce
that for some constant $ c_{q} > 0 $ and any $ q \in (1, 2] $:
\begin{equation*}
\begin{aligned}
(\sqrt{\Delta_{M-3}})^{q}(1 - \varphi) \geqslant c_{q} \varphi^{q} \;.
\end{aligned}
\end{equation*}
Writing $ \varphi = 1 - \ve $ for $ \ve \in (0,1) $, we then have 
\begin{equation*}
\begin{aligned}
(\sqrt{\Delta_{M-3}} )^{q} \ve \geqslant c_{q} (1 - \ve)^{q} \geqslant 
{c}_{q}(1 - q \ve) \;,
\end{aligned}
\end{equation*}
which in turn implies the lower bound
\begin{equation*}
\begin{aligned}
\ve \geqslant \frac{c_{q}}{ c_{q} q + (\sqrt{\Delta_{M-3}})^{q}} \approx_{q}
\frac{1}{\Delta_{M-3}^{\frac{q}{2}}}  \;, \qquad \forall q \in (1, 2]\;.
\end{aligned}
\end{equation*}
Now we can choose $ q $ arbitrarily close to $1 $ and hence $ q/2 $ arbitrarily
close to $1/2$. For instance, by fixing appropriate $ q $ we find that
\begin{equation*}
\begin{aligned}
\PP (X + Y >  8) \geqslant \Delta_{M-3}^{-\frac{3}{4}} \;,
\end{aligned}
\end{equation*}
for any $ M \geqslant \K $, provided we choose $ \K $ sufficiently large. 

Observe
that on the event $ X + Y \geqslant 8 $ we must have $ \tau \leqslant 
t_{\star}^{M,(2)} $, since $ \sigma $ would kick in  by time $
t_{\star}^{M, (2)} $ at the latest, so that
\begin{equ}[e:boundtausmall]
\PP( \tau \leqslant t_{\star}^{M, (2)}) \geqslant \Delta_{M-3}^{-\frac{3}{4}}\;.
\end{equ}
We are left with two tasks. First, we
show that, conditional on this event we have, with high probability, that indeed $ \sigma
< \tau_{\mathrm{up}} \wedge \tau_{\mathrm{low}} $, meaning that we have overall
a (small) bound from below probability on the probability of dilution in the time interval $ [0,
t_{\star}^{M, (2)}] $. Second, we iterate our argument to go from a
small probability to a probability of order one by considering longer
time-scales.

We start with the first task, namely by proving that there exists a constant $
c > 0 $ for which
\begin{equation}\label{e:lowp-bd}
\begin{aligned}
\PP( \sigma < \tau_{\mathrm{low}} \wedge \tau_{\mathrm{up}} \wedge
t_{\star}^{M, (2)}) & \geqslant
c \Delta_{M-3}^{-\frac{3}{4}} \;.
\end{aligned}
\end{equation}
This requires some estimates on the high and central frequencies of the process
$ t \mapsto z_{t} $.

\textit{Estimate for $ \Pi^{\mygeq}_{M-1} z_{t} $.}
By summing \eqref{e:z-2} over $ | k | > (M-2)_{\alpha} $ and over $ \alpha \in
\{ 1, \dots, m \} $, we obtain for all $ t \geqslant 0 $
\begin{equation*}
\begin{aligned}
e^{2 \zeta_{M-2} t} \| \Pi^{\mygeq}_{M-2} z_{t} \|^{2} \leqslant \|
\Pi^{\mygeq}_{M-2}
z_{0} \|^{2} & + C(\Gamma)\int_{0}^{t} e^{2 \zeta_{M-2}s} \| v_{s} \|^{2} \ud s +
\mN_{t} \;,
\end{aligned}
\end{equation*}
again by the decay assumption \eqref{e:reg-assu} on the coefficients $
\Gamma^{\alpha}_{\beta, l} $ and where $ \mN $ is the martingale given by
\begin{equation*}
\begin{aligned}
\mN_{t} = 2 \mf{Re} \bigg( \sum_{\alpha =1}^{m} \sum_{l\in\ZZ^d}
\sum_{(M-2)_{\alpha} < | k | } \int_{0}^{t \wedge \tau}
e^{2 \zeta_{M-2} s} \hat{v}_{s}^{\alpha, k}
 \left( \hat{v}_{s}^{- k - l} \cdot \ud B^{l}_{s}\right)^{\alpha}  \bigg) \;.
\end{aligned}
\end{equation*}
Now, using once more the upper bound \eqref{e:ub-v} and the decay of the
coefficients $ \Gamma $, the continuous martingale
$ t \mapsto \mN_{t} $ has quadratic variation bounded by
\begin{equation*}
\begin{aligned}
\sum_{\alpha, \beta = 1}^{m} \sum_{l\in\ZZ^d} \Gamma^{\alpha}_{\beta, l}\int_{0}^{t}
\Big\vert \sum_{(M-2)_{\alpha} < | k |}e^{2
\zeta_{M-2} s} \hat{v}_{s}^{\alpha, k} \hat{v}_{s}^{\beta, - k - l} \Big\vert^{2}
\ud s \lesssim_{\Gamma} t
\;.
\end{aligned}
\end{equation*}
Therefore, by Doob's submartingale inequality, we obtain that for any $ p \in
[1, \infty] $ 
\begin{equation*}
\begin{aligned}
\PP \Big( \sup_{0 \leqslant s \leqslant t_{\star}^{M, (2)}} \mN_{s} \geqslant C
\Big) \lesssim \frac{\EE | \mN_{t_{\star}^{M,(2)}} |^{p}}{C^{p}} \lesssim
(t_{\star}^{M,(2)})^{\frac{p}{2}} \leqslant  \frac{1}{\Delta_{M-3}}  \;,
\end{aligned}
\end{equation*}
where the last inequality follows for example by choosing $ p =4 $, and
provided that $ \K $ is sufficiently
large. We therefore conclude that
\begin{equation} \label{e:up-bd-z}
\begin{aligned}
\PP \left( \tau_{\mathrm{up}} < \sigma \wedge \tau_{\mathrm{low}}
\wedge t_{\star}^{M, (2)} \right) \leqslant \frac{1}{\Delta_{M-3}} \;.
\end{aligned}
\end{equation}
This is a sufficient upper bound for our purposes, since the probability $
\Delta_{M-3}^{-1} $ is much smaller than $ \Delta_{M-3}^{- 3/4} $, which is the
lower bound on our tentative ``success'' event. The next step
is to establish a similar upper bound for the stopping time $
\tau_{\mathrm{low}} $.

\textit{Estimate for $ (\Pi_{M-1}^{\myc} + \Pi_{M-2}^{\myc})  z_{t}^{\beta_{0}} $.}
Similarly to above, we would now like to prove
a lower bound on $ (\Pi_{M-1}^{\myc} + \Pi_{M-2}^{\myc})  z_{t}^{\beta_{0}} $ up to time $ t_{\star}^{M,(1)} $,
with a very high probability. By summing
\eqref{e:z-2} for $ \alpha= \beta_{0} $ over $ (M-2)_{\beta_{0}} < | k | \leqslant M_{\beta_{0}} $ we find that
\begin{equation*}
\begin{aligned}
\sum_{(M-2)_{\beta_{0}} < | k | \leqslant M_{\beta_{0}}} e^{2 \zeta_{M} t} |
\hat{z}^{ \beta_{0}, k}_{t}
|^{2} \geqslant \| (\Pi_{M-1}^{\myc} + \Pi_{M-2}^{\myc})  z_{0}^{\beta_{0}} \|^{2} + \mO_{t} \;,
\end{aligned}
\end{equation*}
with 
\begin{equation*}
\begin{aligned}
\mO_{t} = 2 \mf{Re} \bigg( \sum_{l \in \ZZ^d} \sum_{(M-2)_{\beta_{0}}  < | k | \leqslant
M_{\beta_{0}}} \int_{0}^{t \wedge \tau }
e^{2 \zeta_{M} s} \hat{v}_{s}^{ \beta_{0}, k}
 \left( \hat{v}_{s}^{- k - l} \cdot \ud B^{l}_{s}\right)^{\beta_{0}} \bigg) \;.
\end{aligned}
\end{equation*}
Now, as before, we compute the quadratic variation of $ \mO $, up to time
$ t_{\star}^{M,(1)} $:
\begin{equation*}
\begin{aligned}
\langle & \mO \rangle_{t_{\star}^{M,(1)}} \\
&  \lesssim \sum_{\beta=1}^{m}
\sum_{l \in \ZZ^{d}}
\Gamma^{\beta_{0}}_{\beta, l}  \int_{0}^{t_{\star}^{M,(1)}} \Big\vert
\sum_{(M-2)_{\beta_{0}} < | k | \leqslant M_{\beta_{0}} }e^{2 \zeta_{M} s}
\hat{v}_{s}^{\beta_{0}, k} \hat{v}_{s}^{\beta, - k - l} \Big\vert^{2} \ud s \\
& \lesssim \sum_{\beta, l }
\Gamma^{\beta_{0}}_{\beta, l} \int_{0}^{t_{\star}^{M,(1)}}  e^{
4(\zeta_{M} - \zeta_{M-2}) s}
\bigg( \sum_{(M-2)_{\beta_{0}} < | k | \leqslant M_{\beta_{0}} }e^{2
\zeta_{M-2} s}
|\hat{v}_{s}^{\beta_{0}, k} \hat{v}_{s}^{\beta, - k - l}| \bigg)^{2} \ud s \\
& \lesssim_{\Gamma} t_{\star}^{M,(1)} \;,
\end{aligned}
\end{equation*}
where we used that for $ s \leqslant t_{\star}^{M,(1)} $ we have
$e^{ 4( \zeta_{M} - \zeta_{M-2})s} \leqslant c$,
for some constant $ c >0 $ independent of $ M $, the bound \eqref{e:ub-v}, and
the decay assumptions on $ \Gamma $.
Following the same steps as above, we
therefore find that, provided $ \K $ is sufficiently large:
\begin{equation} \label{e:l-bd-z}
\begin{aligned}
\PP (\tau_{\mathrm{low}} < \tau_{\mathrm{up}} \wedge \sigma \wedge
t_{\star}^{M, (2)}) \leqslant \frac{1}{\Delta_{M-3}} \;.
\end{aligned}
\end{equation}
Hence, combining \eqref{e:up-bd-z} and \eqref{e:l-bd-z} with \eqref{e:boundtausmall}, we obtain
\eqref{e:lowp-bd}, since $ \Delta_{M-3}^{-1} \ll \Delta_{M-3}^{-
\frac{3}{4}} $.

\textit{Iteration.}
The lower bound \eqref{e:lowp-bd} guarantees that we can dilute, but only with
a very small probability, by time $ t_{\star}^{M, (2)} $. The last step in the
proof is to perform a large number of attempts (more than $
\Delta_{M-3}^{3/4}$, in order to compensate the small probability of success) so that with
a high probability, we observe at least one success. We therefore define the final time horizon
\begin{equation*}
\begin{aligned}
t_{\star}^{M,(3)} = \frac{ \log{( \lambda \Delta_{M-3})}}{2
\Delta_{M-3}^{1 - r}} = \Delta_{M-3}^{r} \cdot
t_{\star}^{M, (2)} \;,
\end{aligned}
\end{equation*}
for an arbitrary parameter $ r \in (3/4, 1) $.
Before we can conclude let us observe that the previous calculations prove
also, up to changing the value of the proportionality constant, that there
exists a $ c > 0 $ such that for any
stopping time $ t_{0} $

\begin{equ}[e:t1bd]
\PP_{t_{0}} ( \sigma^{\tD}(M, t_{0}) \leqslant t_{0} + t_{\star}^{M, (2)})
\one_{ \mB_{\K, t_{0}} }> c \Delta_{M-3}^{- \frac{3}{4}}   \one_{ \mB_{\K,
t_{0}}}\;,
\end{equ}
where $ \mB_{\K, t_{0}} $ is the event
\begin{equation*}
\begin{aligned}
\mB_{\K, t_{0}} = \Big\{ \| (\Pi_{M-1}^{\myc} + \Pi_{M-2}^{\myc}) u_{t_{0}} \|
& \geqslant
\frac{1}{4} \| \Pi^{\myl}_{M -2} u_{t_{0}} \| \;,\\
 M & \geqslant \K\;,
\|w^{\mygeq} (M; t_{0}, \cdot) \| \leqslant 3 \Big\} \;.
\end{aligned}
\end{equation*}
The event $ \mB_{\K , t_{0}} $ is almost equivalent to the assumption on the
initial condition appearing in
Definition~\ref{def:high-freq-insta} for high-frequency stochastic instability
(up to a time shift).
The only difference is that at time $ t_{1} $ we only assume $ \| \Pi^{\mygeq}
u_{t_{1}} \| \leqslant 3 \| \Pi^{\myl}_{M} u_{t_{1}} \| $, rather than the same
inequality with the constant $ 2 $. Therefore \eqref{e:t1bd} follows
identically to \eqref{e:lowp-bd}, up to choosing a slightly smaller value for
$ \eta_{0} $ in \eqref{e:beta0-1}.

In addition, a slight adaptation of the second estimate of
Lemma~\ref{lem:neg-mmts} (the only difference being that we do not assume that
$ M $ is the skeleton median) and the same calculations that led to
\eqref{e:l-bd-z}, we find that the event $ \| w (M;
t_{0}, \cdot) \| > 3 $ has small probability, with respect to our benchmark
probability $ \Delta_{M-3}^{- 3/4} $, at least if $ t_{0} \leqslant t_{\star}^{M, (3)} $:
\begin{equs}
\PP \bigg( \sup_{0 \leqslant t \leqslant t_{\star}^{M, (3)}} &\| w^{\mygeq}
(M ;  t, \cdot ) \| > 3 \bigg) \leqslant
\frac{1}{\Delta_{M-3}}  \;, \label{e:sigma3bd}
\end{equs}
provided $ \K $ is sufficiently large: as above, the upper bound $ \Delta_{M-3}^{-1}  $
could be replaced by arbitrary larger inverse power, up to choosing
sufficiently large $ \K$. For later reference, let us denote
\begin{equation*}
\begin{aligned}
\widetilde{\sigma}  = \inf \{ t \geqslant 0   \; \colon \;  \| w^{\mygeq}
(M ;  t, \cdot ) \| > 3\} \;.
\end{aligned}
\end{equation*}

Now we are ready to iterate our bound to obtain the desired result.
Let us fix 
\begin{equation*}
\begin{aligned}
L_{M} = \lfloor \Delta_{M-3}^{r}\rfloor -1\;,
\end{aligned}
\end{equation*}
as well as the following sequence of events for $ \l \in \{ 1, \dots,
L_{M} \}  $:
\begin{equs}
\mB_{\l} &= \left\{ \| (\Pi_{M-1}^{\myc} + \Pi_{M-2}^{\myc}) u_{t} \| \geqslant
\frac{1}{4} \| \Pi^{\myl}_{M -2} u_{t} \| \;,  \quad \forall t \in [\l t_{\star}^{M,(2)},
(\l+1) t_{\star}^{M,(2)}] \right\} \\
& \qquad \cap \{ M \geqslant \K\;, \widetilde{\sigma} \geqslant \l
\cdot t_{\star}^{M, (2)} \} \;, \\
\mH_{\l} &= \bigcap_{j = 1}^{\l} \mB_{j}\;.
\end{equs}
Then we have
\begin{equation*}
\begin{aligned}
\PP (\sigma^{\tD}
\geqslant  L_{M} t_{\star}^{M,(2)}) & \leqslant \PP (
\mH_{L_{M}}) + \PP ( \widetilde{\sigma} \leqslant t_{\star}^{M,
(3)} ) \\
& \leqslant \PP ( \mH_{L_{M}}) + \Delta_{M-3}^{-1}  \;, 
\end{aligned}
\end{equation*}
by \eqref{e:sigma3bd}.
In particular, for our purposes it suffices to find an upper bound to $
\PP (\mH_{L_{M}} )$. Here we find that for any $ \l \in \NN^{+} $:
\begin{equs}
\PP (\mH_{\l}) & = \EE \left[ \one_{\mH_{\l-1}}\PP_{(\l-1)
t_{\star}^{M,(2)}}(\mB_{\l})
\right] \\
& =  \EE  \left[
\one_{\mH_{\l-1}}\PP_{(\l-1) t_{\star}^{M,(2)}}(\mB_{\K, \l
t_{\star}^{M, (2)}})  \right]+ \PP( \widetilde{\sigma} \leqslant t_{\star}^{M,
(3)} )   \\
& \leqslant \EE \left[
\one_{\mH_{\l-1}} \right](1 - c \Delta_{M-3}^{- \frac{3}{4}}) +
\Delta_{M-3}^{-1}  \\
& \leqslant \PP (\mH_{\l-1}) (1 - c \Delta_{M-3}^{- \frac{3}{4}}) +
\Delta_{M-3}^{-1}   \;,
\end{equs}
where we used both \eqref{e:sigma3bd} and \eqref{e:t1bd}. Now we can iterate
this bound to obtain overall for some $ c, C > 0 $:
\begin{equs}
\PP (\mH_{L_{M}}) & \leqslant (1 - c \Delta_{M-3}^{- \frac{3}{4}})^{L_{M}} +
\Delta_{M-3}^{-1} \sum_{\l =
0}^{L_{M}-1}(1 - c \Delta_{M-3}^{- \frac{3}{4}})^{\l} \\
&\leqslant (1 - c \Delta_{M-3}^{- \frac{3}{ 4}})^{L_{M}} + C 
\Delta_{M-3}^{-1}  \Delta_{M-3}^{r} \;.
\end{equs}
This last bound is now sufficient to conclude the proof, since for every $
\ve \in (0, 1) $ there exists a $ \K (\ve) $ such that
\begin{equation*}
\begin{aligned}
(1 - c \Delta_{M-3}^{- \frac{3}{ 4}})^{L_{M}} + 
C\Delta_{M-3}^{-1}  \Delta_{M-3}^{r} \leqslant \ve \;,
\qquad \forall M \geqslant \K(\ve) \;,
\end{aligned}
\end{equation*}
where we have used that in the definition of $ L_{M} $ the parameter $ r $
satisfies $ r \in (3/4, 1) $.
Then choose $ \K (\mf{t}, \ve)
$ sufficiently large such that $ \mf{t} \geqslant t_{\star}^{\K, (3)} $, and such
that all previous calculations hold true. We have proven that
\begin{equation*}
\begin{aligned}
\PP ( \sigma^{\tD} \leqslant  \mf{t}) \geqslant
 1- \ve \;, \qquad \forall M \geqslant \K(\mf{t}, \ve)\;.
\end{aligned}
\end{equation*}
This concludes the proof.
\end{proof}
The next result is a simple corollary of the definition of high frequency
instability, together with the moment bounds in Lemma~\ref{lem:neg-mmts}.
Recall the notation in \eqref{e:short}.

\begin{lemma}\label{lem:switch-diluted}
  Under the assumptions of Theorem~\ref{thm:lyap-func}, and in particular if $
W $ induces a high-frequency stochastic instability, then the following holds. For any $ i
\in \NN $, let $ V_{i+1} $ be the stopping time defined in
Definition~\ref{def:Tn}.  For any $ \ve \in (0, 1) $ there exists a $ \K(\ve) > 0$ such that 
\begin{equ}
  \PP_{V_{i+1}} \left( \sigma^{\tD}(V_{i+1}) <
    \sigma^{\mygeq}(V_{i+1}) \right)  \geqslant 1 - \ve\;,
\end{equ}
on the event $\{ M_{T_{i}} \geqslant \K \}$.
\end{lemma}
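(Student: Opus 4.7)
The plan is to split the analysis at time $V_{i+1}$ according to whether $\mf{m}_{V_{i+1}} = \tD$ or $\mf{m}_{V_{i+1}} = \tC$. In the first case, $\sigma^{\tD}(V_{i+1}) = V_{i+1}$; since by construction of $V_{i+1}$ we have $\|w^{\mygeq}(M_{T_i}; V_{i+1}, \cdot)\| \leqslant 5/4 < 3/2$, the stopping time $\sigma^{\mygeq}(V_{i+1})$ is strictly greater than $V_{i+1}$ and we are done. The nontrivial case is $\mf{m}_{V_{i+1}} = \tC$, for which I would directly invoke the high-frequency stochastic instability assumption of Definition~\ref{def:high-freq-insta} via the strong Markov property applied at $V_{i+1}$, with the choice $M = M_{T_i}$. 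The two hypotheses \eqref{e:idata} of that definition are satisfied at time $V_{i+1}$: the first because $\|\Pi^{\mygeq}_{M_{T_i}} u_{V_{i+1}}\| \leqslant (5/4) \|\Pi^{\myl}_{M_{T_i}} u_{V_{i+1}}\| \leqslant 2\|\Pi^{\myl}_{M_{T_i}} u_{V_{i+1}}\|$, and the second because $\mf{m}_{V_{i+1}} = \tC$ is literally the concentration condition at level $M_{T_i}-1$.

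The second ingredient is to ensure that the dilution produced by the instability occurs \emph{before} $\sigma^{\mygeq}(V_{i+1})$ triggers. For this I would apply Lemma~\ref{lem:neg-mmts} at $t_0 = V_{i+1}$ with $L = M_{T_i}$, $E = 5/4$, $\beta = 3/2$, which yields uniform negative exponential moments of $\sigma^{\mygeq}(V_{i+1}) - V_{i+1}$. Markov's inequality then permits to choose a deterministic $\mf{t} > 0$ (depending only on $\ve$) such that $\PP_{V_{i+1}}(\sigma^{\mygeq}(V_{i+1}) \leqslant V_{i+1} + \mf{t}) \leqslant \ve/2$ uniformly. Having fixed this $\mf{t}$, Definition~\ref{def:high-freq-insta} supplies a threshold $\K(\mf{t}, \ve/2)$ such that, if $M_{T_i} \geqslant \K$, then with conditional probability at least $1 - \ve/2$ there exists $t^\star \in [V_{i+1}, V_{i+1} + \mf{t}]$ at which $\|(\Pi^{\myc}_{M_{T_i}-1} + \Pi^{\myc}_{M_{T_i}-2}) u_{t^\star}\| < \tfrac14 \|\Pi^{\myl}_{M_{T_i}-2} u_{t^\star}\|$. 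Taking the intersection of these two events produces, on $\{M_{T_i} \geqslant \K\}$, a lower bound of $1 - \ve$ on the probability that such a $t^\star$ exists with $t^\star < \sigma^{\mygeq}(V_{i+1})$.

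The step that requires slight care, which I would regard as the only real subtlety, is the translation between the pointwise concentration test on $u_{t^\star}$ provided by the instability assumption and the marker $\mf{m}_{t^\star}$ appearing in $\sigma^{\tD}(V_{i+1})$. Since $\mf{m}_t = \mf{m}(M_t - 1, u_t)$ depends on the skeleton median, this equivalence holds only as long as $M_t = M_{T_i}$ on $[V_{i+1}, t^\star]$, that is, as long as $T_{i+1} > t^\star$. But by Definition~\ref{def:Tn}, $T_{i+1} \geqslant S_{i+1} = \sigma^{\tD}(V_{i+1}) \wedge \sigma^{\mygeq}(V_{i+1})$, and on the good event both of these are at least as large as $t^\star$ (the first because the infimum defining $\sigma^{\tD}$ is attained no later than $t^\star$, the second by our choice of $\mf{t}$). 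Hence on the good event $\sigma^{\tD}(V_{i+1}) \leqslant t^\star < \sigma^{\mygeq}(V_{i+1})$, which is exactly the desired conclusion.
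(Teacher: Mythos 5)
Your proposal is correct and follows essentially the same route as the paper: push $\sigma^{\mygeq}(V_{i+1})$ beyond a deterministic horizon with probability $1-\ve/2$ via the negative-moment bound of Lemma~\ref{lem:neg-mmts} (valid since $\|w^{\mygeq}(M_i;V_{i+1},\cdot)\|\leqslant 5/4<3/2$), then invoke Definition~\ref{def:high-freq-insta} with that horizon to force dilution first, taking $\K$ large afterwards. Your explicit $\tC$/$\tD$ case split and the verification of \eqref{e:idata} are only left implicit in the paper's shorter argument, and the marker\slash skeleton-median subtlety you flag is dissolved by the remark following Definition~\ref{def:Tn}, which identifies $\mf{m}_t$ with $\mf{m}(M_{T_i}-1,u_t)$ on the relevant interval.
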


\begin{proof}
To further lighten the notation we assume that $ V_{i+1} = 0 $ and write $
\sigma^{\tD} $ and $ \sigma^{\mygeq} $ instead of $
\sigma^{\tD} (V_{i+1})$ and $ \sigma^{\mygeq}
(V_{i +1}) $ respectively.
Next, let
$ \delta \in (0, 1) $ be a parameter. We can lower bound
\begin{equs}
\PP ( \sigma^{\tD} < \sigma^{\mygeq} ) & \geqslant \PP (
\sigma^{\tD} < \delta < \sigma^{\mygeq} ) \\
& \geqslant 1 - \PP( \sigma^{\tD} \geqslant \delta)-
\PP( \sigma^{\mygeq}  \leqslant \delta )\;.
\end{equs}
Now, by Lemma~\ref{lem:neg-mmts}, choose $ \delta(\ve) \in (0, 1) $
sufficiently small, so that
\begin{equ}
  \PP(\sigma^{\mygeq}  \leqslant \delta) \leqslant \ve \;.
\end{equ}
This is possible because by definition of $ V_{i+1} $ we have that $ \|
w^{\mygeq} (M_{i}; V_{i+1}, \cdot) \| \leqslant 5/4 < 3/2 $ (see
Definition~\ref{def:Tn}).
As for the first probability, we use the Definition~\ref{def:high-freq-insta}
regarding high frequency stochastic instability. By choosing $ \K (\delta, \ve)
> 0 $ sufficiently large and $ M \geqslant \K (\delta, \ve) $, we find that $
\PP ( \sigma^{\tD} \geqslant \delta) \leqslant \ve$, so that the
result follows.
\end{proof}
We conclude this section with a lemma which provides a simple criterion to establish the 
non-degeneracy property in Assumption~\ref{assu:noise}. See also
Remark~\ref{rem:a-grt-1}.

\begin{lemma}\label{lem:geom}
  For every $ d, \beta \in \NN $ and $ \ve > 0 $ there exists an $ L_{0} (\beta, \ve) $ such that for every $ k \in \ZZ^{d} $ with $ | k | \geqslant L_{0} $ there exists an $
\l (k) \in \ZZ^{d} $ such that
\begin{equation*}
\begin{aligned}
| \l | \leqslant \beta \;, \qquad | k + \l | \leqslant
| k | - \frac{\beta}{\sqrt{d} } + \ve \;.
\end{aligned}
\end{equation*}
\end{lemma}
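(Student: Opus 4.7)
The plan is to reduce the claim to a one-line pigeonhole argument. Given $ k \in \ZZ^{d} $ with $ | k | $ large, by the Cauchy--Schwarz inequality there exists an index $ i \in \{ 1, \dots, d \} $ with $ | k_{i} | \geqslant | k |/\sqrt{d} $. I would then set
\begin{equ}
\l = - \mathrm{sgn}(k_{i}) \, \beta \, e_{i} \in \ZZ^{d} \;,
\end{equ}
so that $ | \l | = \beta $ (recalling $ \beta \in \NN $), and compute
\begin{equ}
| k + \l |^{2} = | k |^{2} - 2 \beta | k_{i} | + \beta^{2} \leqslant | k |^{2} - \frac{2 \beta}{\sqrt{d}} | k | + \beta^{2} \;.
\end{equ}

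The remaining task is to verify that this upper bound is, for $ | k | $ sufficiently large, dominated by $ ( | k | - \beta/\sqrt{d} + \ve)^{2} $. Expanding the latter and cancelling the terms $ | k |^{2} $ and $ - 2 \beta | k |/\sqrt{d} $ that appear on both sides, the required inequality becomes
\begin{equ}
2 | k | \ve \geqslant \beta^{2} - \Big( \frac{\beta}{\sqrt{d}} - \ve \Big)^{2} \;,
\end{equ}
which is a linear condition on $ | k | $. It is thus enough to set
\begin{equ}
L_{0}(\beta, \ve) = \max \bigg\{ \frac{\beta}{\sqrt{d}} \;, \; \frac{\beta^{2} - (\beta/\sqrt{d} - \ve)^{2}}{2 \ve} \bigg\} \;,
\end{equ}
where the first term in the maximum ensures that $ | k | - \beta/\sqrt{d} + \ve > 0 $ and hence that squaring is reversible.

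There is no real obstacle in this proof, and I expect the entire argument to take half a page at most. The only point worth remarking on is the appearance of $ 1/\sqrt{d} $: this factor is sharp and captures the worst-case direction $ k $ pointing along the diagonal, where every coordinate axis is equally far from being aligned with $ k $, so that a unit lattice step along any single axis reduces $ | k | $ only by $ 1/\sqrt{d} $ to leading order. This is exactly the geometric feature that motivates the factor $ 3 \nu_{\mathrm{min}}^{- 1/2 \a } \sqrt{d} $ in Remark~\ref{rem:a-grt-1}.
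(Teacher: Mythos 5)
Your proposal is correct and follows essentially the same route as the paper: both pick the coordinate $i$ with $|k_i|\geqslant |k|/\sqrt d$ and take $\l=\mp\beta e_i$, the only (cosmetic) difference being that the paper finishes via concavity of the square root, $|k+\l|\leqslant |k|-(2k_{i}\beta-\beta^2)/(2|k|)$, whereas you square the target bound and compare directly, which is equally valid given your check that $|k|-\beta/\sqrt d+\ve>0$.
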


\begin{proof}
  We can assume without loss of generality that $ k = (k_{i})_{i=1}^{d} $ satisfies $ k_{i} \geqslant 0 $
  for all $ i \in \{ 1, \dots, d \} $. Next fix an $ i_{0} \in \{ 1, \dots , d
\} $ such that $  k_{i_{0}}  \geqslant \frac{| k |}{\sqrt{d}} $ and fix
\begin{equation*}
\begin{aligned}
\l = - \beta e_{i_{0}} = (0, \dots, 0, - \beta,0, \dots, 0 ) \in \ZZ^{d} \;,
\end{aligned}
\end{equation*}
so that the value $ -\beta $ appears in the $ i_{0} $-th position. Then we can
compute that
\begin{equation*}
\begin{aligned}
| k + \l |^{2} = | k |^{2} - 2 k_{i_{0}} \beta + \beta^{2} \;.
\end{aligned}
\end{equation*}
Now assume that $ \beta > 0 $. Then by concavity of the square root we obtain that
\begin{equation*}
\begin{aligned}
| k + \l | \leqslant | k | - \frac{2 k_{i_{0}} \beta - \beta^{2}}{2| k |}
\leqslant | k | - \frac{\beta}{\sqrt{d}} + \frac{\beta^{2}}{L_{0}} \;.
\end{aligned}
\end{equation*}
Therefore the result follows by choosing $ L_{0} $ sufficiently large.
\end{proof}

\section{High-frequency regularity}

The final step towards the construction of the Lyapunov functional is to
establish high frequency regularity estimates. We start with a bound on
exponential moments of $ \| w_{t} \|_{\gamma, M_{t}} $ for $ \gamma \leqslant 1/2 $.
Later we proceed to polynomial moments of the same quantity, but allowing higher
values of $ \gamma $.

\subsection{Exponential moments}
The purpose of the next result
is to obtain suitable regularity estimates, up to certain stopping times.
For the statement of the proposition recall here the definition of the stopping
times in \eqref{eqn:tau-generic-n}, as well as
of the event $ \mD_{E}(Q;t_{0})$ from
\eqref{e:sets-1n}, and of the first jump time $ T (t_{ 0}) $ of
the skeleton median in \eqref{e:Tt0}.

\begin{proposition}\label{prop:reg-high-freq}
  Under the assumptions of Theorem~\ref{thm:lyap-func}, let $ t_{0} $
be any  stopping time. Further,
fix parameters $\a \geqslant 1, \kappa > 0$, and $\gamma \in (0, 1/2]$ (with $
\a $ appearing in \eqref{eqn:main}).
Finally, set $ L = M_{t_{0}} + k_{0} $ for any $ k_{0} \in \NN^{+} $. Then the
following estimates hold uniformly over
  $ k_{0} \in \NN^{+}, \gamma \in (0, 1/2] $ and locally uniformly over $ \a
\geqslant 1$.
\begin{enumerate}
\item Uniformly over $
\delta \in (0, 1) $, there exists a deterministic increasing function $ \kappa
\mapsto C(\kappa) $ such that:
\begin{equ}[eqn:reg-dwn-n]
  \EE_{t_{0}} \left[ e^{ \kappa \| w( 
    t_{0}+ \delta- , \cdot) \|_{\gamma, L}^{2} }  \one_{
      \{ t_{0} + \delta \leqslant T (t_{0}) \}}\right] \leqslant
C(\kappa) e^{ 2\kappa e^{- 2 \delta k_{0}} \| w(t_{0}, \cdot)
\|^{2}_{\gamma, L}}  \;.
\end{equ}
\item For any constants $ 0 < E < \beta $ and any $ \mF_{t_{0}} $-adapted random variable $ Q \in \NN $,  assuming that $ \sigma_{\beta}^{\mygeq} (Q, t_{0}) \leqslant T
(t_{0}) $ on $ \mD_{E}(Q;t_{0}) $, there exists a deterministic function $
(\kappa,\beta, \delta ) \mapsto C(\kappa, \beta, \delta) $
increasing in $ \kappa $ such that:
\begin{equ}[e:rupnew]
  \EE_{t_{0}}  \left[ e^{
\kappa \| w( \sigma_{\beta}^{\mygeq} (Q,t_{0})-  , \cdot)
\|_{\gamma, L}^{2}}  \one_{\mD_{E}(Q;t_{0})} \right]  \leqslant
C \left(\kappa, \beta, E\right) \one_{\mD_{E}(Q;t_{0})} \;. 
\end{equ}
\item For any
stopping time $t_1$ with  $ t_{0} \leqslant
t_{1} \leqslant T(t_{0})  $, the bound
\begin{equ}[eqn:uniform-bd]
  \EE_{t_{0}} \left[ \sup_{t \in [t_{0}, t_{1})} e^{\kappa \| w(t, \cdot) \|_{\gamma, L}^{2}} \right] \leqslant
  \hat C(\kappa) e^{ 2 \kappa \| w (t_{0}, \cdot) \|_{\gamma, L}^{2} } \;,
\end{equ}
holds almost surely, for some increasing deterministic function $ \kappa \mapsto \hat C(\kappa)$.
\end{enumerate}
\end{proposition}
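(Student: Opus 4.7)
The plan is to reduce all three bounds to a single It\^o analysis of the scalar process $X_t = \|w(t, \cdot)\|_{\gamma, L}^2$ on the interval $[t_0, T(t_0))$. On this interval, by construction $M_t \equiv M_{t_0}$, so that $L = M_{t_0} + k_0$ is $\mF_{t_0}$-measurable and constant in $t$. Thus $X_t$ is an honest ratio: the numerator is a weighted high-frequency energy, while the denominator $\|\Pi_{M_t}^{\myl} u_t\|^2 \geqslant 1/5$ by Lemma~\ref{lem:consistency}. That same lemma also gives $\|w^{\mygeq}(M_t; t, \cdot)\| \leqslant 2$ throughout, which controls the It\^o corrections.

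First I would derive, in close analogy to Proposition~\ref{prop:drft-OU}, the bound
\begin{equ}
dX_t \leqslant -c\, k_0\, X_t\, dt + R(\beta)\, dt + d\mN_t\;, \qquad d\langle \mN\rangle_t \leqslant R(\beta)\, dt\;,
\end{equ}
valid on $[t_0, T(t_0) \wedge \sigma^{\mygeq}_\beta]$, with $c = c(\nu_{\mathrm{min}}, \a) > 0$ and $\beta \mapsto R(\beta)$ an increasing function. The key computational inputs are: (i) for $|k| > L_\alpha$ the dissipation rate satisfies $\nu^\alpha |k|^{2\a} \geqslant L^{2\a} + c L^{2\a-1} \nu_{\mathrm{min}}^{1/(2\a)} (|k| - L_\alpha)$ by convexity; (ii) $L^{2\a} - M_t^{2\a} \geqslant 2\a k_0 M_t^{2\a-1} \geqslant 2\a k_0$; and (iii) the absorption inequality $(|k| - L_\alpha)(1 + |k| - L_\alpha)^{2\gamma} \geqslant \tfrac{1}{2}(1 + |k| - L_\alpha)^{2\gamma}$ on the support of $\Pi^{\myg}_L$. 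Combining these, the weighted dissipation dominates the spurious drift $\leqslant 2 M_t^{2\a} X_t$ coming from the It\^o correction of the denominator, yielding the coercive rate $\gtrsim k_0$. The noise terms are handled uniformly in the initial data via the decay hypothesis \eqref{e:reg-assu}, exactly as in the proof of Proposition~\ref{prop:drft-OU}.

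Next I would conclude via a pathwise comparison with the Ornstein--Uhlenbeck process $dY_t = (-ck_0 Y_t + R)\, dt + \sqrt{R}\, dW_t$, $Y_{t_0} = X_{t_0}$, which dominates $X_t$ and has Gaussian marginals. At any fixed time $t$ this gives the explicit exponential moment
\begin{equ}
\EE_{t_0}[e^{\kappa Y_t}] \leqslant \exp\Bigl(\kappa e^{-ck_0(t-t_0)} X_{t_0} + \tfrac{\kappa R}{ck_0} + \tfrac{\kappa^2 R}{4ck_0}\Bigr)\;.
\end{equ}
The bound \eqref{eqn:reg-dwn-n} then follows directly at $t = t_0 + \delta$ on the event $\{t_0 + \delta \leqslant T(t_0)\}$ with $\beta = 2$, absorbing the Gaussian variance and the constant-drift contribution into $C(\kappa)$; the factor $2$ in $2\kappa e^{-2\delta k_0}$ can be extracted by a Cauchy--Schwarz split between the initial-data and the transient terms, and by adjusting the value of $c$. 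The uniform-in-time bound \eqref{eqn:uniform-bd} follows analogously by applying Doob's maximal inequality to the exponential supermartingale associated with $Y_t$ on $[t_0, t_1) \subseteq [t_0, T(t_0))$, with the extra factor $2$ in the exponent being the standard cost of passing to the maximum.

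The stopping-time bound \eqref{e:rupnew} is the most delicate, because its right-hand side is \emph{absolute}: no dependence on $X_{t_0}$. The mechanism must exploit smoothing over the random interval $[t_0, \sigma^{\mygeq}_\beta(Q, t_0)]$. The assumption $E < \beta$ together with Lemma~\ref{lem:neg-mmts} guarantees that $\tau = \sigma^{\mygeq}_\beta - t_0$ has finite negative exponential moments of every order on $\mD_E$, i.e.\ its left tail at $0$ decays super-polynomially. Combining the pointwise estimate $X_\sigma \leqslant e^{-ck_0 \tau} X_{t_0} + \mZ$, with $\mZ$ of bounded exponential moments from the OU comparison, with a calibrated decomposition of $\tau$ near $0$, and absorbing the worst-case contribution $e^{\kappa X_{t_0}}$ on the event $\{\tau < \delta^\star\}$ via the strong tail control of Lemma~\ref{lem:neg-mmts}, should yield the absolute bound $C(\kappa, \beta, E)$.

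I expect this last step to be the main obstacle: a direct Cauchy--Schwarz split between $\exp(\kappa e^{-ck_0 \tau} X_{t_0})$ and $\PP(\tau < \delta^\star)$ does not close uniformly in $X_{t_0}$ for any single choice of threshold $\delta^\star$, because the polynomial-in-$k_0$ dissipation rate competes against an initial-data contribution that can be arbitrarily large. The resolution likely requires either a supermartingale decomposition of $\exp(\kappa X_t)$ that isolates the attracting region $\{X_t \leqslant C(\kappa, \beta, E)\}$ (where the generator applied to $\exp(\kappa x)$ is non-positive thanks to the coercive drift, so that excursions above this level cannot grow in expectation), or a refined iterative argument based on the structure of the OU comparison with an $X_{t_0}$-dependent threshold $\delta^\star \sim (ck_0)^{-1}\log(1 + X_{t_0})$, together with the full super-polynomial negative moment control on $\tau$. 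Care must also be taken to keep all constants uniform in $k_0 \geqslant 1$ and locally uniform in $\a \geqslant 1$, as the statement requires.
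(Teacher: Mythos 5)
Your proposal takes a genuinely different route from the paper: you work directly with It\^o's formula for the scalar $X_t = \|w_t\|_{\gamma,L}^2$ and compare with an Ornstein--Uhlenbeck process, whereas the paper writes $\Pi^{\myg}_L w_t$ in mild form as $S^L_{0,t}w_0 + y_t + z_t$, bounds the initial-condition and drift pieces deterministically via the semigroup estimates of Lemma~\ref{lem:reg-semi}, and devotes a separate energy argument (Lemma~\ref{lem:bd-z}) to exponential moments of the stochastic convolution $z_t$. Unfortunately your version has two genuine gaps.

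First, the claimed bound $\ud\langle\mN\rangle_t \leqslant R(\beta)\,\ud t$ is false. The martingale part of $\ud\|w_t\|_{\gamma,L}^2$ is $2\langle\Lambda^{2\gamma}_L w_t,\sigma(u_t,\ud W_t)\rangle$, and its quadratic variation is controlled by $\|w_t\|_{2\gamma,L}^2$ (times constants depending on $\Gamma$ and on the lower bound for $\|\Pi^{\myl}_M u_t\|$), exactly as in \eqref{eqn:bd-z-qv-1}. This is an \emph{unbounded} norm of $w_t$ for every $\gamma>0$, since $w_t$ is only bounded in $L^2$; the analogy with Proposition~\ref{prop:drft-OU} breaks down precisely because that result concerns the unweighted $L^2$ ratio. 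To close the argument one must interpolate $\|w_t\|_{2\gamma,L}^2 \leqslant \ve\|w_t\|_{\gamma+\a,L}^2 + C(\ve)$ and absorb the result into the strong dissipation $-\mf{c}_1\|w_t\|_{\gamma+\a,L}^2$, which destroys the clean comparison with a scalar OU process with constant diffusion coefficient; this is the whole content of the exponential-supermartingale computation at the end of Lemma~\ref{lem:bd-z}.

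Second, and more fundamentally, the linear drift $-ck_0 X_t$ can never yield the initial-condition-free bound \eqref{e:rupnew}: it only produces $e^{-ck_0\tau}X_{t_0}$, and as you yourself observe, no Cauchy--Schwarz split against the left tail of $\tau$ closes uniformly in $X_{t_0}$. The mechanism the paper uses is parabolic smoothing of the semigroup acting on an initial datum that is bounded \emph{in $L^2$ only}: $\|S^L_{0,t}w_0\|_{\gamma,L}\lesssim t^{-\gamma/2\a}\|w_0\|\lesssim t^{-\gamma/2\a}$, after which Lemma~\ref{lem:neg-mmts} (usable because $E<\beta$ and $\gamma/\a<1$) controls $\EE[\exp(\kappa C t_1^{-\gamma/\a})]$. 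Your energy-space framework could in principle recover this through the superlinear drift $-cX_t^{(\gamma+\a)/\gamma}$ obtained by interpolating the $H^{\gamma+\a}_L$ dissipation against $\|w_t\|\leqslant 2$ (a ``coming down from infinity'' argument), but that is a substantially different and more delicate step than anything sketched in your proposal, and neither of the two resolutions you speculate about supplies it. The remaining two claims, \eqref{eqn:reg-dwn-n} and \eqref{eqn:uniform-bd}, would follow from your scheme once the quadratic-variation issue is repaired, modulo tracking the exact rate $e^{-2\delta k_0}$ coming from the spectral gap $(M+k_0+1)^{2\a}-M^{2\a}\geqslant 2\a(k_0+1)$.
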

We observe that in the bounds we consider left limits $ w (t_{1}-, \cdot) $
because by assumption $ t_{1} \leqslant T (t_{0}) $. It may therefore be that
$ t_{1} $ is the first time after $ t_{0} $ at which the skeleton median jumps
and there is a discontinuity in $ w $: indeed, in many instances in which we use
the estimates above this will be the case and we will have $ t_{1} = T
(t_{0}) $. The discontinuity will then be treated separately. Moreover, let us
observe as in Lemma~\ref{lem:neg-mmts} that the estimate in
\eqref{e:rupnew} breaks down as $ E \uparrow \beta $.
The proofs of all these estimates follow along similar lines, the main
difference being the treatment of the initial condition.

\begin{proof}

  Let us start with some general considerations. First, in all cases we are
  considering the evolution of $ w $ on some time interval $ [t_{0},
  t_{1}) $, where $ t_{1} $ is another stopping time. 
To lighten the notation, we will consider throughout the proof $ t_{0} = 0 $
and write $ M = M_{t_{0}} $ and $ L = M_{t_{0}} + k_{0} $.
Then, by Lemma~\ref{lem:consistency} we have that $ \| w
(t, x) \| \leqslant 2 $ for all $ t \geqslant 0$, and in addition since $
t_{1} \leqslant T (t_{0}) $ we have for all $ t \in [0, t_{1}) $ that $
M_{t} = M $ and, via
It\^o's formula, $ w$ satisfies the following equation:
  \begin{equ}[eqn:high-freq]
  \ud  w_{t}  = \Big[ \mL w_{t} + \psi(M, u_{t})
  w_{t} + Q(u_{t}) \Big] \ud
t + \sigma(u_{t}, \ud W_{t})\;, \qquad \forall t \in [0, t_{1}) \;,
\end{equ}
where  $  \left( \mL \varphi \right)^{\alpha} = - \nu^{\alpha} (-
\Delta)^{\a } \varphi^{\alpha} $ and \(\psi (M,
u_{t}) \in \RR \) is given by
\begin{equ}[eqn:def-alpha]
\psi(M, u_{t}) = -  \frac{1}{\| \Pi^{\myl}_{M} u_{t}
\|^{2}} \langle \Pi^{\myl}_{M} u_{t}, \mL  u_{t} \rangle\;.
\end{equ}
In particular, by our definition of projection in Definition~\ref{def:proj}, it holds that \( 0 \leqslant \psi (M, u_{t}) \leqslant
\zeta_{M}\). Moreover the martingale term $ \sigma $ is given by
\begin{equ}[eqn:def-sigma]
  \sigma(u_{t}, \ud W_{t})  =   \Pi_{M}^{\myg}  \left(\frac{ u_{t} \cdot \ud W_{t} }{ \|
\Pi^{\myl}_{M}
u_{t}  \|} \right) -  \frac{ \Pi_{M}^{\myg}  u_{t}}{\| \Pi^{\myl}_{
M} u_{t}
\|^{3}} \langle \Pi^{\myl}_{M} u_{t}, u_{t} \cdot \ud W_{t} \rangle
\end{equ}
and the vector-valued quadratic variation term $ Q $ is given by
\begin{equs}[eqn:for-Q]
  Q^{\alpha}(u_{t})(x)   &= \frac{3}{2} \Pi^{\myg}_{M} u_{t}^{\alpha}
\sum_{\gamma, \eta=1}^{m} \sum_{| k | \leqslant M_{\gamma}, | l | \leqslant
M_{\eta}} \frac{ \hat{u}_{t}^{\gamma, k}
\hat{u}_{t}^{\eta, l}}{ \| \Pi^{\myl}_{M} u_{t} \|^{5}} C_{k, l}^{\gamma, \eta}
(u_{t})\\
&\quad -\frac{1}{2} \Pi^{\myg}_{M} u_{t}^{\alpha}
\sum_{\gamma, \eta=1}^{m} \sum_{| k | \leqslant M_{\gamma}, | l | \leqslant
M_{\eta}} \frac{1}{\|
\Pi^{\myl}_{M} u_{t} \|^{3}} \one_{\{ k+l =0 \} \one_{\{ \gamma = \eta \}}}C_{k,
l}^{\gamma, \eta} (u_{t}) \qquad \\
&\quad  - \frac{1}{2} \sum_{\beta=1}^{m}\sum_{| k | > (M+1)_{\alpha}, | l |
\leqslant M_{\beta} }
\frac{ \hat{u}_{t}^{\beta, l} e_{k}(x)}{ \| \Pi^{\myl}_{M} u_{t} \|^{3} }  C_{k,
l}^{\alpha, \beta}(u_{t})\;,
\end{equs}
where $ C_{k, l} (u) $ is the covariation defined in Remark~\ref{rem:fourier}.

We can rewrite the term in the last line in spatial
coordinates, so that it becomes simpler to estimate: 
\begin{equ}[e:q-spatial]
  -\sum_{\beta=1}^{m}\sum_{| k | > (M+1)_{\alpha}, | l | \leqslant M_{\beta}}  
  \frac{ \hat{u}_{t}^{\beta, l} e_{k}(x)}{ \| \Pi^{\myl}_{M} u_{t} \|^{3} }
C^{\alpha, \beta}_{k, l}(u_{t}) = -  \Pi^{\myg}_{M} \left[
 F^{\alpha} (u_{t}) \right] (x) \;,
\end{equ}
with 
\begin{equ}
  F^{\alpha} (u)(x) = \sum_{\beta, \theta, \eta=1}^{m}
\frac{u^{\beta}(x)}{\| \Pi^{\myl}_{M} u\|^{3}} \int_{\TT^{d}} \Pi^{\myl}_{M} u^{\eta} (y)
u^{\theta}(y) \Lambda^{\alpha, \eta}_{\beta, \theta} (x-y) \ud y\;.
\end{equ}
Here $ \Lambda $ is the spatial correlation function introduced in
\eqref{e:def-Lambda}, and in particular by our regularity assumption
\eqref{e:reg-assu} on the noise, we have
that $ \| \Lambda \|_{\infty} < \infty $.

Next we will write $ w_{t_{1}} $ as a mild solution to
\eqref{eqn:high-freq}. 
Namely, for $ 0 \leqslant  s < t < \infty$ and $ L = M + k_{0} $  we
introduce the time-inhomogeneous semigroup as follows (written in coordinates
for $ \alpha \in \{ 1, \dots, m \} $): 
\begin{equ}[e:def-semi]
S_{s,t}^{L} \, \varphi^{\alpha} = e^{(t-s) \mL^{\alpha}  + \int_{s}^{t}
\psi_{r}^{\alpha}  \ud r} \Pi_{L}^{\myg}  \varphi^{\alpha} \;.
\end{equ}
Observe that although we do no state
this explicitly in the definition, the semigroup depends on $ M $ and
\( u \) through $ \psi $. Hence, for $ t \in [0, t_{1}) $
\begin{equ}[eqn:decomposition-w]
  \Pi_{L}^{\myg}  w_{t} =   S_{0,
  t}^{L} w_{0} 
   + \int_{0}^{t} S_{s, t}^{L} \, Q(u_{s}) \ud s +
  \int_{0}^{t} S_{s,
  t}^{L} \, \sigma(u_{s}, \ud W_{s})\;.
\end{equ}

Note also that we project on frequencies higher than $ L $ because
eventually we are interested in the norm $ \| w_{t} \|_{\gamma, L} $.
For later reference let us write, for $ t \in [0, t_{1}) $
\begin{equ}[eqn:def-y-z]
  y_{t} = \int_{0}^{t} S_{s, t}^{L} \, Q(u_{s}) \ud s, \qquad
  z_{t} = \int_{0}^{t} S_{s, t}^{L} \, \sigma(u_{s}, \ud W_{s})\;.
\end{equ}
Regarding the last term $ z_{t} $, we observe that for any $ L \in \NN $, the
semigroup $ S_{s, t}^{L} $ is not adapted to the filtration $ \mF_{s} $ at time
$ s $, because of the
presence of $ \psi_{t} = \psi(M, u_{t})$. But the stochastic integral
can be still defined by rewriting $$ \exp{ \Big( \int_{s}^{t} \psi_{r} \ud r
\Big)} = \exp { \Big( \int_{0}^{t} \psi_{r} \ud r \Big)} \exp {  \Big(-\int_{0}^{s}
\psi_{r} \ud r \Big)},$$
so that the part that is not adapted can be taken out of the stochastic
integral. Yet, for this very reason, proving a useful bound on the
stochastic convolution is a bit cumbersome and we will use energy estimates for
$z_{t}$ instead. In any case, as we already observed, the crucial observation for our proof is that
\begin{equ}[eqn:reg-prf-alpha-bd]
0 \leqslant   \psi_{t} \leqslant \zeta_{M}\;,
\end{equ}
so that we are in the setting of Lemma~\ref{lem:reg-semi}.
This concludes our preliminary observations.
Next we will concentrate on finding
separate bounds for the three terms $ S_{0, t}^{L} w_{0}
$, $ y_{t} $ and $ z_{t} $ for $ t \in [0, t_{1}) $. To lighten the notation we will omit
writing explicitly the dependence of constants on the parameters appearing in
the statement of the proposition.

\textit{Bound on the initial condition.} We start with the first term,
concerning the initial condition, where via the two estimates in 
Lemma~\ref{lem:reg-semi}, since $ \a  > 1/2 $ and in view of the bound \eqref{eqn:reg-prf-alpha-bd}, for some
deterministic $ C(\gamma) > 0 $, both of the following two bounds hold:
\begin{equs}[eqn:reg-prf-1]
  \|  S_{0, t}^{L} w_{0} \|_{\gamma,
  L} \leqslant \begin{cases} C(\gamma) t^{-
  \frac{\gamma}{2 \a }} \;, \\ e^{- t k_{0}} \|
w_{0} \|_{\gamma, L}\;, \end{cases} \forall t \in
[0, t_{1})\;,
\end{equs}
where the first estimate follows from 
\begin{equation*}
\begin{aligned}
  \|  S_{0, t}^{L} w_{0} \|_{\gamma,
  L} \lesssim_{\gamma} t^{-
  \frac{\gamma}{2 \a }} \| w_{0} \| \lesssim_{\gamma}t^{-
  \frac{\gamma}{2 \a }} \;,
\end{aligned}
\end{equation*}
since $ \| w_{0} \| \leqslant 2 $.

\textit{Bound for $ y $.} For the convolution term $ y_{t} $ we start with a bound on
the quadratic variation $ Q(u_{t}) $ from \eqref{eqn:for-Q}. We find that for
all $ t \in [0, t_{1}) $ and $ \alpha \in \{ 1, \dots, m \} $, by the estimate in
Remark~\ref{rem:fourier} and \eqref{e:q-spatial}:
\begin{equs}
  \| Q^{\alpha}(u_{t}) \| &\lesssim  \| \Pi_{M}^{\myg}  u_{t}^{\alpha} \|   \left( \frac{\left( \sum_{ k }| \hat{u}_{t}^{k} | \| u_{t} \|_{\l^{2}_{k}} \right)^{2} }{\|
\Pi_{M}^{\myl}  u_{t} \|^{5}} + \sum_{ k } \frac{\| u_{t} \|_{\l^{2}_{k}}^{2}}{\|
\Pi_{M}^{\myl} u_{t} \|^{3}} \right)  + \left\| \Pi^{\myg}_{M} \left[  F^{\alpha}(u_{t}) \right] \right\|\\
&\lesssim  \frac{\| \Pi_{M}^{\myg}  u_{t} \| \| u_{t} \|^{2}}{\|
\Pi_{M}^{\myl}  u_{t} \|^{3}} \frac{\| \Pi_{M}^{\myl}  u_{t} \|^{2}}{ \|
\Pi_{M}^{\myl}  u_{t} \|^{2}} + \frac{\| \Pi_{M}^{\myg}  u_{t} \| \|
u_{t} \|^{2}}{ \| \Pi_{M}^{\myl}  u_{t} \|^{3}} + \frac{\| u_{t} \|^{3}}{\|
\Pi^{\myl}_{M} u_{t} \|^{3}} \| \Lambda \|_{\infty} \\
 &\lesssim  1 \;.
\end{equs}
Here we used that by assumption 
$ t_{1} \leqslant T(t_{0}) $ so that $ M = M_{t} $ for all $ t \in
[0,t_{1}) $ and therefore
by Lemma~\ref{lem:consistency} we have $\| u_{t} \| \leqslant \sqrt{5} \|
\Pi^{\myl}_{M} u_{t} \| $ for all $ t \in [0, t_{1}) $.
We therefore conclude that for some deterministic constant $ C \in (0, \infty)  $
\begin{equ}[eqn:reg-prf-2]
 \| Q (u_{t}) \| \leqslant C\;, \quad \forall t \in [0, t_{1}) \;.
\end{equ}
In view of this bound, we now use Lemma~\ref{lem:reg-semi} with $
\delta = \gamma$ to estimate the term $
t \mapsto y_{t} $ as follows. Here we recall that by assumption we have $ t_{1}
\leqslant 3 $, since $ T (t_{0}) - t_{0} \leqslant 3  $ by
Lemma~\ref{lem:consistency}, so that we can estimate
\begin{equ}
  \bigg\| \int_{0}^{t} S^{L}_{s, t} Q(u_{s}) \ud s
  \bigg\|_{\gamma, L }  \leqslant \bigg\| \int_{0}^{t} S^{L}_{s , t} Q(u_{s}) \ud s
  \bigg\|_{\gamma, L}  \lesssim
\int_{0}^{t } (t-s)^{- \frac{\gamma}{2 \a }}  \ud s \lesssim 1 \;,
\end{equ}
uniformly over $ t \in [0, t_{1}) $, where we applied \eqref{eqn:reg-prf-2} and used
that $ \gamma < 2 \a  $. Hence we find
a deterministic constant $ C(\gamma)>0 $ such that 
\begin{equ}[eqn:reg-prf-3]
  \sup_{0 \leqslant t < t_{1}} \bigg\| \int_{0}^{t} S^{M}_{s, t} Q(u_{s}) \ud s
  \bigg\|_{\gamma, L}\leqslant C(\gamma)\;.
\end{equ}

This concludes our bound on the convolution term $ y $. The stochastic
convolution term $ t \mapsto z_{t} $ requires special care, so we defer its
study to the separate Lemma~\ref{lem:bd-z}. In particular, the named lemma
shows that for every $ \kappa $ and $ \gamma $ as in our assumption there
exists a $ C(\kappa, \gamma) \in (0, \infty) $ such that
\begin{equation} \label{e:zbd}
\begin{aligned}
\EE \left[ \sup_{0 \leqslant t< t_{1}} e^{\kappa \| z_{t} \|_{\gamma,
L}^{2}} \right] \leqslant C(\kappa,
\gamma) \;.
\end{aligned}
\end{equation}
We have now all tools at our disposal to deduce the desired bounds.

\textit{Proof of \eqref{eqn:reg-dwn-n}.} The only difference that appears
throughout the proofs of the bounds lies in the treatment of the initial
condition. In all cases we have that
\begin{equation*}
\begin{aligned}
 e^{\kappa \| w_{t_{1} -} \|^{2}_{\gamma, L}} \leqslant 
\exp \left( 2\kappa  \| S^{L}_{0, t_{1}-} w_{0}  \|^{2}_{\gamma,L} + 4\kappa  \|
y_{t_{1}-}  \|^{2}_{\gamma, L} + 4 \kappa \| z_{t_{1}-} \|^{2}_{\gamma, L} \right)  \;,
\end{aligned}
\end{equation*}
as $ (a + b)^{2} \leqslant 2 a^{2} + 2 b^{2} $.
Therefore, by \eqref{eqn:reg-prf-3}, we can further bound for $ t_{1} =
t_{0} + \delta $ on the event $ \{ t_{0} + \delta < T (t_{0}) \} $:
\begin{equation*}
\begin{aligned}
\EE \left[ e^{\kappa \| w_{t_{1} -} \|^{2}_{\gamma, L}} 1_{ \{ t_{0} + \delta
\leqslant T(t_{0}) \}}\right] \leqslant
C(\kappa, \gamma) \EE \left[
\exp \left( 2\kappa  \| S^{L}_{0, t_{1}-} w_{0}  \|^{2}_{\gamma,L} + 4 \kappa
\| z_{t_{1}-} \|^{2}_{\gamma, L} \right)  \right] \;.
\end{aligned}
\end{equation*}
Now, to obtain \eqref{eqn:reg-dwn-n} we apply the second estimate in
\eqref{eqn:reg-prf-1} as well as \eqref{e:zbd} to bound
\begin{equation*}
\begin{aligned}
\EE \left[ e^{\kappa \| w_{t_{1} -} \|^{2}_{\gamma, L}} \right]
\lesssim e^{2 \kappa e^{- 2 \delta k_{0}} \| w_{0} \|_{\gamma, L}^{2}} \;,
\end{aligned}
\end{equation*}
as required.

\textit{Proof of \eqref{e:rupnew}.} 
In this estimate, the point is that the stopping time $
t_{1} = \sigma_{\beta}^{\mygeq}(Q; t_{0}) $ take a time of order one to kick in, which suffices to
regularise the initial condition. As above, we start from the estimate
\begin{equation*}
\begin{aligned}
\EE \left[ e^{\kappa \| w_{t_{1} -} \|^{2}_{\gamma, L}} \right] \leqslant
C(\kappa, \gamma) \EE \left[
\exp \left( 2\kappa  \| S^{L}_{0, t_{1}-} w_{0}  \|^{2}_{\gamma,L} +4 \kappa
\| z_{t_{1}-} \|^{2}_{\gamma, L} \right)  \right] \;.
\end{aligned}
\end{equation*}
This time, we apply the first estimate in \eqref{eqn:reg-prf-1} and
Cauchy--Schwarz to obtain
\begin{equation*}
\begin{aligned}
\EE \left[ e^{\kappa \| w_{t_{1} -} \|^{2}_{\gamma, L}} \right] \leqslant
C(\kappa, \gamma) \EE \left[ e^{4 \kappa C(\gamma) t_{1}^{-
\frac{\gamma}{\a }}} \right]^{\frac{1}{2}} \EE \left[ e^{8 \kappa \|
z_{t_{1}-} \|_{\gamma, L}^{2}} \right]^{\frac{1}{2}} \;.
\end{aligned}
\end{equation*}
Now, since  $ \gamma /
\a  < 1 $, and since $ E < \beta $  we can apply
Lemma~\ref{lem:neg-mmts} together with \eqref{e:zbd}. We then obtain that as desired
\begin{equation*}
\begin{aligned}
\EE \left[ e^{\kappa \| w (\sigma_{\beta}^{\mygeq} (Q, t_{0}) - , \cdot)  \|^{2}_{\gamma, L}} \right]
1_{\mD_{E}(Q; t_{0})} \leqslant C \one_{\mD_{E}(Q; t_{0})}\;,
\end{aligned}
\end{equation*}
with the proportionality constant depending on all the parameters of the
problem.

\textit{Proof of the uniform bound \eqref{eqn:uniform-bd}.} 
We find, as above, that
\begin{equation*}
\begin{aligned}
\EE \left[ \sup_{0 \leqslant t < t_{1}} e^{\kappa \| w_{t} \|^{2}_{\gamma, L}}
\right] & \leqslant \EE \left[ \sup_{0 \leqslant t < t_{1}} 
e^{ 2\kappa  \| S^{L}_{0, t} w_{0}  \|^{2}_{\gamma,L} + 4\kappa  \|
y_{t}  \|^{2}_{\gamma, L} + 4 \kappa \| z_{t} \|^{2}_{\gamma, L} }
\right] \\
& \leqslant C(\kappa, \gamma) e^{2 \kappa \| w_{0} \|^{2}_{\gamma, L}} \EE \left[ \sup_{0 \leqslant t < t_{1}} 
e^{ 4 \kappa \| z_{t} \|^{2}_{\gamma, L} } \right] \\
& \lesssim e^{2 \kappa \| w_{0} \|_{\gamma, L}^{2}}\;,
\end{aligned}
\end{equation*}
by \eqref{e:zbd} and the second bound on \eqref{eqn:reg-prf-1}, as desired.
This concludes the proof of the proposition.
\end{proof}
In the following result we obtain exponential moments of the stochastic
convolution term that was relevant in the preceding proof.

\begin{lemma}\label{lem:bd-z} 
In the same setting as that of Proposition~\ref{prop:reg-high-freq}, for any
stopping time $ t_{0} $ and any other stopping time $ t_{1} $ such that $ t_{0}
\leqslant t_{1} \leqslant T(t_{0}) $, let $ z_{t} $ be the solution to:
\begin{equ}
\ud  z_{t}  = \big[ \mL z_{t} + \psi_{t} z_{t} \big] \ud
t + \sigma(u_{t}, \ud W_{t}), \qquad z_{t_{0}} = 0\;, \qquad t \in
[t_{0}, t_{1}) \;,
\end{equ}
where $ \psi_{t} $ is defined in \eqref{eqn:def-alpha}, and $ \sigma $ in
\eqref{eqn:def-sigma}. Then for any $ \gamma \in (0,  1/2] $ and $ \a \geqslant
1 $ we can bound
\begin{equation*}
\begin{aligned}
\EE_{t_{0}} & \left[ \sup_{s \in [t_{0}, t_{1})} \exp\big( \ve^{-1}\| z_{s}
  \|_{\gamma, M_{t_{0}} + k_{0}}^{2}\big) \right] < C(\ve, \gamma, \a)  \;,
\end{aligned}
\end{equation*}
for any $ \ve > 0 $, and where the constant $ C (\ve) > 0 $ additionally
depends on all the parameters in the statement of
Proposition~\ref{prop:reg-high-freq}.
\end{lemma}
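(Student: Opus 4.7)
The plan is to combine the mild formulation of $z_t$ with an energy estimate and an exponential martingale argument. Since the semigroup $S^L_{s,t}$ defined in \eqref{e:def-semi} is not adapted at time $s$ (because $\psi_r$ depends on $u_r$ for $r \in [s,t]$), the cleanest route is to apply It\^o's formula directly to $Y_t = \| z_t \|_{\gamma,L}^2$ with $L = M_{t_0}+k_0$, rather than to attempt a direct $L^p$ bound on a stochastic convolution with non-adapted kernel. The SDE for $Y_t$ decomposes into three contributions: the dissipative action of $\mathcal L + \psi_t$, the It\^o correction $R_t = \operatorname{tr}[\sigma\sigma^{*}(u_t)]$ measured in the shifted norm, and a continuous martingale $N_t = 2\int_{t_0}^{\cdot}\langle z_s, \sigma(u_s, \ud W_s)\rangle_{\gamma,L}$.

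For the dissipative part, the key point is that $z_t$ is supported in frequencies $|k|>L_\alpha$, so $\langle z^{\alpha}, \mathcal L^{\alpha} z^{\alpha}\rangle_{\gamma,L} \leqslant -\nu^{\alpha}\zeta_{L_\alpha+1} \|z^{\alpha}\|_{\gamma,L}^2 = -(L+1)^{2\mathbf a}\|z^{\alpha}\|_{\gamma,L}^{2}$, while $\psi_t \leqslant \zeta_M$ by \eqref{eqn:reg-prf-alpha-bd}. Since $\mathbf a\geqslant 1$ and $L = M + k_0$, the net rate $(L+1)^{2\mathbf a} - \zeta_M$ grows at least linearly in $k_0$. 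For the It\^o correction $R_t$, one unfolds $\sigma(u_t,\ud W_t)$ as in \eqref{eqn:def-sigma}, uses the bound $\|u_t\|/\|\Pi^{\myl}_M u_t\| \leqslant \sqrt 5$ coming from Lemma~\ref{lem:consistency} on $[t_0,t_1)$, and invokes the noise decay $\overline \Gamma_k \lesssim (1+|k|)^{-2\gamma_0}$ with $\gamma_0 > d/2 + 1$. A convolution estimate based on $(1+|k|-L_\alpha)^{2\gamma} \lesssim (1+|k+m|)^{2\gamma} + (|m|-L_\alpha)_{+}^{2\gamma}$ then controls $R_t$ by a deterministic constant plus an extra contribution of order $\|w_t\|_{\gamma,L}^{2}$. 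Similarly the quadratic variation of $N_t$ satisfies $\ud\langle N\rangle_t \leqslant C(1+\|w_t\|_{\gamma,L}^{2})\,Y_t\,\ud t$.

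With these in hand, for fixed $\lambda>0$ one applies It\^o to $\exp(\lambda Y_t)$ and arrives at a differential inequality whose drift coefficient is negative once $k_0$ is large enough to absorb both the linear-in-$Y_t$ growth from $\lambda R_t$ and the quadratic-in-$\lambda$ growth from the quadratic variation of $N_t$. The resulting supermartingale structure, combined with Doob's maximal inequality applied to the associated exponential martingale, yields the bound $\EE_{t_0}[\sup_{s\in [t_0,t_1)}\exp(\lambda Y_s)] \leqslant C(\lambda,\gamma,\mathbf a)$ for any prescribed $\lambda$. Taking $\lambda = \varepsilon^{-1}$ gives the claim.

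The principal obstacle I foresee is the coupling between $z$ and $w$ introduced by the second summand of $\sigma$ in \eqref{eqn:def-sigma}, which produces a $\|w_t\|_{\gamma,L}^{2}$ contribution in both $R_t$ and $\ud\langle N\rangle_t$ that cannot be dominated purely by $Y_t$ or deterministic constants. To close the estimate one must either (i) re-use the decomposition $w_t = S^{L}_{t_0,t} w_{t_0} + y_t + z_t$ from \eqref{eqn:decomposition-w} and the already-established bounds \eqref{eqn:reg-prf-1} and \eqref{eqn:reg-prf-3} on the first two summands to replace $\|w_t\|_{\gamma,L}^{2}$ by $\|z_t\|_{\gamma,L}^{2}$ plus a deterministic quantity, setting up a self-consistent Gronwall loop, or (ii) exploit the extra spectral gap afforded by $k_0$ to absorb this term into the dissipation. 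Calibrating the parameters so that the spectral gap dominates the $\lambda$-dependent constants uniformly in $M_{t_0}$, which is the essence of the claim, is the delicate point.
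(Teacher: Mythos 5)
Your overall architecture is the same as the paper's: It\^o's formula applied directly to $\|z_t\|_{\gamma,L}^2$ (precisely to avoid the non-adapted semigroup in the mild formulation), a decomposition into dissipation, It\^o correction and martingale, the substitution $w_t = S^L_{0,t}w_0 + y_t + z_t$ to convert the $\|w_t\|_{\gamma,L}^2$ appearing in the correction back into $\|z_t\|_{\gamma,L}^2$ plus controlled quantities, and finally an exponential supermartingale plus Doob. However, there is a genuine gap in how you propose to close the exponential estimate, and it sits exactly at the point you flag as ``delicate''.

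The problem is the martingale's quadratic variation. Writing $Y_t = \|z_t\|_{\gamma,L}^2$, the natural bound is $\ud\langle N\rangle_t \lesssim \|z_t\|_{2\gamma,L}^2\,\ud t$ (or, in your normalisation, $\lesssim Y_t\,(L^{2\gamma}+\|w_t\|_{\gamma,L}^2)\,\ud t$, which after the self-consistent substitution $w\to z$ contains a term of order $Y_t^2$). In either form this is \emph{superlinear} in $Y_t$, or equivalently involves the strictly stronger norm $\|z\|_{2\gamma,L}$, so the term $\tfrac12\lambda^2\ud\langle N\rangle_t$ arising in the drift of $e^{\lambda Y_t}$ cannot be dominated by a dissipation that is merely linear in $Y_t$, no matter how large the spectral gap. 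Your proposed fix (ii) therefore fails twice over: first, $k_0$ is not at your disposal here --- the estimate is required uniformly over $k_0\in\NN^{+}$ (and is invoked in the main proof with $k_0$ replaced by $1$), so you cannot enlarge it to beat a given $\lambda$; second, even an arbitrarily large gap $\propto M^{2\a-1}k_0$ multiplying $Y_t$ cannot absorb a term $\propto \lambda^2\|z\|^2_{2\gamma,L}$, since $\|z\|_{2\gamma,L}/\|z\|_{\gamma,L}$ is unbounded. Your fix (i) removes the randomness from the coefficient but leaves the superlinearity intact.

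The missing ingredient is to retain the \emph{full} parabolic smoothing rather than only the spectral gap: the quadratic form gives simultaneously $-\mf{c}_1(\nu)\|z_t\|_{\gamma+\a,L}^2$ and $-\mf{c}_2(\nu)M^{2\a-1}k_0\,Y_t$, and one keeps both. Since $\gamma\leqslant 1/2\leqslant\a$ one has $2\gamma<\gamma+\a$, so interpolation together with the a priori bound $\|z_t\|\leqslant\|w_t\|+\|S^L_{0,t}w_0\|+\|y_t\|\lesssim 1$ yields
\begin{equ}
\ud\langle N\rangle_t \lesssim \|z_t\|_{2\gamma,L}^2\,\ud t \leqslant \big(\ve\,\|z_t\|_{\gamma+\a,L}^2 + C(\ve)\big)\,\ud t\;,
\end{equ}
and it is the term $-\lambda\,\mf{c}_1(\nu)\|z_t\|^2_{\gamma+\a,L}$ that absorbs $\tfrac12\lambda^2\ve\,\|z_t\|^2_{\gamma+\a,L}$ upon choosing $\ve$ small depending on $\lambda$. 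This is precisely what makes the admissible exponent $\lambda=\ve^{-1}$ arbitrary; it is also where the deterministic contribution $L^{2\gamma}\leqslant M+k_0$ to the It\^o correction gets paired with the gap $M^{2\a-1}k_0$ to keep the final constant uniform in $M$ and $k_0$, a uniformity your write-up does not address.
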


\begin{proof}
As in the previous proof we fix $ t_{0} = 0, M =
M_{t_{0}} $ and $
M_{t_{0}} + k_{0} = L $.
Our aim is to obtain an energy estimate for $ \| z_{t} \|_{\gamma, L}^{2} $. Therefore we compute
\begin{equ}[eqn:for-z]
  \ud \| z_{t} \|_{\gamma, L}^{2} = 2 \langle \Lambda^{2
  \gamma}_{L} z_{t}, \mL z_{t} + \psi_{t} z_{t}  \rangle \ud t +
\overline{Q}(u_{t}) \ud t  + \ud \overline{\mM}_{t}\;,
\end{equ}
where $ \Lambda^{2 \gamma}_{L} $ is the Fourier multiplier, which in
coordinates for $ \alpha \in \{ 1, \dots, m \} $ is defined by $$ \widehat{\Lambda^{2
\gamma}_{L} z^{\alpha}} (k) = (1+|k| - L_{\alpha})^{2 \gamma}
\hat{z}^{\alpha, k}
 \one_{\{ | k | > (L+1)_{\alpha}\}}\;, $$
$ \overline{Q} $ is a quadratic variation term which we compute below, 
and $ \overline{\mM} $ is a local
martingale defined by
\begin{equ}
  \ud \overline{\mM}_{t} = \langle \Lambda^{2 \gamma}_{L}
  z_{t}, \sigma(u_{t}, \ud W_{t}) \rangle \;,
\end{equ}
with $ \sigma $ as in \eqref{eqn:def-sigma}. In particular, in Fourier
coordinates
\begin{equation*}
\begin{aligned}
\mF ( \sigma^{\alpha}  (u_{t}, \ud W_{t})) (k) = 1_{\{ | k | > (M+1)_{\alpha} \}}\sum_{l \in \ZZ^{d}} 
\bigg( & \sum_{\beta=1}^{m} \frac{\hat{u}^{\beta, k-l}}{\|
\Pi^{\myl}_{M} u_{t} \|} \ud B^{\alpha, \beta}_{l , t} \\
 - \frac{ \hat{u}^{\alpha, k}}{\| \Pi^{\myl}_{M} u_{t} \|^{3}} &\sum_{h \in
\ZZ^{d}} \sum_{\gamma, \eta= 1}^{m} \hat{u}^{\gamma, h}_{t} \hat{u}^{\eta,
h-l} \ud B^{\gamma, \eta}_{l, s} \bigg) \;.
\end{aligned}
\end{equation*}
As for the quadratic variation term $ \overline{Q} $, we can compute it as
follows:
\begin{equs}[e:qbar]
\overline{Q} (u_{t}) = &\sum_{\alpha =1}^{m} \sum_{| k | > (L+1)_{\alpha}}
 (1 +  | k | - L_{\alpha})^{2 \gamma} \sum_{l \in \ZZ^{d}}\Bigg(  \sum_{\beta,
\beta^{\prime}} \Gamma^{\alpha, \alpha}_{\beta,
\beta^{\prime}, l} \frac{ \hat{u}^{\beta, k - l}_{t} \hat{u}^{\beta^{\prime},
-k + l}_{t}}{\|
\Pi^{\myl}_{M} u_{t} \|^{2}} \\
& - 2 \mf{Re} \Bigg(  \sum_{\beta, \gamma, \eta=1}^{m} \Gamma^{\alpha, \gamma}_{\beta, \eta, l}
\frac{\hat{u}^{\beta, k-l}_{t} \hat{u}^{\alpha, -k}_{t} }{\| \Pi^{\myl}_{M}
u_{t} \|^{4}} \sum_{h \in \ZZ^{d}} \hat{u}^{\gamma, h}_{t} \hat{u}^{\eta,
h-l}_{t} \Bigg) \\
& +\frac{ | \hat{u}^{\alpha, k}_{t} |^{2} }{\| \Pi^{\myl}_{M} u_{t}
\|^{6}}
\sum_{\gamma, \gamma^{\prime}, \eta, \eta^{\prime} =1}^{m}
\Gamma^{\gamma, \gamma^{\prime}}_{\eta, \eta^{\prime}, l} \Bigg(  \sum_{h \in
\ZZ^{d}} \hat{u}^{\gamma, h}_{t} \hat{u}^{\eta,
h-l}_{t}\Bigg) \Bigg(  \sum_{h \in
\ZZ^{d}} \hat{u}^{\gamma^{\prime}, h}_{t} \hat{u}^{\eta^{\prime},
h-l}\Bigg)\Bigg) \;.
\end{equs}
In particular, via Remark~\ref{rem:fourier}, we can bound $ \overline{Q} $ by
\begin{equation*}
\begin{aligned}
\overline{Q} (u_{t}) \lesssim_{\Gamma} & \sum_{\alpha =1}^{m} \sum_{| k | > (L+1)_{\alpha}}
 (1 +  | k | - L_{\alpha})^{2 \gamma} \Bigg(\frac{| \hat{u}^{\alpha,
k}_{t} |^{2}}{\| \Pi^{\myl}_{M} u_{t} \|^{2}}
\frac{\| u_{t} \|^{4}}{\| \Pi^{\myl}_{M} u_{t} \|^{4}}  \\
& + \sum_{l \in \ZZ^{d}}    \Bigg(
\overline{\Gamma}_{l} \frac{| \hat{u}^{k - l}_{t} |^{2} }{\| \Pi^{\myl}_{M} u_{t}
\|^{2}} + \overline{\Gamma}_{l} \frac{| \hat{u}^{\alpha, k}_{t} |  | \hat{u}^{k -
l}_{t} |}{\| \Pi^{\myl}_{M} u_{t} \|^{2}} \frac{\| u_{t} \|^{2}}{\|
\Pi^{\myl}_{M} u_{t} \|^{2}}\Bigg)  \Bigg) \;,
\end{aligned}
\end{equation*}
where the first line contains a bound on the last term appearing in
\eqref{e:qbar}. Then by using the estimate $ \| u_{t} \| \lesssim \|
\Pi^{\myl}_{M} u_{t} \| $, via Lemma~\ref{lem:consistency}, since $ M =
M_{t} $, we can further estimate
\begin{equation*}
\begin{aligned}
\overline{Q} (u_{t}) & \lesssim \sum_{\alpha =1}^{m} \sum_{| k | > (L+1)_{\alpha}}
 (1 +  | k | - L_{\alpha})^{2 \gamma} \Bigg(\frac{| \hat{u}^{\alpha,
k}_{t} |^{2}}{\| \Pi^{\myl}_{M} u_{t} \|^{2}} + \sum_{l \in \ZZ^{d}} 
\overline{\Gamma}_{l} \frac{| \hat{u}^{k - l}_{t} |^{2} }{\| \Pi^{\myl}_{M} u_{t}
\|^{2}} \Bigg) \\
& \lesssim \| w_{t} \|_{\gamma, L}^{2} +  \sum_{\alpha =1}^{m} \sum_{| k | > (L+1)_{\alpha}}
 (1 +  | k | - L_{\alpha})^{2 \gamma} \sum_{l \in \ZZ^{d}} 
\overline{\Gamma}_{l} \frac{| \hat{u}^{k - l}_{t} |^{2} }{\| \Pi^{\myl}_{M} u_{t}
\|^{2}}\;.
\end{aligned}
\end{equation*}
Here the last term is the most tedious one to control, since we have on the one
hand to pass the quantity $ (1+| k | - L_{\alpha})^{2 \gamma} $ inside the norm
$ \| \cdot \|_{\l^{2}_{k}} $ and on the other hand because inside the sum we do
not have the $ \alpha $-th component of the solution, which means that we must
replace $ L_{\alpha} $ with $ L_{\beta} $ for arbitrary $ \beta \in \{ 1,
\dots, m \} $. To be precise, we bound the sum as follows:
\begin{equs}
 \sum_{\alpha =1}^{m} & \sum_{| k | > (L+1)_{\alpha}}
 (1 +  | k | - L_{\alpha})^{2 \gamma} \sum_{l \in \ZZ^{d}} 
\overline{\Gamma}_{l} \frac{| \hat{u}^{k - l}_{t} |^{2} }{\| \Pi^{\myl}_{M} u_{t}
\|^{2}} \\
 \leqslant  & \nu_{\mathrm{max}}^{\gamma/ \a} \nu_{\mathrm{min}}^{- \gamma/\a}
\sum_{\alpha, \beta =1}^{m} \sum_{| k | > (L+1)_{\beta}}
 (1 +  | k | - L_{\beta})^{2 \gamma} \sum_{l \in \ZZ^{d}} 
\overline{\Gamma}_{l} \frac{| \hat{u}^{\beta, k - l}_{t} |^{2} }{\| \Pi^{\myl}_{M} u_{t}
\|^{2}} \\
& + \sum_{\alpha, \beta =1}^{m} \sum_{(L +1)_{\beta} \geqslant | k | > (L+1)_{\alpha}}
 (1 + \Delta L^{\beta, \alpha}_{+} )^{2 \gamma} \sum_{l \in \ZZ^{d}} 
\overline{\Gamma}_{l} \frac{| \hat{u}^{\beta, k - l}_{t} |^{2} }{\| \Pi^{\myl}_{M} u_{t}
\|^{2}} \;, \label{e:2ndt}
\end{equs}
where $ \Delta L^{\beta, \alpha}_{+} = \{ L_{\beta} - L_{\alpha} \} \vee 0$.

To treat the first term above, let us introduce for $ k \in \ZZ^{d} $ the weight $
\varrho^{L}_{k}$ defined as follows:
\begin{equ}[eqn:def-weights]
\varrho^{L}_{k} = \begin{cases} 1, \qquad & \text{ if } | k | \leqslant L\;, \\
(1+| k | - L)^{2 \gamma}, \qquad & \text{ if } | k | > L\;.
\end{cases} 
\end{equ}
We omit writing the dependence of $ \vr $ on $ \gamma, $ since this parameter is fixed
throughout the proof. Similarly, define $ \varrho_{k} = (1 + | k
|)^{2 \gamma} $. Then by Lemma~\ref{lem:bound-weights} there exists a $
c(\gamma) > 0 $ such that
\begin{equ}
\varrho^{L}_{k} \leqslant c(\gamma) \varrho_{k+l}^{L} \varrho_{l}, \qquad \forall k, l
\in \ZZ^{d}\;.
\end{equ}
In particular, by the decay assumptions on $ \Gamma $
together with the fact that $ \gamma <  1 $, we find
\begin{equs}
\sum_{| k | > (L+1)_{\beta}}
 (1 +  | k | - L_{\beta})^{2 \gamma} \sum_{l \in \ZZ^{d}} 
\overline{\Gamma}_{l} \frac{| \hat{u}^{\beta, k - l}_{t} |^{2} }{\| \Pi^{\myl}_{M} u_{t}
\|^{2}}  & \lesssim  
 \sum_{l \in \ZZ^{d}} \overline{\Gamma}_{l} \varrho_{l} \sum_{k \in \ZZ^{d}}
\varrho^{L_{\beta}}_{l+k} | \hat{u}_{t}^{\beta, k+l} |^{2} \\
&\lesssim_{\Gamma}  \| \Pi_{L}^{\myleq}  u_{t} \|^{2} + \| u_{t} \|_{\gamma,
L}^{2}\;.
\end{equs}
As for the second term in \eqref{e:2ndt}, we have
\begin{equation*}
\begin{aligned}
 \sum_{(L +1)_{\beta} \geqslant | k | > (L+1)_{\alpha}}
 (1 + \Delta L^{\beta, \alpha}_{+} )^{2 \gamma} \sum_{l \in \ZZ^{d}} 
\overline{\Gamma}_{l} \frac{| \hat{u}^{\beta, k - l}_{t} |^{2} }{\| \Pi^{\myl}_{M} u_{t}
\|^{2}} \lesssim_{\Gamma, \nu} L^{2 \gamma} \frac{\| u_{t} \|^{2}}{ \|
\Pi^{\myl}_{M} u_{t} \|^{2}} \;.
\end{aligned}
\end{equation*}
Therefore, overall and once more via Lemma~\ref{lem:consistency}, we deduce that for all $
t \in [0, t_{1}) $
\begin{equ}[eqn:reg-prf-4]
  \overline{Q} (u_{t}) \lesssim L^{2 \gamma} \frac{\|  u_{t}  \|^{2}}{ \|
\Pi^{\myl} u_{t}\|^{2}} + \frac{\| u_{t}  \|^{2}_{\gamma, L}}{\|
\Pi^{\myl} u_{t} \|^{2}} 
 \lesssim L^{2 \gamma}  + \| w_{t} \|_{\gamma, L}^{2}\;.
\end{equ}
Now we in turn estimate for $ t \in [0, t_{1}) $
\begin{equ}
  \| w_{t} \|_{\gamma, L} \leqslant \| S_{0, t}^{L}
  w_{0} \|_{\gamma, L} + \|y_{t} \|_{\gamma, L} + \| z_{t}
  \|_{\gamma, L}\;.
\end{equ}
Hence, substituting \eqref{eqn:reg-prf-4} into \eqref{eqn:for-z} and using that $
\psi_{t} \leqslant \zeta_{M}$ we obtain for some $ C(\gamma )> 0 $ and $
\mf{c}_{1}(\nu), \mf{c_{2} (\nu)} > 0 $
and uniformly over $ t \in [0, t_{1})$
\begin{equs}
  \ud \| z_{t} \|_{\gamma, L}^{2} \leqslant & - \left(  \mf{c}_{1}(\nu)  \| z_{t} \|_{\gamma +
   \a, L}^{2} + \mf{c}_{2}(\nu) M^{2 \a -1} k_{0} \| z_{t} \|_{\gamma, L} \right) \ud
t\\
&   + C \left( L^{2 \gamma} + \| z_{t} \|_{\gamma,L}^{2}  + \| S_{0,
      t}^{L} w_{0} \|_{\gamma, L}^{2}  + \| y_{t}
\|_{\gamma, L}^{2} \right) \ud t + \ud \overline{\mM}_{t}\;,
\end{equs}
where we have employed the two different estimates below for the quadratic form $ \langle
\Lambda^{2 \gamma}_{L} z_{t}, ( \mL + \psi_{t}) z_{t}  \rangle  $ appearing in
\eqref{eqn:for-z}. On the one
hand, we have
\begin{equs}
 \langle \Lambda^{2 \gamma}_{L} z_{t}, ( \mL + \psi_{t}) z_{t}  \rangle &  = 2 \sum_{\alpha=1}^{m} \sum_{| k | > (L +1)_{\alpha}} (1+ | k | - L_{\alpha})^{2
\gamma} \left( - \nu^{\alpha} | k |^{2 \a} + \psi_{t} \right)
|\hat{z}_{t}^{\alpha, k}|^{2} \\
& \leqslant - \sum_{\alpha=1}^{m} \sum_{| k | > (L +1)_{\alpha}  }(1+ | k | -
L_{\alpha})^{2 \gamma}\left( \nu^{\alpha} | k |^{2 \a} - \zeta_{M} \right)
|\hat{z}_{t}^{\alpha, k}|^{2}  \\
& \leqslant -  \nu_{\mathrm{min}} \sum_{\alpha=1}^{m} \sum_{| k | > (L +1)_{\alpha} }(1+| k | -
L_{\alpha})^{2 \gamma}(1 + | k |- L_{\alpha})^{2 \a} |\hat{z}_{t}^{\alpha, k}|^{2}
\\& \leqslant -  \nu_{\mathrm{min}} \| z_{t} \|_{H^{\gamma + \a}_{L}}^{2}\;.
\end{equs}
Here we have used that $ \hat{z}^{\alpha, k} = 0 $ for $ | k | \leqslant
L_{\alpha}+1 $, and that since $ k_{0} \in \NN^{+} $ we have $ L \geqslant M +1 $ 
together with the inequality
$ x^{2 \a} - y^{2\a} \geqslant (x - y)^{2\a} $ for all $ x \geqslant y
\geqslant 0 $. 

On the other hand, we can estimate the quadratic form by using convexity of the
map $ x \mapsto x^{2 \a}  $, since $ \a \geqslant 1/2 $:
\begin{equs}
 \langle \Lambda^{2 \gamma}_{L} z_{t}, ( \mL + \psi_{t}) z_{t}  \rangle &  =  \sum_{\alpha=1}^{m} \sum_{| k | > (L +1)_{\alpha}} (1+ | k | - L_{\alpha})^{2
\gamma} \left( - \nu^{\alpha} | k |^{2 \a} + \psi_{t} \right)
|\hat{z}_{t}^{\alpha, k}|^{2} \\
& \leqslant - \sum_{\alpha=1}^{m} \sum_{| k | > (L +1)_{\alpha}  }(1+ | k | -
L_{\alpha})^{2 \gamma}\left( \nu^{\alpha} | k |^{2 \a} - \zeta_{M} \right)
|\hat{z}_{t}^{\alpha, k}|^{2}  \\
& \leqslant - \mf{c}_{2}(\nu) \sum_{\alpha=1}^{m} \sum_{| k | > (L +1)_{\alpha} }(1+| k | -
L_{\alpha})^{2 \gamma} M^{2\a -1} k_{0}|\hat{z}_{t}^{\alpha, k}|^{2}
\\& \leqslant - \mf{c}_{2} (\nu) M^{2\a -1} k_{0} \| z_{t} \|_{\gamma, L}^{2}\;.
\end{equs}

At this point we use interpolation with $ \mu = \frac{\gamma}{\gamma+\a} $ and
then Young's inequality for products for any $ \ve \in (0, 1) $ to estimate for
some $ c > 0 $
\begin{equs}
  \| z_{t} \|_{\gamma, L} & \leqslant c \| z_{t} \|^{1- \mu } \|
z_{t} \|_{\gamma +\a, L}^{\mu} \\
& \leqslant c (1-\mu) \ve^{-\frac{1}{1-\mu}} \| z_{t} \| + c \mu \ve^{\frac{1}{\mu}} \| z_{t}
\|_{\gamma+\a, L} \\
& \leqslant  c (1-\mu) \ve^{-\frac{1}{1-\mu}} (\| w_{t} \| + \|
S^{L}_{0, t} w_{0}\| + \| y_{t} \|) + c \mu \ve^{\frac{1}{\mu}} \| z_{t}
\|_{\gamma+\a, L}\\
& \leqslant  c (1-\mu) \ve^{-\frac{1}{1-\mu}}C+ c \mu \ve^{\frac{1}{\mu}} \| z_{t}
\|_{\gamma+\a, L}\;, \label{eqn:interpolation}
\end{equs}
where the bound on $ \| S_{0 , t} w_{0} \| $ and $ \| y_{t} \| $ is provided in
the relative steps of the proof of Proposition~\ref{prop:reg-high-freq} above.
Thus, finally, via \eqref{eqn:reg-prf-3} and the above estimate, by choosing $
\ve \in (0, 1) $ such that $ c \mu
\ve^{\frac{1}{\mu}} \leqslant \mf{c}_{1}(\nu) /2 $, we obtain for some $ C > 0 $, whose value
changes from line to line:
\begin{equs}
  \ud \| z_{t} \|_{ \gamma, L}^{2}  \leqslant &- \left(  
\frac{\mf{c}_{1}(\nu)}{2} \| z_{t} \|_{\gamma +
  \a, L}^{2} + \mf{c}_{2}(\nu) M^{2 \a -1}k_{0} \| z_{t} \|_{\gamma,
L}^{2} \right) \ud t \\
& + C ( L^{2 \gamma} + \| S^{L}_{0, t} w_{0} \|_{\gamma,
L}^{2}  ) \ud t + \ud \overline{\mM}_{t} \\
 \leqslant &-\left(   \frac{\mf{c}_{1}(\nu)}{2} \| z_{t} \|_{\gamma +
  \a, L}^{2}  + \mf{c}_{2}(\nu) M^{2 \a -1}k_{0} \| z_{t} \|_{\gamma,
L}^{2} \right) \ud t \\
&  + C ( L^{2 \gamma} + t^{- \frac{\gamma}{\a}}  ) \ud t + \ud \overline{\mM}_{t}\;. \label{eqn:reg-prf-5}
\end{equs}
The final ingredient in our study is a bound on the quadratic variation of the
martingale $ \overline{\mM} $. Note that in Fourier coordinates
\begin{equs}
\ud \overline{\mM}_{t} = & \sum_{l \in \ZZ^{d}}  \left(
\sum_{\alpha=1}^{m}\sum_{| k | >
 ( L+1)_{\alpha}} \frac{\varrho^{L_{\alpha}}_{k} \hat{z}_{t}^{\alpha, k}
 \left( \hat{u}_{t}^{ k-l} \cdot \ud B^{l}_{t} \right)^{\alpha}  }{\|
\Pi^{\myl}_{M} u_{t} \|} \right) 
\\
& - \sum_{l \in \ZZ^{d}}   \left( \frac{\langle \Lambda^{2 \gamma}_{L} z_{t},
u_{t} \rangle}{\| \Pi^{\myl}_{ M} u_{t}
\|^{3}} \sum_{\alpha=1}^{m} \sum_{| k | \leqslant M_{\alpha}}
\hat{u}_{t}^{\alpha, k}  \left( \hat{u}_{t}^{k-l} \cdot \ud B^{l}_{t}
\right)^{\alpha} \right)   \;.
\end{equs}
We then bound the quadratic variation of $ \overline{\mM} $ as follows, using
as usual that $ t_{1} \leqslant T(t_{0}) $ together with
Lemma~\ref{lem:consistency}
\begin{equs}
 \ud \langle \overline{\mM} \rangle_{t} \lesssim &\sum_{l \in \ZZ^{d}}
\overline{\Gamma}_{l}
 \left\vert \sum_{\alpha= 1}^{m} \sum_{| k | > (L+1)_{\alpha}} \frac{\varrho^{L_{\alpha}}_{k} 
|\hat{z}^{\alpha, k} | }{\| \Pi^{\myl}_{M}
u_{t}\|} | \hat{u}_{t}^{k - l} | \right\vert^{2}   \\
& + \sum_{l \in \ZZ^{d}} \overline{\Gamma}_{l} \left\vert\frac{|\langle
\Lambda^{2 \gamma}_{L} z_{t}, u_{t} \rangle|}{\| \Pi^{\myl}_{ M} u_{t}
\|^{3}} \sum_{\alpha=1}^{m} \sum_{| k | \leqslant M_{\alpha}}
|\hat{u}_{t}^{\alpha, k}|  |\hat{u}_{t}^{k-l} | \right\vert^{2}\\
\lesssim & \sum_{l \in
\ZZ^{d}} \overline{\Gamma}_{l} \left( \| z_{t} \|_{2
\gamma, L}^{2} \frac{ \| u_{t} \|^{2}}{\| \Pi_{M}^{\myl} 
u_{t} \|^{2}}  + \| z_{t} \|_{2 \gamma,L}^{2} \frac{\|
u_{t}\|^{6}}{\| \Pi^{\myl}_{M} u_{t} \|^{6}}  \right) \ud t\\
\lesssim & \| z_{t} \|_{2 \gamma, L}^{2} \ud t \;. \label{eqn:bd-z-qv-1}
\end{equs}
Now we make use of the fact that $ \gamma < 1 \leqslant \a$, so that we can use
interpolation to estimate the norm $ \| z_{t} \|_{2\gamma, L}^{2} $ by
$ \| z_{t} \|_{\gamma + \a, L}^{2} $, with the gain of a
small factor. In particular, by Young's inequality and interpolation as in
\eqref{eqn:interpolation}, we obtain
that for any $ \ve \in (0, 1) $ there exists a $ C(\ve) \in (0, \infty)  $ such
that for all $ t \in [0, t_{1}) $
\begin{equs}[eqn:bd-qv-total]
 \ud \langle \overline{\mM} \rangle_{t}
  \leqslant \left\{ \ve \| z_{t} \|^{2}_{\gamma+\a, L} + C(\ve) \right\} \ud t \;.
\end{equs}
Now from \eqref{eqn:reg-prf-5} and since by assumption $ L^{2 \gamma} \leqslant
M + k_{0} $ as $ \gamma \leqslant 1/2 $, we find that
\begin{equation*}
\begin{aligned}
\| z_{t} \|_{\gamma, L}^{2} \leqslant \int_{0}^{t} e^{- \mf{c}_{2} (\nu)
M^{2 \a -1} k_{0} (t -s)} C(M + k_{0} + s^{- \frac{\gamma}{\a}}) \ud s +
\mZ_{t} \;,
\end{aligned}
\end{equation*}
where the process $ \mZ_{t} $ satisfies $ \mZ_{0} = 0 $ and
\begin{equation*}
\begin{aligned}
\ud \mZ_{t} & = - \left(   \frac{\mf{c}_{1}(\nu)}{2} \| z_{t} \|_{\gamma +
  \a, L}^{2}  + \mf{c}_{2}(\nu) M^{2 \a -1}k_{0} \mZ_{t}\right) \ud t + \ud
\overline{\mM}_{t} \\
& \leqslant  -  \frac{\mf{c}_{1}(\nu)}{2} \| z_{t} \|_{\gamma +
  \a, L}^{2} \ud t + \ud \overline{\mM}_{t} \;.
\end{aligned}
\end{equation*}
In particular, since $ 2\a -1 \geqslant 1 $ for $ \a \geqslant 1 $, and since
by assumption $ t_{1} \leqslant 3 $ (which follows from
Lemma~\ref{lem:consistency}, because $ T (t_{0}) - t_{0} \leqslant 3 $ for any
stopping time $ t_{0} $), there exists a (deterministic) constant $ C > 0
$ such that
\begin{equation*}
\begin{aligned}
 \int_{0}^{t} e^{- \mf{c}_{2} (\nu)
M^{2 \a -1} k_{0} (t -s)} (M + k_{0} + s^{- \frac{\gamma}{\a}}) \ud s \leqslant
C \;, \qquad \forall M , k_{0} \in \NN^{+}\;. 
\end{aligned}
\end{equation*}
Therefore we obtain that uniformly over $ M $ and $ k_{0} $ we have, for any $
\mu > 0  $
\begin{equation*}
\begin{aligned}
\EE \left[ \sup_{t \in [0, t_{1})} e^{ \mu \| z_{t} \|_{\gamma, L}^{2}} \right]
\lesssim_{\mu}\EE \left[ \sup_{t \in [0, t_{1})} e^{ \mu \mZ_{t} } \right] \;.
\end{aligned}
\end{equation*}
Now for $ \mZ_{t} $ we find that if we set $ F_{\mu} (x) = e^{\mu x} $,
then
\begin{equation*}
\begin{aligned}
\ud F_{\mu} ( \mZ_{t}) \leqslant & F_{\mu} (\mZ_{t}) \left( - \mu \frac{\mf{c}_{1}(\nu)}{2} \| z_{t} \|_{\gamma +
  \a, L}^{2} + \frac{1}{2} \mu^{2} (\ve  \| z_{t} \|_{\gamma +
  \a, L}^{2}+ C(\ve)) \right) \ud t \\
& + \partial_{x} F_{\mu} (\mZ_{t}) \ud \overline{\mM}_{t} \\
\leqslant &-  F_{\mu} (\mZ_{t}) \mu \frac{\mf{c}_{1}(\nu)}{4} \| z_{t} \|_{\gamma +
  \a, L}^{2}  + \partial_{x} F_{\mu} (\mZ_{t}) \ud \overline{\mM}_{t}\;,
\end{aligned}
\end{equation*}
where we have used \eqref{eqn:bd-qv-total} to obtain the first inequality, and
to obtain the second inequality we have assumed that $ \mu $ satisfies
\begin{equation} \label{e:mu-bound}
\begin{aligned}
\mu \leqslant \frac{\mf{c}_{1}(\nu) \ve^{-1}}{2} \;.
\end{aligned}
\end{equation}
In particular, we conclude that for $ \mu $ satisfying \eqref{e:mu-bound}
\begin{equation*}
\begin{aligned}
\sup_{0 \leqslant t \leqslant 1} \EE \left[ e^{\mu \mZ_{t \wedge t_{1}}}+
\int_{0}^{3} F_{\mu} (\mZ_{s \wedge t_{1}}) \| z_{s \wedge t_{1}} \|_{\gamma + \a, L}^{2}  \ud s\right] < \infty
\;.
\end{aligned}
\end{equation*}
To pass the supremum inside the expectation we can further bound, by using the
already mentioned bound $ t_{1} \leqslant 3 $, which follows from our
assumptions:
\begin{equation*}
\begin{aligned}
 \EE \left[ \sup_{0 \leqslant t \leqslant t_{1}} F_{\mu} (\mZ_{t})  \right]
 & \leqslant\EE \left[ \sup_{0 \leqslant t \leqslant 3} F_{\mu} (\mZ_{t \wedge
t_{1}})  \right] \\
& \lesssim \EE \left[ \int_{0}^{3} | \partial_{x} F_{\mu}
(\mZ_{s \wedge t_{1}}) |^{2}\ud \langle \overline{\mM}_{\cdot \wedge
t_{1} } \rangle_{s} \right] \\
& \lesssim \EE \left[ \int_{0}^{3} F_{2 \mu} (\mZ_{s \wedge t_{1}}) \| z_{s
\wedge t_{1}} \|_{\gamma + \a, L}^{2}\ud s  \right] < \infty \;,
\end{aligned}
\end{equation*}
once more by applying \eqref{eqn:bd-qv-total} and by the previous step,
provided that $ \mu \leqslant \mf{c}_{1} (\nu) \ve^{-1} /4 $. Since $ \ve \in
(0, 1) $ can be chosen small at will, this proves the result.

\end{proof}

\begin{lemma}\label{lem:bound-weights} For any $ \gamma >0 $ and any
$ L \in \NN $  consider $ ( \varrho^{L}_{k} )_{k \in \ZZ^{d}}  $ as in \eqref{eqn:def-weights}, and $ \varrho_{k}
= (1 + |k|)^{2\gamma}$ for $ k \in \ZZ^{d} $. Then there exists a $ c(\gamma) >0 $ such
that uniformly over $ L $
\begin{equ}
\varrho^{L}_{k} \leqslant c(\gamma) \varrho^{L}_{k +l} \varrho_{l}, \qquad \forall k, l
\in \ZZ^{d}\;.
\end{equ}
\end{lemma}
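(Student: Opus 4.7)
The plan is to prove the inequality by a direct case analysis based on the triangle inequality $|k| \leq |k+l|+|l|$, with the punchline that one may actually take $c(\gamma)=1$.

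First I would dispose of the trivial case $|k|\leq L$: here $\varrho^L_k=1$, whereas $\varrho^L_{k+l}\varrho_l\geq 1$ by definition, so there is nothing to prove. From now on assume $|k|>L$, so $\varrho^L_k=(1+|k|-L)^{2\gamma}$, and split on whether $|k+l|\leq L$ or $|k+l|>L$.

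If $|k+l|\leq L$, then by the triangle inequality $|k|-L\leq |k+l|+|l|-L\leq |l|$, so $1+|k|-L\leq 1+|l|$. Raising to the $2\gamma$-th power, $\varrho^L_k\leq (1+|l|)^{2\gamma}=\varrho_l = \varrho^L_{k+l}\varrho_l$ (using $\varrho^L_{k+l}=1$). If instead $|k+l|>L$, then $1+|k+l|-L>1$, and the triangle inequality gives
\begin{equ}
1+|k|-L\;\leq\;(1+|k+l|-L)+|l|\;\leq\;(1+|k+l|-L)(1+|l|)\;,
\end{equ}
where the last step uses that $(1+|k+l|-L)(1+|l|)=(1+|k+l|-L)+(1+|k+l|-L)|l|$ together with $1+|k+l|-L\geq 1$. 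Raising to the $2\gamma$-th power yields $\varrho^L_k\leq \varrho^L_{k+l}\varrho_l$ as required.

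There is no real obstacle here: the only mildly delicate point is that although $k+l\in\ZZ^d$, the parameter $L$ (typically of the form $L_\alpha=(\nu^\alpha)^{-1/(2\a)}L$ in applications) need not be an integer, so one has to remember that $|k+l|>L$ only gives $1+|k+l|-L>1$ rather than $\geq 2$, but this is already enough for the final step above. Combining the three cases proves the claim with $c(\gamma)=1$.
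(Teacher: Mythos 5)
Your proof is correct and follows essentially the same route as the paper's: an elementary case analysis on whether $|k|$ and $|k+l|$ exceed $L$, combined with the triangle inequality. Your version is in fact slightly sharper, since the observation $x+y\leqslant x(1+y)$ for $x\geqslant 1$ lets you take $c(\gamma)=1$, where the paper settles for an unspecified constant.
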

\begin{proof}
We distinguish three elementary cases. If $ | k+l | \geqslant | k | $, then the result
is trivial. Otherwise we can either have $ L < | k+l | < | k | $ or $ | k+l
| \leqslant L $ and $ | k+l | < | k | $. In the first case we find by the triangle inequality and for
some $ c(\gamma) > 0 $
\begin{equ}
\varrho^{L}_{k} \leqslant (| k+l |+ | l | - L )^{2 \gamma} \leqslant
c(\gamma) (| k+l | - L)^{2 \gamma} (| l |+ 1)^{2 \gamma} = c(\gamma)
\varrho_{k+l}^{L} \varrho_{l}\;,
\end{equ}
where in the last equality we used the assumption $ | k+l | > L $. Instead,
in the second case, $ | k +l | \leqslant L $ we have
\begin{equ}
| l | \geqslant | k | - L \quad \Rightarrow \quad
\varrho_{k}^{L} \leqslant \varrho_{l}\;,
\end{equ}
so that the claim follows.
\end{proof}
\begin{lemma}\label{lem:reg-semi}
  Fix any $ L^{(1)}, L^{(2)} \in \NN$ such that $ L^{(1)} \geqslant L^{(2)} $ and $
  \psi  \ \colon \ [0, \infty) \to \RR^{m} $ such that $
  | \psi_{s}^{\alpha}  | \leqslant \zeta_{L^{(2)}} $ for all $ s \geqslant 0 $
and $ \alpha \in \{ 1, \dots, m \} $, and
define the time-inhomogeneous semigroup for $ \varphi \in L^{2}(\TT^{d};
\RR^{m}) $ (written in components for $ \alpha \in \{ 1, \dots, m \} $):
  \[ S_{s, t}^{L^{(1)}} \, \varphi^{\alpha} = e^{(t-s) \mL^{\alpha} +
\int_{s}^{t} \psi_{r}^{\alpha} \ud r} \, \Pi_{L^{(1)}}^{\myg}
\varphi^{\alpha}\;, \quad 0 \leqslant s < t < \infty \;,\]
where $ \mL^{\alpha} = -\nu^{\alpha}  (- \Delta)^{\a } $, for $
\a  \geqslant 1/2$ and $(\nu^{\alpha})_{\alpha =1}^{m} > 0 $.
Then for any $ \gamma \in \RR $ and $ \delta \in [0, \infty) $ there exists a
constant $ C(\gamma, \delta) $ such that for all $ 0 \leqslant s < t < \infty $
\begin{equs}
  & \| S_{s, t}^{L^{(1)}} \, \varphi \|_{\gamma + \delta, L^{(1)}} \leqslant C
(\nu_{\mathrm{min}}, \gamma,
  \delta)(t -s)^{-\frac{\delta}{2 \a }} \| \Pi^{\myg}_{L^{(1)}} \varphi
\|_{\gamma,L^{(1)}}\;, \\
  & \| S_{s, t}^{L^{(1)}} \, \varphi \|_{\gamma, L^{(1)}} \leqslant e^{-
  |t -s|(L^{(1)} - L^{(2)})} \| \Pi^{\myg}_{L^{(1)}} \varphi \|_{\gamma,L^{(1)}}\;. 
\end{equs}
\end{lemma}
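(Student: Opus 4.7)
The plan is to work entirely in Fourier coordinates, where the semigroup $S_{s,t}^{L^{(1)}}$ is a diagonal multiplier: for $\varphi^\alpha$ with Fourier support on $|k|>(L^{(1)}+1)_\alpha$ (as ensured by $\Pi^{\myg}_{L^{(1)}}$), the action at mode $k$ is multiplication by the scalar $\exp\bigl(-\nu^\alpha|k|^{2\a}(t-s)+\int_s^t\psi_r^\alpha \,\ud r\bigr)$, whose modulus is at most $e^{-(t-s)(\nu^\alpha|k|^{2\a}-\zeta_{L^{(2)}})}$ by the hypothesis $|\psi_r^\alpha|\leqslant\zeta_{L^{(2)}}$. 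Both claims will therefore reduce to pointwise bounds on this multiplier, summed against the weight $(1+|k|-L^{(1)}_\alpha)^{2\gamma}$ appearing in $\|\cdot\|_{\gamma,L^{(1)}}$.

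The key algebraic step I will rely on is the following decomposition of the effective dissipation rate. Using $\nu^\alpha(L^{(1)}_\alpha)^{2\a}=(L^{(1)})^{2\a}$ together with the elementary inequality $x^{2\a}\geqslant(x-y)^{2\a}+y^{2\a}$ for $x\geqslant y\geqslant 0$ and $2\a\geqslant 1$ (already used in the proof of Lemma~\ref{lem:bd-z}), I obtain
\begin{equ}
\nu^\alpha|k|^{2\a}-\zeta_{L^{(2)}}\;\geqslant\; \nu_{\mathrm{min}}(|k|-L^{(1)}_\alpha)^{2\a} + \bigl((L^{(1)})^{2\a}-(L^{(2)})^{2\a}\bigr)\;,
\end{equ}
with both right-hand side terms nonnegative since $L^{(1)}\geqslant L^{(2)}$. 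The two claimed bounds will then be obtained by keeping one or the other of these contributions.

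For the second, exponential-decay estimate, I would discard the first right-hand side term and use $(L^{(1)})^{2\a}-(L^{(2)})^{2\a}\geqslant(L^{(1)}-L^{(2)})^{2\a}\geqslant L^{(1)}-L^{(2)}$, the last step because $L^{(1)}-L^{(2)}\in\NN$ and $2\a\geqslant 1$. This gives the multiplier bound $e^{-(t-s)(L^{(1)}-L^{(2)})}$ uniformly over the relevant modes, and pulling it out of the weighted $\ell^2$-sum yields the claim. For the first, smoothing estimate, I would instead discard the second nonnegative term and, setting $x=|k|-L^{(1)}_\alpha$, reduce to the scalar inequality
\begin{equ}
(1+x)^{2\delta}\,e^{-2(t-s)\nu_{\mathrm{min}} x^{2\a}}\;\lesssim_{\delta,\nu_{\mathrm{min}}}\;(t-s)^{-\delta/\a}\;,
\end{equ}
valid uniformly in $x\geqslant 0$ via the standard bound $\sup_{x\geqslant 0} x^{2\delta}e^{-\lambda x^{2\a}}\lesssim_\delta \lambda^{-\delta/\a}$ after splitting $x\leqslant 1$ and $x>1$. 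Combined with the weight $(1+|k|-L^{(1)}_\alpha)^{2\gamma}$ already present inside $\|\cdot\|_{\gamma,L^{(1)}}$ and summed over modes, this absorbs the jump from $\|\cdot\|_{\gamma,L^{(1)}}$ to $\|\cdot\|_{\gamma+\delta,L^{(1)}}$ at the cost of the prefactor $(t-s)^{-\delta/(2\a)}$ after taking a square root.

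No step presents a serious obstacle: the lemma is the expected heat-kernel smoothing-and-decay estimate transported to the shifted Sobolev scale. The only mildly non-trivial piece is the decomposition displayed above, which cleanly separates the ``local regularisation'' contribution, controlling powers of the distance from the cutoff, from the ``spectral gap'' contribution, controlling exponential decay; having both available from the same elementary inequality allows the two estimates to be proved in parallel.
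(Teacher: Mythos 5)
Your proof is correct and follows essentially the same route as the paper's: both reduce to a pointwise bound on the Fourier multiplier, use the superadditivity $x^{2\a}\geqslant (x-y)^{2\a}+y^{2\a}$ (valid since $2\a\geqslant 1$) to separate a local smoothing contribution from a spectral-gap contribution, and finish with the standard bound $\sup_{x} x^{2\delta}e^{-\lambda x^{2\a}}\lesssim \lambda^{-\delta/\a}$. The only small inaccuracy is the claim that $(1+x)^{2\delta}e^{-2(t-s)\nu_{\mathrm{min}}x^{2\a}}\lesssim (t-s)^{-\delta/\a}$ holds uniformly over $x\geqslant 0$ (it fails at $x=0$ when $t-s$ is large); it does hold on the support of $\Pi^{\myg}_{L^{(1)}}$, where $x=|k|-L^{(1)}_{\alpha}$ is bounded below by $(\nu^{\alpha})^{-1/2\a}>0$, which is all that is needed.
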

\begin{remark}\label{rem:semigroup-lemma}
 The first bound of the lemma is not optimal, since it should actually improve
 as $ L^{(1)} - L^{(2)} $ increases (in analogy to the second one). We state this
 weaker version only because we do not require the improved bound.
\end{remark}
\begin{proof}
  Let us start with the first bound. We find
\begin{equ}
  |\mF S_{s, t}^{L^{(1)}} \varphi^{\alpha} |(k) =  e^{-(t -s) \nu^{\alpha}  | k |^{2
\a } - \int_{s}^{t} \psi_{r}^{\alpha} \ud r} | \hat{\varphi}(k) | \one_{\{ | k | >
  (L^{(1)} + 1)_{\alpha} \}}\;.
\end{equ}
Since $ \psi_{r}^{\alpha} \leqslant \zeta_{L^{(2)}} $ we have
\begin{equ}
(t -s) \nu^{\alpha} | k |^{2 \a } + \int_{s}^{t} \psi_{r}^{\alpha} \ud r \geqslant (t-s)
(\nu^{\alpha}| k |^{2 \a } - | L^{(2)} |^{2 \a })\;,
\end{equ}
so that from the inequality $ x^{2 \a } - y^{2 \a } \geqslant (x
- y)^{2 \a } $, for $
x \geqslant y \geqslant 0 $, we conclude that
\begin{equs}
  |\mF & S_{s, t}^{L^{(1)}} \varphi^{\alpha} |(k) \\
&  \leqslant e^{- (t-s)
\nu^{\alpha} (| k | -
  L^{(2)}_{\alpha})^{2 \a }}
| \hat{\varphi}(k)|  \\
& \leqslant \big( (t - s)^{\frac{1}{2 \a }}( |
k | - L^{(2)}_{\alpha} ) \big)^{-  \delta} \big(
(t-s)^{\frac{1}{2 \a }} (| k | - L^{(2)}_{\alpha}) \big)^{ \delta} e^{-
(t-s) \nu^{\alpha} (| k | - L^{(2)}_{\alpha})^{2 \a }} | \hat{\varphi}(k)|\;.
\end{equs}
Then, using the bound $ \big( t^{\frac{1}{2 \a }} x \big)^{ \delta} e^{-
\nu^{\alpha} t x^{2 \a }}
\leqslant C(\nu_{\mathrm{min}}) $ for some $ C(\nu_{\mathrm{min}}) > 0 $ we obtain that uniformly over $ t,
L^{(1)}, L^{(2)} $ and $ | k | >(L^{(1)} + 1)_{\alpha}  $
\begin{equs}
  \| S_{s, t}^{L^{(1)}} \varphi \|_{\gamma + \delta,L^{(1)}}^{2} & =
\sum_{\alpha =1}^{m} \sum_{| k | >
  (L^{(1)}+1)_{\alpha} } (| k | - L^{(1)}_{\alpha})^{2(\gamma + \delta)} | \mF
S_{s, t}^{L^{(1)}} \varphi^{\alpha} |^{2} (k) \\
& \lesssim_{\nu_{\mathrm{min}}} (t -s)^{- \frac{ \delta}{ \a  }} \| \varphi
\|_{\gamma, L^{(1)}}^{2} \;,
\end{equs}
where we used that $ L^{(1)} \geqslant L^{(2)} $. This proves the first bound. As
for the second one, we simply have for all $ \alpha \in \{ 1, \dots, m \} $
\begin{equs}
  \| S_{s, t}^{L^{(1)}} \varphi^{\alpha} \|_{\gamma,L^{(1)}}^{2} & \leqslant e^{- 2 (t -
  s) \nu^{\alpha} (L^{(1)}_{\alpha} - L^{(2)}_{\alpha} )^{2 \a }}
\sum_{| k | > (L^{(1)} + 1)_{\alpha}} (| k | - L^{(1)}_{\alpha})^{2 \gamma}| \hat{\varphi} (k) |^{2} \\
  & \leqslant e^{- 2 (t - s)(L^{(1)} - L^{(2)})^{2 \a }} \| \varphi \|_{\gamma,
  L^{(1)}} \;,
\end{equs}
which immediately proves the estimate since $ \a  \geqslant 1/2 $. 
\end{proof}

\subsection{Higher order polynomial regularity estimates}

The aim of this section is to provide a proof of the higher order regularity estimate
appearing in Corollary~\ref{cor:finite-lyap}.

In particular, we will need a polynomial bound on the $
H^{\a} $ norm, as opposed to the estimate on the $ H^{1/2} $ provided by the Lyapunov
functional $ \mf{G} $ and in the preceding Proposition~\ref{prop:reg-high-freq}. 
But the latter two provide estimates on exponential moments of $ \| \pi
\|_{\gamma} $, and as we will see this result can be
improved if we restrict to proving polynomial moments of $ \pi_{t} $ in higher
regularity spaces.

\begin{proposition}\label{prop:higher-regularity}
  Under the assumptions of Theorem~\ref{thm:lyap-func}, for any $ \a \geqslant 1, \gamma \in (0, \a + 1/2) $ and $ n \in \NN $ there exist
  $ s_{\star}, C(\gamma,n) \in (0, \infty) $ such
  that for all $ \pi_{0} \in S $
  \begin{equ}
    \sup_{t \geqslant s_{\star}} \EE \| \pi_{t} \|_{H^{\gamma}}^{n} \leqslant
    C(\gamma, n) \mf{G}(\pi_{0}) \;,
  \end{equ}
where $ \mf{G} $ is the Lyapunov functional in \eqref{eqn:def-G-lyap}.
\end{proposition}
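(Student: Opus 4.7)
The plan is to bootstrap the $H^{1/2}$ control provided by the Lyapunov functional $\mf{G}$ up to $H^\gamma$ regularity for any $\gamma < \a + 1/2$, by exploiting the parabolic smoothing of the semigroup generated by $\mL$ over a short (unit) time window. First I would iterate Theorem~\ref{thm:lyap-func}: as noted in the proof of Theorem~\ref{thm:lyap-func-discrete}, the contraction estimate extends to $\mf{G}^b$ for every $b \geqslant 1$, so for any $n \in \NN$ one obtains a time $s_1 > 0$ such that $\sup_{s \geqslant s_1} \EE[\mf{G}(\pi_s)^n] \lesssim_n \mf{G}(\pi_0)$. In view of Remark~\ref{rem:def-norm}, this delivers uniform polynomial moments of $M(\pi_s)$ and uniform exponential moments of $\|\pi_s\|_{H^{1/2}}^2$ for $s \geqslant s_1$.

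Next, fix $t \geqslant s_\star := s_1 + 1$ and set $s = t - 1 \geqslant s_1$. Restarting the skeleton median construction of Definition~\ref{def:Tn} at time $s$, the interval $[s,t]$ is partitioned by the stopping times $\{T_i\}$ into sub-intervals on which the skeleton median is constant; there are only boundedly many such intervals in $[s,t]$ thanks to Lemma~\ref{lem:consistency} and Lemma~\ref{lem:stopping-density-bounds}. On each sub-interval $[T_i, T_{i+1}]$ the rescaled process $w$ solves the linear equation \eqref{eqn:high-freq} and admits the mild representation \eqref{eqn:decomposition-w} with $L = M_i + k_0$:
\begin{equ}
  \Pi^{\myg}_L w_t = S^L_{T_i, t}\, w_{T_i} + y_t + z_t \;.
\end{equ}
Lemma~\ref{lem:reg-semi} yields $\|S^L_{T_i,t}\, w_{T_i}\|_{\gamma, L} \lesssim (t - T_i)^{-(\gamma-1/2)/(2\a)}\, \|w_{T_i}\|_{1/2, L}$, whose singularity in $t$ is integrable precisely under the assumption $\gamma < \a + 1/2$. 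The deterministic convolution $y_t$ satisfies $\|y_t\|_{\gamma, L} \lesssim 1$ exactly by the argument leading to \eqref{eqn:reg-prf-3}, whose only requirement is $\gamma < 2\a$.

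The most delicate piece is the stochastic convolution $z_t$, which I would handle via an extension of Lemma~\ref{lem:bd-z} producing \emph{polynomial} (rather than exponential) moments of $\|z_t\|_{\gamma, L}$ uniformly in $L$ for every $\gamma < \a + 1/2$. Applying It\^o's formula to $\|z_t\|_{\gamma, L}^{2n}$, the dissipation $\|z_t\|_{\gamma + \a, L}^2$ produced by $\mL$ has to absorb both the lower-order $\psi$-contribution and the quadratic variation of $\overline{\mM}_t$. For $\gamma < \a$ this absorption follows from the same interpolation already used in the proof of Lemma~\ref{lem:bd-z}; for $\gamma \in [\a, \a + 1/2)$ one instead needs an improved estimate on the quadratic variation, obtained by distributing the Cauchy--Schwarz asymmetrically between $z$ and $u$ in Fourier space, combined with the off-diagonal decay of $\Gamma$ from \eqref{e:reg-assu} and the unit $L^2$-norm of $\pi$. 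Conditioning on $\mF_{T_i}$, combining the three estimates via Cauchy--Schwarz, and using that $\|w_{T_i}\|_{1/2, L}^2 \lesssim \log \mf{G}(\pi_{T_i})$, yields $\EE\, \|\pi_t\|_{\gamma, M_t + k_0}^n \lesssim \mf{G}(\pi_0)$. The pointwise bound $\|\pi_t\|_{H^\gamma}^2 \lesssim M_t^{2\gamma} + \|\pi_t\|_{\gamma, M_t + k_0}^2$ from Remark~\ref{rem:def-norm}, together with the moment control of $M_t$ from the first step, then concludes the proof.

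The main obstacle is precisely the extension of Lemma~\ref{lem:bd-z} beyond $\gamma \leqslant 1/2$: the quadratic variation bound $\|z\|_{2\gamma, L}^2$ obtained there is no longer absorbable by $\|z\|_{\gamma + \a, L}^2$ through plain interpolation once $\gamma \geqslant \a$. Overcoming this requires redistributing the weight $(\varrho^L_k)^2$ between the two factors in the Cauchy--Schwarz estimate of the quadratic variation, shifting the analysis from an $H^{2\gamma}_L$ norm of $z$ to a product of $H^\gamma_L$ of $z$ and a weighted $L^2$ norm of $u$ in which the $k$-dependent weights are absorbed into the summable tensor $\Gamma$; this is the one point where the extension to the full range $\gamma < \a + 1/2$ is non-routine.
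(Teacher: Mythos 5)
Your overall skeleton (reduction via the Markov property, Cauchy--Schwarz over the intervals $[T_i,T_{i+1})$ with Lemma~\ref{lem:stopping-density-bounds}, the pointwise bound $\|\pi_t\|_{H^\gamma}^2 \lesssim M_{T_i}^{2\gamma} + \|w_t\|_{\gamma,M_{T_i}}^2$, and the mild decomposition $w = S w_{T_i} + y + z$ with the semigroup and $y$ terms handled as in Proposition~\ref{prop:reg-high-freq}) matches the paper's proof. The gap is in the one place you flag as ``non-routine'', namely the stochastic convolution $z$ for $\gamma \in [\a,\a+1/2)$, and the fix you sketch does not work. The quadratic variation of $\|z\|_{\gamma,L}^2$ contains $\sum_l \overline{\Gamma}_l \big|\sum_k (1+|k|-L_\alpha)^{2\gamma}\hat z^k \hat u^{k-l}\big|^2/\|\Pi^{\myl}_M u\|^2$; putting the full weight on $z$ gives $\|z\|_{2\gamma,L}^2$, which for $\gamma>\a$ can no longer be interpolated against the dissipation $\|z\|_{\gamma+\a,L}^2$. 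Any asymmetric redistribution of the weight $(1+|k|-L_\alpha)^{2\gamma}$ onto the $u$-factor (even after using Lemma~\ref{lem:bound-weights} to absorb a factor $(1+|l|)^{2\gamma}$ into $\overline{\Gamma}_l$) produces a norm $\|w\|_{\sigma,L}$ with $\sigma>1/2$, which is exactly the quantity you are trying to bound: the estimate becomes circular, and no a priori control on it is available at this stage. Also, your claim that the singularity $(t-T_i)^{-(\gamma-1/2)/2\a}$ of the semigroup term ``is integrable precisely under $\gamma<\a+1/2$'' is off — it is integrable for all $\gamma<2\a+1/2$, and in any case the threshold $\a+1/2$ does not come from that term.

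The paper's route around this (Lemma~\ref{lem:high-reg-disc}, Step 3) is quite different and worth internalising. One first removes the non-adapted drift $\psi_s z_s$ from the mild formulation by exploiting that $\psi_s \leqslant \zeta_M$ while the semigroup acts on frequencies $>M$, so that $\int_0^t e^{-(t-s)(M+1)^{2\a}}\zeta_M\,\ud s \leqslant M^{2\a}/(M+1)^{2\a} \leqslant 1 - c/M$; absorbing this contraction costs only a harmless polynomial factor $M^n$ and reduces everything to the stochastic convolution $\int_0^t e^{(t-s)\mL}\sigma(u_s,\ud W_s)$ against the \emph{deterministic} semigroup. One then applies the factorisation method with exponents $\overline{\gamma}+\beta=\gamma$, $\overline{\gamma}\in(0,1/2)$, $\alpha\in(0,1/2)$, $\beta<2\alpha\a$: the semigroup gains $\beta<\a$ derivatives, and the inner integrand $G$ is estimated at the \emph{low} regularity $\overline{\gamma}<1/2$ via vector-valued BDG and the quadratic variation bound $\lesssim 1+\|w_s\|^2_{\overline{\gamma},M}$, which is then controlled by the already-proved uniform bound \eqref{eqn:uniform-bd} of Proposition~\ref{prop:reg-high-freq}. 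This bootstrap from $\overline{\gamma}<1/2$ to $\overline{\gamma}+\beta<1/2+\a$ is the actual source of the restriction $\gamma<\a+1/2$. To repair your proof you should replace your direct It\^o/energy estimate on $\|z\|_{\gamma,L}^{2n}$ by this two-step argument (or supply a genuinely new quadratic-variation estimate, which your sketch does not).
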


\begin{proof}

  Let us fix $ t_{\star} $ as in Theorem~\ref{thm:lyap-func}. Then for any $
s_{\star} \geqslant 0 $ we can rewrite
  \begin{equ}
    \sup_{t \geqslant s_{\star}}\EE \| \pi_{t} \|_{H^{\gamma}}^{n} =
    \sup_{m \in \NN} \sup_{t \in [m t_{\star}, (m+1) t_{\star} ]} \EE \| \pi_{
    s_{\star} + t } \|_{H^{\gamma}}^{n} \;.
  \end{equ}
  Now, if we  prove that uniformly over all $ m \in \NN $ the following bound
holds
  \begin{equ}[eqn:aim-hr]
    \sup_{t \in [m t_{\star}, (m+1) t_{\star} ]} \EE_{m t_{\star}} \| \pi_{
    s_{\star} + t } \|_{H^{\gamma}}^{n} \leqslant \overline{C}(\gamma, n)
    \mf{G}( \pi_{m t_{\star}} )\;,
  \end{equ}
  then by Theorem~\ref{thm:lyap-func} we conclude that as desired
  \begin{equ}
    \sup_{t \geqslant s_{\star}}\EE \| \pi_{t} \|_{H^{\gamma}}^{n} \leqslant
    \overline{C} (\gamma, n) \sup_{m \in \NN} \EE \mf{G}( \pi_{m t_{\star}})
    \leqslant C(\gamma, n) < \infty \;.
  \end{equ}
  Hence we are left with verifying \eqref{eqn:aim-hr}, and since $
  \pi_{t}$ is a time-homogeneous Markov process we can reduce the problem to
  verifying the claim for $ m= 0 $:
  \begin{equ}[eqn:aim-hr-reduced]
    \sup_{t \in [0, t_{\star} ]} \EE \| \pi_{
    s_{\star} + t } \|_{H^{\gamma}}^{n} \leqslant \overline{C}(\gamma, n)
    \mf{G}( \pi_{0} ) \;.
  \end{equ}
To obtain this bound, let us recall the definition of the stopping times $
  \{ T_{i} \}_{i \in \NN} $ and the skeleton energy median process $
  (M_{t})_{t \geqslant 0} $ as in Section~\ref{sec:st-def}. Then we
  estimate for $ A_{i} = [T_{i}, T_{i+1}) $ by the Cauchy--Schwarz inequality
  \begin{equs}
    \EE \| \pi_{s_{\star} + t} \|^{n}_{H^{\gamma}} & = \sum_{i = 0}^{\infty} \EE \left[
      \one_{A_{i}} (s_{\star}+t) \| \pi_{s_{\star}+t} \|_{H^{\gamma}}^{n}
    \right] \\
    & \leqslant \sum_{i =1}^{\infty} \sqrt{\mathbf{p}_{s_{\star}+t}(i)} \cdot \EE \left[
    \sup_{T_{i} \leqslant s < T_{i+1}} \| \pi_{s} \|_{H^{\gamma}}^{2 n}
  \right]^{\frac{1}{2}} \;,
  \end{equs}
  with $ \mathbf{p}_{s_{\star}+t}(i) = \PP ( s_{\star}+t \in [T_{i}, T_{i+1})) $
  for $ i \in \NN $, and where we assumed that the sum can start from $ i =1
  $ since by Lemma~\ref{lem:consistency}, $ T_{1} \leqslant 3 $ and we can
  choose $ s_{\star} > 3 $.
  Now for $ t \in  [ T_{i}, T_{i+1}) $ we find that 
  \begin{equs}
    \| \pi_{t} \|_{H^{\gamma}}^{2} & \leqslant M_{T_{i}}^{2 \gamma} +
    \sum_{| k | > M_{T_{i}}} (1 + | k |)^{2 \gamma} | \hat{\pi}_{t} (k)
    |^{2} \\
    & \lesssim M_{T_{i}}^{2 \gamma} + \| \pi_{t} \|_{\gamma, M_{T_{i}}}^{2} \\
    & \lesssim M_{T_{i}}^{2 \gamma} + \| w_{t} \|_{\gamma,
    M_{T_{i}}}^{2} \;, \label{e:ref}
  \end{equs}
  where in the last line $ w_{t} $ is as in \eqref{eqn:def-h} and we made use
  of the estimate \eqref{e:bd-pi}.
  Hence, if we would prove that
  \begin{equ}[eqn:aim-fk-discrete]
    \EE \left[ M_{T_{i}}^{n \gamma} + \sup_{t \in A_{i}} \| w_{t}
    \|_{\gamma, M_{T_{i}}}^{n}  \right] \leqslant \overline{C} (\gamma, n)
    \mf{G}(\pi_{0})\;,
  \end{equ}
  then we could conclude that
  \begin{equs}
    \sup_{t \in [0, t_{\star}]} \EE \| \pi_{
    s_{\star} + t } \|_{H^{\gamma}}^{n} & \leqslant \overline{C}(\gamma, n)
    \mf{G}( \pi_{0} ) \left( \sup_{t \in [0, t_{\star}]} \sum_{i =
    1}^{\infty} \sqrt{\mathbf{p}_{s_{\star}+t}(i)} \right) \\
    & \leqslant \overline{C}(\gamma, n) C(t_{\star}, s_{\star})\mf{G}( \pi_{0} ) \;.
  \end{equs}
  by Lemma~\ref{lem:stopping-density-bounds}, which implies
  \eqref{eqn:aim-hr-reduced}, up to choosing a larger $ \overline{C} $.
  
  To conclude, we
  observe that \eqref{eqn:aim-fk-discrete} follows from
  Lemma~\ref{lem:high-reg-disc} below and Theorem~\ref{thm:lyap-func-discrete}.
  Indeed the latter theorem implies immediately that uniformly over $ i \in \NN $
  \begin{equ}
    \EE[M_{T_{i}}^{\gamma n}] \lesssim_{\gamma, n} \EE
    [\mf{F} (1/2, \bpi_{T_{i}})] \lesssim \mf{F}(1/2, \bpi_{0}) =
    \mf{G}(\pi_{0}) \;.
  \end{equ}
  Furthermore, applying first the second and then the first estimate of
  Lemma~\ref{lem:high-reg-disc} guarantees that for any $ i \in \NN^{+} $
  \begin{equs}
    \EE \left[\sup_{t \in A_{i}} \| w_{t}
    \|_{\gamma, M_{T_{i}}}^{n} \right] & = \EE \left[ \EE_{T_{i}} \left[ \sup_{t
        \in A_{i}} \| w_{t} \|_{\gamma, M_{T_{i}}}^{n}
    \right] \right] \\
    & \leqslant \mf{C} \EE \left[\mf{F}(1/2, \bpi_{T_{i}}) +\| w_{T_{i}} \|_{\gamma, M_{T_{i}}}^{n}
    \right] \\
    & \leqslant \mf{C}^{2} \EE \left[ \mf{F}(1/2, \bpi_{T_{i-1}}) \right] \\
    & \leqslant \overline{C}(\gamma, n) \mf{F} (1/2, \bpi_{0}) =
    \overline{C}(\gamma, n) \mf{G} (\pi_{0})\;,
  \end{equs}
  where in the last lines we used again
  Theorem~\ref{thm:lyap-func-discrete} (up to choosing a larger $
  \mf{C}$).
\end{proof}
In the next lemma we provide the key technical estimate of this section. 
\begin{lemma}\label{lem:high-reg-disc}
In the setting of Proposition~\ref{prop:higher-regularity},
  for any $ \a \geqslant 1, \gamma \in (0, \a+ 1/2), n \in \NN $ there exists a $ \mf{C} (\gamma, n)> 1 $
  such that uniformly over $ i \in \NN$ and with $ A_{i} = [T_{i},
  T_{i+1}) $
  \begin{equs}
    \EE_{T_{i}} \left[ \| w_{T_{i+1}} \|_{\gamma, M_{T_{i}}}^{n} \right] & \leqslant
    \mf{C} \mf{F}(1/2, \bpi_{T_{i}}) \;, \\
    \EE_{T_{i}} \left[ \sup_{t \in A_{i}} \| w_{t} \|_{\gamma,
    M_{T_{i}}}^{n} \right] & \leqslant \mf{C}(\mf{F}(1/2, \bpi_{T_{i}}) + \| w_{T_{i}} \|_{\gamma,
    M_{T_{i}}}^{n} ) \;.
  \end{equs}
\end{lemma}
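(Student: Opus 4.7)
The plan is to run through the mild formulation \eqref{eqn:decomposition-w} one more time, but now tracking polynomial rather than exponential moments in the higher regularity space $H^{\gamma}_{L}$, with $L = M_{T_{i}} + k_{0}$. Writing $w_{t} = S_{T_{i}, t}^{L} w_{T_{i}} + y_{t} + z_{t}$ for $t \in [T_{i}, T_{i+1})$, the drift term $y_{t}$ is controlled uniformly by $\| y_{t} \|_{\gamma, L} \lesssim 1$ exactly as in the proof of Proposition~\ref{prop:reg-high-freq}, using $\| Q(u_{s}) \| \lesssim 1$ combined with the first bound of Lemma~\ref{lem:reg-semi}, which is integrable in time since $\gamma < \a + 1/2 \leqslant 2\a$ for $\a \geqslant 1$.

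The heart of the argument is a polynomial-moment version of Lemma~\ref{lem:bd-z}. Differentiating $\| z_{t} \|_{\gamma, L}^{2}$ produces a dissipative term $-\mf{c}_{1} \| z_{t} \|_{\gamma + \a, L}^{2}$, forcing of order $L^{2\gamma}$, and a martingale whose quadratic variation is bounded, as in \eqref{eqn:bd-z-qv-1}, by $\| z_{t} \|_{2\gamma, L}^{2}$. Interpolation between $L^{2}$ (where $\| z \| \lesssim 1$) and $H^{\gamma + \a}_{L}$ absorbs this into the dissipation provided $\gamma < \a + 1/2$, which is exactly the threshold we have; an application of It\^o's formula to $\| z_{t} \|_{\gamma, L}^{2n}$ followed by Burkholder--Davis--Gundy then yields $\EE_{T_{i}}[\sup_{t \in A_{i}} \| z_{t} \|_{\gamma, L}^{2n}] \lesssim 1 + L^{2\gamma n}$. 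Since $L^{2\gamma n} \lesssim \exp(\kappa_{0} M_{T_{i}})$, this is absorbed into $\mf{F}(1/2, \bpi_{T_{i}})$.

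For the first estimate, I apply the first bound of Lemma~\ref{lem:reg-semi} at $t = T_{i+1}$, giving $\| S_{T_{i}, T_{i+1}}^{L} w_{T_{i}} \|_{\gamma, L} \lesssim (T_{i+1} - T_{i})^{-(\gamma - 1/2)/(2\a)} \| w_{T_{i}} \|_{1/2, L}$, and integrate the resulting singular factor against the law of $T_{i+1} - T_{i}$ by Cauchy--Schwarz: since $(\gamma - 1/2)/(2\a) < 1/2$, all negative polynomial moments of $T_{i+1} - T_{i}$ provided by Lemma~\ref{lem:neg-mmts} suffice. The surviving $\| w_{T_{i}} \|_{1/2, L}^{n}$ is absorbed into $\exp((1/2)\| w_{T_{i}} \|_{L}^{2}) \leqslant \mf{F}(1/2, \bpi_{T_{i}})$ via $x^{n} \lesssim_{n} e^{x^{2}/2}$. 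For the second estimate, the second bound of Lemma~\ref{lem:reg-semi} gives the uniform estimate $\sup_{t \in A_{i}} \| S_{T_{i}, t}^{L} w_{T_{i}} \|_{\gamma, L} \leqslant \| w_{T_{i}} \|_{\gamma, L}$, which is exactly the extra term on the right-hand side.

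The main obstacle will be the interpolation step for the stochastic convolution in the range $\gamma \in [\a, \a + 1/2)$, where the naive interpolation between $L^{2}$ and $H^{\gamma + \a}_{L}$ to control $\| z \|_{2\gamma, L}$ becomes borderline and must be carried out carefully exploiting the $L^{2}$-boundedness of $w$ (and hence of $z$) inherited from the bound $\| w_{t} \| \leqslant 2$ of Lemma~\ref{lem:consistency}.
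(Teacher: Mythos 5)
Your overall skeleton --- the mild decomposition \eqref{eqn:decomposition-w}, smoothing of the initial condition paid for by negative moments of $T_{i+1}-T_{i}$ from Lemma~\ref{lem:neg-mmts}, the unchanged bound on $y_{t}$, and the absorption of the surviving powers of $M_{T_{i}}$ and $\|w_{T_{i}}\|_{1/2,L}$ into $\mf{F}(1/2,\bpi_{T_{i}})$ --- matches the paper's proof. The gap is in your treatment of the stochastic convolution $z_{t}$, which is precisely the term that determines the threshold $\gamma < \a + 1/2$. Your energy estimate produces a martingale whose quadratic variation is bounded by $\|z_{t}\|^{2}_{2\gamma,L}$ (as in \eqref{eqn:bd-z-qv-1}), and you propose to absorb this into the dissipation $-\mf{c}_{1}\|z_{t}\|^{2}_{\gamma+\a,L}$ by interpolating $H^{2\gamma}_{L}$ between $L^{2}$ and $H^{\gamma+\a}_{L}$. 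That interpolation exists only when $2\gamma < \gamma+\a$, i.e.\ $\gamma < \a$ --- this is exactly why Lemma~\ref{lem:bd-z} is restricted to $\gamma \leqslant 1/2 \leqslant \a$. For $\gamma \in [\a,\a+1/2)$ the norm $\|\cdot\|_{2\gamma,L}$ is \emph{stronger} than $\|\cdot\|_{\gamma+\a,L}$, so no amount of $L^{2}$ information on $z$ (or $w$) places it between $L^{2}$ and $H^{\gamma+\a}_{L}$: the obstacle you flag at the end is not merely ``borderline'' but fatal to this route, and the proposed fix cannot close it.

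The paper circumvents this by a bootstrap rather than an energy estimate. One first absorbs the random drift $\psi_{s}z_{s}$ using $\psi_{s}\leqslant \zeta_{M}$ together with $M^{2\a}\leqslant (M+1)^{2\a}-cM^{2\a-1}$, which yields
\begin{equ}
\sup_{t}\|z_{t}\|_{H^{\gamma}} \lesssim M \sup_{t}\Big\| \int_{0}^{t} e^{(t-s)\mL}\sigma(u_{s},\ud W_{s})\Big\|_{H^{\gamma}}\;,
\end{equ}
now with the \emph{deterministic} heat semigroup. The remaining stochastic convolution is estimated by the factorisation method with $\gamma=\overline\gamma+\beta$, $\overline\gamma\in(0,1/2)$, $\beta<2\alpha\a$, $\alpha\in(0,1/2)$: the BDG step controls the $H^{\overline\gamma}$-quadratic variation by $(t-s)^{-2\alpha}(1+\|w_{s}\|^{2}_{\overline\gamma,M})$, whose moments are supplied by the already-proved exponential bound \eqref{eqn:uniform-bd} since $\overline\gamma<1/2$, and the factorisation gains $\beta<\a$ derivatives. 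This is what actually reaches every $\gamma<\a+1/2$; your argument as written covers only $\gamma<\a$.
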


\begin{proof}
  The result is true for $ \gamma \leqslant 1/2 $ by
Theorem~\ref{thm:lyap-func-discrete}, so we consider only the case $ \gamma >
1/2 $.
The proof of both estimates follows along similar lines. As for the first
estimate, we observe first that by Lemma~\ref{lem:h-jump-n}, since $ \gamma > 1/2 $,
for some $ C(\nu, \gamma ) > 0 $
  \begin{equ}
    \| w_{T_{i+1}} \|_{\gamma, M_{T_{i}}}^{2} \leqslant C(\nu, \gamma) (1 + \| w_{T_{i+1} -} \|_{\gamma,
    M_{T_{i-1}}}^{2}) \;.
  \end{equ}
  Now, for $ t \in [T_{i}, T_{i+1}) $ we follow the same decomposition that we
have used in \eqref{eqn:decomposition-w}, in  the
  proof of Proposition~\ref{prop:reg-high-freq}, namely for all $ t \in
[T_{i}, T_{i+1}) $:
  \begin{equ}
    w_{t} = S_{T_{i}, t}^{M_{T_{i}}} w_{T_{i}} + y_{t} +
    z_{t} \;,
  \end{equ}
  where $ S^{M_{T_{i}}}_{T_{i}, \cdot} $ is the semigroup defined in
\eqref{e:def-semi} and the terms $ y_{t} $ and $ z_{t} $ are defined in \eqref{eqn:def-y-z}, so that
  \begin{equ}
    \| w_{T_{i+1}-} \|_{\gamma,M_{T_{i}}} \leqslant \| S_{T_{i},
    T_{i+1}}^{M_{T_{i}}} w_{T_{i}}\|_{\gamma, M_{T_{i}}} +\| y_{T_{i+1}}
    \|_{\gamma, M_{T_{i}}}+\| z_{T_{i+1}} \|_{\gamma, M_{T_{i}}} \;.
  \end{equ}
In particular, both the claimed estimates now follow if we prove the following
bounds for some deterministic constant $ C > 0 $:
\minilab{e:hf-aim}
\begin{equs}
  \EE_{T_{i}} \left[ \| S_{T_{i}, T_{i+1}}^{M_{T_{i}}} w_{T_{i}}\|_{\gamma,
  M_{T_{i}}}^{n}  \right] & \leqslant  C \;, \label{e:hf1}\\
  \EE_{T_{i}} \left[ \sup_{t \in A_{i}}  \| S_{T_{i},t}^{M_{T_{i}}}
  w_{T_{i}}\|_{\gamma, M_{T_{i}}}^{n} \right] & \leqslant  C \|
  w_{T_{i}} \|_{\gamma, M_{T_{i}}}^{n} \;, \label{e:hf2} \\
  \EE_{T_{i}} \left[ \sup_{t \in A_{i}} \| y_{t} \|^{n}_{\gamma, M_{T_{i}}}
  \right] & \leqslant  C \;, \label{e:hf3} \\
  \EE_{T_{i}} \left[ \sup_{t \in A_{i}} \| z_{t} \|^{n}_{\gamma, M_{T_{i}}}
  \right] & \leqslant  C \mf{F} (1/2, \bpi_{T_{i}}) \;. \label{e:hf4}
\end{equs}
The proofs of these bounds follow roughly the same arguments as in the proof of
Proposition~\ref{prop:reg-high-freq}, up to the stochastic convolution term $
z_{t} $, which requires a separate treatment. The rest of the proof is devoted
to obtaining \eqref{e:hf-aim}, treating term by term.

  \textit{Step 1. Bound on the initial condition and on $ y_{t} $.} As for
\eqref{e:hf1}, we estimate by Lemma~\ref{lem:reg-semi}
  \begin{equ}[eqn:bd-ic]
    \| S_{T_{i-1}, T_{i}}^{M_{T_{i-1}}} w_{T_{i-1}}\|_{\gamma, M_{T_{i-1}}}
    \lesssim (T_{i} - T_{i-1})^{-\frac{\gamma}{2\a}} \| w_{T_{i-1}} \| \lesssim
    (T_{i} - T_{i-1})^{-\frac{\gamma}{2\a}} \;,
  \end{equ}
  so that the estimate follows from Lemma~\ref{lem:neg-mmts}. Similarly also
  \eqref{e:hf2}.

  Instead, for the bound \eqref{e:hf3} on the term \( y_{t} \), we follow verbatim the estimate
  \eqref{eqn:reg-prf-3}, which holds for any $ \gamma \in (0, 2\a) $, to obtain
  for some deterministic $ C(\gamma) \in (0, \infty) $
  \begin{equ}
    \sup_{T_{i-1} \leqslant t \leqslant T_{i}}\| y_{t} \|_{\gamma,
M_{T_{i-1}}} \leqslant C (\gamma) \;.
  \end{equ}

  \textit{Step 3. Bound on $ z_{t} $.} Here we must follow a different
  estimate than in the proof of Proposition~\ref{prop:reg-high-freq} and in
particular Lemma~\ref{lem:bd-z}, although we will use, through a bootstrap
argument, the results in the quoted proposition and lemma.
  Recall that we have, with $ \psi $ as in
  \eqref{eqn:def-alpha} and $ \sigma $ as in \eqref{eqn:def-sigma}:

\begin{equ}
\ud  z_{t}  = \big[ \mL z_{t} + \psi_{t} z_{t} \big] \ud
t + \sigma(u_{t}, \ud W_{t})\;, \qquad z_{T_{i-1}} = 0\;, \qquad \forall t \in
[T_{i-1}, T_{i}) \;.
\end{equ}
To further simplify the notation let us assume as usual that $ T_{i-1} = 0 $ and write $
T_{1} = T $ and $ M_{T_{i-1}} = M $. We represent $ z_{t} $ in its mild form
\begin{equ}
  z_{t} = \int_{0}^{t} e^{(t - s) \mL} \psi_{s} z_{s} \ud s +
  \int_{0}^{t}e^{(t-s) \mL} \sigma(u_{s}, \ud W_{s}) \;.
\end{equ}
Then, since $ \psi_{t} \leqslant \zeta_{M} $ we find that for some $
c(\a)> 0 $
\begin{equs}
  \sup_{0 \leqslant t \leqslant T} &\| z_{t} \|_{H^{\gamma}} \leqslant  
  \sup_{0 \leqslant t \leqslant T} \| z_{t} \|_{H^{\gamma}} \sup_{0 \leqslant t
  \leqslant T}  \int_{0}^{t} e^{-(t-s) (M+1)^{2 \a}} \zeta_{M} \ud s \\
  & \qquad \qquad \ \  + \sup_{0 \leqslant t \leqslant T} \bigg\|
  \int_{0}^{t}e^{(t -s) \mL} \sigma( u_{s} , \ud W_{s}) \bigg\|_{H^{\gamma}} \\
  \leqslant & \frac{M^{2 \a}}{(M+1)^{2 \a}} \sup_{0 \leqslant t \leqslant T} \|
    z_{t} \|_{H^{\gamma}} + \sup_{0 \leqslant t \leqslant T} \bigg\|
    \int_{0}^{t}e^{(t -s) \mL} \sigma( u_{s} , \ud W_{s}) \bigg\|_{H^{\gamma}}
    \\
  \leqslant & \left( 1 -  \frac{c(\a)}{M} \right)  \sup_{0 \leqslant t \leqslant T} \|
    z_{t} \|_{H^{\gamma}}+ \sup_{0 \leqslant t \leqslant T} \bigg\|
    \int_{0}^{t}e^{(t -s) \mL} \sigma( u_{s} , \ud W_{s}) \bigg\|_{H^{\gamma}}
    \;, \quad \qquad \label{e:z-end}
\end{equs}
where we used that
\begin{equs}
  M^{2 \a} & = (M +1)^{2 \a} - ( (M +1)^{2\a} - M^{2 \a}) \\
  & \leqslant (M +1)^{2 \a} - \inf_{\xi \in [M , M+1]} 2 \a \xi^{2\a -1} \\
  & \leqslant (M +1)^{2 \a} - 2 \a M^{2\a -1} \\
& \leqslant  (M +1)^{2 \a} - 2^{-2\a+2} \a (M+1)^{2\a -1} \;,
\end{equs}
since $ M+1 \leqslant 2M $.
Hence from \eqref{e:z-end} we overall estimate
\begin{equs}
  \sup_{0 \leqslant t \leqslant T} \| z_{t} \|_{H^{\gamma}}^{n} &
\lesssim_{\a}   M^{n} \sup_{0 \leqslant t \leqslant T} \bigg\|
    \int_{0}^{t}e^{(t -s) \mL} \sigma( u_{s} , \ud W_{s}) \bigg\|_{H^{\gamma}}^{n} \\
    & \lesssim_{\a} M^{2 n} +  \sup_{0 \leqslant t \leqslant T}\bigg\| \int_{0}^{t}e^{(t -s) \mL} \sigma( u_{s} ,
    \ud W_{s}) \bigg\|_{H^{\gamma}}^{2n} \;. \label{eqn:imprvd-reglrty-1}
\end{equs}
Now our efforts will concentrate on estimating the latter stochastic integral.
We observe that contrary to our original term $ z_{t} $, the semigroup $
e^{- t \mL} $ is deterministic, so that the convolution becomes simpler to
bound with classical tools.

To this purpose, let us introduce parameters $ \overline{\gamma}, \alpha, \beta > 0 $ such that
\begin{equ}[eqn:assu-parameters-hr]
  \overline{\gamma} + \beta = \gamma \;, \quad \beta < 2 \cdot \alpha \cdot \a\;, \quad
  \overline{\gamma} \in (0, 1/2) \;, \quad \alpha \in (0, 1/2) \;.
\end{equ}
This choice is possible since \( \gamma \in (0, \a + 1/2) \).
Our aim will be to obtain an estimate on the stochastic convolution in
\eqref{eqn:imprvd-reglrty-1} that depends on $ \| z_{t}
\|_{H^{\overline{\gamma}}} $
(so that we have improved the regularity from $ \overline{\gamma} $ to $
\overline{\gamma}+ \beta $) and then use Proposition~\ref{prop:reg-high-freq}
to control the $ H^{ \overline{\gamma}} $ norm, since $ \overline{\gamma} \in
(0, 1/2) $. To estimate uniformly in time the stochastic integral we use the
so-called ``factorisation method'', in which one rewrites the convolution as
follows for any $ \alpha \in (0, 1) $ and an appropriate normalisation constant
$ c_{\alpha}> 0 $:
\begin{equ}
  \int_{0}^{t}e^{(t -s) \mL} \sigma( u_{s} , \ud W_{s})   = c_{\alpha}
  F_{t} \;,
\end{equ}
where $ F_{t} $ is given by
\begin{equ}
  F_{t}  =  \int_{0}^{t} (t -s)^{\alpha-1} e^{(t -s) \mL}  \left( \int_{0}^{s}(s-r)^{-\alpha}e^{(s -r)
  \mL} \sigma( u_{r} , \ud W_{r}) \right)\ud s \;.
\end{equ}
Then via Lemma~\ref{lem:reg-semi} we bound for $ t \geqslant 0 $ and $
G_{s} =\int_{0}^{s}(s-r)^{-\alpha}e^{(s -r) \mL} \sigma( u_{r \wedge T} , \ud
W_{r}) $ (note that since we have stopped $ u $ at time $ T $, the process $ G_{s} $ is defined for all $ s
  \geqslant 0 $):
\begin{equ}
  \| F_{t}  \|_{H^{ \overline{\gamma} + \beta}} \lesssim  \int_{0}^{t} (t -
  s)^{\alpha-1 - \frac{\beta}{2\a}} \left\|  G_{s} \right\|_{H^{
  \overline{\gamma}}}   \ud s \;.
\end{equ}
Since $ \beta/2\a < \alpha $ by \eqref{eqn:assu-parameters-hr}, there exists a $ p (\alpha, \beta) \in (1,
\infty) $ such that for $ p^{\prime} $ the conjugate exponent satisfying $ 1/p +
1/p^{\prime} =1 $, we have
\begin{equ}
  \int_{0}^{t} (t - s)^{\alpha-1 - \frac{\beta}{2\a}} \left\|  G_{s}
  \right\|_{H^{\gamma}}   \ud s \leqslant t^{q}\| G \|_{L^{p^{\prime}}([0,t];
  H^{\gamma})} \;,
\end{equ}
where $ p $ is chosen sufficiently close to $ 1 $ such that
\begin{equ}
  q = \frac{1}{p} \left[p \left(\alpha - 1 - \frac{\beta}{2\a} \right) +1 \right] = \alpha - 1 -
  \frac{\beta}{2\a} + \frac{1}{p}  > 0\;.
\end{equ}
Therefore, we can bound
\begin{equ}
  \sup_{0 \leqslant t \leqslant T} \| F_{t} \|_{H^{ \overline{\gamma} + \beta}} \lesssim
  T^{q} \| G \|_{L^{p^{\prime} }([0, T]; H^{ \overline{\gamma}}) }\;.
\end{equ}
Then, since $ T \leqslant 3 $ (see Lemma~\ref{lem:consistency}) and assuming
without loss of generality that $ n \geqslant p^{\prime} $ we
use Jensen's inequality to obtain
\begin{equ}
  \EE  \left[ \sup_{0 \leqslant t \leqslant T} \| F_{t} \|^{n}_{H^{ \overline{\gamma}+
  \beta }} \right] \lesssim \sup_{0 \leqslant t \leqslant 3} \EE \left[ \| G_{t
    } \|_{H^{ \overline{\gamma}}}^{n} \right] \;.
\end{equ}
Now for the last term we use the vector-valued BDG inequality (see for
example \cite[Theorem 1.1]{MarinelliRockner16BDG}) to obtain 
\begin{equ}[eqn:bdg]
  \EE \left[ \| G_{t}^{t} \|_{H^{ \overline{\gamma}}}^{n} \right] \lesssim \EE
  \left[  \langle G^{t} \rangle_{t}^{\frac{n}{2}} \right] \;.
\end{equ}
Here $ G^{t}_{s} $ is the martingale
\begin{equation*}
\begin{aligned}
 G^{t}_{s} = \int_{0}^{s}(t-r)^{-\alpha}e^{(s -r) \mL} \sigma( u_{r \wedge T} , \ud
W_{r}) \;, \qquad \forall s \in [0, t] \;,
\end{aligned}
\end{equation*}
and $ \langle G \rangle_{t} $ indicates the scalar quadratic variation computed with
respect to the Hilbert space $ H^{ \overline{\gamma}} $, namely the unique
continuous increasing process $ s \mapsto A_{s}^{t} $, for $ s \in [0, t] $ such that $ \| G^{t}_{s}
\|_{H^{ \overline{\gamma}}}^{2} - A_{s}^{t} /2 $ is a martingale on $
[0, t] $. In our setting, the quadratic variation is given by
\begin{equation*}
\begin{aligned}
 \ud \langle G^{t} \rangle_{s} = 2 (t -s)^{- 2 \alpha} \overline{Q}
(u_{s \wedge T}) \ud t\;, 
\end{aligned}
\end{equation*}
where $ \overline{Q} $ is defined in \eqref{e:qbar}, only in the present case with $ L = 0 $ and $ \gamma
$ replaced by $ \overline{\gamma} $.
In particular, following the calculation that leads to \eqref{eqn:reg-prf-4}
(the only difference being that since $ L = 0 $ the terms $ \Delta^{\alpha,
\beta} L_{} $ are not present), we obtain that
\begin{equation*}
\begin{aligned}
 \ud \langle G^{t} \rangle_{s} \lesssim (t-s)^{- 2 \alpha} (1 + \|
w_{s} \|^{2}_{H^{ \overline{\gamma}}}) \;.
\end{aligned}
\end{equation*}
Therefore,  by \eqref{eqn:bdg} and since $ \alpha \in (0, 1/2) $ by assumption we can conclude that
\begin{equs}
   \EE \left[ \| G_{t}^{t} \|^{n}_{H^{ \overline{\gamma}}} \right] 
  & \lesssim  \left( M^{n \overline{\gamma}} +  \EE  \left[ \sup_{0 \leqslant s
\leqslant T}\| w_{s} \|^{n}_{ \overline{\gamma}, M}\right] 
\right)  \cdot \left(
\int_{0}^{t} (t -s)^{- 2 \alpha}  \ud s\right)^{\frac{n}{2}} \\
& \lesssim_{\alpha}  M^{n \overline{\gamma}} +  \EE
 \left[ \sup_{0 \leqslant s \leqslant T}\| w_{s} \|^{n}_{\overline{\gamma}, M}\right] \;.
\end{equs}
Now we use the uniform bound \eqref{eqn:uniform-bd} from Proposition~\ref{prop:reg-high-freq} to
conclude that
\begin{equ}
  \sup_{0 \leqslant t \leqslant 3}\EE \left[ \| G_{t}^{t} \|^{n}_{H^{
\overline{\gamma}}} \right] \lesssim_{n} \mf{F} (1/2, \bpi_{T_{i}}) \;,
\end{equ}
from which all the desired estimates follow.
\end{proof}

\section{Uniqueness of the invariant measure}\label{sec:uniqueness}

Our aim is to apply Harris' theorem to the
Markov process $ ( [\pi_{t}])_{t \geqslant 0} $. Throughout this section we work
under Assumption~\ref{assu:non-deg-uniq}.
We denote with $ (\mP_{t})_{t \geqslant 0} $ the transition probabilities of $
[\pi_{t}] $ on $ \mathbf{P} $:
\begin{equ}
  \mP_{t}([\pi], A) = \PP([\pi_{t}] \in A \,|\, [\pi_{0}] = [\pi] ) \;, \qquad \forall A
  \subseteq \mathbf{P},\; [\pi] \in \mathbf{P}\;.
\end{equ}
Harris' theorem applies if the Markov process possesses a Lyapunov functional,
which is the case by Theorem~\ref{thm:lyap-func}, and if level sets of the
Lyapunov functional satisfy a ``smallness'' property, which we now recall. 
\begin{definition}\label{def:d-small}
  For any $ t > 0 $ we say that a set $ A \subseteq \mathbf{P} $ is small for $
  \mP_{t} $ if there exists $ \delta \in (0, 1) $ such that
  \begin{equ}
    \| \mP_{t}([\pi], \cdot)- \mP_{t}([\nu], \cdot) \|_{\mathrm{TV},
    \mathbf{P}} \leqslant 1- \delta\;, \qquad \forall \, [\pi], [\nu] \in A \;.
  \end{equ}
\end{definition}
Recall, that in \eqref{e:tv} we have defined the total variation distance  between two positive
measures by $\| \mu - \nu \|_{\mathrm{TV}, \mX} =  \frac{1}{2} \sup_{A \in \mF
} | \mu(A) - \nu (A) |$, so there with an additional $ 1/2 $ normalisation
factor in contrast to the ``usual'' definition.
For convenience, we also state Harris' theorem below, adjusted to our setting. See
for example \cite{HairerMattingly11Harris}.
\begin{theorem}[Harris]\label{thm:Harris}
  Consider the Lyapunov functional $ \mf{G} $ 
  as in Theorem~\ref{thm:lyap-func}. If there exists $
  t_{\star} > 0 $ such that for every $ R > 0 $ the set
  \begin{equ}
    V_{R} = \{ [\pi] \in \mathbf{P}  \; \colon \; \mf{G}([\pi]) \leqslant R \}
  \end{equ}
  is small for $ \mP_{ t_{\star}} $, then there exists a unique invariant measure $
  \mu_{\infty} $ for the Markov process $ [\pi_{t}] $, and it satisfies
  for some $ C, \gamma > 0 $
  \begin{equ}
    \| \mP_{t} ([\pi], \cdot) - \mu_{\infty} \|_{\mathrm{TV},
    \mathbf{P}} \leqslant C\mf{G}([\pi]) e^{- \gamma t}\;, \qquad \forall \,
    [ \pi] \in \mathbf{P} \;, t \geqslant  0 \;.
  \end{equ}
\end{theorem}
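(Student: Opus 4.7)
The plan is to prove Harris's theorem via the Hairer--Mattingly coupling-and-contraction method, combining the two ingredients in hand: the Lyapunov bound $\EE_t[\mf{G}([\pi_{t+t_\star}])] \leqslant \mf{c}\, \mf{G}([\pi_t]) + J$ from Theorem~\ref{thm:lyap-func} (with $\mf{c} \in (0,1)$) and the assumed smallness of every level set $V_R$. For a parameter $\beta > 0$ to be tuned, introduce the bounded metric on $\mathbf{P}$
\[
d_\beta([\pi], [\nu]) = \one_{\{[\pi] \neq [\nu]\}} \bigl(2 + \beta \mf{G}([\pi]) + \beta \mf{G}([\nu])\bigr),
\]
and let $\mathcal{W}_\beta$ denote the associated Kantorovich distance on probability measures. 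A short calculation, exploiting that the maximal (diagonal) coupling is $d_\beta$-optimal, yields the identity
\[
\mathcal{W}_\beta(\mu, \nu) = 2\|\mu - \nu\|_{\mathrm{TV}, \mathbf{P}} + \beta \int \mf{G}\, d|\mu - \nu| \;,
\]
so that $\mathcal{W}_\beta$ controls both the total-variation distance and the Lyapunov-weighted defect simultaneously.

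The crux of the proof is the one-step contraction $\mathcal{W}_\beta(\mP_{t_\star}([\pi], \cdot), \mP_{t_\star}([\nu], \cdot)) \leqslant \alpha\, d_\beta([\pi], [\nu])$ for some $\alpha < 1$, which I would prove by a dichotomy on $\Sigma = \mf{G}([\pi]) + \mf{G}([\nu])$. Fix first $R_\star > 2J/(1-\mf{c})$. In the high-Lyapunov regime $\Sigma > R_\star$, the independent coupling combined with the Lyapunov bound yields $\mathcal{W}_\beta \leqslant 2 + \beta\mf{c}\Sigma + 2\beta J$; the ratio with $d_\beta = 2 + \beta \Sigma$ is a monotone function of $\Sigma$ with limit $\mf{c}$ at infinity and value strictly below $1$ at $\Sigma = R_\star$, so bounded by some $\alpha_1 < 1$ uniformly in this regime. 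In the complementary regime $\Sigma \leqslant R_\star$, both points lie in $V_{R_\star}$ and the smallness hypothesis gives $\|\mP_{t_\star}([\pi], \cdot) - \mP_{t_\star}([\nu], \cdot)\|_{\mathrm{TV}, \mathbf{P}} \leqslant 1 - \delta$; a maximal-coupling argument combined with the Lyapunov bound on the off-diagonal mass yields
\[
\mathcal{W}_\beta \leqslant 2(1-\delta) + \beta(\mf{c} R_\star + 2J)\;,
\]
which divided by $d_\beta \geqslant 2$ is bounded by some $\alpha_2 < 1$ once $\beta$ is chosen small enough relative to $\delta$, $R_\star$ and $J$. Setting $\alpha = \max(\alpha_1, \alpha_2)$ closes the contraction.

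Existence and uniqueness then follow from Banach's fixed-point theorem applied to $\mP_{t_\star}$ on the complete metric space of probability measures $\mu$ on $\mathbf{P}$ with $\int \mf{G}\, d\mu < \infty$, equipped with $\mathcal{W}_\beta$: the Lyapunov bound ensures $\mP_{t_\star}$ maps this space to itself, and the contraction yields a unique fixed point $\mu_\infty$, which is the desired invariant measure. Iterating gives
\[
\mathcal{W}_\beta\bigl(\mP_{n t_\star}([\pi], \cdot), \mu_\infty\bigr) \leqslant \alpha^n\, \mathcal{W}_\beta(\delta_{[\pi]}, \mu_\infty) \leqslant C\, \alpha^n\, \mf{G}([\pi])\;,
\]
where the second inequality uses that $\int \mf{G}\, d\mu_\infty < \infty$ (via Fatou on the invariance relation and the Lyapunov bound) and that $\mf{G} \geqslant 1$. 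Since $\mathcal{W}_\beta \geqslant 2\|\cdot\|_{\mathrm{TV}, \mathbf{P}}$ this yields the claimed exponential decay at integer multiples of $t_\star$, and the estimate for arbitrary $t \geqslant 0$ is recovered by writing $t = n t_\star + s$ with $s \in [0, t_\star)$ and using the trivial contractivity $\|\mP_s \mu - \mP_s \nu\|_{\mathrm{TV}, \mathbf{P}} \leqslant \|\mu - \nu\|_{\mathrm{TV}, \mathbf{P}}$, absorbing the factor $\alpha^{-1}$ into $C$.

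The main obstacle will be the joint tuning of $\beta$ and $R_\star$ in the dichotomy, since the two regimes pull $\beta$ in opposite directions: in the small-set regime $\beta$ must be small to defeat the additive defect $2(1-\delta)$, while in the high-Lyapunov regime the contraction margin shrinks as $\beta \to 0$. The resolution is to fix $R_\star$ first, depending only on the Lyapunov constants $\mf{c}$ and $J$, crucially using that $\mf{c} < 1$ strictly (not merely that $\mf{G}$ is a drift function), and only afterwards to optimise $\beta$; the availability of a genuinely contractive Lyapunov inequality from Theorem~\ref{thm:lyap-func} is therefore essential for the argument to close.
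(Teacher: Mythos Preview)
The paper does not give its own proof of this theorem: it is stated as a known result with a citation to \cite{HairerMattingly11Harris} and is then invoked as a black box (together with Proposition~\ref{prop:d-small}) in the proof of Theorem~\ref{thm:uniq}. Your proposal is a correct sketch of precisely the argument in that cited reference, so in that sense it agrees with what the paper relies on.
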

Hence, the rest of this section is devoted to establishing the smallness property of level
sets of the Lyapunov functional, from which the spectral gap in Theorem~\ref{thm:uniq} promptly
follows. This is a standard consequence of the strong Feller property (Lemma~\ref{lem:feller})
and controllability (Lemma~\ref{lem:control}).

\begin{proposition}\label{prop:d-small}
  In the setting of Theorem~\ref{thm:uniq}, and in particular under
Assumption~\ref{assu:non-deg-uniq}, there exists a $ t_{\star} > 0 $ such that for every $ R > 0 $,
  $V_{R}$ is small for $ \mP_{t_{\star}} $. 
\end{proposition}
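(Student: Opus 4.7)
The plan is to obtain the smallness of $V_R$ from a standard combination of the strong Feller property (Lemma~\ref{lem:feller}) and controllability (Lemma~\ref{lem:control}), with the point $[e_{0}] \in \mathbf{P}$ (where $e_{0} = \mathbf{1}/\sqrt{m}$ is the normalised constant function) playing the role of a common ``accessible'' reference point. Throughout, the choice of $[e_{0}]$ is dictated by the fact that, as discussed in Remark~\ref{rem:on-assumption}, the Bismut--Elworthy--Li argument behind strong Feller only runs cleanly in a neighbourhood of the constant eigenfunction, where the Jacobian flow stays in the range of the noise operator.

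First I would fix $R>0$ and use the higher regularity Corollary/Proposition~\ref{prop:higher-regularity} together with Theorem~\ref{thm:lyap-func} to guarantee that, after a deterministic time $s_{\star} > 0$, the family of laws $\{\mP_{s_{\star}}([\pi],\cdot) : [\pi] \in V_{R}\}$ is tight on $\mathbf{P}$ and supported on configurations with uniformly bounded $H^{\a}$ norm. Next, I would invoke Lemma~\ref{lem:control} to show that there exist $t_{1} > 0$ and, for every open neighbourhood $U$ of $[e_{0}]$, a constant $\eta(U) > 0$ such that
\[
\mP_{t_{1}}([\pi], U) \;\geqslant\; \eta(U) \qquad \forall\, [\pi] \in V_{R}\;.
\]
This is the step where one uses the non-degeneracy of the noise in the strong form of Assumption~\ref{assu:non-deg-uniq}, via a support theorem argument with a suitably chosen control driving $\pi$ near $[e_{0}]$.

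Then I would invoke Lemma~\ref{lem:feller} to obtain the strong Feller property of $\mP_{t_{2}}$ at $[e_{0}]$: for every $\gamma > 0$ one can choose a neighbourhood $U$ of $[e_{0}]$, and a time $t_{2} > 0$, such that
\[
\| \mP_{t_{2}}([\zeta_{1}],\cdot) - \mP_{t_{2}}([\zeta_{2}],\cdot) \|_{\mathrm{TV}, \mathbf{P}} \;\leqslant\; \gamma \qquad \forall\, [\zeta_{1}],[\zeta_{2}] \in U\;.
\]
Setting $t_{\star} = s_{\star} + t_{1} + t_{2}$ and writing, via the Markov property,
\[
\mP_{t_{\star}}([\pi],\cdot) \;\geqslant\; \int_{U} \mP_{t_{2}}([\zeta],\cdot)\, \mP_{t_{1} + s_{\star}}([\pi], d[\zeta]) \;\geqslant\; \eta(U)\,\mP_{t_{2}}([e_{0}],\cdot) - \gamma\cdot \eta(U)\, \nu_{\pi}
\]
for a suitable probability measure $\nu_{\pi}$, one obtains that both $\mP_{t_{\star}}([\pi],\cdot)$ and $\mP_{t_{\star}}([\nu],\cdot)$ dominate a common positive measure of mass at least $\eta(U)(1-\gamma)$. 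Choosing $\gamma = 1/2$ yields
\[
\| \mP_{t_{\star}}([\pi],\cdot) - \mP_{t_{\star}}([\nu],\cdot) \|_{\mathrm{TV}, \mathbf{P}} \;\leqslant\; 1 - \tfrac{1}{2}\eta(U) \qquad \forall\, [\pi],[\nu] \in V_{R}\;,
\]
which is the required smallness.

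The truly delicate ingredients are hidden inside Lemmas~\ref{lem:feller} and \ref{lem:control}, and in particular the strong Feller property at $[e_{0}]$ is the main obstacle: it is here that the restriction $\a > d/2$ enters, since the Bismut--Elworthy--Li formula requires one to represent derivatives of $\mP_{t_{2}}f$ using the range of the noise covariance, and only near the constant eigenfunction $[e_{0}]$ does the linearised flow stay within this range on a set of uniformly positive probability (which explains why the argument must be localised via the controllability step, rather than applied directly on all of $V_{R}$). Once strong Feller at $[e_{0}]$ and controllability to $[e_{0}]$ are taken for granted, the combination above is routine and the conclusion of Theorem~\ref{thm:uniq} follows from Theorem~\ref{thm:Harris}.
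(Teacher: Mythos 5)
Your proposal is correct and follows essentially the same route as the paper: controllability (Lemma~\ref{lem:control}) drives every point of $V_R$ into a small $H^{\gamma_0}$-neighbourhood of the constant function with uniformly positive probability, the local Doeblin bound near that point comes from the Bismut--Elworthy--Li/strong Feller argument of Lemma~\ref{lem:feller}, and the two are composed via the Markov property (the paper does this with the one-line estimate $\delta'(1-\delta)+(1-\delta') = 1-\delta\delta'$ rather than your explicit common minorizing measure, but these are equivalent). Your preliminary regularisation step via Proposition~\ref{prop:higher-regularity} is harmless but unnecessary, since the controllability lemma already applies to all of $V_R$ directly.
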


\begin{proof}

  The first step is to establish the smallness property
  locally around $ [\pi] \equiv [1] $. Here we write $ 1 $ for the unit function 
\begin{equation*}
\begin{aligned}
\TT^{d} \ni x \mapsto (1, \dots, 1) \in \RR^{m} \;.
 \end{aligned}
\end{equation*}
 For this reason we define
  \begin{equ}
    B_{\ve} = \{u \in  H^{\gamma_{0}}  \; \colon \; \| u - 1
    \|_{H^{\gamma_{0}}} < \ve \}\;,
  \end{equ}
  the ball of radius $ \ve \in (0, 1) $ about $ 1 $ in the $
  H^{\gamma_{0}} $ topology, for $ \gamma_{0} > d/2 $ as in
  Assumption~\ref{assu:non-deg-uniq}, so that in particular by Sobolev
embedding $ H^{\gamma_{0}} \subseteq C (\TT^{d}) $: this will be essential for the
  invertibility of the multiplication operator further on. Since we are interested in the
  projective dynamic we also define  
  \begin{equ}[eqn:b-ve-proj]
    B_{\ve}^{\mathrm{proj}} = \{ [\pi]   \; \colon \; \pi \in
    B_{\ve} \} \subseteq \mathbf{P} \;.
  \end{equ}
By Lemma~\ref{lem:feller} below, for any $ \delta \in
  (0, 1) $ there exists $ \ve \in (0, 1) $ such that
  \begin{equ}
    \| \mP_{1} ([\pi], \cdot) - \mP_{1}([\nu], \cdot)  \|_{\mathrm{TV},
    \mathbf{P}} < 1 - \delta \;,
    \qquad \forall \, [\pi], [\nu] \in B_{\ve}^{\mathrm{proj}}\;.
  \end{equ} 
Next, by Lemma~\ref{lem:control} we find an $ s_{\star} >0 $ such
that for any $ R > 0 $ there exists
$ \delta^{\prime}$ for which
  \begin{equ}
    \mP_{s_{\star}}([\pi] , B_{\ve}^{\mathrm{proj}}) \geqslant \delta^{\prime}
\;, \qquad \forall \, [\pi] \in V_{R}\;.
  \end{equ}
It follows immediately that for any $[\pi], [\nu] \in V_{R}$,
\begin{equ}
 \| \mP_{ s_\star +1} ( [\pi] , \cdot) -  \mP_{ s_{\star} +1}( [\nu] , \cdot)
    \|_{\mathrm{TV}, \mathbf{P}}
\le \delta' (1-\delta) + 1-\delta' = 1-\delta\delta'\;,
\end{equ}
thus concluding the proof.
\end{proof}
The next lemma establishes the small set property locally around the point
$ \pi \equiv 1 $: the result follows by proving the strong Feller property for
the solution to a new SPDE, which coincides with the linear SPDE
\eqref{eqn:main} for $ u_{0} $ close to $ u\equiv 1 $.

\begin{lemma}\label{lem:feller}
  In the setting of Theorem~\ref{thm:uniq}, and in particular under
Assumption~\ref{assu:non-deg-uniq}, for any $ \delta \in (0, 1) $ and $
  t > 0 $ there exists $ \ve  \in (0, 1)  $ such that
  \begin{equ}[e:Doeblin]
    \| \mP_{t} ([\pi], \cdot) - \mP_{t}([\nu], \cdot)  \|_{\mathrm{TV},
    \mathbf{P}} < 1 - \delta\;,
    \quad \forall \, [\pi], [\nu] \in B_{\ve}^{\mathrm{proj}} \;,
  \end{equ}
  with $ B_{\ve}^{\mathrm{proj}} $ (depending on $ \gamma_{0} $ as in
Assumption~\ref{assu:non-deg-uniq}) defined in \eqref{eqn:b-ve-proj}. 
\end{lemma}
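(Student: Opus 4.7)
The plan is to deduce \eqref{e:Doeblin} from a local strong-Feller (in total variation) property of the $u$-process near the constant function $1 \in H^{\gamma_0}$, transferred to the projective process through the measurable map $u \mapsto [u/\|u\|]$. First I would set up a localisation. Since $\gamma_0 > d/2$, Sobolev embedding gives $H^{\gamma_0} \hookrightarrow C(\TT^{d}; \RR^{m})$, so every $u \in B_\ve$ is bounded away from $0$ componentwise once $\ve$ is small. Fix $\ve' \in (\ve, 1)$ and set
\begin{equ}
\tau = \inf\{s \geqslant 0 : \|u_s - 1\|_{H^{\gamma_0}} \geqslant \ve'\}\;.
\end{equ}
Standard parabolic a priori estimates on $u_s - e^{s\mL} 1$, combined with the continuity of the map $u_0 \mapsto u$ from Lemma~\ref{lem:cont-ic}, show that $\PP(\tau \leqslant t) \leqslant \eta(\ve,\ve',t)$ where $\eta \to 0$ as $\ve \to 0$, uniformly over $u_0 \in B_\ve$.

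Next I would introduce a smooth cutoff $\chi \colon H^{\gamma_0} \to [0,1]$ with $\chi \equiv 1$ on $\{\|u-1\|_{H^{\gamma_0}} \leqslant \ve'\}$ and $\chi \equiv 0$ outside $\{\|u-1\|_{H^{\gamma_0}} \leqslant 2\ve'\}$, and consider the modified SPDE
\begin{equ}
\ud \tilde u^{\alpha} = -\nu^{\alpha}(-\Delta)^{\a}\tilde u^{\alpha} \,\ud t + \chi(\tilde u)\,(\tilde u \cdot \ud W)^{\alpha}\;.
\end{equ}
By construction $\tilde u_s = u_s$ on $[0,\tau]$, so $\|\law(u_t | u_0) - \law(\tilde u_t | u_0)\|_{\mathrm{TV}} \leqslant \PP(\tau \leqslant t) \leqslant \eta$. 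For $\tilde u$ I would apply the Bismut--Elworthy--Li formula. The Jacobian $J_s h = D_{u_0}\tilde u_s [h]$ solves a linear SPDE whose unperturbed part is $\mL$-semigroup driven, so it remains in $H^{\gamma_0}$ uniformly on the support of $\chi$. The key structural input is that on $\supp \chi$ the noise operator $\sigma(u)\colon \xi \mapsto \chi(u)(u \cdot \xi)$ is right-invertible: the explicit choice $\xi^{\alpha}_{\beta}(x) = \chi(u(x))^{-1}(J_s h)^{\alpha}(x)\, u^{\beta}(x)/|u(x)|^{2}$ satisfies $\sigma(u)\xi = J_s h$, and the algebra property $H^{\gamma_0}\cdot H^{\gamma_0} \subset H^{\gamma_0}$ (valid since $\gamma_0 > d/2$) together with the fact that $u$ is bounded away from $0$ on $\supp\chi$ gives $\|\xi^{\alpha}_{\beta}\|_{H^{\gamma_0}} \lesssim \|J_s h\|_{H^{\gamma_0}}$. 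The matching two-sided bounds \eqref{e:boundalphak} identify (up to equivalence of norms) $H^{\gamma_0}$ with the Cameron--Martin space of $W$, making $\xi$ an admissible control. The BEL identity then yields, for any bounded measurable $f$,
\begin{equ}
\bigl|D_h \EE f(\tilde u_t)\bigr| \leqslant \frac{\|f\|_{\infty}}{t^{1/2}}\, \EE\Big[ \int_0^t \|\xi_s\|_{H^{\gamma_0}}^{2}\,\ud s\Big]^{1/2} \leqslant C_t \|h\|_{H^{\gamma_0}}\;,
\end{equ}
which in turn gives the TV-Lipschitz bound $\|\tilde \mP_t(u_0,\cdot) - \tilde \mP_t(v_0,\cdot)\|_{\mathrm{TV}} \leqslant C_t \|u_0 - v_0\|_{H^{\gamma_0}}$ for all $u_0,v_0$ in a neighbourhood of $1$.

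Finally, combining the two ingredients gives, for every $u_0, v_0 \in B_\ve$,
\begin{equ}
\|\mP_t^{u}(u_0, \cdot) - \mP_t^{u}(v_0, \cdot)\|_{\mathrm{TV}} \leqslant 2\eta(\ve) + 2 C_t \ve\;,
\end{equ}
where $\mP_t^{u}$ is the Markov kernel of $u$ on $L^{2}_{\star}$. By Lemma~\ref{lem:nonzero} the map $u \mapsto [u/\|u\|]$ from $L^{2}_{\star}$ to $\mathbf{P}$ is almost surely well defined on $u_t$, so the push-forward of $\mP_t^{u}$ along it is $\mP_t$; since TV distances contract under measurable push-forwards, the same bound holds on $B_\ve^{\mathrm{proj}}$. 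Choosing $\ve$ small enough to make the right-hand side strictly smaller than $1 - \delta$ gives \eqref{e:Doeblin}. The main obstacle is the BEL step: one must control $\xi_s$ uniformly in the Cameron--Martin norm of $W$ and also absorb the extra terms in $J_sh$ coming from differentiating $\chi(\tilde u)$, which in turn dictate the assumption $\a > d/2$: it is precisely this condition that makes $H^{\gamma_0}$ an algebra embedding into $L^{\infty}$, allowing pointwise inversion of multiplication by $u$ and preservation of $H^{\gamma_0}$-regularity of $\xi$. Lifting this restriction would require an asymptotic strong-Feller or asymptotic coupling strategy, as noted in Remark~\ref{rem:on-assumption}.
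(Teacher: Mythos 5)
Your overall strategy \dash localise the dynamics near the constant function $1$, prove a total-variation gradient bound for the localised process via Bismut--Elworthy--Li, and push forward to $\mathbf{P}$ \dash is exactly the paper's. However, your specific localisation has a genuine flaw. By multiplying the noise coefficient by a scalar cutoff $\chi(\tilde u)$ that vanishes outside $B_{2\ve'}$, you make the diffusion \emph{degenerate} (indeed deterministic) on the complement of $\supp\chi$. The Bismut--Elworthy--Li identity requires the control $\xi_s$ to be defined and square-integrable in the Cameron--Martin norm along the \emph{entire} trajectory up to time $t$, and your proposed right inverse $\xi^{\alpha}_{\beta} = \chi(\cdot)^{-1}(J_sh)^{\alpha}u^{\beta}/|u|^{2}$ blows up as $\chi \to 0$; since the localised process exits $\supp\chi$ with positive probability, the bound $\|\xi\|\lesssim\|J_sh\|_{H^{\gamma_0}}$ cannot hold uniformly and the BEL step collapses. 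You cannot simply restrict BEL to the event of staying inside the ball. The fix is the one the paper uses: instead of interpolating the noise operator to zero, interpolate it to the (non-degenerate, constant-coefficient) operator at $u\equiv 1$, i.e.\ replace $M_{u^{\beta}}\circ K^{\alpha,\beta}_{\Theta}$ by $M_{\texttt{g}(u)^{\beta}}\circ K^{\alpha,\beta}_{\Theta}$ with $\texttt{g}(u)=\varrho(u)u+(1-\varrho(u))1$, so that $\texttt{g}(u)$ is uniformly bounded away from zero and the inverse exists globally with a uniform bound.

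Two further points. First, the Cameron--Martin direction must also absorb the Fourier multiplier $K_{\Theta}$: the relevant bound is on $[\overline G^{\alpha,\alpha}(u)]^{-1}$ as a map $H^{\gamma_0}\to L^{2}$ (applying $K_{\Theta^{-1}}$ costs $\gamma_0$ derivatives, which is where the lower bound in \eqref{e:boundalphak} enters), so you need the Jacobian $J_sh$ to actually land in $H^{\gamma_0}$. Second, and relatedly, your attribution of the hypothesis $\a>d/2$ to the algebra property of $H^{\gamma_0}$ is off: $H^{\gamma_0}$ is an algebra embedding into $L^{\infty}$ already because $\gamma_0>d/2+1$, independently of $\a$. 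The condition $\a>d/2$ is what guarantees (via the stochastic-convolution estimate of Lemma~\ref{lem:wp}, which requires $\gamma_0<\a+\gamma_0-d/2$) that the Jacobian gains enough regularity to lie in $H^{\gamma_0}$, equivalently the Hilbert--Schmidt condition on the semigroup in the Da~Prato--Zabczyk framework. With these corrections your argument matches the paper's proof.
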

In the upcoming proof, for a map $ f \colon X \to Y $ between two Banach spaces $ X, Y $, we define its
Fr\'echet derivative $D f(x) \in \mathbf{L}(X,Y)$ as the bounded linear map $X\to Y$ such that
  \begin{equ}
\lim_{\| h \|_{X} \to 0}
\frac{\|f (x + h) - f (x) - D f (x)h\|_{Y} }{\| h \|_{X}} = 0  \;, \quad \forall
    x \in X \;,
  \end{equ}
  provided that it exists.
  We say that a functional $ f $ as above is of class $ C^{m} $, for $ m \in
  \NN $, if it is $ m $ times Fr\'echet differentiable (the derivatives being not
  necessarily  uniformly bounded), and that it is smooth if it lies in $
  C^{m}  $ for all $ m \in \NN $.

\begin{proof}
  Let us start by reducing the problem to the study of the strong Feller
  property of \eqref{eqn:main}. Since $ [\pi_{t}] $ is a functional of $
  \pi_{t} $, which in turn is a functional of $
  u_{t} $, if we denote with $ \mQ_{t}(u, \cdot) $ the law of the solution $
  u_{t} $ to \eqref{eqn:main} with initial condition $ u_{0} = u \in
  L^{2} $, then for $ \pi, \nu \in S $ 
  \begin{equ}
    \| \mP_{t}([\pi], \cdot) - \mP_{t}([\nu], \cdot) \|_{\mathrm{TV},
    \mathbf{P}} \leqslant \|
    \mQ_{t}(\pi, \cdot) - \mQ_{t}(\nu, \cdot) \|_{\mathrm{TV}, L^{2}} \;,
  \end{equ}
so that it suffices to check \eqref{e:Doeblin} for $ \mQ_{t} $, uniformly over $ B_{\ve} = \{ u \in  H^{\gamma_{0}}  \; \colon \; \| u - 1
    \|_{H^{\gamma_{0}}} < \ve \} $, again with $
\gamma_{0} $ as in Assumption~\ref{assu:non-deg-uniq}.
  We localise our argument around $ B_{\ve} $ by
  constructing a dynamic that coincides with that of $ u_{t} $ only on $
  B_{\ve} $. Since we are working under Assumption~\ref{assu:non-deg-uniq}, we can
  represent $ u_{t} $ as the solution to
  \begin{equ}
    \ud u_{t}^{\alpha} = - \nu^{\alpha} (- \Delta)^{\a} u_{t}^{\alpha}  \ud t
+( G(u_{t}) \cdot \ud \xi)^{\alpha}\;, \qquad \forall \alpha \in \{ 1, \dots ,
m \} \;,
  \end{equ}
  where $ (G(u) \cdot \ud \xi)^{\alpha} = \sum_{\beta =1}^{m} G^{\alpha, \beta}
(u) \ud \xi^{\alpha, \beta}  $, and where  $ \xi $ is a space-time white noise,
that is a homogeneous Gaussian field with, formally, the covariance structure
\begin{equation*}
\begin{aligned}
\EE [ \xi^{\alpha, \beta} (x) \xi^{\alpha^{\prime} , \beta^{\prime}}
(y)] = \one_{\alpha = \alpha^{\prime}} \one_{\beta = \beta^{\prime}} \delta
(x-y) \;,
\end{aligned}
\end{equation*}
and $G^{\alpha, \beta}(u) \colon L^{2}(\TT^{d}; \RR ) \to L^{2}(\TT^{d}; \RR ) $ is given in Fourier coordinates by
  \begin{equ}
    \mF [G^{\alpha, \beta} (u ) \varphi] (k) = \sum_{l} \hat{u}^{\beta}( k - l)
 \Theta^{\alpha, \beta}_{l} \hat{\varphi}(l) \;,
  \end{equ}
  where we have defined $ \Theta^{\alpha, \beta}_{l} = (\Gamma^{\alpha,
\beta}_{l})^{\frac{1}{2}} $. In other words, $ G^{\alpha, \beta} (u) =
M_{u^{\beta} }
\circ K_{ \Theta}^{\alpha, \beta} $, where $ M_{u} \colon L^{2} \to L^{2} $ is
the multiplication operator in physical coordinates $ M_{u^{\beta}} \varphi =
u^{\beta} \varphi
$ and $ K_{\Theta}^{\alpha, \beta} \colon L^{2}
\to L^{2} $ is the multiplication operator in Fourier coordinates $ \mF
[K_{\Theta}^{\alpha, \beta} \varphi ](k) = \Theta^{\alpha, \beta}_{k}
\hat{\varphi}(k) $. We note that for $ u \in H^{\gamma_{0}}(\TT^{d}; \RR^{m}) $ fixed
we have $ G^{\alpha, \beta}( u) \in \mathbf{L} (L^{2}(\TT^{d}; \RR))$, where $
\mathbf{L}(X) $ is the space of bounded linear operators from a Banach space $ X $ into itself, since we have
\begin{equation} \label{e:bd-op}
\begin{aligned}
\| G^{\alpha, \beta}  (u) \varphi \| \leqslant \| u \|_{\infty} \|
K_{\Theta}^{\alpha, \beta} \varphi \| \lesssim \| u \|_{H^{\gamma_{0}}} \|
K_{\Theta}^{\alpha, \beta} \varphi \| \lesssim_{\Gamma} \| u
\|_{H^{\gamma_{0}}} \| \varphi \| \;,
\end{aligned}
\end{equation}
by Sobolev embedding, since $ H^{\gamma_{0}}(\TT^{d}; \RR^{m}) \subseteq C(\TT^{d};
\RR^{m}) $, and by Assumption~\ref{assu:non-deg-uniq} on the noise
coefficients (here we have merely used that the coefficients are bounded).

\textit{Step 1: Localisation.}  We now construct an operator-valued map 
$$ \overline{G}^{\alpha, \beta} \colon
H^{\gamma_{0}} (\TT^{d}; \RR^{m}) \to \mathbf{L}(L^{2}(\TT^{d}; \RR)) \;, $$
such that for any $ u \in H^{\gamma_{0}} (\TT^{d}; \RR^{m})$,
the linear operator $ \overline{G}^{\alpha, \beta}(u) $ is globally invertible
and satisfies the following properties for some $ C \in (0, \infty) $:
  \begin{equs}[eqn:prop-B-bar]
 {}   & \overline{G}^{\alpha, \beta}(u)  =  G^{\alpha, \beta}(u)\;, \qquad  & & \text{if } u \in
      B_{\ve_{1}} \;, \\ 
& \| [ \overline{G}^{\alpha, \alpha }(u)]^{-1} 
\|_{\mathbf{L}(H^{\gamma_{0}}(\TT^{d}; \RR); L^{2}(\TT^{d}; \RR))}
    \leqslant C \;,  \qquad & & \forall  u \in H^{\gamma_{0}}\;.
  \end{equs}
In addition, the parameter $
\ve_{1} \in (0, 1) $ must be chosen small enough \dash as it turns out, a
sufficient choice is given by
\begin{equation} \label{e:eps1}
\begin{aligned}
\ve_{1} = C_{\mathrm{s}}^{-1} /4 \;,
\end{aligned}
\end{equation}
where $ C_{\mathrm{s}} $ is the constant in the continuous embedding $
H^{\gamma_{0}} (\TT^{d}; \RR^{m}) \subseteq C(\TT^{d}; \RR^{m}) $, so that $
\| \varphi \|_{\infty} \leqslant C_{\mathrm{s}} \| \varphi
\|_{H^{\gamma_{0}}} $.
  We can construct a $
\overline{G}^{\alpha , \beta} $ with the desired properties as
  follows. We choose a smooth functional $ \varrho
  \colon H^{\gamma_{0}} \to [0, 1] $, such that
  \begin{equ}
    \varrho(u) = \begin{cases} 1 & \ \ \text{ if } u \in B_{\ve_{1}} \;, \\
    0 & \ \ \text{ if } u \in B_{2 \ve_{1}}^{c} \;.\end{cases} 
  \end{equ}
Such a choice of $ \varrho $ is always possible, for example because $ H^{\gamma_{0}} $ is a Hilbert
space and one can define $ \varrho (u) = \widetilde{\varrho} ( \| u -1
\|^{2}_{H^{\gamma_{0}}}) $ for a suitable smooth and compactly supported
function $ \widetilde{\varrho} \colon \RR \to [0,1] $.
  Then we define $ \overline{G}^{\alpha, \beta}  $ by
  \begin{equ}
    \overline{G}^{\alpha , \beta} (u) =  \varrho(u) \cdot (M_{u^{\beta}} \circ
K_{\Theta}^{\alpha, \beta})  + (1 -
    \varrho(u)) \cdot K_{\Theta}^{\alpha, \beta} \;,
  \end{equ}
where the dot indicates multiplication with a scalar.
  If we introduce the map
  \begin{equ}
    \texttt{g} \colon H^{\gamma_{0}}(\TT^{d}; \RR^{m})  \to
H^{\gamma_{0}}(\TT^{d}; \RR^{m})  \;, \qquad \texttt{g}(u) =
    \varrho(u) u + (1 - \varrho (u)) 1 \;, 
  \end{equ}
  then we can rewrite $ \overline{G}^{\alpha, \beta} $ as $
\overline{G}^{\alpha, \beta}(u) = M_{\texttt{g}(u)^{\beta}} \circ
K_{\Theta}^{\alpha, \beta} $.
  In particular, the Fr\'echet derivative with respect to $ u
$ of $ \overline{G}^{\alpha, \beta} $ can be computed as follows, for $ h \in H^{\gamma_{0}} $:
  \begin{equ}[e:dG]
    D \overline{G}^{\alpha , \beta} (u)  = M_{(D \texttt{g} (u) h)^{\beta}} \circ
K_{\Theta}^{\alpha, \beta} \in \mathbf{L}(L^{2} (\TT^{d}; \RR))\;.
  \end{equ}
Here the derivative of $ \texttt{g} $ is given by
  \begin{equ}
    D \texttt{g} (u) h =  \big(D \varrho(u) h \big) u +
    \varrho(u) h - \big(D \varrho (u) h\big) 1 \in H^{\gamma_{0}} \;,
  \end{equ}
  so that for some $ C > 0$
  \begin{equ}[eqn:bdd-derivative]
    \| \texttt{g} (u) \|_{H^{\gamma_{0}}}  \leqslant 1  \;, \qquad \| D
    \texttt{g}(u) h \|_{H^{\gamma_{0}}}   \leqslant C \| h
    \|_{H^{\gamma_{0}}} \;.
  \end{equ}
The fact that $ D \overline{G}^{\alpha, \beta} (u) \in
\mathbf{L} (L^{2} (\TT^{d}; \RR)) $ follows from \eqref{eqn:bdd-derivative} and
the same calculation as in \eqref{e:bd-op}. 

Hence, with this definition, $ \overline{G}^{\alpha, \beta} $ satisfies the first
property in \eqref{eqn:prop-B-bar}. Let us check that is satisfies also the
second one.  For the inverse  we have
  \begin{equ}
    \overline{G}^{\alpha, \alpha} (u)^{-1} = M_{ [\texttt{g}(u)^{\alpha}]^{-1}} \circ
K_{\Theta^{-1}}^{\alpha, \alpha } \;,
  \end{equ}
where we have defined for any $ \alpha, \beta \in \{ 1, \dots, m \} $
\begin{equation*}
\begin{aligned}
    [\texttt{g}(u)^{\beta}]^{-1} (x) = 1/[\texttt{g}(u)^{\beta}(x)] \;, \qquad \mF
[K_{\Theta^{-1} }^{\alpha, \beta} \varphi ](k) =  \big( \Theta^{\alpha,
\beta}_{k}\big)^{-1} \hat{\varphi}(k) \;.
\end{aligned}
\end{equation*}
The latter operator is defined in view of the lower bound on the correlation
coefficients in Assumption~\ref{assu:non-deg-uniq}. In particular, we can
follow the same calculations as in \eqref{e:bd-op} to obtain
that $ \overline{G}^{\alpha, \alpha} (u)^{-1} \in \mathbf{L}
(H^{\gamma_{0}} (\TT^{d}; \RR ), L^{2}(\TT^{d}; \RR)) $. More precisely, we can bound:
\begin{equation*}
\begin{aligned}
\| \overline{G}^{\alpha, \alpha} (u)^{-1} \varphi \|_{L^{2}} \lesssim \|
\texttt{g}^{-1} (u) \|_{\infty} \| K^{\alpha, \alpha}_{ \Theta^{-1}} \varphi \|
\lesssim_{\Gamma} \| \varphi \|_{H^{\gamma_{0}}} \;,
\end{aligned}
\end{equation*}
since $ \| \texttt{g}(u) - 1 \|_{\infty} \leqslant C_{\mathrm{s}} \| \texttt{g}(u) -1
\|_{H^{\gamma_{0}}} \leqslant 1/2 $, by Sobolev
embedding and by our choice of $ \ve_{1} $ in \eqref{e:eps1}, and where in the
last step we used the lower bound on the noise coefficients in
Assumption~\ref{assu:non-deg-uniq}. Hence overall the nonlinearity $ \overline{G} $ that we
have constructed does indeed satisfy \eqref{eqn:prop-B-bar} (the
first requirement of \eqref{eqn:prop-B-bar}is satisfied by construction).

Now the proof of the strong Feller property follows closely the proof of
\cite[Theorem 7.1.1]{DaPratoZabczyk96Ergodicity}. In particular, we observe
that Hypothesis 7.1.(iv) regarding the Hilbert--Schmidt norm of the semigroup
$s \mapsto e^{-s (- \Delta)^{\a}} $
in the quoted book corresponds to our assumption $
\mathbf{a} > d/2$ (with the Hilbert space $ H $ being $ L^{2}
(\TT^{d}; \RR^{m}) $). 

\textit{Step 2: Properties of the localised dynamic.} Let us consider the
solution $ \overline{u} $ to the nonlinear and nonlocal equation
  \begin{equ}[eqn:localized]
    \ud \overline{u}_{t}^{\alpha}  = - \nu^{\alpha} (- \Delta)^{\a}
\overline{u}_{t}^{\alpha} \ud t + \left(  \overline{G} (
    \overline{u}_{t}) \cdot  \ud \xi\right)^{\alpha} \;, \quad \overline{u}_{0} \in
    H^{\gamma_{0}} \;, \quad \alpha \in \{ 1, \dots, m \} \;.
  \end{equ}
Here, as usual, we have defined $ ( \overline{G} ( \overline{u}) \cdot \ud \xi)^{\alpha} =
\sum_{\beta =1}^{m} \overline{G}^{\alpha, \beta} ( \overline{u}) \ud
\xi^{\alpha, \beta} $.
Our objective is to obtain some a priori bounds on the solution: in particular
we would like to guarantee that with high probability, at least for small
times, the solution remains close to $ 1 $, if it is started in its
neighbourhood. Such estimates are classical, therefore we do not enter into
details. Instead, since \eqref{eqn:localized} is not dissimilar from
\eqref{eqn:main}, we refer to the proof of Lemma~\ref{lem:wp} to obtain that,
since we are assuming crucially $ \a > d/2 $ (so that we can choose $ \gamma =
\gamma_{0} $ in Lemma~\ref{lem:wp}) there exist $C, \zeta^{\prime} > 0 $ such
that:
  \begin{equs}
     \Big( \EE \Big[  \sup_{0 \leqslant s \leqslant t} \| \overline{u}_{s} -1\|_{H^{\gamma_{0}}}^{2}\Big] 
\Big)^{\frac{1}{2}} & \leqslant  \|
    \overline{u}_{0} -1 \|_{H^{\gamma_{0}}} + C t^{\zeta^{\prime} },
  \end{equs}
  One consequence of this bound is that if we define $ \tau $ to be the
stopping time
\begin{equ}
    \tau = \inf \{ t \geqslant 0  \; \colon \; u_{t} \not\in B_{\ve_{1}} \}\;,
  \end{equ}
then for any $ \delta \in (0, 1) $, we find a (deterministic) $ t \in (0,
1) $ such that 
  \begin{equ}
    \sup \{ \PP_{u_{0}} ( \tau < s )  \; \colon \; u_{0} \in B_{\ve_{1}/2} \;, s
    \leqslant t \} \leqslant \delta / 2\;.
  \end{equ}
Now, for any $ u_{0} = \overline{u}_{0} \in B_{\ve_{1}/2} $ we have $ u_{t} =
  \overline{u}_{t} $ up to the stopping time $ \tau $.
  Therefore, uniformly over $s \leqslant t$ and $ u, v 
  \in B_{\ve}$, with $ \ve \in (0, \ve_{1}/2) $, we find that
  \begin{equ}
    \| \mQ_{s} (u , \cdot ) - \mQ_{s}(v, \cdot)  \|_{\mathrm{TV}, L^{2}} \leqslant \|
    \overline{\mQ}_{s}(u, \cdot) - \overline{\mQ}_{s}(v, \cdot)
    \|_{\mathrm{TV}, L^{2}} + \delta /2 \;.
  \end{equ}
  In particular, the lemma is now proven if we show that for any $s \in
  (0, t) $ there exists $
  \ve \in (0, 1/4) $ such that
  \begin{equ}[e:fnl]
    \sup_{u, v \in B_{\ve}} \| \overline{\mQ}_{s}(u, \cdot) -
    \overline{\mQ}_{s}(v, \cdot) \|_{\mathrm{TV}, L^{2}} \leqslant 1 - \f{3\delta}2 \;.
  \end{equ}
 
\textit{Step 3: The Bismut--Elworthy--Li formula.} Finally, we set out to prove
\eqref{e:fnl}, which follows if we can establish the strong Feller property of $
\overline{u}_{t} $. Here we follow a classical approach via the
  Bismut--Elworthy--Li formula, see for example \cite[Lemma
7.1.3]{DaPratoZabczyk96Ergodicity}, which reads
\begin{equs}
    D_{u} & \overline{\mQ}_{t} (u, \psi) h \\
& = \EE_{u} \left[
      \psi( \overline{u}_{t}) \frac{1}{t} \sum_{\alpha =1}^{m} \int_{0}^{t} \int_{\TT^{d}} \left[
[\overline{G}^{\alpha, \alpha} (
\overline{u}_{s})]^{-1}  \big(D_{u} \overline{u}_{s}^{\alpha} h \big) \right] (x)
\cdot \ud   \xi^{\alpha, \alpha} ( x, s)\right] \;. 
\end{equs}
Here we have written $ \EE_{u} $ for the expectation under the law of $
\overline{u}_{t} $ started in $ \overline{u}_{0} = u $. Furthermore, $
D_{u} \overline{u}^{\alpha} $ denotes the Fr\'echet derivative of the $
\alpha $-th component of the solution $
\overline{u}_{t} $ with respect to its initial condition.
  In particular, this formula allows us to bound
  \begin{equs}
    |  D_{u} \overline{\mQ}_{t}&(u, \psi) h |^{2} \\
& \lesssim  \| \psi
    \|_{\infty}^{2} t^{-2} \sum_{\alpha=1}^{m} \EE_{u} \left[\int_{0}^{t}
\int_{\TT^{d}} | [\overline{G}^{\alpha, \alpha} (
      \overline{u}_{s})]^{-1} \big(D_{u} \overline{u}_{s}^{\alpha} h\big)  |^{2}(x) \ud s \ud
    x \right] \\
    & \lesssim \| \psi \|_{\infty}^{2} t^{-2} \int_{0}^{t} \EE_{u} \|
    \big( D_{u} \overline{u}_{s} h \big)
\|_{H^{\gamma_{0}}(\TT^{d}; \RR^{m})}^{2} \ud s\;,
    \label{eqn:bel}
  \end{equs}
where in the last line we have used the second property of $ \overline{G} $ in
\eqref{eqn:prop-B-bar}.

  Hence to conclude we have to find a bound on the $ H^{\gamma_{0}}
(\TT^{d}; \RR^{m})$ norm of
  the differential of the flow to \eqref{eqn:localized}. If we set $
r_{t}^{\alpha} =  D_{u}
  \overline{u}_{t}^{\alpha} h $ we find that
  \begin{equ}
    \ud r_{t}^{\alpha} = - \nu^{\alpha} (- \Delta)^{\a} r_{t}^{\alpha} \ud t +
 \left( D \overline{G} ( \overline{u}_{t})
    r_{t} \cdot \ud \xi\right)^{\alpha}  \;, \qquad r_{0}^{\alpha} =
h^{\alpha} \in H^{\gamma_{0}}(\TT^{d}; \RR)\;,
  \end{equ}
where we have written as usual $ \left( D \overline{G} ( \overline{u}_{t})
    r_{t} \cdot \ud \xi\right)^{\alpha} = \sum_{\beta=1}^{m}
\big(D \overline{G}^{\alpha, \beta} ( \overline{u}_{t}) r_{t}\big) \ud
\xi^{\alpha, \beta}$.
  The equation for $ r $ can in turn can be rewritten via \eqref{e:dG} as
  \begin{equ}
    \ud r_{t}^{\alpha}  = - \nu^{\alpha} (- \Delta)^{\a}  r_{t}^{\alpha} \ud t
+ \sum_{\beta=1}^{m} M_{( D \texttt{g} ( \overline{u}_{t}) h)^{\beta} } \circ K_{\Theta}^{\alpha, \beta} \ud \xi^{\alpha, \beta}
     \;.
  \end{equ}
  Now, for simplicity let us define for $ \overline{u}_{0}, h \in H^{\gamma_{0}}$ fixed
    $v_{t} = D \texttt{g} ( \overline{u}_{t}) h$.
  Then, if we rewrite the equation for $ r_{t} $ in its mild formulation, we
can follow the same calculations as in Lemma~\ref{lem:wp} to obtain for some $
\zeta^{\prime} > 0 $ and $ \gamma_{1} < \gamma_{0} $
\begin{equ}
  \EE \| r_{t} \|_{H^{\gamma_{0}}}^{2} \lesssim \| r_{0}
  \|_{H^{\gamma_{0}}}^{2} + t^{\zeta} \sup_{0 \leqslant s \leqslant t} \EE \|
  v_{s} \|_{H^{\gamma_{1}}}^{2} \lesssim \|
  h \|_{H^{\gamma_{0}}}^{2}(1 +t^{\zeta^{\prime}})\;,
\end{equ}
where in the last step we have used \eqref{eqn:bdd-derivative} to bound $
v_{s} $. 
We are now ready to conclude our argument. From \eqref{eqn:bel} we deduce that
\begin{equ}
  \sup_{\| h \|_{H^{\gamma_{0}}} \leqslant 1} | D_{u}
  \overline{\mQ}_{t} (u , \psi) h | \lesssim \| \psi
  \|_{\infty}^{2} t^{- 1} (1 + t^{\zeta^{\prime} }) \;,
\end{equ}
so that for some $ C>0 $ and uniformly over $ u, v \in B_{\ve} $
\begin{equ}
  | \overline{\mQ}_{t} (u, A) - \overline{\mQ}_{t} (v, A) | \leqslant C t^{- 1} (1
  + t^{\zeta^{\prime} }) \| u - v \|_{H^{\gamma_{0}}} \leqslant C  t^{- 1} (1 +
  t^{\zeta^{\prime}}) \ve\;,
\end{equ}
from which we deduce the claim.
\end{proof}
The second ingredient used to establish the small set property is a
form of controllability.
\begin{lemma}\label{lem:control}
  In the setting of Theorem~\ref{thm:uniq}, and in particular under
Assumption~\ref{assu:non-deg-uniq}, for any $ \ve \in (0, 1
), R > 0 $ there exist $ s_{\star} > 0  $
  and $ \delta^{\prime} \in (0, 1) $ such that
  \begin{equ}
    \mP_{s_{\star}}([\pi] , B_{\ve}^{\mathrm{proj}}) \geqslant \delta^{\prime}
\;, \qquad  \forall \; [\pi] \in V_{R}\;.
  \end{equ}
\end{lemma}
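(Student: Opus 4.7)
\emph{Plan.} The approach combines the higher-regularity estimate of Proposition~\ref{prop:higher-regularity} with a controllability argument based on a Stroock--Varadhan type support theorem and the non-degeneracy in Assumption~\ref{assu:non-deg-uniq}. The proof splits naturally into two steps, chained via the Markov property.

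\emph{Regularisation.} Fix an exponent $\gamma > \gamma_0$ with $\gamma < \a + 1/2$; such $\gamma$ exists at least when $\a$ is sufficiently larger than $d/2$ (the extremal case close to $\a = d/2$ can be handled by a minor adaptation, working in a slightly weaker Sobolev topology still controlling the $C(\TT^d)$ norm). Proposition~\ref{prop:higher-regularity} applied with $n=2$ then yields $s_1 > 0$ and $C = C(\gamma, R) > 0$ such that
\begin{equ}
\EE_{\pi_0} \|\pi_{s_1}\|^2_{H^\gamma} \leqslant C R\;,\qquad \forall\,\pi_0 \in V_R\;.
\end{equ}
Markov's inequality, combined with the compactness of the embedding $H^\gamma \hookrightarrow H^{\gamma_0}$, produces (for $M$ sufficiently large depending on $R$) a set
\begin{equ}
\mathcal K = \{\pi \in S : \|\pi\|_{H^\gamma} \leqslant M\}\;,
\end{equ}
compact in $H^{\gamma_0}$, satisfying $\mP_{s_1}([\pi_0], [\mathcal K]) \geqslant 1/2$ uniformly over $[\pi_0] \in V_R$.

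\emph{Controllability.} By the Markov property, it suffices to establish the existence of $s_2 > 0$ and $\delta'' > 0$ such that $\mP_{s_2}([\nu_0], B_\ve^{\mathrm{proj}}) \geqslant \delta''$ uniformly over $[\nu_0] \in [\mathcal K]$; the lemma then follows with $s_\star = s_1 + s_2$ and $\delta' = \delta''/2$. The pointwise positivity $\mP_{s_2}([\nu_0], B_\ve^{\mathrm{proj}}) > 0$ is obtained through a Stroock--Varadhan type support theorem: the topological support of the law of $u_{s_2}$ in $H^{\gamma_0}$ coincides with the $H^{\gamma_0}$-closure of the set of endpoints $\tilde u(s_2)$ of the deterministic controlled evolution
\begin{equ}
\partial_t \tilde u^\alpha = -\nu^\alpha(-\Delta)^\a \tilde u^\alpha + (\tilde u \cdot \phi_t)^\alpha\;,\qquad \tilde u(0) = \nu_0\;,
\end{equ}
as the smooth admissible control $\phi$ varies. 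Under Assumption~\ref{assu:non-deg-uniq}, where every Fourier mode of the noise is activated with a strictly positive coefficient, this control system is approximately controllable toward the constant function $1$. Promoting the pointwise positivity to a uniform lower bound $\delta''$ over $[\mathcal K]$ then uses lower semicontinuity of $[\nu_0] \mapsto \mP_{s_2}([\nu_0], B_\ve^{\mathrm{proj}})$ (since $B_\ve^{\mathrm{proj}}$ is open and the process $([\pi_t])$ is Feller in $H^{\gamma_0}$) together with the compactness of $[\mathcal K]$.

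\emph{Main obstacle.} The principal technical difficulty lies in the deterministic controllability step: the multiplicative coupling $\tilde u \cdot \phi$ means that no mode of $\tilde u$ can be excited at a point where $\tilde u$ vanishes. This is circumvented by prefacing the control phase with a brief interval of pure hyperviscous evolution ($\phi \equiv 0$), under which the analytic smoothing of the semigroup associated to $\mL$ forces $\tilde u$ to be either identically zero or non-vanishing on an open dense set; the former is excluded since $\nu_0 \in S$ has unit $L^2$ norm. One then constructs explicit Fourier-localised smooth controls to steer $\tilde u$, mode by mode, into an $\ve$-neighbourhood of $1$ in $H^{\gamma_0}$, completing the argument.
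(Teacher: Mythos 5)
Your overall architecture matches the paper's: a support theorem reduces the problem to steering the deterministic controlled flow into $B_{\ve}^{\mathrm{proj}}$, and uniformity over the initial set is obtained from lower semicontinuity of $[\pi]\mapsto \mP_{t}([\pi],B_{\ve}^{\mathrm{proj}})$ (Portmanteau plus the Feller property of Lemma~\ref{lem:cont-ic}) together with compactness. However, there are two genuine problems. First, your preliminary ``regularisation'' step requires an exponent $\gamma$ with $\gamma_{0}<\gamma<\a+1/2$ so that $\{\|\pi\|_{H^{\gamma}}\leqslant M\}$ is compact in $H^{\gamma_{0}}$; Assumption~\ref{assu:non-deg-uniq} only gives $\gamma_{0}>d/2+1$ and $\a>d/2$, so $\gamma_{0}$ may well exceed $\a+1/2$ and no such $\gamma$ need exist. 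Passing to ``a slightly weaker Sobolev topology'' does not repair this, because $B_{\ve}$ is by definition an $H^{\gamma_{0}}$-ball, so the compact embedding you need is into $H^{\gamma_{0}}$ itself. The paper sidesteps this entirely by applying the semicontinuity/compactness argument directly on $V_{R}$, using only that $\mf{G}$ bounds the energy median; your extra step is both unnecessary and, as stated, unavailable.

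Second, and more importantly, the controllability assertion --- the actual content of the lemma --- is left as a claim. ``Fourier-localised controls steering $\tilde u$ mode by mode into a neighbourhood of $1$'' is not a construction: because the control enters multiplicatively, exciting mode $k$ of component $\alpha$ requires knowing which modes of which components of $\tilde u$ are currently non-negligible, and one must also quantify the effect uniformly over $V_{R}$ to feed the compactness argument. The paper's construction is quite different and much more economical: since $M(\pi)\leqslant n_{R}$ on $V_{R}$, some component $\pi^{\alpha_{0}}$ carries a definite amount of low-frequency mass; the control $h^{\alpha_{0},\alpha_{0}}_{t}=P_{t}\Pi^{\myl}_{M}\pi^{\alpha_{0}}$ then produces a quantitatively nonzero spatial mean in that single component (the leading term is the nonnegative quadratic $\int_{0}^{t_{\star}}\|P_{s}\Pi^{\myl}_{M}\pi\|^{2}\,\ud s$, which is why no sign or non-vanishing issue arises); after that, one switches the control off and lets dissipation concentrate everything in the zero mode, then uses a space-independent control --- which acts as a finite-dimensional ODE on the component averages --- to equalise the $m$ components, and dissipates once more. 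Your analyticity remark correctly identifies the obstruction to a generic mode-by-mode scheme, but the fix you propose does not by itself yield a construction, let alone one with bounds uniform over $V_{R}$. As it stands the core of the proof is missing.
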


\begin{proof}
  The result follows from the support theorem for \eqref{eqn:main} by solving a
control problem.
To this aim, for any $s_{\star} >0 $ (we will fix this parameter later on), we denote by $ \mathcal{CM}
\subseteq L^{2}(\TT^{d}; (\RR^{m})^{\otimes 2}) $ the
  Cameron--Martin space  associated to the noise $ \dot{W} $ on the time
  interval $ [0, s_{\star}] $ (for convenience, we omit the dependence on $
  s_{\star} $ in the notation), characterised by the norm
\begin{equ}
  \| h \|_{\mathcal{CM}}^{2} = \int_{0}^{s_{\star}} \sum_{\alpha, \beta
=1}^{m} \sum_{k \in \ZZ^{d}} (\Gamma^{\alpha, \beta}_{k})^{-1} |
  \hat{h}^{\alpha, \beta}_{s}(k) |^{2} \ud s \;.
\end{equ}
Then for $ h \in \mathcal{C M}$, we denote by $
\Phi_{t}^{h} \pi $ the flow of the following deterministic, controlled
equation, for any initial datum $ u \in L^{2} $:
\begin{align*}
  \partial_{t} \Phi_{t}^{h} u &  = \mL \Phi_{t}^{h}
u + h  \Phi_{t}^{h} u, \qquad  \Phi_{0}^{h}
u = u \;, \qquad \forall t \in [0, s_{\star}] \;.
\end{align*}
Now the support theorem, see for example \cite[Theorem
2.1]{BallyMilletSole1995Support} (the setting of this work is slightly
more complex than ours, since the noise is space-time white, but the 
key technical result \cite[Proposition 2.2]{BallyMilletSole1995Support} adapts
to our setting), guarantees that
\begin{equation*}
\begin{aligned}
\supp ( \mQ_{s_{\star}}(u, \cdot) ) = \overline{ \{
\Phi_{s_{\star}}^{h} u  \; \colon \; h \in \mathcal{CM} \}
}^{H^{\gamma_{0}}} \;, \qquad \forall
u \in H^{\gamma_{0}}(\TT^{d}; \RR^{m}) \;.
\end{aligned}
\end{equation*}
Here we indicate with $ \mQ_{s} (u , \cdot) $ the law in $
H^{\gamma_{0}} $ of the solution $ u_{s} $ to \eqref{eqn:main} started in $ u
$. Recall that the support of a positive measure $ \mu $ on a metric space
$ (\mX, d) $ is given by
\begin{equation*}
\begin{aligned}
\supp (\mu) = \{ x \in \mX  \; \colon \; \mu (B)> 0 \text{ for all open sets }
B \text{ such that } x \in B \} \;.
\end{aligned}
\end{equation*}
Now we observe that if we prove that for $ B^{\mathrm{sym}}_{\ve} = B_{\ve}
\cup (- B_{\ve}) $ we have
\begin{equation} \label{e:Qaim}
\begin{aligned}
B_{\ve/2}^{\mathrm{sym}} \cap \supp (\mQ_{s_{\star}}(\pi, \cdot)) \neq 0 \;,
\qquad \forall \,  \pi \in  V_{R} \;,
\end{aligned}
\end{equation}
then also $ B_{\ve/2}^{\mathrm{proj}}  \in
\supp(\mP_{s_{\star}}([\pi], \cdot)) $ for all $ \pi \in S \cap V_{R} $. In
particular, if we show that \eqref{e:Qaim} is true, then our
result holds in view of Lemma~\ref{lem:cont-ic}, which guarantees the lower
bound of the probability to be uniform over $ \pi \in V_{R} $ (since $
V_{R} $ is compact in $ S $ and $ B_{\ve}^{\mathrm{sym}}$ is open in $
H^{\gamma_{0}} $, so that $ \mP_{t}([\pi], B_{\ve}^{\mathrm{proj}}) $ is
lower-semicontinuous in $ [\pi] $ by the Portmanteau theorem). Therefore, our aim is now to prove
\eqref{e:Qaim}, and in particular it suffices to show that
there exists an $ s_{\star} (R) > 0 $ such that for each $ \pi
\in V_{R} $ we can find a control $ h $ satisfying
\begin{equs}[eqn:aim-ctrl-har]
  \Phi^{h}_{s_{\star}} (\pi) / \|
  \Phi^{h}_{s_{\star}} (\pi) \| &  \in
  B_{\ve/2}^{\mathrm{sym} } \;, \qquad & \forall \pi \in V_{R} \;.
\end{equs}

One possible construction of such $ h$ is as follows. 
We start by observing that from the definition of $
V_{R} $ we have 
\begin{equ}
 M =  M (\pi) \leqslant \lfloor \kappa_{0}^{-1} \log{R} \rfloor = n_{R} \in \NN \;,
\end{equ}
with $ \kappa_{0} $ as in Theorem~\ref{thm:lyap-func}. In particular, there
exists an $ \alpha_{0} \in \{ 1, \dots, m \} $ such that $ \|
\Pi^{\myl}_{M(\pi)} \pi^{\alpha} \| \geqslant \eta $, for some deterministic $
\eta > 0 $. Our argument then proceeds in two steps. The main point will be
to prove that there exists a time $ s_{\star}^{(1)} > 0 $ such that, for
appropriate control $ h $, at time $ s_{\star}^{(1)} $ we have $ \|
\Pi^{\myl}_{1} \pi_{s_{\star}^{(1)}}^{\alpha_{0}} \| \geqslant
\ve \| \Pi^{\mygeq}_{1} \pi_{s_{\star}^{(1)}}^{\alpha_{0}}  \| $. That is, at
least the $ \alpha_{0} $-th component is concentrated in the $ 0 $-th frequency
level. We then use this result to prove that by a time $ s_{\star}^{(2)} >
s_{\star}^{(1)} $ and for an appropriate choice of $ h $ we have $ \| \Pi^{\myl}_{1}
\pi_{s_{\star}^{(2)}} \| \geqslant \ve \| \Pi^{\mygeq}_{1}
\pi_{s_{\star}^{(2)}} \| $, so that all components are now close to the unit,
at least in $ L^{2} $ norm.
 
We start by constructing the control $ h $ on the first time step $ [0,
s_{\star}^{(1)}) $. Let us fix a time
horizon $ t_{\star} \in (0, 1) $ to be fixed later on. Then consider
\begin{equation*}
\begin{aligned}
h^{\alpha_{0}, \alpha_{0}}(t, x) = P_{t} \Pi_{M}^{\myl} \pi^{\alpha_{0}} (x)
\;, \quad h^{\alpha, \beta} (t, x) = 0 \;, \quad \forall (\alpha, \beta) \neq
(\alpha_{0}, \alpha_{0})\;, \ \  t \in [0, t_{\star})\;,
\end{aligned}
\end{equation*}
where $ P_{t} $ is the semigroup $ P_{t}= e^{- \nu^{\alpha_{0}} (- \Delta)^{\a}t } $.
Note that such $ h $ lies in the Cameron--Martin space $ \mC \mM $ by
Assumption~\ref{assu:non-deg-uniq}. Further, there exists a constant $
C(R) >0 $ such that $$ \sup_{0 \leqslant s \leqslant 1} \left\{  \| \Phi^{h}_{s} \pi \|
+ \| h_{s} \|_{\infty}\right\} \leqslant C(R) \;. $$  Then we can compute
\begin{equation*}
\begin{aligned}
\int_{\TT^{d}} \Phi^{h}_{t} \pi (x) \ud x = \int_{\TT^{d}} \pi (x) \ud x +
\int_{0}^{t} \int_{\TT^{d}} h_{s}(x) \left[ P_{s} \pi + \int_{0}^{s} P_{s -r}
(h_{r} u_{r}) \ud r \right] (x) \ud x \ud s \;.
\end{aligned}
\end{equation*}
We can estimate the last term as follows:
\begin{equation*}
\begin{aligned}
\int_{0}^{t_{\star}} & \int_{\TT^{d}} h_{s}(x) \left[ P_{s} \pi + \int_{0}^{s} P_{s -r}
(h_{r} u_{r}) \ud r \right] (x) \ud x \ud s \\
& \geqslant \int_{0}^{t_{\star}} \| P_{s} \Pi_{M}^{\myl} \pi \|^{2} \ud s -
\int_{0}^{t_{\star}} \| h_{s} \|_{\infty} \int_{0}^{s} \| h_{r} \|_{\infty} \|
u_{r}\| \ud r \ud s \\
& \geqslant e^{- 2  \zeta_{M} t_{\star}} t_{\star} \eta -
\frac{t^{2}_{\star}}{2} C^{3} \geqslant c_{0}(\eta, \delta, R) > 0
\end{aligned}
\end{equation*}
for some constant $ c_{0}(\eta, \delta, R) $, provided that $ \delta $ is chosen
sufficiently small. Now assume that $ \left\vert \int_{\TT^{d}} \pi (x) \ud x
\right\vert < c_{0}/2 $ (otherwise set, instead of the present choice of $
h $, the trivial $ h = 0 $ on \( [0, t_{\star}) \)). In this way we have
\begin{equation*}
\begin{aligned}
 \left\vert \int_{\TT^{d}}\Phi^{h}_{t_{\star}} \pi (x) \ud x \right\vert
\geqslant c_{0}/2 \;.
\end{aligned}
\end{equation*}
Now by setting $ h = 0 $ on $ [t_{\star}, s_{\star}^{(1)}) $, so that by
choosing $ s_{\star}^{(1)} $ sufficiently large we obtain the desired $
\| \Pi^{\myl}_{1} (\Phi^{h}_{s_{\star}^{(1)}} \pi )^{\alpha_{0}} \| \geqslant \ve
\| \Pi^{\mygeq}_{1} (\Phi^{h}_{s_{\star}^{(1)}} \pi )^{\alpha_{0}} \|$.
Therefore we find that at time $ s_{\star}^{(1)} $ (up to choosing a 
larger $ s_{\star}^{(1)} $)
\begin{equation*}
\begin{aligned}
\pi_{s_{\star}^{(1)}} = \frac{\Phi^{h}_{s_{\star}^{(1)}} \pi}{\|
\Phi^{h}_{s_{\star}^{(1)}} \pi \|}
\end{aligned}
\end{equation*}
satisfies $ \| \pi_{s_{\star}^{(1)}} - \kappa \| \leqslant \ve/2 $, where $
\kappa \in L^{2}(\TT^{d}; \RR^{m}) $ is the constant vector $ \kappa =
(\kappa_{\alpha})_{\alpha =1}^{m} $ and by our previous discussion there exists a deterministic $
\underline{\kappa} $ such that $ |\kappa^{\alpha_{0}}| \geqslant
\underline{\kappa}>0$.

Finally, by choosing any $ s_{\star}^{(2)} > s_{\star}^{(1)} $, and since $
\kappa $ is nonzero, we can construct a time-independent control $ h $ on $
[s_{\star}^{(1)}, s_{\star}^{(2)}) $ for the ODE
\begin{equation*}
\begin{aligned}
\partial_{t} y^{\alpha} = \sum_{\beta=1}^{m} y^{\beta} h^{\alpha, \beta}\;,
\qquad y^{\alpha}_{s_{\star}^{(1)}} = \kappa^{\alpha} \;,
\end{aligned}
\end{equation*}
such that $ y_{s_{\star}^{(2)}}^{\alpha} =
y_{s_{\star}^{(1)}}^{\alpha_{0}} $ for all $ \alpha \in \{ 1,
\dots,m\} $. Up to choosing $ \ve $ sufficiently small, this guarantees that
under such choice of control
\begin{equation*}
\begin{aligned}
\min \left\{ \| \Phi^{h}_{s_{\star}^{(1)} } \pi / \|  \Phi^{h}_{s_{\star}^{(1)}
} \pi \| - 1 \| \;,  \| \Phi^{h}_{s_{\star}^{(1)} } \pi / \|
\Phi^{h}_{s_{\star}^{(1)} } \pi \| + 1 \| \right\} \leqslant \ve \;.
\end{aligned}
\end{equation*}
Finally, by Schauder estimates for a sufficiently
large time $
s_{\star}> s_{\star}^{(2)} $ and $ h =0 $ on $ [s_{\star}^{(2)}, s_{\star}] $, we find $
\pi_{s_{\star}} \in B^{\mathrm{sym}}_{\ve} $ as desired. 
\end{proof}

\section{Basic properties of the angular component} \label{sec:basic}

The first result of this section establishes well-posedness and continuity with
respect to the initial data for \eqref{eqn:main}.

\begin{lemma}\label{lem:wp}
Under the assumptions of Theorem~\ref{thm:lyap-func}, for any $ u_{0} \in L^{2}
(\TT^{d}; \RR^{m}) $ there exists a unique mild solution to \eqref{eqn:main} for all $ t \geqslant
0 $. Furthermore, for every
$$  \gamma \in [0, \a + \gamma_{0} -d/2)\;, $$  and $ p \geqslant 2,
T > 0 $ there exists a $ C(T, \gamma, p ) $ such that
\begin{equation*}
\begin{aligned}
\EE \left[ \sup_{0 \leqslant s \leqslant T} \| u_{s}
\|_{H^{\gamma}}^{p} \right]^{\frac{1}{p}} \leqslant C(T, \gamma, p) \|
u_{0} \|_{H^{\gamma}} \;.
\end{aligned}
\end{equation*}
\end{lemma}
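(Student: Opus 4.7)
The plan is to construct $u$ via a Picard iteration on the mild formulation
\begin{equ}
u_{t} = e^{t\mL} u_{0} + \mathcal{I}_{t}(u)\;, \qquad \mathcal{I}_{t}(v) = \int_{0}^{t} e^{(t-s)\mL} \bigl(v_{s} \cdot \ud W_{s}\bigr)\;,
\end{equ}
reducing the whole proof to a single stochastic convolution estimate in $H^{\gamma}$ that balances the $\a$ derivatives gained from the hyperviscous semigroup against the $\gamma_{0}$ derivatives of regularity carried by the noise and the $d/2$ derivatives lost to the summation over $\ZZ^{d}$.

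The technical core of the argument is the bound: for every $\gamma \in [0, \a + \gamma_{0} - d/2)$ and $p \geqslant 2$ there exists an increasing $T \mapsto \Theta_{\gamma, p}(T)$ with $\Theta_{\gamma, p}(0) = 0$ such that, for every predictable process $v$,
\begin{equ}
\EE \sup_{0 \leqslant t \leqslant T} \| \mathcal{I}_{t}(v) \|_{H^{\gamma}}^{p} \leqslant \Theta_{\gamma, p}(T)^{p}\, \EE \sup_{0 \leqslant t \leqslant T} \| v_{t} \|_{H^{\gamma}}^{p}\;.
\end{equ}
I would prove this via the Da Prato--Kwapie\'n--Zabczyk factorisation trick, exactly as in the proof of Lemma~\ref{lem:high-reg-disc}: pick $\alpha \in (1/p, 1/2)$, represent $\mathcal{I}_{t}(v) = c_{\alpha}\int_{0}^{t}(t-s)^{\alpha-1} e^{(t-s)\mL} Z_{s}\,\ud s$ with $Z_{s} = \int_{0}^{s}(s-r)^{-\alpha} e^{(s-r)\mL}(v_{r}\cdot\ud W_{r})$, and combine the smoothing bound $\| e^{t\mL}\|_{H^{\gamma}\to H^{\gamma+\beta}} \lesssim t^{-\beta/(2\a)}$ from Lemma~\ref{lem:reg-semi} (applied with $L=0$) with H\"older in time to transfer the supremum in $t$ onto an $L^{p'}$ integral of $\| Z_{s}\|_{H^{\gamma}}$. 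The pointwise moments $\EE \| Z_{s} \|_{H^{\gamma}}^{p}$ are controlled by the vector-valued BDG inequality in the Hilbert space $H^{\gamma}$ together with the It\^o isometry, which, through Remark~\ref{rem:fourier}, reduces the whole matter to a weighted Fourier-space summation
\begin{equ}
\sum_{k \in \ZZ^{d}} (1+|k|)^{2\gamma}\, e^{-c|k|^{2\a}\tau}\, (1+|k-j|)^{-2\gamma_{0}} \lesssim \tau^{-\theta}\, (1+|j|)^{2\gamma}\;,
\end{equ}
valid for some $\theta < 1$ precisely in the advertised range of $\gamma$. This one establishes by splitting the sum according to whether $|k|$ is smaller or larger than $|j|/2$ and playing the heat-kernel decay against the noise decay, extracting the extra regularity $\gamma-\gamma_{0}$ (when positive) from the hyperviscous kernel.

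Once the above estimate is available, the affine map $v \mapsto \Phi(v)_{t} = e^{t\mL} u_{0} + \mathcal{I}_{t}(v)$ is a strict contraction on $L^{p}(\Omega; C([0, T_{\star}]; H^{\gamma}))$ for $T_{\star}$ chosen so that $\Theta_{\gamma, p}(T_{\star}) \leqslant 1/2$; uniqueness follows from the same contraction. Because $T_{\star}$ depends only on $\gamma$ and $p$ \dash and not on $u_{0}$ \dash and since \eqref{eqn:main} is linear, iteration over the $\lceil T/T_{\star}\rceil$ successive subintervals yields a unique global mild solution together with the announced moment bound
\begin{equ}
\EE \sup_{0 \leqslant s \leqslant T} \| u_{s} \|_{H^{\gamma}}^{p} \leqslant C(T, \gamma, p)\, \| u_{0} \|_{H^{\gamma}}^{p}\;.
\end{equ}

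The main obstacle is the sharp Fourier-space estimate above: tracking simultaneously the $\a$ smoothing from the hyperviscous semigroup and the $\gamma_{0}$ smoothing from the noise covariance, while paying only $d/2$ derivatives for the summation over $\ZZ^{d}$, is exactly what produces the threshold $\a + \gamma_{0} - d/2$ appearing in the statement.
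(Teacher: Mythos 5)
Your proposal is correct and follows essentially the same route as the paper: mild formulation, the factorisation trick combined with the smoothing bounds of Lemma~\ref{lem:reg-semi} and a vector-valued BDG inequality, and a weighted Fourier-space summation in which the decay $\overline{\Gamma}_{l}\lesssim(1+|l|)^{-2\gamma_{0}}$ pays the $d/2$ derivatives for the lattice sum while the hyperviscous kernel supplies the remaining $\a$, producing exactly the threshold $\a+\gamma_{0}-d/2$. The only cosmetic difference is bookkeeping: the paper closes the stochastic-convolution estimate at an auxiliary regularity $\gamma_{1}\in(\gamma-\a,\gamma_{0}-d/2)$ and recovers the gap $\gamma-\gamma_{1}$ from the outer deterministic convolution, whereas you keep the full $H^{\gamma}$ weight and absorb the excess into an integrable singularity $\tau^{-\theta}$ with $\theta<1$, which lets you phrase the conclusion as an explicit Picard contraction on small intervals (a step the paper dismisses as ``classical'').
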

\begin{proof}[(sketch of)]
This result is entirely classical, so we refrain from proving it completely. We
only provide a short description of how the condition $ \gamma < \a +
\gamma_{0} -d/2 $ appears. We can represent the solution $ u_{t} $ in mild form
as
\begin{equation*}
\begin{aligned}
u_{t} = e^{t \mL} u_{0} + \int_{0}^{t} e^{(t -s) \mL}  u_{s} \cdot \ud W_{s} \;,
\end{aligned}
\end{equation*}
where $ e^{t \mL} \varphi^{\alpha} = \exp ( - \nu^{\alpha}(- \Delta)^{\a} t)
\varphi^{\alpha} $ for every $ \alpha \in \{ 1, \dots, m \} $. Then the crucial point of the
proof is to estimate the stochastic convolution appearing above. Here we use
the so-called ``factorisation method'' (see also the proof of
Lemma~\ref{lem:high-reg-disc}).
Namely, if for some $ \zeta \in (0, 1) $ we
define (assuming for the moment that the integral is defined, which will be
justified below) 
\begin{equation*}
\begin{aligned}
\mZ_{s} =\int_{0}^{s}(s-r)^{- \zeta }e^{(s -r) \mL} u_{r} \cdot \ud
W_{r} \;, 
\end{aligned}
\end{equation*}
then we find that for some constant $ c_{\zeta} \in (0, 1) $
\begin{equation*}
\begin{aligned}
\int_{0}^{s} e^{(s-r)\mL} u_{r} \cdot \ud W_{r} = c_{\zeta}\int_{0}^{s} (s -r)^{\zeta -1} e^{(s
- r) \mL} \mZ_{r} \ud r \;.
\end{aligned}
\end{equation*}
Therefore, by H\"older's inequality and Lemma~\ref{lem:reg-semi} we find that
\begin{equation*}
\begin{aligned}
 \sup_{0
\leqslant s \leqslant t} \Bigg\| &\int_{0}^{s} e^{(s-r)\mL} u_{r} \cdot \ud
W_{r} \Bigg\|_{H^{\gamma}}  \\
& \lesssim \sup_{0 \leqslant s \leqslant t}
\int_{0}^{s} (s -r)^{\zeta -1 - (\gamma - \gamma_{1}) /2\a} \|
\mZ_{r} \|_{H^{\gamma_{1}}} \ud r\\
& \lesssim  \sup_{0
\leqslant s \leqslant t}  \left( \int_{0}^{s} (s - r)^{(\zeta - 1 -
(\gamma - \gamma_{1}) /2\a) p} \ud r
\right)^{\frac{1}{p}} \cdot \left( \int_{0}^{s} \| \mZ_{r} \|_{H^{
\gamma_{1}}}^{q} \ud r \right)^{\frac{1}{q}} \;,
\end{aligned}
\end{equation*}
where $ \gamma_{1} < \gamma $ and $ p,q \in [1, \infty] $ are conjugate, so
that $ 1/p + 1/q =1$. 
Now, to close our argument we must choose appropriate parameters $
\gamma_{1}, \zeta $ and $ p $. First, we fix any arbitrary $ \gamma_{1} \in
(0, \gamma)$ such that
\begin{equation} \label{e:gamma1}
\begin{aligned}
 \gamma -  \a < \gamma_{1} < \gamma_{0} - d/2\;,
\end{aligned}
\end{equation}
which is possible in view of our assumption \(\gamma < \a + \gamma_{0} - d/2
\). Next we fix $ \zeta \in (0, 1/2) $ such
that $ \zeta - (\gamma - \gamma_{1})/2\a > 0 $. This choice is now possible
in view of the upper bound in \eqref{e:gamma1}. Finally, we choose $ p \in
(1, \infty) $ sufficiently close to $ 1 $ such that
\begin{equation*}
\begin{aligned}
\zeta^{\prime}-1 \eqdef (\zeta - 1 - (\gamma_{0} - \gamma_{1}) /2\a) p > -1 \;,
\end{aligned}
\end{equation*}
which is now possible in view of all our choices above. With these choices
taken, we can now conclude that
\begin{equation*}
\begin{aligned}
\EE \left[ \sup_{0
\leqslant s \leqslant t} \Bigg\| \int_{0}^{s} e^{(s-r)\mL} u_{r}\cdot \ud
W_{r} \Bigg\|_{H^{\gamma_{0}}}^{2}  \right] \lesssim t^{2 \zeta^{\prime}} \EE \left[
\left( \int_{0}^{t} \| \mZ_{s} \|_{H^{
\gamma_{1}}}^{q} \ud s \right)^{\frac{2}{q}}\right] \;.
\end{aligned}
\end{equation*}
The result therefore follows if we can prove that for any $ t > 0 $ and any $
q \in (2, \infty) $
\begin{equ}
\sup_{0 \leqslant s \leqslant t} \EE \| \mZ_{s} \|_{H^{
\gamma_{1}}}^{q} < \infty \;.
\end{equ}
Since $ \mZ_{s} $ is a stochastic integral, by BDG this reduces to bounding
its quadratic variation, as $ \EE \| \mZ_{s} \|_{H^{\gamma_{1}}}^{q} \lesssim 
\EE \langle \mZ^{s} \rangle^{\frac{q}{2}}_{s} $, where $ \langle
\cdot \rangle $ denotes the scalar quadratic variation of an $
H^{\gamma_{1}} $-valued martingale (see also the analogous argument leading to
\eqref{eqn:bdg}), and for $ 0 \leqslant r \leqslant s $ we write $
\mZ^{s}_{r}=\int_{0}^{r}(s-h)^{- \zeta }e^{(s -h) \mL}  u_{h} \cdot \ud W_{h} $.
In our setting, we can bound the quadratic variation as follows for any $ t > 0
$ and following the notation of Remark~\ref{rem:fourier}:
\begin{equs}
    \langle \mZ^{t} \rangle_{t} & \leqslant 
\sum_{k, l \in
      \ZZ^{d}} (1 + | k |)^{2 \gamma_{1}} \int_{0}^{t}
(t -s)^{- 2 \zeta}
       e^{- 2 \nu_{\mathrm{min}}(t -s)| k |^{2 \a}}
    | \hat{u}_{s}^{k- l}|^{2} 
  \overline{\Gamma}_{l} \ud s \\
  & \lesssim \int_{0}^{t} (t -s)^{- 2 \zeta}\sum_{k \in
    \ZZ^{d}} (1 + | k |)^{2 \gamma_{1}} \sum_{l \in \ZZ^{d}} \overline{\Gamma}_{l}^{2}
 | \hat{u}^{k- l}_{s} |^{2} \ud s \;.
  \end{equs}
Now we use that by assumption $
\overline{\Gamma}_{l} \lesssim (1 + | k |)^{- 2 \gamma_{0}} $ together with
   Lemma~\ref{lem:bound-weights}, so that
\begin{equation*}
\begin{aligned}
\langle \mZ^{t} \rangle_{t} & \lesssim_{\Gamma} t^{1 - 2 \zeta}\sup_{0 \leqslant s \leqslant t} \sum_{k , l \in \ZZ^{d}} (1 + | l
|)^{2(\gamma_{1}- \gamma_{0})}  (1 + | k-l
|)^{2\gamma_{1}} | \hat{u}_{s}^{k-l}|^{2} \\
& \lesssim_{\Gamma} t^{1 - 2 \zeta} \sup_{0 \leqslant s \leqslant t}\|
u_{s} \|_{H^{\gamma_{1}}}^{2}\;,
\end{aligned}
\end{equation*}
where  by \eqref{e:gamma1} we have that  $ 2(\gamma_{0}- \gamma_{1}) > d $.
From here on the study of well-posedness of the equation follows classical lines.
\end{proof}

The next result guarantees that the angular component is
defined for all positive times. This result is not completely obvious: as a matter
of fact we make use of the full strength of the results in the previous
sections (including the higher order regularity estimates in
Proposition~\ref{prop:higher-regularity}). We do however believe that this is overkill and
that one could get such a result as a consequence of a pathwise form of backwards uniqueness.
Unfortunately we were unable to find such a result in the existing literature, although
\cite{BackwardsUnique} covers the case $\a = 1$ and $m = 1$.

\begin{lemma}\label{lem:nonzero}
Under the assumptions of Theorem~\ref{thm:lyap-func}, for all $ u_{0}  \in L^{2}_{\star}(\TT^{d}; \RR^{m})$, the solution $
t \mapsto u_{t} $ to \eqref{eqn:main} almost surely satisfies $
u_{t} \neq 0 $ for all $ t \geqslant 0 $. In addition, for every $ \ve \in
(0, 1), t \geqslant 0 $ and $ R \geqslant  1 $ there exists a $ \delta (\ve, R, t) \in (0, 1) $ such that
\begin{equ}[e:ubd]
\sup_{u_{0} \in S^{\a}_{R}} \PP \Big( \sup_{0 \leqslant s \leqslant t} \| u_{s} \|
\leqslant \delta \Big) \leqslant \ve \;,
\end{equ}
with
\begin{equ}
S^{\a}_{R} = S \cap \{ \varphi \in H^{\a}  \; \colon \; \| \varphi
\|_{H^{\a}} \leqslant R \} \;.
\end{equ}
Finally, the process $ t \mapsto \pi_{t} = u_{t} / \| u_{t} \|  $ is a Markov process.
\end{lemma}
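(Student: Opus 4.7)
The plan is to establish the three claims in sequence: Part~1 by a contradiction argument based on the It\^o decomposition of $\log\|u_t\|$, Part~2 by making the same estimate quantitative via Proposition~\ref{prop:higher-regularity}, and Part~3 as an immediate consequence of Part~1.

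For Part~1, set $\tau_0 = \inf\{t\geq 0 : u_t=0\}$. On the random open set $[0,\tau_0)$ the angular process $\pi_t$ is well defined and It\^o's formula applied to $\log\|u_t\|^2$ yields, as in \eqref{eqn:differential-fk},
\begin{equ}
\log\|u_t\| = \log\|u_0\| + \int_0^t \bigl(\langle\pi_s,\mL\pi_s\rangle + \tfrac12 C(\pi_s,\Lambda)\bigr)\ud s + N_t\;,
\end{equ}
with $|C(\pi_s,\Lambda)| \lesssim \|\Lambda\|_\infty$ uniform in $\pi_s \in S$ and $N_t$ a continuous martingale with $\ud\langle N\rangle_t \leq \|\Lambda\|_\infty \ud t$. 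All drift contributions except $\langle\pi_s,\mL\pi_s\rangle = -\sum_\alpha \nu^\alpha\|\pi_s^\alpha\|_{H^\a}^2 \leq 0$ are bounded. Assuming towards a contradiction that $\PP(\tau_0 \leq T) > 0$ for some $T < \infty$: on $\{\tau_0 \leq T\}$ the bounded quadratic variation forces $N_t$ to converge a.s.\ as $t \uparrow \tau_0$, so $\log\|u_t\|\to-\infty$ is possible only if $\int_0^{\tau_0}\|\pi_s\|_{H^\a}^2\ud s = \infty$. To contradict this I combine Proposition~\ref{prop:higher-regularity}, which yields $\EE\int_{s_\star}^T\|\pi_s\|_{H^\a}^2\mathbf{1}_{\{s<\tau_0\}}\ud s \lesssim (T-s_\star)\mf{G}(\pi_0) < \infty$ whenever $\pi_0 \in H^{1/2}$, with the standard energy estimate $\EE\int_0^{s_\star}\|u_s\|_{H^\a}^2\ud s < \infty$ and continuity of $t\mapsto\|u_t\|$ on $[0,\tau_0)$: on $\{\tau_0 > s_\star\}$ continuity gives $\inf_{s\leq s_\star}\|u_s\| > 0$ a.s., so $\int_0^{s_\star}\|\pi_s\|_{H^\a}^2\ud s < \infty$ a.s. A strong Markov restart at an arbitrarily small deterministic time $t_0 > 0$ (at which $u_{t_0}\in H^{1/2}\setminus\{0\}$ with probability approaching $1$ by $L^2$-continuity and the parabolic smoothing of Lemma~\ref{lem:wp}) disposes of the residual case $\tau_0\leq s_\star$ and ultimately yields $\tau_0 = \infty$ a.s.

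For Part~2, the bounds $\|\pi_0\|_{H^{1/2}}\leq\|\pi_0\|_{H^\a}\leq R$ and $M(\pi_0)\lesssim R^{1/\a}$ give $\mf{G}(\pi_0)\leq e^{cR^2}$ uniformly on $S^\a_R$, so the same line of argument produces $\EE\int_0^t\|\pi_s\|_{H^\a}^2\ud s \leq C(R,t)$. Setting
\begin{equ}
\Omega_N = \Bigl\{\int_0^t\|\pi_s\|_{H^\a}^2\ud s \leq N\Bigr\} \cap \Bigl\{\sup_{s\leq t}|N_s| \leq N\Bigr\}\;,
\end{equ}
Markov's and Doob's inequalities give $\PP(\Omega_N^c) \leq \ve$ for $N = N(\ve,R,t)$ large enough, and on $\Omega_N$ the It\^o identity yields $\log\|u_s\| \geq -(\nu_{\max}+1)N - Ct$ for all $s\leq t$, hence $\inf_{s\leq t}\|u_s\| \geq \delta(\ve,R,t)$ for $\delta$ chosen accordingly. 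Part~3 is then immediate: granted Part~1, the map $L^2_\star\ni u\mapsto u/\|u\|\in S$ is continuous, so $\pi_t$ is a measurable functional of the Markov process $u_t$ and inherits its Markov property.

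The delicate step is Part~1: precisely when $\|u_s\|$ could approach zero, the dissipative drift $\sum_\alpha\nu^\alpha\|\pi_s^\alpha\|_{H^\a}^2$ in the equation for $\log\|u_t\|$ becomes unbounded, since $\|\pi_s\|_{H^\a} = \|u_s\|_{H^\a}/\|u_s\|$; the very quantity needed to control the argument is the one that would blow up. Escaping this coupling forces the appeal to the higher-order regularity of Proposition~\ref{prop:higher-regularity}, and the initial time window $[0,s_\star]$ it leaves uncovered must be bridged separately by an energy estimate, continuity of the flow, and a strong-Markov iteration.
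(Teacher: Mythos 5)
Your overall strategy is the same as the paper's: represent $\log\|u_t\|$ via \eqref{eqn:differential-fk}/\eqref{e:r}, observe that the martingale part has quadratic variation bounded by $\|\Lambda\|_\infty t$, and reduce everything to a moment bound on $\int_0^t\|\pi_s\|^2_{H^{\a}}\ud s$ obtained from the higher-order regularity machinery. The genuine gap is in how you cover the initial window $[0,s_\star]$. Proposition~\ref{prop:higher-regularity} only controls $\EE\|\pi_s\|^2_{H^{\a}}$ for $s\geqslant s_\star$ \emph{by design} (its proof starts the sum over stopping-time intervals at $i=1$ precisely to avoid the term involving the initial regularity), and neither of your two patches closes the hole. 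The continuity argument gives $\inf_{r\leqslant s}\|u_r\|>0$ and hence $\int_0^{s}\|\pi_r\|^2_{H^{\a}}\ud r<\infty$ only for $s$ \emph{strictly below} $\tau_0$; it says nothing about the limit as $s\uparrow\tau_0$, which is exactly the quantity that must be shown finite to get a contradiction when $\tau_0\leqslant s_\star$. The strong Markov restart at $t_0$ merely translates the uncovered window to $[t_0,t_0+s_\star]$. For the quantitative bound \eqref{e:ubd} the problem is worse: you assert $\EE\int_0^t\|\pi_s\|^2_{H^{\a}}\ud s\leqslant C(R,t)$ with the integral starting at $0$, but "the same line of argument" only delivers this from $s_\star$ on, and a qualitative "a.s.\ positive infimum" cannot be converted into a bound that is uniform over $u_0\in S^{\a}_R$. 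The missing ingredient is the \emph{second} estimate of Lemma~\ref{lem:high-reg-disc} (equivalently \eqref{e:unif}), which bounds $\EE_{T_i}[\sup_{t\in A_i}\|w_t\|^n_{\a,M_{T_i}}]$ by $\mf{C}(\mf{F}(1/2,\bpi_{T_i})+\|w_{T_i}\|^n_{\a,M_{T_i}})$ and hence does cover $[T_0,T_1)$, since $\|w_0\|_{\a,M_0}\lesssim\|\pi_0\|_{H^{\a}}\leqslant R$ by $\|\Pi^{\myl}_{M_0}\pi_0\|\geqslant 2^{-1/2}$. This is how the paper obtains the sup-in-time bound $\EE[\sup_{0\leqslant s\leqslant t}\|\pi_s\|^2_{H^{\a}}]\lesssim_t e^{c\|u_0\|^2_{H^{\a}}}$ valid from time zero, from which \eqref{e:ubd} follows; for the qualitative claim with $u_0\in L^2_\star$ only, the skeleton construction must additionally be stopped at $\tau_0$.

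A smaller point: your justification of the Markov property of $\pi_t$ ("a measurable functional of a Markov process inherits its Markov property") is false as a general principle. What makes it work here is the homogeneity of \eqref{eqn:main}: $u\mapsto\lambda u$ maps solutions to solutions, so the conditional law of $\pi_{t+s}$ given $\mF_t$ depends on $u_t$ only through $\pi_t$. Equivalently, as the paper does, one can point to the closed equation \eqref{e:pi} for $\pi$, whose coefficients are finite once $\langle\pi_t,\mL\pi_t\rangle$ is known to be finite for $t>0$.
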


\begin{proof}

We restrict to proving the claim in \eqref{e:ubd}. The fact that for $
u_{0} \in L^{2}_{\star} $, almost surely  $ u_{t} \neq 0 $
for all $ t \geqslant 0 $, follows along similar lines: the only difference is
that when considering the skeleton median $ (M_{t})_{t \geqslant 0} $ and the
related stopping times we must stop at the potential hitting time $
\tau_{0} = \inf \{ t \geqslant 0  \; \colon \; \| u_{t} \| = 0 \}$, or else the
process defined in the previous sections might not be defined.

As for \eqref{e:ubd}, consider an initial condition $
u_{0} \in S^{\a}_{R}$. Our aim will be to use \eqref{e:r} via an upper bound on
the following quantity (see the proof of Corollary~\ref{cor:finite-lyap}
for a somewhat similar argument):
\begin{equ}
\EE \left[ \sup_{0 \leqslant s \leqslant t } \| \pi_{s} \|_{H^{\a}}^{2} \right]
\;,
\end{equ}
where $ t \mapsto \pi_{t} = u_{t} / \| u_{t} \| $ is the angular component
associated to $ u $.
To obtain an estimate on the quantity above we follow roughly the proof of Proposition~\ref{prop:higher-regularity}.
Indeed, for any $ i \in \NN $ consider the probability $ \mathbf{p}_{t} (i) = \PP ( t \in [
T_{i}, T_{i +1})) $. Then we can bound
\begin{equ}
\EE \left[ \sup_{0 \leqslant s \leqslant t } \| \pi_{s} \|_{H^{\a}}^{2} \right]
\leqslant \sum_{i \in \NN} \sqrt{\mathbf{p}_{t}(i)} \left( \EE \left[ \sup_{0
\leqslant s \leqslant T_{i+1} } \| \pi_{s} \|_{H^{\a}}^{4}\right]
\right)^{\frac{1}{2}} \;.
\end{equ}
Now, for every $ i \in \NN $ we have, similarly to \eqref{e:ref} in the proof
of Proposition~\ref{prop:higher-regularity}
\begin{equation*}
\begin{aligned}
\EE_{T_{i}} \left[  \sup_{T_{i} \leqslant s < T_{i+1}} \|
\pi_{s} \|_{H^{\a}}^{4}\right] & \lesssim M_{T_{i}}^{4 \a} + \EE_{T_{i}} \left[
\sup_{T_{i} \leqslant s < T_{i+1} } \| w_{s} \|_{\a, M_{T_{i}}}^{4}\right] \\
& \lesssim  \mf{F}(1/2, \bpi_{T_{i}} ) + \| w_{T_{i}} \|_{\a,
M_{T_{i}}}^{4} \;,
\end{aligned}
\end{equation*}
where the last bound follows from Lemma~\ref{lem:high-reg-disc}. Therefore, following verbatim the proof of
Proposition~\ref{prop:higher-regularity} we obtain that for every $ i \in \NN
\setminus \{ 0 \} $ and some constant $ \overline{C} (\a) $ (uniform over $
u_{0} $ and $ i $)
\begin{equation*}
\begin{aligned}
 \EE \left[  \sup_{T_{i} \leqslant s < T_{i+1}} \|
\pi_{s} \|_{H^{\a}}^{4}\right] \leqslant \overline{C} (\a) \mf{G} ( \pi_{0}) \;, \qquad
\EE \left[  \sup_{0 \leqslant s < T_{1}} \|
\pi_{s} \|_{H^{\a}}^{4}\right] \leqslant \overline{C} (\a) \| \pi_{0}
\|_{H^{\a}}^{4} \;.
\end{aligned}
\end{equation*}
Now, if we bound
\begin{equation*}
\begin{aligned}
 \EE \left[ \sup_{0
\leqslant s \leqslant T_{i+1} } \| \pi_{s} \|_{H^{\a}}^{4}\right]
\leqslant \sum_{j = 0}^{i}\EE \left[ \sup_{T_{j}
\leqslant s \leqslant T_{j+1} } \| \pi_{s} \|_{H^{\a}}^{4}\right] \;,
\end{aligned}
\end{equation*}
then collecting all our estimates we can then conclude via
Lemma~\ref{lem:stopping-density-bounds} that
\begin{equ}[e:bdfin]
\EE \left[ \sup_{0 \leqslant s \leqslant t } \| \pi_{s} \|_{H^{\a}}^{2} \right]
\lesssim e^{c \| u_{0} \|_{H^{\a}}^{2}} \sum_{i} \sqrt{
\mathbf{p}_{t} ( i) \cdot i} \lesssim_{t} e^{c \| u_{0} \|_{H^{\a}}^{2}} \;.
\end{equ}
Now to conclude we will make use of \eqref{e:r}, which allows us to represent
\begin{equs}
\| u_{t} \| = \| u_{0} \| \exp \left( \int_{0}^{t} \langle \pi_{s}, \mL \pi_{s} \rangle \ud
s + \int_{0}^{t} \langle \pi_{s}, \pi_{s} \cdot \circ \ud W_{s} \rangle \right)
\;.
\end{equs}
The results follows then immediately from \eqref{e:bdfin}, since the stochastic
integral has bounded It\^o--Stratonovich corrector and the It\^o integral has
bounded quadratic variation: see for example the proof of
Corollary~\ref{cor:finite-lyap}.

The fact that $ \pi_{t} $ is a Markov process follows for example from the
representation in \eqref{e:pi}, since by our argument above $
Q_{\mL}(\pi_{t}, \pi_{t}) $ is finite 
for all $ t > 0 $.
\end{proof}
The next result guarantees the Feller property of the semigroup associated to
the angular component.

\begin{lemma}\label{lem:cont-ic}
In the setting of Theorem~\ref{thm:lyap-func}, fix $ \gamma \in [\a, \a +
\gamma_{0} - d/2) $. For any $ \pi \in S \cap H^{\gamma}$, let $
\mP_{t}^{\gamma} (\pi, \cdot) $ be the
law in $ H^{\gamma} $ of the angular component $ \pi_{t} $ of
\eqref{eqn:main} started
in $ \pi_{0} = \pi $. Then the map $ \pi \mapsto \mP_{t}^{\gamma}
(\pi, \cdot) $ is continuous from $
S \cap H^{\gamma} $ to the space of probability measures on $
S \cap H^{\gamma} $ equipped with the topology of weak convergence.
\end{lemma}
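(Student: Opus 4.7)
To show the weak continuity of $\pi \mapsto \mP_t^\gamma(\pi, \cdot)$, the plan is to couple solutions started at different initial conditions through the same driving noise and show that convergence of initial data in $H^\gamma$ implies convergence in probability of the angular components in $H^\gamma$; this is stronger than weak convergence of the laws. So suppose $\pi^{(n)} \to \pi$ in $S \cap H^\gamma$ and write $u_t^{(n)}$ and $u_t$ for the solutions to \eqref{eqn:main} driven by a common Wiener process $W$ with initial data $\pi^{(n)}$ and $\pi$, respectively.

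The first step exploits linearity: the difference $v_t^{(n)} = u_t^{(n)} - u_t$ satisfies \eqref{eqn:main} with initial condition $\pi^{(n)} - \pi$, so Lemma~\ref{lem:wp} (applicable since $\gamma < \a + \gamma_0 - d/2$) yields the bound
\begin{equ}
\EE \bigg[ \sup_{0 \leqslant s \leqslant t} \| v_s^{(n)} \|_{H^\gamma}^2 \bigg]^{1/2} \leqslant C(t, \gamma, 2) \| \pi^{(n)} - \pi \|_{H^\gamma} \to 0\;.
\end{equ}
In particular, $u_t^{(n)} \to u_t$ in probability in $H^\gamma$, and also $r_t^{(n)} \eqdef \| u_t^{(n)} \| \to r_t \eqdef \| u_t \|$ in probability since $| r_t^{(n)} - r_t | \leqslant \| v_t^{(n)} \|$.

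The second step handles the normalisation. By Lemma~\ref{lem:nonzero}, $r_t > 0$ almost surely; combined with the convergence $r_t^{(n)} \to r_t$ in probability, this gives $1/r_t^{(n)} \to 1/r_t$ in probability. Writing
\begin{equ}
\pi_t^{(n)} - \pi_t = \frac{v_t^{(n)}}{r_t^{(n)}} + u_t \Big( \frac{1}{r_t^{(n)}} - \frac{1}{r_t} \Big)\;,
\end{equ}
we can localise on the event $\{ r_t \geqslant \delta,\, \| u_t \|_{H^\gamma} \leqslant R \}$, whose complement has probability arbitrarily small for $\delta$ small and $R$ large (using \eqref{e:ubd} and Lemma~\ref{lem:wp}). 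On this event, for $n$ large we also have $r_t^{(n)} \geqslant \delta / 2$, so both terms converge to $0$ in $H^\gamma$ in probability. This yields $\pi_t^{(n)} \to \pi_t$ in probability in $H^\gamma$, and since $S \cap H^\gamma$ is closed in $H^\gamma$ under the constraint $\| \pi \| = 1$, convergence in probability of the $H^\gamma$-valued random variables implies weak convergence of their laws as probability measures on $S \cap H^\gamma$.

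The main subtlety lies in the second step: one cannot hope for a Lipschitz estimate of the normalisation map $u \mapsto u/\| u \|$ uniform in $u$, so convergence in $H^\gamma$ for $u_t^{(n)}$ does not immediately transfer to $\pi_t^{(n)}$ without additional probabilistic control near $\{ u = 0 \}$. This is precisely what the quantitative lower bound \eqref{e:ubd} of Lemma~\ref{lem:nonzero} provides, applied on the sets $S^\a_R$ to which the $\pi^{(n)}$ eventually belong since they converge in $H^\gamma \hookrightarrow H^\a$. The rest of the argument is routine.
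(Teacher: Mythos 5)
Your proposal is correct and follows essentially the same route as the paper: couple through a common noise, use linearity and Lemma~\ref{lem:wp} to get $u_t^{(n)} \to u_t$ in probability in $H^\gamma$, and then invoke the uniform bound of Lemma~\ref{lem:nonzero} to control the normalisation. The paper states this in two lines; your write-up simply makes explicit the localisation on $\{ r_t \geqslant \delta \}$ that the paper leaves implicit.
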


\begin{proof}
It is sufficient to show that if $ \pi^{n} \to \pi $ in $ H^{\gamma}
$, then for every $ t > 0
$ one has $\pi_t^n \to \pi_t$ in probability in $H^\gamma$. 
We know that $
u^{n}_{t}  \to u_{t} $ in probability in $ H^{\gamma} $ by
Lemma~\ref{lem:wp} (in fact, we have convergence in $ L^{p}(\Omega) $ for any $ p
\geqslant 2 $), and the uniform
bound in Lemma~\ref{lem:nonzero} implies that in addition
$u^{n}_{t} / \| u^{n}_{t} \| \to u_{t} / \| u_{t} \|$ in probability, as required.
\end{proof}

\glsaddall
\printglossaries

\bibliographystyle{Martin}

\def\polhk#1{\setbox0=\hbox{#1}{\ooalign{\hidewidth
  \lower1.5ex\hbox{`}\hidewidth\crcr\unhbox0}}}

\end{document}